\documentclass[12pt]{amsart}

\usepackage{amsmath,amssymb}
\usepackage{txfonts}
\usepackage{eucal}
\usepackage{mathrsfs}

\numberwithin{equation}{section}

\usepackage{graphicx}
\usepackage[all]{xy}

\usepackage{xspace}
\usepackage{adjustbox}
\usepackage{xcolor}

\usepackage[
    colorlinks=true,
    linkcolor=blue,
    citecolor=blue,
    urlcolor=blue,
    backref=page
]{hyperref}

\newtheorem{theorem}{Theorem}[section]
\newtheorem{prop}[theorem]{Proposition}
\newtheorem{proposition}[theorem]{Proposition}
\newtheorem{lemma}[theorem]{Lemma}
\newtheorem{corollary}[theorem]{Corollary}

\theoremstyle{definition}
\newtheorem{example}[theorem]{Example}
\newtheorem{defn}[theorem]{Definition}
\newtheorem{definition}[theorem]{Definition}
\newtheorem{remark}[theorem]{Remark}

\newcommand{\kk}{\Bbbk}
\newcommand{\ZZ}{\mathbb{Z}}
\newcommand{\ot}{\otimes}
\newcommand{\id}{{\rlap{1}\hskip1.6pt\adjustbox{scale=1.1}{1}}}

\DeclareMathOperator{\Hom}{Hom}
\DeclareMathOperator{\ad}{ad}

\newcommand{\calg}{\mathcal{G}}
\newcommand{\calq}{\mathcal{Q}}

\newcommand{\mquiver}{mixed quiver\xspace}
\newcommand{\mquivers}{mixed quivers\xspace}
\newcommand{\mpair}{\langle 2\rangle}
\newcommand{\mset}[1]{\{\{#1\}\}}
\newcommand{\cocont}[1]{_{\prec #1}}

\renewcommand{\eqref}[1]{Eq.\,(\ref{#1})}

\newcommand{\QSym}{\mathrm{QSym}}
\newcommand{\WQSym}{\mathrm{WQSym}}
\newcommand{\OSP}{\operatorname{OSP}}
\newcommand{\Ori}{\operatorname{Ori}}
\newcommand{\Asc}{\operatorname{Asc}}
\newcommand{\Des}{\operatorname{Des}}
\newcommand{\eps}{\varepsilon}
\newcommand{\one}{\mathbf{1}}
\newcommand{\ab}{\operatorname{ab}}
\newcommand{\ps}{\operatorname{ps}}

\begin{document}

\title[]{A Hopf Algebraic Theory of the Quantum Magnusian}

\author{Li Guo, Joon-Hwi Kim, Jung-Wook Kim, Sungsoo Kim, Sangmin Lee, Jian-Rong Li}
\thanks{Preprint number: CERN-TH-2026-234, KIAS-P26047}
 
\address{LG: Department of Mathematics and Computer Science,
Rutgers University, Newark, NJ 07102, United States} 
\email{liguo@rutgers.edu}

\address{JHK: Walter Burke Institute for Theoretical Physics,
California Institute of Technology, Pasadena, CA 91125, United States}
\email{joonhwi@caltech.edu}

\address{JWK: Theoretical Physics Department, CERN, 1211 Geneva 23, Switzerland}
\email{jung-wook.kim@cern.ch}

\address{SK: Department of Physics and Astronomy, Seoul National University, 
1 Gwanak-ro, Gwanak-gu, Seoul 08826, Korea}
\email{sooo4017@snu.ac.kr}

\address{SL: School of Physics and Quantum Universe Center, Korea Institute for Advanced Study, 
85 Hoegi-ro, Dongdaemun-gu, Seoul 02455, Korea}
\email{sangminlee@kias.re.kr}

\address{JL: Faculty of Mathematics, University of Seville, Calle Tarfia s/n, 41012 Seville, Spain}
\email{lijr07@gmail.com}

\date{}

\begin{abstract}
We develop a Hopf-algebraic theory of the graph coefficients arising in
the quantum Magnus expansion.  The classical expansion is governed by
directed trees, whereas its quantum counterpart naturally produces
graphs with loops, multiple edges, tadpoles, and different types of
edge data.  To accommodate these operations, we construct a
contraction Hopf algebra of mixed quivers, closed under the graph
contractions occurring in the quantum expansion.

On the Hopf subalgebra spanned by quivers, we define a character $e$ from
normalized linear-extension data and a tadpole prescription, and let
$\omega$ be its convolution inverse.  Our main result is a universal
closed formula for the connected function $\omega_c$ on every finite
quiver.  The formula is a finite sum over ordered set partitions of
the vertex set and is valid without acyclicity or simplicity
assumptions.  We also give a local characterization: a two-vertex
contraction identity, together with tadpole factorization, 
vanishing on disconnected graphs, and boundary data, determines $\omega_c$ uniquely. 

For acyclic quivers, the general master formula reduces to a
permutation formula whose coefficients depend only on descent numbers.
We derive this acyclic master formula independently from the Magnus
expansion.  An oscillator realization and Wick expansion first produce
a red-blue graph expansion, whose compatibility with the
Hopf-algebraic construction is controlled by an RB convolution
identity.  An edgewise change of basis then introduces independent
edge parameters and directed-undirected graphs; restricting to its
purely directed sector yields exactly the same acyclic master formula. 
Thus the physical graph coefficients of the quantum Magnus expansion
are governed by the convolution structure of the contraction Hopf
algebra. As further consequences, we obtain a refined quantum Murua formula
and orientation-sum identities related to Tutte and chromatic
polynomials.

These results place the loop-level quantum Magnus coefficients within a single contraction
Hopf-algebraic structure.
\end{abstract}

\subjclass[2020]{Primary 16T05, 81T99; Secondary 16T30, 05C31}

\keywords{Quantum Magnusian, quantum Magnus expansion, contraction Hopf algebra, mixed quivers, Murua coefficients, Tutte polynomial, quasi-symmetric functions}

\maketitle

\tableofcontents

\setcounter{section}{0}

\allowdisplaybreaks

\newpage 
\section{Introduction}

\subsection{Physics motivation} 
The Magnus expansion \cite{Magnus} converts 
the solution to a time evolution equation  
into the exponential of a Lie-algebra valued quantity.  Given
\begin{align}
\label{magnus-setup}
   \frac{d}{dt}U(t)\,U(t)^{-1}=A(t),
    \qquad
    U(0)=1,  
\end{align}
one writes
\begin{align}
\label{magnusian-general}
     U(t)=\exp \Omega(t),
    \qquad
    \Omega(t)=\sum_{n\geq1}\Omega_n(t),
\end{align}
where $\Omega_n(t)$ is a
linear combination of $n$-fold time-ordered integrals of nested
commutators. The series $\Omega(t)$ is the Magnus expansion.

The main topic of this paper is the algebraic and combinatorial structures behind the Magnus expansion. 
Since it draws motivation from physics, we give a brief exposition of the quantum Magnusian and its diagrammatic expansion from a physicist's perspective. 

In quantum mechanics or quantum field theory, 
$U(t)$ is the time-evolution operator.  
For a time-independent problem, $A(t) \rightarrow (-i H)$ and $H$ the Hamiltonian operator, $U(t) = e^{-iHt}$ solves \eqref{magnus-setup}.  
For a time-dependent problem, a closed-form formula for $U(t)$ is not easy to guess. In a pioneering work \cite{Magnus}, Magnus proposed a formula to compute $\Omega(t)$ defined in \eqref{magnusian-general}.
In the recent physics literature, 
the operator $(i\hbar)\Omega$ has been named ``Magnusian"  \cite{Kim:2025gis,Kim:2025sey}. In practice, an exact evaluation of $\Omega(t)$ is rarely possible, and one often relies on approximate methods. When a perturbation theory (convergent or asymptotic) is available, a formula known as the Magnus expansion \cite{Magnus} 
allows one to compute  $\Omega_{(n)}$ in \eqref{magnusian-general} order by order. 

In the classical level of quantum mechanics, ref.~\cite{Kim:2024svw} showed that the Magnusian admits a graphical expansion of the form 
\begin{align}
    (\Omega_\text{classical})_{n} = \sum_{|\tau|=n} \frac{\omega(\tau)}{\sigma(\tau)} I(\tau) \,. 
\end{align}
The sum runs over connected, directed tree graphs $\tau$ with $n$ vertices. 
The directed edges arise from a time ordering. 
The function $\sigma(\tau)$, called the symmetry factor in the  physics literature, is the order of the automorphism group of $\tau$. 
The integral $I(\tau)$ contains the physics content, the details of which will be specified later. 
More important for the current paper is the fact \cite{Kim:2024svw} that $\omega(\tau) \in \mathbb{Q}$ is a generalization of the Murua coefficient \cite{Murua} 
to non-rooted trees. 

Given the quantum origin of the Magnusian, it is natural to ask how the graph expansion extends to the full quantum theory. The usual
$\hbar$ counting in Feynman-diagram expansions suggests that the
quantum Magnusian should involve graphs of arbitrary loop number,
where loops are cycles in the underlying undirected graph.
The graph expansion of the quantum Magnusian for a relativistic quantum field theory was initiated in refs.~\cite{Brandhuber:2025igz,Kim:2025ebl,Guo:2026xaw}.
The graph sum of the quantum Magnusian takes the form 
\begin{align}
\label{quantum-magnusian}
    \Omega_{n} = \sum_{|G|=n} \frac{\omega(G)}{\sigma(G)} I(G) \,, 
\end{align}
where the sum runs over connected graphs with $n$ vertices. 
In the so-called BW basis, the graphs in (\ref{quantum-magnusian}) include directed and undirected edges. The undirected edges only affect loop graphs, 
so no change in the tree result is needed. In the RB basis, the graphs in (\ref{quantum-magnusian}) include blue and red directed edges.
To treat these
different graph realizations within a common algebraic framework, we
work throughout with mixed quivers, allowing directed and undirected edges,
directed cycles, multiple edges, tadpoles, and edge
decorations.
As noted in ref.~\cite{Guo:2026xaw}, 
the BW and RB bases
have natural origins in 
the two complementary approaches 
to the quantum Magnusian
due to
refs.~\cite{Kim:2025ebl,Brandhuber:2025igz},
with ref.~\cite{Brandhuber:2025igz} working in the standard perturbative language of physicists
while ref.~\cite{Kim:2025ebl} utilizes deformation quantization \cite{Fedosov1994,kontsevich,Karabegov1996,BordemannWaldmann1997}.

At the classical level, the resulting connected graphs are directed
trees.  
The Hopf algebra of
Calaque--Ebrahimi-Fard--Manchon (CEM) provides a natural algebraic framework
for the rooted-tree theory \cite{CEM}, 
later extended to non-rooted trees~\cite{Kim:2024svw}. 
The quantum problem is qualitatively different.  Loop graphs occur,
multiple contractions produce multiple edges, different diagrammatic
bases naturally involve both directed and undirected edges, and graph
contraction creates tadpoles and directed cycles.  
Thus the passage from the classical to the quantum Magnusian  
requires an algebraic structure capable of treating general graphs.
The purpose of this paper is to construct such a structure and to show
that it completely controls the graph coefficients of the quantum
Magnusian in \eqref{quantum-magnusian}.

\subsection{Hopf algebra and graph functions}
We introduce a contraction Hopf algebra on arbitrary
\emph{mixed quivers}.  Its objects may have directed and undirected
edges, directed cycles, multiple edges, tadpoles, and independent edge
decorations.  No acyclicity or simplicity hypothesis enters the
definition.  The Hopf algebra is therefore substantially larger than
the tree setting from which the classical Magnus coefficients
arise, and also larger than the class of diagrams that is immediately
visible in physics. 

In addition to the tree case in \cite{CEM}, restriction--contraction coproducts on graphs also occur in the incidence
Hopf-algebra framework of Schmitt~\cite{schmitt1994incidence}. In particular, for a hereditary family of finite simple graphs, Schmitt's construction is indexed by the lattice of contractions, or equivalently by closed edge subsets, and pairs a restriction with the corresponding contraction. This provides an
important antecedent for the form of our coproduct. The construction introduced here is nevertheless different already on the level of quivers: we allow directed edges, multiple edges and tadpoles, sum over all edge subsets rather than only closed ones, and retain all vertices in the cut-subquive, including isolated vertices. These features make the class stable under the contractions needed below and lead, in particular, to the
nontrivial grouplike one-vertex graph that is subsequently localized.

Hopf algebras of graphs have of course played a central role in quantum
field theory since the work of Connes and Kreimer \cite{Connes:1998qv}.  
There the coproduct organizes renormalization by extracting suitable divergent subgraphs.
The structure considered here has a different origin and a different
coproduct: it is built from spanning cut-sub-mixed graphs and contraction,
and its convolution algebra governs Magnus coefficients rather than
counterterms.  

Denote by $\kk\mathcal G$ the commutative algebra spanned by isomorphism
classes of finite mixed quivers, with multiplication given by disjoint
union.  For a mixed quiver $G$, let $H\preceq G$ denote a
cut-sub-mixed quiver; see Section~\ref{sec:hopf} for the precise
definition.  Contracting the connected components of $H$ defines
$G\cocont H$, and the coproduct is
\begin{equation}
\label{eq:intro-coproduct}
    \Delta(G)
    =
    \sum_{H\preceq G}
    H\otimes(G\cocont H).
\end{equation}
The compatibility of successive contractions implies coassociativity.
Since the one-vertex graph $\bullet$ is grouplike but is not the
multiplicative unit, localizing at $\bullet$ gives the graded
commutative Hopf algebra
$\mathcal H_c=\kk\mathcal G[\bullet^{-1}]$.

The central graph functions of the paper are obtained from convolution
in the Hopf subalgebra
$\mathcal H_c^{\mathrm{dir}}\subseteq\mathcal H_c$ spanned by quivers.
On quivers, we define a multiplicative function $e$ from linear
extensions. A crucial point in the
definition of $e$ for arbitrary quivers is the tadpole prescription: each
tadpole $\varepsilon$ contributes a factor
$-\beta_{\varepsilon,-}$, rather than forcing $e$ to vanish, see the general definition given in Definition \ref{def:e function}. It extends uniquely to a character
of $\mathcal H_c^{\mathrm{dir}}$. We then define the
$\omega$-function by
\begin{equation}
    \omega*e=\epsilon,
    \label{eq:intro-omega-definition}
\end{equation}
where $*$ is the convolution product induced by the coproduct of
$\mathcal H_c^{\mathrm{dir}}$. We prove that $\omega$ is in fact the
two-sided convolution inverse of $e$.

This formulation has an important conceptual consequence.  Although
the definition of $e$ is rooted in acyclic order theory, its
convolution inverse $\omega$ is canonically defined on arbitrary
quivers.  Thus the values of $\omega$ for loop graphs are not
introduced by adding separate prescriptions to the tree theory; they
are determined by the same Hopf convolution that governs the tree
sector.

Since $\omega$ is multiplicative, we write $\omega_c$ for its connected
part, equal to $\omega$ on connected graphs and zero on disconnected
graphs.  Our principal result is an explicit universal closed formula
for $\omega_c$, whose definition is implicit in the convolution
relation above.  Let $G$ be a quiver with nonempty vertex set $V_G$,
and assign an independent parameter $\beta_\varepsilon$ to each edge
$\varepsilon\in E_G$. Set $\beta_{\varepsilon,\pm}
    =
    \frac{\beta_\varepsilon\pm1}{2}$.
For an ordered set partition
$\mathcal B=(B_1|\cdots|B_\ell)$
of $V_G$ and an edge $\varepsilon:j\to i$, define
\[
    s_{\mathcal B}(\varepsilon)
    =
    \begin{cases}
    +,
    &\text{if the block containing $i$ precedes the block containing $j$},
    \\[1mm]
    -,
    &\text{otherwise}.
    \end{cases}
\]
Then  
\begin{equation} 
    \omega_c(G)
    =
    \sum_{\mathcal B\in\operatorname{OSP}(V_G)}
    \frac{(-1)^{|\mathcal B|-1}}{|\mathcal B|}
    \prod_{\varepsilon\in E_G}
    \beta_{\varepsilon,s_{\mathcal B}(\varepsilon)}, 
\label{eq:intro-general-master}
\end{equation}
see Section~\ref{subsec:general-special-master-formulas} for the detailed notation and construction.
We call \eqref{eq:intro-general-master} the \emph{general master
formula}.  It applies to every quiver with a nonempty vertex
set, including graphs with directed cycles and tadpoles, and with
independent parameters on all edges.  Thus it gives a finite, closed,
and non-recursive expression for $\omega_c$, whose definition comes
from Hopf convolution.

The same function also admits a local characterization. For an edge
$\varepsilon\in E_G$, let $G_{\setminus\varepsilon}$ denote the graph
obtained by deleting $\varepsilon$ while retaining its endpoints. The
two-vertex contraction rule of
Theorem~\ref{thm:omega-two-vertex-contraction} implies 
\begin{equation}
    2\frac{\partial}{\partial\beta_\varepsilon}
    \omega_c(G)
    =
    \omega_c(G_{\setminus\varepsilon}).
    \label{eq:intro-edge-PDE}
\end{equation}
Together with tadpole factorization, vanishing on disconnected graphs,
and the boundary values at $\boldsymbol\beta=\mathbf1$, the local
contraction rule determines $\omega_c$ uniquely; see
Theorem~\ref{thm:master-uniqueness}.  Thus the local contraction
calculus and the global ordered-set-partition formula give two
equivalent descriptions of $\omega_c$.

For acyclic graphs, the general master formula simplifies to a
permutation formula.  Let $G$ have $n$ vertices, labeled so that every
edge is $j\to i$ with $i<j$.  For $\sigma\in S_n$, let
$p_\sigma(i)=\sigma^{-1}(i)$, let $k(\sigma)$ be the number of descents
of $\sigma$, and set
\[
    d_{n,k}
    =
    \frac{(-1)^{n-1-k}}{n\binom{n-1}{k}},
    \qquad
    s_\sigma(\varepsilon)
    =
    \begin{cases}
    +,&p_\sigma(i)<p_\sigma(j),\\
    -,&p_\sigma(i)>p_\sigma(j),
    \end{cases}
\]
for $\varepsilon:j\to i$.  Then
\begin{equation}
    \omega_c(G)
    =
    \sum_{\sigma\in S_n}
    d_{n,k(\sigma)}
    \prod_{\varepsilon\in E_G}
    \beta_{\varepsilon,s_\sigma(\varepsilon)}.
    \label{eq:intro-acyclic-master}
\end{equation}
We call \eqref{eq:intro-acyclic-master} the \emph{acyclic master
formula}.  It follows combinatorially from the general master formula.

The master formulas also admit a natural interpretation in
quasisymmetric functions.  To each finite quiver one associates a
quasisymmetric function whose polynomial principal specialization,
followed by extraction of the linear coefficient, gives the connected
$\omega$-function.  Its monomial-basis expansion yields the
ordered-set-partition form of the general master formula, while for
acyclic quivers its fundamental-basis expansion yields the permutation
coefficients in the acyclic master formula.  See
Appendix~\ref{sec:qsym}.

\subsection{Acyclic master formula from operator products}

Sections~\ref{sec:operator-products-Hopf}--\ref{sec:operator-acyclic-master}
give an operator-product route to the acyclic master formula,
culminating in Theorem~\ref{thm:operator-special-master-formula}.

Section~\ref{sec:operator-products-Hopf} first translates the Magnus
expansion into graph language.  In the oscillator realization of
Section~\ref{subsec:star-products-ordered-contractions}, let
$a,a^\dagger$ satisfy $[a,a^\dagger]=1$ and represent the
time-dependent operator by the normally ordered symbol
$    A(t)
    =
    :\exp\!\bigl(f(t)a^\dagger+\bar f(t)a\bigr):$.
For $A_i:=A(t_i)$ and
$\sigma=(\sigma_1,\ldots,\sigma_n)\in S_n$, write
$
    A_{(\sigma)}
    :=
    A_{\sigma_1}\star\cdots\star A_{\sigma_n}$,
where $\star$ denotes the product of normally ordered symbols defined
in Section~\ref{subsec:star-products-ordered-contractions}.
Setting
$f_i:=f(t_i)$, $\bar f_i:=\bar f(t_i)$, and
$W_{ij}:=\bar f_i f_j$, the exponentiated Wick rule gives
\[
    A_{(\sigma)}
    =
    \exp\left(
        \sum_{a<b}W_{\sigma_a\sigma_b}
    \right)
    \prod_{i=1}^n A_i .
\]
Thus the pairwise contractions are encoded by the ordered kernels
$W_{ij}$.  The time
ordering fixes the direction of an edge, while the operator ordering
determines whether the contraction is $W_{ij}$ or $W_{ji}$.  We encode
$W_{ij}\theta_{ij}$ by a blue edge and
$W_{ji}\theta_{ij}$ by a red edge.  Thus the Wick expansion naturally
produces red-blue (RB) quivers, with parallel edges arising
from multiple contractions.

Section~\ref{sec:global-RB-compatibility} then identifies the connected
RB graph series with the Magnusian.  More precisely, the RB convolution
identity implies the star-exponential relation $\exp_\star(\Omega_{\mathrm{RB}})=U$,
and hence $\Omega_{\mathrm{RB}}=\Omega$.  Therefore the Magnusian admits
the RB expansion
\[
    \Omega(t)
    =
    \sum_{G_{RB}}
    \frac{\omega_{RB,c}(G_{RB})}{\sigma(G_{RB})}
    I_{\mathrm{color}}(G_{RB};t),
\]
where the sum runs over connected acyclic RB graphs.

Section~\ref{sec:RB-BW-bases-reorg} passes from the fixed RB
specialization $\beta_r=1$, $\beta_b=-1$ to independent edge
parameters.  For a reference orientation
$\varepsilon:j\to i$, let $u=|\varepsilon|$ denote the underlying
unoriented edge obtained from $\varepsilon$ by forgetting its
orientation.  The edgewise change of basis reads
\[
    W_{ji}\theta_{ij}
    \longmapsto
    \beta_{\varepsilon,+}\mathrm D_\varepsilon\theta_{ij}
    +\gamma_u\mathrm U_u\theta_{ij},
    \qquad
    W_{ij}\theta_{ij}
    \longmapsto
    \beta_{\varepsilon,-}\mathrm D_\varepsilon\theta_{ij}
    +\gamma_u\mathrm U_u\theta_{ij},
\]
where
$\beta_{\varepsilon,\pm}
=(\beta_\varepsilon\pm1)/2$.
Here $\mathrm D_\varepsilon\theta_{ij}$ represents the directed edge
$\varepsilon:j\to i$, while $\mathrm U_u\theta_{ij}$ is the
contribution, in the sector $t_i>t_j$, of the undirected edge $u$.
This leads to directed--undirected (DU) graphs, with parameters
$\beta_\varepsilon$ on directed edges and dressing parameters
$\gamma_u$ on undirected edges.  Denote by $G_D$ the spanning directed core of a DU graph
$G_{\mathrm{DU}}$.  Then
Theorem~\ref{thm:BW-black-core-factorization-reorg} gives
\begin{equation}
\label{eq:intro-black-core}
    \omega_{\mathrm{DU},c}
    (G_{\mathrm{DU}};\boldsymbol\beta,\boldsymbol\gamma)
    =
    \left(
        \prod_{u\in E_U(G_{\mathrm{DU}})}\gamma_u
    \right)
    \omega_c(G_D;\boldsymbol\beta).
\end{equation}
The physical black-white basis is recovered at
$(\boldsymbol\beta,\boldsymbol\gamma)
=(\mathbf0,\mathbf1)$.

To derive the acyclic master formula, we use the purely directed part
of this change of basis.  For an edge $\varepsilon:j\to i$, the local
rule becomes $W_{ji}\longmapsto\beta_{\varepsilon,+}$ and
$W_{ij}\longmapsto\beta_{\varepsilon,-}$.
Section~\ref{sec:operator-acyclic-master} applies this rule to the
purely directed sector and combines it with the
Mielnik--Plebański--Strichartz permutation formula for the Magnus
expansion.  Each operator ordering then contributes an edge-parameter
monomial.  For an acyclic quiver $G$ on $n$ vertices, denote by
$\Omega_n^{\mathrm{op}}(G)$ the coefficient obtained by summing these
operator-product contributions.  Reversing the operator word reverses
the corresponding edge signs and transforms the permutation
coefficient into the coefficient appearing in the acyclic master
formula; see Lemma~\ref{lem:operator-permutation-reversal}.  This yields
Theorem~\ref{thm:operator-special-master-formula}, which identifies
$\Omega_n^{\mathrm{op}}(G)$ with the acyclic master function
$\Omega_n(G)$.

Combining this identification with
Proposition~\ref{prop:general-reduces-to-special} and
Theorem~\ref{thm:general-master-formula}, we obtain, for every connected
acyclic quiver $G$ on $n$ vertices,
\begin{equation}
\label{eq:intro-two-realizations}
    \Omega_n^{\mathrm{op}}(G)
    =
    \Omega_n(G)
    =
    \omega_c(G).
\end{equation}
Thus the Hopf-algebraic and operator-product constructions give the
same coefficients on acyclic quivers, showing that the physical graph
coefficients of the quantum Magnus expansion are governed by the
convolution structure of the contraction Hopf algebra.

\subsection{Quantum Murua formula}
The acyclic master formula has another consequence.  It implies a
refined quantum version of the Murua formula.  For every connected
acyclic oriented multigraph $G$, every choice of semi-root
$s$, and arbitrary independent edge parameters
$\boldsymbol\beta=(\beta_\varepsilon)$, we construct a spider-web
expression
$\mathcal M_s(G;\boldsymbol\beta)$
and prove $\omega_c(G;\boldsymbol\beta)
    =
    \mathcal M_s(G;\boldsymbol\beta)$. 
The right-hand side is therefore independent of the choice of
semi-root. 

This refined quantum Murua formula provides a common extension of
several earlier formulas.  It recovers the classical Murua formula for
rooted trees \cite{Murua} and its extension to general trees in \cite{Kim:2024svw}, while the
red-blue and black-white specializations recover the corresponding
quantum Murua formulas of \cite{Guo:2026xaw}.  Thus the refined edge parameters
place the classical tree Murua formulas and the RB/BW quantum Murua formulas in a
single framework.

\subsection{Sum rules and graph polynomials}

Finally, we study sums over all orientations of a fixed undirected
graph. For a tadpole-free undirected graph $\widehat G$, let
$\operatorname{Ori}(\widehat G)$ denote the set of quivers obtained by
choosing an orientation independently for each edge of $\widehat G$.
Then the $e$-function satisfies the simple sum rule 
\[
    \sum_{G\in\operatorname{Ori}(\widehat G)} e(G)=1.
\]
For the $\omega$-function, let $u$ be an edge of $\widehat G$, with
two orientations $\varepsilon$ and $\bar\varepsilon$, and set
$    \overline\beta_u
    :=
    \frac{\beta_\varepsilon+\beta_{\bar\varepsilon}}{2}$.
We prove
\begin{equation}
    \sum_{G\in\operatorname{Ori}(\widehat G)}
    \omega_c(G;\boldsymbol\beta)
    =
    \sum_{\substack{F\subseteq E\\(V,F)\ {\rm connected}}}
    (-1)^{|F|}
    \prod_{u\in E\setminus F}\overline\beta_u.
    \label{eq:intro-orientation-sum}
\end{equation}
Thus the orientation sum depends on the directional edge parameters
only through their averages.
The right-hand side of \eqref{eq:intro-orientation-sum} is the
coefficient of the linear term in $q$ of the multivariate polynomial
$\mathcal P_{\widehat G}
(q;\overline{\boldsymbol\beta})$ introduced in
\eqref{eq:P-G-q-beta}.  Equivalently,
\[
    \sum_{G\in\operatorname{Ori}(\widehat G)}
    \omega_c(G;\boldsymbol\beta)
    =
    \left.
    \frac{d}{dq}
    \mathcal P_{\widehat G}
    (q;\overline{\boldsymbol\beta})
    \right|_{q=0}.
\]
The uniform specialization $\overline\beta_u=\beta$ yields a formula
in terms of the usual Tutte polynomial, while at
$\overline\beta_u=1$ the polynomial
$\mathcal P_{\widehat G}$ becomes the chromatic polynomial; see
Section~\ref{sec:sum-rules}.

\begin{figure}[htbp]
    \centering
    \includegraphics[width=0.45\linewidth]{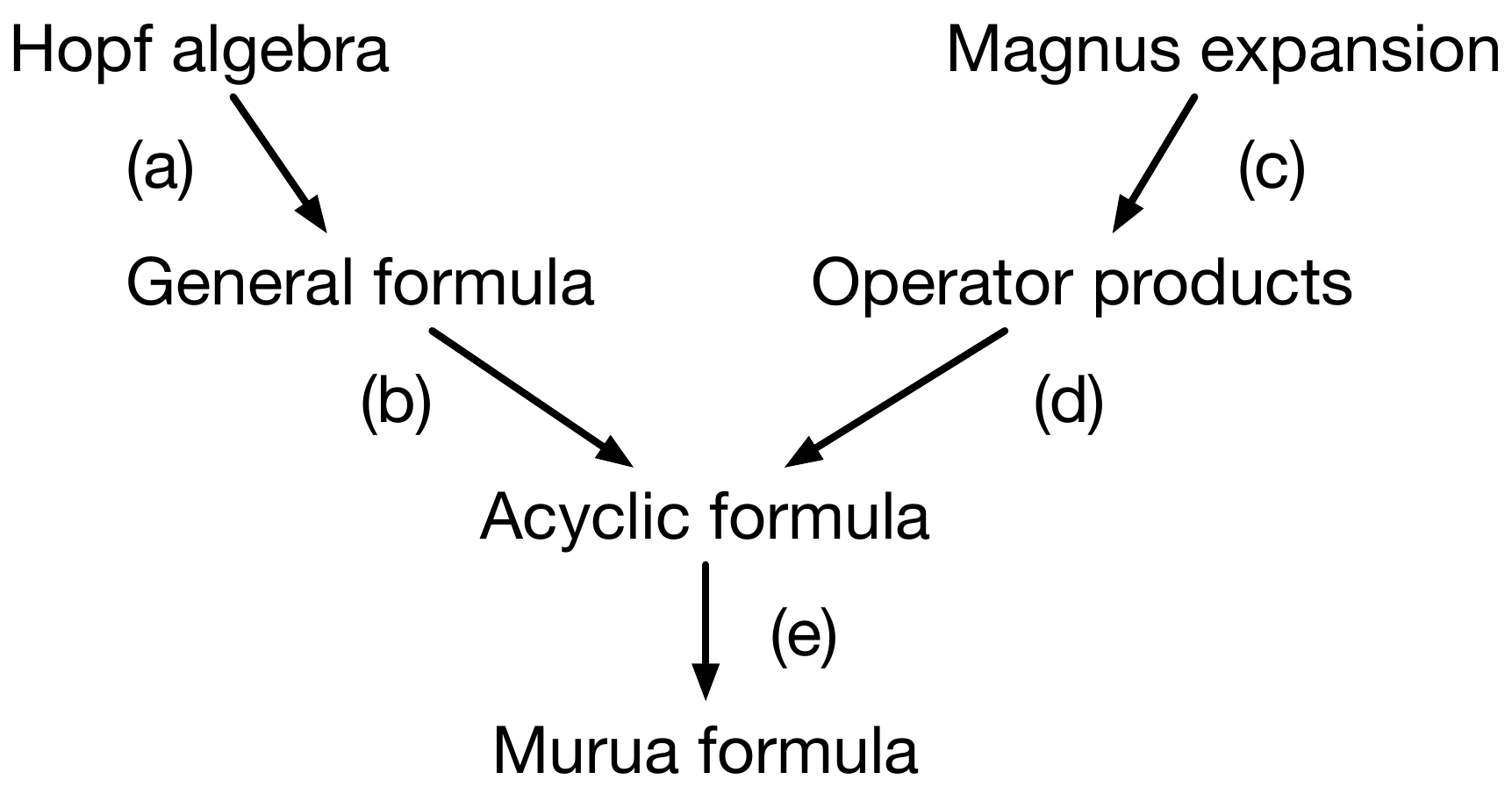}
    \caption{Overall flow of the paper.}
    \label{fig:overview}
\end{figure} 

\subsection{Organization} 

The architecture of the paper is summarized in
Fig.~\ref{fig:overview}.  There are two independent routes to the same
acyclic coefficient formula.  The contraction Hopf algebra yields the
general master formula (a), which specializes combinatorially to the
acyclic master formula (b).  
On the operator-product side, the Magnus expansion gives the
permutation formula (c).  Its oscillator realization and Wick
expansion produce a red-blue graph expansion.  At this stage, the RB
convolution identity identifies the connected RB series defined by the
Hopf $\omega$-function with the Magnusian, providing a structural
bridge between the Hopf-algebraic and operator-product constructions.
An edgewise change of basis then passes from RB graphs to
directed-undirected graphs, and its purely directed sector yields the
operator-product form of the acyclic master formula (d), which agrees
with (b).  This formula in turn leads to the refined quantum Murua
formula (e).

We first construct the contraction Hopf algebra of mixed quivers in Section~\ref{sec:hopf} and
introduce the graph functions $e$ and $\omega$ in Section~\ref{sec:graph functions}.  We then develop the local
contraction calculus in Section~\ref{sec:contraction}, and derive the associated differential equations in Section~\ref{sec:differential-equations}. 
In Section~\ref{sec:master-formulas}, 
we prove the general and acyclic master formulas. 
We next turn to the operator-product description of the Magnus
expansion.  In Section~\ref{sec:operator-products-Hopf}, we develop the
local dictionary between operator products and red-blue graphs.
Section~\ref{sec:global-RB-compatibility} establishes the compatibility
between this RB realization and the Hopf-algebraic construction via the
RB convolution identity.  In Section~\ref{sec:RB-BW-bases-reorg}, we
perform the edgewise change of basis that introduces independent edge
parameters and undirected edges.  Section~\ref{sec:operator-acyclic-master}
then uses its purely directed sector to derive the acyclic master
formula from operator products.
In Section~\ref{sec:quantum-Murua-reorg}, we derive the refined quantum
Murua formula from the acyclic master formula. In
Section~\ref{sec:sum-rules}, we establish orientation sum identities
and relate them to Tutte and chromatic polynomials. Appendix~\ref{appendix_sec:three-vertex-contraction} contains the proof
of the three-vertex contraction formula.
Appendix~\ref{sec:qsym} develops a word-quasisymmetric and quasisymmetric interpretation of both the general and special master formulas, and relates this viewpoint to the orientation-sum formulas.

\section{Hopf algebra}
\label{sec:hopf}

In this section, we construct the contraction Hopf algebra used
throughout the paper.  We first introduce mixed quivers,
cut-sub-mixed quivers, and graph contraction.  We then define a
coproduct on the commutative algebra $\kk\calg$ of mixed quivers and
prove that it makes $\kk\calg$ into a commutative bialgebra.  Since
the one-vertex graph $\bullet$ is grouplike but not invertible in
$\kk\calg$, we localize this bialgebra at the multiplicative subset
generated by $\bullet$.  We prove that the resulting localized
bialgebra $\mathcal H_c=\kk\calg[\bullet^{-1}]$
is a graded commutative Hopf algebra.  

\subsection{Mixed quivers}
\label{subsec:mixed quivers}
A mixed-edge graph is a graph whose edges may be oriented or
unoriented.  Mixed graphs of this type, without general multiple
edges, were considered in~\cite{Foi}.  Here we allow multiple edges,
edge decorations, and tadpoles, and formulate the construction in terms
of \mquivers.  Throughout this paper, a self-loop is called a
\emph{tadpole}.  This terminology is used to distinguish self-loops
from \emph{loops} in the physical sense, namely cycles in the underlying
undirected graph; the corresponding loop number is the number of
independent such cycles.

If all edges of a \mquiver are oriented, then it is a usual quiver,
that is, a directed multigraph.  We use the terms \emph{quiver} and
\emph{directed multigraph} interchangeably.  When no confusion can
arise, we may also simply say \emph{directed graph}, with the
convention that multiple edges and tadpoles are still allowed.

More generally, we use the term \emph{graph} as a generic term for
directed, undirected, and mixed graphs. Throughout this
paper, the term graph does not imply simplicity: multiple edges
and tadpoles are allowed unless explicitly excluded.

Let $X$ be a set.  We write $X^{\mset{2}}$ for the set of multisets
of size two with entries in $X$, and denote the disjoint union
\[
    X^{\mpair}:=X^2\sqcup X^{\mset{2}}.
\]

\begin{defn}
A \emph{\mquiver} is a triple $G=(V_G,E_G,\rho_G)$,
where $V_G$ is the set of vertices, $E_G$ is the set of edges, and
$\rho_G:E_G\longrightarrow V_G^{\mpair}$
is the incidence map. If $\Omega$ is a nonempty set, an $\Omega$-decorated \mquiver is a
\mquiver together with a decomposition $E_G=\bigsqcup_{\omega\in\Omega}E_{G,\omega}$.
\end{defn}

\begin{remark}
When decorations are present, isomorphisms of \mquivers are understood
to preserve the edge decorations. We suppress the decoration set from
the notation whenever no confusion can arise.
\end{remark}

\begin{remark}
The reason for working with \mquivers in this generality is that they provide a convenient framework for graphs that may have both directed and undirected edges; see sections \ref{sec:RB-BW-bases-reorg} and \ref{sec:quantum-Murua-reorg}.
\end{remark} 

The two summands in $V_G^{\langle 2\rangle}=V_G^2\sqcup V_G^{\mset{2}}$ encode the two types of edges: an element $(i,j)\in V_G^2$ represents a directed edge from $i$ to $j$, whereas an element $\mset{i,j}\in V_G^{\mset{2}}$ represents an undirected edge joining $i$ and $j$. 

A mixed quiver is an ordinary quiver precisely when $\operatorname{im}\rho_G\subseteq V_G^2$. We say that a quiver is \emph{acyclic} if it contains no directed
cycle; in particular, an acyclic quiver has no tadpoles.

An edge-sub-\mquiver determined by $E_H\subseteq E_G$ is obtained by
retaining only the vertices incident with edges of $E_H$.  For the
Hopf-algebraic construction, however, we shall use the following
spanning version.

\begin{defn}
\label{def:cut-sub-mquiver}
A \emph{cut-sub-\mquiver} of $G=(V_G,E_G,\rho_G)$
is a \mquiver $H=(V_G,E_H,\rho_H)$,
where $E_H\subseteq E_G$, $\rho_H=\rho_G|_{E_H}$.
Thus all vertices of $G$ are retained, including isolated vertices.
We write $H\preceq G$ if $H$ is a cut-sub-\mquiver of $G$. If $G$ is a quiver, we also call $H$ a cut-subquiver. 
\end{defn}

A cut-sub-\mquiver is uniquely determined by its edge set, so there
are $2^{|E_G|}$ cut-sub-\mquivers of $G$.  We denote the one with empty edge set by $G_{\emptyset}:=(V_G,\emptyset)$.
Thus $G_{\emptyset}=\bullet^{|V_G|}$,
where $\bullet^m$ denotes the edgeless \mquiver on $m$ vertices and
$\bullet^0=\emptyset$.

We next define the contraction.  Let $H\preceq G$.  Let $\sim_H$ be the
equivalence relation on $V_G$ generated by $x\sim_H y$
whenever $x$ and $y$ are the endpoints of an edge of $H$, and let
\[
    \pi_H:V_G\longrightarrow V_G/{\sim_H}
\]
be the quotient map.  Define
\begin{align*}
{G}\cocont{H}
    :=
    \left(
        V_G/{\sim_H},
        E_G\setminus E_H,
        \rho_{{G}\cocont{H}}
    \right), \text{where } \rho_{{G}\cocont{H}}
    :=
    \pi_H^{\mpair}\circ
    \rho_G|_{E_G{\setminus} E_H}.
\end{align*}
Here, for a map $f:X\to Y$,
\[
    f^{\mpair}(x,y)=(f(x),f(y)),
    \qquad
    f^{\mpair}(\mset{x,y})
    =
    \mset{f(x),f(y)}.
\]
In particular,
\begin{equation}
\label{eq:cut-extreme-contractions}
    {G}\cocont{G_{\emptyset}}=G.
\end{equation}
If $G$ is connected, then
\begin{equation}
\label{eq:full-contraction-connected}
    {G}\cocont{G}=\bullet.
\end{equation}
See Fig.~\ref{fig:cocont-example} for a simple nontrivial example of $G\cocont{H}$. 
\begin{figure}[htbp]
    \centering
    \includegraphics[width=0.6\linewidth]{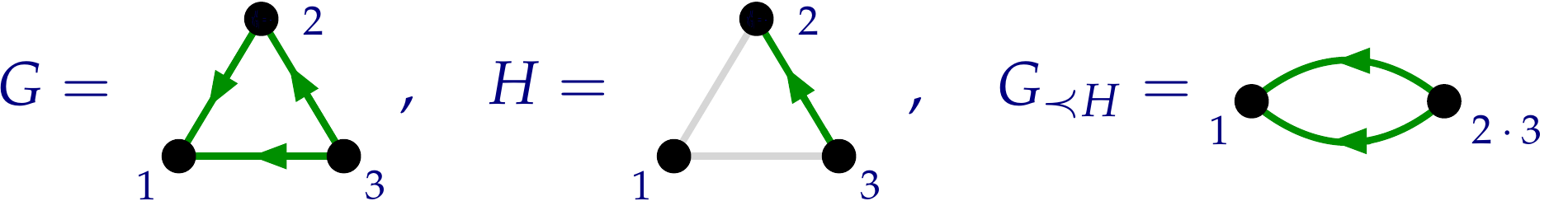}
    \caption{An example of $G$, $H$ and $G\cocont{H}$. The vertex label $(2\cdot 3)$ indicates that the two vertices have collapsed into one.}
    \label{fig:cocont-example}
\end{figure}

\begin{remark}[Compatibility with edge decorations]
\label{rem:decoration-compatibility}
The preceding constructions are compatible with edge decorations.
A cut-sub-\mquiver inherits the decorations of its retained edges,
and the contraction ${G}\cocont{H}$ retains the decorations of the
surviving edges $E_G\setminus E_H$. Consequently, the coproduct and
the Hopf-algebraic constructions below apply verbatim to decorated
\mquivers.
\end{remark}

\subsection{Hopf algebra} \label{subsec:Hopf algebra}

Let $\calg$ denote the set of isomorphism classes of \mquivers and
$\calq$ the set of isomorphism classes of connected \mquivers.
Under disjoint union, $\calg$ is the free commutative monoid
generated by $\calq$. Let $\kk$ be a field of characteristic zero and let $\kk\calg$
be the monoid algebra of $\calg$.  
Since $\mathcal G$ is the free commutative monoid generated by $\mathcal Q$, we may identify $\kk\calg \cong \kk[\calq]$, 
the free commutative $\kk$-algebra generated by $\calq$.
Its multiplication is induced by disjoint union:
\[
    G_1G_2:=G_1\sqcup G_2.
\]
The unit is the empty \mquiver:
\[
    \eta:\kk\longrightarrow\kk\calg,
    \qquad
    \eta(1)=\emptyset.
\]
We also write $1=\emptyset$ when no confusion can arise.

Since an isomorphism of \mquivers induces a bijection between their
cut-sub-\mquivers and commutes with contraction, the following
construction is well defined on isomorphism classes. Define
\begin{equation}
\label{eq:graphcoprodm}
    \Delta(G)
    :=
    \sum_{H\preceq G}
    H\otimes({G}\cocont{H})
\end{equation}
for every \mquiver $G$, and extend it linearly to $\kk\calg$.
In particular,
\[
    \Delta(\emptyset)
    =
    \emptyset\otimes\emptyset,
    \qquad
    \Delta(\bullet^m)
    =
    \bullet^m\otimes\bullet^m.
\]
If $G$ is connected, $|V_G|=n$, and $E_G\neq\emptyset$, then
\eqref{eq:cut-extreme-contractions} and
\eqref{eq:full-contraction-connected} give
\begin{equation}
\label{eq:coproduct-extreme-terms}
\begin{split}
    \Delta(G)
    &=
    \bullet^n\otimes G
    +
    G\otimes\bullet
    +
    \sum_{\substack{
        H\preceq G\\
         E_H \neq \emptyset, H \neq G
    }}
    H\otimes({G}\cocont{H}).
\end{split}
\end{equation}

\begin{remark}
The restriction--contraction form of \eqref{eq:graphcoprodm} is reminiscent of the graph incidence Hopf algebras studied by Schmitt~\cite[\S 14]{schmitt1994incidence}. For a
hereditary family of simple graphs, Schmitt considers the lattice
${\mathcal L}(G)$ of contractions, identified with the lattice of closed edge subsets,
and defines a coproduct of the form
\[
   \Delta_{\mathrm{Sch}}[G]
   =
   \sum_{S\in{\mathcal L}(G)}
      [\,G|S\,]\otimes
      [\,G\mathbin{\cdot}(E(G)\setminus S)\,].
\]
Here closedness is tied to remaining in the category of simple graphs under
restriction and contraction; loops created in the contraction process are
discarded and parallel edges are identified. Isolated vertices are also
irrelevant to the weak isomorphism type used there.

Our setting changes each of these points. Mixed quivers are closed under
the contractions used here because directed or undirected tadpoles and
parallel edges are retained as genuine edges (and edge decorations are
retained as well). Hence every subset $E_H\subseteq E_G$ is admissible:
the indexing set in \eqref{eq:graphcoprodm} is the full Boolean lattice $2^{E_G}$, not
the lattice of closed subsets. Moreover For each such $E_H$,  the associated cut-sub-mixed quiver $H$ has vertex set $V_G$. This convention is responsible for the
terms $\bullet^{|V_G|}\otimes G$ and, ultimately, for the nontrivial
grouplike element $\bullet$.

Thus our mixed-quiver coproduct is not obtained by simply applying Schmitt's graph construction. In particular, the coassociativity needed
here involves the behavior of arbitrary spanning edge subsets under
successive mixed-quiver contractions. We therefore verify the bialgebra
axioms directly below.
\end{remark}

\begin{proposition}
\label{prop:coproduct-multiplicative}
The coproduct is multiplicative: for any two \mquivers $G_1$ and $G_2$,
\[
    \Delta(G_1 G_2)
    =
    \Delta(G_1)\Delta(G_2).
\]
\end{proposition}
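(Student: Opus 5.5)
The plan is a direct bijective argument at the level of edge subsets. Write $G=G_1G_2=G_1\sqcup G_2$, so that $V_G=V_{G_1}\sqcup V_{G_2}$, $E_G=E_{G_1}\sqcup E_{G_2}$, and $\rho_G$ restricts to $\rho_{G_1}$ and $\rho_{G_2}$ on the two parts. By Definition~\ref{def:cut-sub-mquiver}, a cut-sub-\mquiver is determined by its edge set. Hence the map
\[
    E_H\longmapsto (E_H\cap E_{G_1},\,E_H\cap E_{G_2})
\]
is a bijection from $\{H\preceq G\}$ onto $\{H_1\preceq G_1\}\times\{H_2\preceq G_2\}$. Its inverse sends $(H_1,H_2)$ to the spanning quiver with edge set $E_{H_1}\sqcup E_{H_2}$. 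Moreover, for corresponding data, $H=H_1\sqcup H_2$ as \mquivers, because the vertex set is $V_{G_1}\sqcup V_{G_2}$ in both cases and the incidence maps agree.

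The second step is to check that contraction respects the disjoint union:
\[
    G\cocont{H}=(G_1\cocont{H_1})\sqcup(G_2\cocont{H_2}).
\]
Every edge of $H$ has both endpoints in the same $V_{G_a}$. Therefore the generating relation of $\sim_H$ never relates a vertex of $V_{G_1}$ to one of $V_{G_2}$. It follows that $\sim_H$ is the disjoint union of $\sim_{H_1}$ and $\sim_{H_2}$, and that
\[
    V_G/{\sim_H}=V_{G_1}/{\sim_{H_1}}\sqcup V_{G_2}/{\sim_{H_2}},
\]
with $\pi_H$ restricting to $\pi_{H_a}$ on $V_{G_a}$. The surviving edges split as $E_G\setminus E_H=(E_{G_1}\setminus E_{H_1})\sqcup(E_{G_2}\setminus E_{H_2})$. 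On each part, $\pi_H^{\mpair}\circ\rho_G$ equals $\pi_{H_a}^{\mpair}\circ\rho_{G_a}$, for directed and undirected edges alike. Edge decorations are carried along unchanged, as in Remark~\ref{rem:decoration-compatibility}.

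Combining the two steps, the product in $\kk\calg\otimes\kk\calg$ is computed componentwise: $(H_1\otimes K_1)(H_2\otimes K_2)=H_1H_2\otimes K_1K_2$. This gives
\[
    \Delta(G_1)\Delta(G_2)=\sum_{H_1\preceq G_1,\,H_2\preceq G_2}(H_1\sqcup H_2)\otimes\bigl((G_1\cocont{H_1})\sqcup(G_2\cocont{H_2})\bigr).
\]
Reindexing by the bijection of the first step yields $\sum_{H\preceq G}H\otimes(G\cocont{H})=\Delta(G)$.

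Finally, everything must descend to isomorphism classes. The sums run over labelled edge subsets of fixed representatives, so the only point is that the identities hold for representatives and the coproduct is already known to be well defined on classes. The case where $G_1$ or $G_2$ is empty is consistent with $\Delta(\emptyset)=\emptyset\otimes\emptyset$. No step is a real obstacle; the only place needing care is the equality of equivalence relations in the second step, i.e., that no contraction merges vertices from different components.
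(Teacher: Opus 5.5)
Your proposal is correct and takes the same approach as the paper. The paper's proof also uses the bijection between cut-sub-\mquivers $H\preceq G_1\sqcup G_2$ and pairs $(H_1,H_2)$ with $H_a\preceq G_a$, together with the splitting $(G_1\sqcup G_2)\cocont{(H_1\sqcup H_2)}\cong(G_1\cocont{H_1})\sqcup(G_2\cocont{H_2})$, substituted into the definition of $\Delta$. Your explicit check that $\sim_H$ never identifies vertices from different components supplies the detail the paper leaves implicit.
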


\begin{proof}
Every cut-sub-\mquiver of $G_1\sqcup G_2$ is uniquely of the form
\[
    H_1\sqcup H_2,
    \qquad
    H_i\preceq G_i,
\]
and
\[
    (G_1\sqcup G_2)\cocont{(H_1\sqcup H_2)}
    \cong
    ({G_1}\cocont{H_1})
    \sqcup
    ({G_2}\cocont{H_2}).
\]
Substituting this into \eqref{eq:graphcoprodm} gives the result.
\end{proof}

\begin{remark}
The use of cut-sub-\mquivers, rather than ordinary edge-sub-\mquivers,
is essential for coassociativity.  If one retains only the vertices
incident with the chosen edge set, then the resulting coproduct
need not be coassociative.
For example, let $G$ consist of a single edge joining two vertices.
With ordinary edge-sub-\mquivers, the subgraph corresponding to the
empty edge set is the empty graph, so the naive coproduct gives
\begin{align*}
\Delta_{\mathrm{naive}}(G)
    =
    \emptyset\otimes G
    +
    G\otimes\bullet, \quad \Delta_{\mathrm{naive}}(\bullet)
    =
    \emptyset\otimes\bullet.
\end{align*}
Consequently,
\[
    (\Delta_{\mathrm{naive}}\otimes\id)
    \Delta_{\mathrm{naive}}(G)
    \neq
    (\id\otimes\Delta_{\mathrm{naive}})
    \Delta_{\mathrm{naive}}(G),
\] 
where $\id$ denotes the identity map on $\kk\calg$.

This spanning convention is also one of the points at which our construction
differs from the graph incidence Hopf algebras of Schmitt~\cite[\S 14]{schmitt1994incidence}.
Keeping the isolated vertices is essential here because they record the
degree-zero grouplike factors that occur under contraction.
\end{remark}

\begin{prop}
\label{p:coass}
The coproduct $\Delta$ is coassociative.
\end{prop}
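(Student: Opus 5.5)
By linearity it suffices to check coassociativity on a single \mquiver $G$. We will write both iterated coproducts as sums over pairs of edge subsets and match them by an explicit bijection.

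Expanding the left side,
\[
(\Delta\otimes\id)\Delta(G)=\sum_{H\preceq G}\ \sum_{K\preceq H} K\otimes (H\cocont{K})\otimes (G\cocont{H}).
\]
Since $K\preceq H$ amounts to $E_K\subseteq E_H\subseteq E_G$, this is a sum over chains $E_K\subseteq E_H\subseteq E_G$. Expanding the right side,
\[
(\id\otimes\Delta)\Delta(G)=\sum_{K\preceq G}\ \sum_{L\preceq G\cocont{K}} K\otimes L\otimes \bigl((G\cocont{K})\cocont{L}\bigr).
\]
By definition, $G\cocont{K}$ has edge set $E_G\setminus E_K$. Hence a cut-sub-\mquiver $L$ of $G\cocont K$ is determined by a subset $E_L\subseteq E_G\setminus E_K$.

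The bijection sends $(E_K,E_H)$ to $(E_K,E_L)$ with $E_L=E_H\setminus E_K$, and conversely $E_H=E_K\sqcup E_L$. It remains to establish two isomorphisms of \mquivers, with the edges identified as the same underlying sets:
\[
H\cocont{K}\cong L,\qquad G\cocont{H}\cong (G\cocont{K})\cocont{L}.
\]

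\textbf{First isomorphism.} $L$ is the cut-sub-\mquiver of $G\cocont K$ with edges $E_H\setminus E_K$. So both sides have vertex set $V_G/{\sim_K}$ and edge set $E_H\setminus E_K$. In both, the incidence of an edge is $\pi_K^{\mpair}\circ\rho_G$ applied to that edge. Thus they are literally equal.

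\textbf{Second isomorphism.} This is the main step: contraction must be transitive. Both sides have edge set $E_G\setminus E_H$. For the vertices, note that $\sim_L$ on $V_G/{\sim_K}$ is generated by $\pi_K(x)\sim\pi_K(y)$ for each edge of $E_L$ with endpoints $x,y$. The composite relation on $V_G$ obtained by first quotienting by $\sim_K$ and then by $\sim_L$ is therefore the equivalence relation generated by the endpoint pairs of edges in $E_K\cup E_L=E_H$, which is exactly $\sim_H$. This gives a canonical bijection
\[
(V_G/{\sim_K})/{\sim_L}\cong V_G/{\sim_H},
\]
with $\pi_L\circ\pi_K=\pi_H$ under this identification. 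Since $f\mapsto f^{\mpair}$ is functorial, we get $\pi_L^{\mpair}\circ\pi_K^{\mpair}=\pi_H^{\mpair}$. Hence the incidence maps agree on $E_G\setminus E_H$, for both directed and undirected edges, and regardless of whether the images are tadpoles or parallel edges. This is where the convention that tadpoles and multiple edges are retained is used: no edge is ever discarded, so no closedness condition arises.

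The only care needed is in the generated-equivalence argument, together with the fact that isolated vertices are preserved throughout (all vertices are kept in cut-sub-\mquivers). Decorations are carried along by edge identity, by Remark~\ref{rem:decoration-compatibility}. Finally, isomorphism classes are respected because isomorphisms of $G$ commute with these constructions, so the two triple sums agree term by term.
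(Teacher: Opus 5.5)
Your proposal is correct and follows essentially the same route as the paper: the paper also reindexes one triple sum by the bijection $E_J\mapsto E_I\sqcup E_J$ (Lemma~\ref{l:quoteqm}). It establishes the same two identifications, $J\cong K\cocont I$ and $(G\cocont I)\cocont J\cong G\cocont K$, via the canonical bijection $\theta\colon (V_G/{\sim_I})/{\sim_{K\cocont I}}\to V_G/{\sim_K}$ with $\pi_{G,K}=\theta\circ\pi_{G\cocont I,K\cocont I}\circ\pi_{G,I}$, which is exactly your transitivity-of-contraction step.
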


\begin{proof}
Coassociativity reflects the usual compatibility of restriction and
contraction, familiar from contraction Hopf algebras of trees and from
incidence Hopf algebras of graphs. Since our coproduct ranges over all edge subsets, with the corresponding sub-mixed quivers taken to be spanning, we record the argument explicitly. 

The verification is similar to the corresponding arguments for
coproducts on trees and graphs. We give the details for our notion of
cut-sub-\mquivers.

First,
\begin{equation}
\label{eq:coassl}
(\Delta\ot\id)\Delta(G)
=
\sum_{H\preceq G}\Delta(H)\ot G\cocont H
=
\sum_{I\preceq H\preceq G}
I\ot H\cocont I\ot G\cocont H.
\end{equation}
On the other hand,
\begin{equation}
\label{eq:coassm}
(\id\ot\Delta)\Delta(G)
=
\sum_{I\preceq G}
I\ot
\bigg(
\sum_{J\preceq G\cocont I}
J\ot(G\cocont I)\cocont J
\bigg).
\end{equation}
By Lemma~\ref{l:quoteqm} below, for each fixed $I\preceq G$ the
assignment $J\mapsto K$ gives a bijection
\[
\{J\mid J\preceq G\cocont I\}
\longrightarrow
\{K\mid I\preceq K\preceq G\},
\]
under which
$J\cong K\cocont I$ and
$(G\cocont I)\cocont J\cong G\cocont K$.
Thus \eqref{eq:coassm} becomes
\begin{equation}
\label{eq:coass2m}
(\id\ot\Delta)\Delta(G)
=
\sum_{I\preceq K\preceq G}
I\ot K\cocont I\ot G\cocont K,
\end{equation}
which agrees with \eqref{eq:coassl}. Hence $\Delta$ is coassociative.
\end{proof}

\begin{lemma}
\label{l:quoteqm}
Fix $I\preceq G$. For $J\preceq G\cocont I$, define
\begin{equation}
\label{e:quotinterm}
K=K_J:=(V_G,E_K,\rho_K),
\qquad
E_K:=E_I\sqcup E_J,
\qquad
\rho_K:=\rho_G|_{E_K}.
\end{equation}
Then:
\begin{enumerate}
\item
\label{i:quoteq1m}
The assignment $J\mapsto K$ defines a bijection
\begin{equation}
\label{e:quotintersetm}
\{J\mid J\preceq G\cocont I\}
\longrightarrow
\{K\mid I\preceq K\preceq G\}.
\end{equation}

\item
\label{i:quoteq2m}
For corresponding $J$ and $K$, there are canonical \mquiver
isomorphisms
\begin{equation}
\label{e:quoteq2m}
J\cong K\cocont I,
\qquad
(G\cocont I)\cocont J
\cong
(G\cocont I)\cocont{(K\cocont I)}
\cong
G\cocont K.
\end{equation}
\end{enumerate}
\end{lemma}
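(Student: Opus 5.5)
The plan is to work throughout at the level of edge sets, since every cut-sub-\mquiver is determined by its edge set.

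\emph{Part (1).} By definition $G\cocont I$ has edge set $E_G\setminus E_I$, so a cut-sub-\mquiver $J\preceq G\cocont I$ is the same thing as a subset $E_J\subseteq E_G\setminus E_I$. Likewise, a $K$ with $I\preceq K\preceq G$ is the same thing as a subset $E_K$ with $E_I\subseteq E_K\subseteq E_G$. The map $E_J\mapsto E_I\sqcup E_J$ is a bijection between these two families of subsets, with inverse $E_K\mapsto E_K\setminus E_I$. Incidence maps are always restrictions of $\rho_G$, so this gives the bijection \eqref{e:quotintersetm}.

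\emph{Part (2), first isomorphism.} Both $J$ and $K\cocont I$ have edge set $E_J=E_K\setminus E_I$ and vertex set $V_G/{\sim_I}$. The key point is that $K\cocont I$ is formed by contracting the components of $I$ inside $K$. Since $K$ and $I$ share the vertex set $V_G$, the relation $\sim_I$ is the same whether $I$ is regarded inside $K$ or inside $G$. Both incidence maps equal $\pi_I^{\mpair}\circ\rho_G|_{E_J}$, so the identity maps on vertices and edges give the isomorphism; decorations are preserved by Remark~\ref{rem:decoration-compatibility}.

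\emph{Part (2), second isomorphism.} This is the main step. Both $(G\cocont I)\cocont J$ and $G\cocont K$ have edge set $E_G\setminus E_K$. I will show that there is a canonical bijection of vertex sets
\[
    (V_G/{\sim_I})/{\sim_J}\;\cong\;V_G/{\sim_K}
\]
satisfying $\pi_J\circ\pi_I=\pi_K$ under this identification.

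To prove this, note that $\sim_K$ is generated by the endpoint pairs of edges in $E_I\sqcup E_J$. The pullback along $\pi_I$ of $\sim_J$ is the equivalence relation generated by $\sim_I$ together with the pairs $(x,y)$ for which $\pi_I(x),\pi_I(y)$ are the endpoints of some edge of $J$. Since the endpoints of such an edge in $G\cocont I$ are exactly the images of its endpoints in $G$, this pulled-back relation coincides with $\sim_K$.

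There are two subtleties to check. First, an edge of $J$ may be a tadpole in $G\cocont I$; it then imposes no relation, consistently with its endpoints already being $\sim_I$-equivalent. Second, the relation generated by a generated relation behaves as expected, which I will verify by showing mutual inclusion of the two equivalence relations.

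Given $\pi_J\circ\pi_I=\pi_K$, functoriality of $(-)^{\mpair}$, namely $(f\circ g)^{\mpair}=f^{\mpair}\circ g^{\mpair}$, yields
\[
    \pi_J^{\mpair}\circ\pi_I^{\mpair}\circ\rho_G|_{E_G\setminus E_K}
    =
    \pi_K^{\mpair}\circ\rho_G|_{E_G\setminus E_K},
\]
so the incidence maps agree. The middle term of \eqref{e:quoteq2m} then follows by substituting $J\cong K\cocont I$.

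I expect the main obstacle to be exactly this equivalence-relation comparison, since that is where the retention of all vertices and of tadpoles enters.
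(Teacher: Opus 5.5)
Your proposal is correct and follows the paper's proof essentially step for step. Part (1) uses the same edge-set bijection $E_J\mapsto E_I\sqcup E_J$. The identification $J\cong K\cocont I$ uses the same argument: equal vertex sets, equal edge sets, and incidence maps $\pi_I^{\mpair}\circ\rho_G|_{E_J}$. The double quotient is handled by the same canonical vertex bijection $\theta$ satisfying $\pi_K=\theta\circ\pi_J\circ\pi_I$. Your mutual-inclusion (pullback) argument comparing the equivalence relations, including the tadpole remark, only makes explicit a step the paper asserts directly.
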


\begin{proof}
For~(\ref{i:quoteq1m}), since
$E_{G\cocont I}=E_G\setminus E_I$, a cut-sub-\mquiver
$J\preceq G\cocont I$ is uniquely determined by
$E_J\subseteq E_G\setminus E_I$. Likewise, a cut-sub-\mquiver $K$
with $I\preceq K\preceq G$ is uniquely determined by
$E_I\subseteq E_K\subseteq E_G$. The assignments
$E_J\mapsto E_I\sqcup E_J$ and
$E_K\mapsto E_K\setminus E_I$ are inverse to each other.

For~(\ref{i:quoteq2m}), the \mquivers $J$ and $K\cocont I$ have the same
vertex set $V_G/{\sim_I}$ and the same edge set
$E_J=E_K\setminus E_I$. Moreover,
\[
\rho_J
=
\rho_{G\cocont I}|_{E_J}
=
\pi_{G,I}^{\mpair}\circ\rho_G|_{E_J}
=
\rho_{K\cocont I}.
\]
Hence $J\cong K\cocont I$. The corresponding incidence maps are
illustrated in Fig.~\ref{fig:quotdiag1m}.

\begin{figure}[htbp]
\centering
\[
\xymatrix{
E_{G\cocont I}=E_G\setminus E_I
    \ar@{^{(}->}[r]
    \ar@/^2pc/[rrr]^{\rho_{G\cocont I}}
&
E_G
    \ar[r]_{\rho_G}
&
V_G^{\mpair}
    \ar[r]_{\!\!\!\!\!\!\!\!\!\!\pi^{\mpair}_{G,I}}
&
(V_G/{\sim_I})^{\mpair}
    \ar@{=}[d]
\\
E_J
    \ar@{^{(}->}[u]
    \ar@{-->}[rrr]^{\rho_J}
&&&
(V_G/{\sim_I})^{\mpair}
\\
E_K\setminus E_I
    \ar@{=}[u]
    \ar@{^{(}->}[r]
    \ar@/_2pc/[rrr]_{\rho_{K\cocont I}}
&
E_K
    \ar[r]^{\rho_K}
&
V_G^{\mpair}
    \ar[r]^{\!\!\!\!\!\!\!\!\pi^{\mpair}_{K,I}}
&
(V_G/{\sim_I})^{\mpair}
    \ar@{=}[u]
}
\]
\caption{Incidence maps for the canonical identification
$J\cong K\cocont I$.}
\label{fig:quotdiag1m}
\end{figure}

It follows that
$(G\cocont I)\cocont J
\cong (G\cocont I)\cocont{(K\cocont I)}$. 
It remains to identify the latter with $G\cocont K$.
Their edge sets agree because
\[
(E_G\setminus E_I)\setminus(E_K\setminus E_I)
=
E_G\setminus E_K.
\]
Since $I\preceq K$, the equivalence relation $\sim_K$ is obtained by
first imposing $\sim_I$ and then identifying the images of the
endpoints of the edges in $E_K\setminus E_I$. Hence there is a
canonical bijection
\begin{equation}
\label{eq:phimapm}
\theta:
(V_G/{\sim_I})/{\sim_{K\cocont I}}
\longrightarrow
V_G/{\sim_K},
\qquad
\theta\bigl([[v]_I]_{K\cocont I}\bigr)=[v]_K.
\end{equation}
It is well defined and bijective, and the quotient maps satisfy
\begin{equation}
\label{eq:quotient-map-composition}
\pi_{G,K}
=
\theta\circ
\pi_{G\cocont I,K\cocont I}
\circ
\pi_{G,I}.
\end{equation}
The compatibility with the incidence maps is illustrated in
Fig.~\ref{fig:doublequotm}.

\begin{figure}[htbp]
\centering
\[
\xymatrix{
E_{\big((G\cocont I)\cocont{(K\cocont I)}\big)}
    \ar@{^{(}->}[r]
    \ar@{=}[d]
    \ar@/^2pc/[rrrrr]^{
        \rho_{\big((G\cocont I)\cocont{(K\cocont I)}\big)}
    }
&
E_{G\cocont I}
    \ar@{^{(}->}[dr]
    \ar@{=}[d]
&&&
V_{G\cocont I}^{\mpair}
    \ar[r]_{
        \!\!\!\!\!\!\pi^{\mpair}_{G\cocont I,K\cocont I}
    }
    \ar@{=}[d]
&
(V_{G\cocont I}/{\sim_{K\cocont I}})^{\mpair}
    \ar@{=}[d]
\\
E_G\setminus E_K
    \ar@{^{(}->}[r]
    \ar@{=}[d]
&
E_G\setminus E_I
&
E_G
    \ar^{\rho_G}[r]
&
V_G^{\mpair}
    \ar[ur]^{\pi^{\mpair}_{G,I}}
    \ar[rrd]_{\pi^{\mpair}_{G,K}}
&
(V_G/{\sim_I})^{\mpair}
    \ar[r]_{
        \!\!\!\!\!\!\!\!\!\pi^{\mpair}_{G\cocont I,K\cocont I}
    }
&
\big((V_G/{\sim_I})/{\sim_{K\cocont I}}\big)^{\mpair}
    \ar[d]^{\theta^{\mpair}}
\\
E_{G\cocont K}
    \ar@{^{(}->}[urr]
    \ar[rrrrr]_{\rho_{G\cocont K}}
&&&&&
(V_G/{\sim_K})^{\mpair}
}
\]
\caption{Compatibility of the incidence maps for the canonical
isomorphism
$(G\cocont I)\cocont{(K\cocont I)}\cong G\cocont K$.}
\label{fig:doublequotm}
\end{figure}

Indeed, for every $\varepsilon\in E_G\setminus E_K$,
\[
\theta^{\mpair}\circ
\rho_{(G\cocont I)\cocont{(K\cocont I)}}(\varepsilon)
=
\pi_{G,K}^{\mpair}\circ\rho_G(\varepsilon)
=
\rho_{G\cocont K}(\varepsilon),
\]
where we used \eqref{eq:quotient-map-composition}. Thus $\theta$,
together with the identity on $E_G\setminus E_K$, defines the required
canonical \mquiver isomorphism
$(G\cocont I)\cocont{(K\cocont I)}\cong G\cocont K$.
\end{proof}

Define $\epsilon:\kk\calg\longrightarrow\kk$ by
\begin{equation}
\label{eq:counit-definition}
\epsilon(G)
:=
\begin{cases}
1, & E_G=\emptyset,\\
0, & E_G\neq\emptyset.
\end{cases}
\end{equation}

\begin{proposition}
\label{prop:mquiver-counit}
The map $\epsilon$ is a counit for $\Delta$, that is, $(\epsilon\otimes\id)\Delta
=
\id
=
(\id\otimes\epsilon)\Delta$.
\end{proposition}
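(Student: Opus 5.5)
Since $\Delta$ and $\epsilon$ are both defined on the basis $\calg$ and extended linearly, it suffices to check the two identities on a single \mquiver $G$. Applying $\epsilon\otimes\id$ to \eqref{eq:graphcoprodm} gives
\[
    (\epsilon\otimes\id)\Delta(G)=\sum_{H\preceq G}\epsilon(H)\,(G\cocont H).
\]
By \eqref{eq:counit-definition}, $\epsilon(H)=1$ exactly when $E_H=\emptyset$. Since a cut-sub-\mquiver is determined by its edge set, exactly one summand survives, namely $H=G_\emptyset$. Then \eqref{eq:cut-extreme-contractions} gives $G\cocont{G_\emptyset}=G$, so this side is settled. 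The key point is that keeping all vertices in $H$ forces $G_\emptyset=\bullet^{|V_G|}$, a genuine basis element with $\epsilon=1$ rather than a vanishing term.

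For the other side,
\[
    (\id\otimes\epsilon)\Delta(G)=\sum_{H\preceq G}H\,\epsilon(G\cocont H).
\]
The edge set of $G\cocont H$ is $E_G\setminus E_H$, so $\epsilon(G\cocont H)=1$ if and only if $E_H=E_G$, that is, $H=G$ itself, viewed as a cut-sub-\mquiver of itself. Only this term survives, and it yields $G\cdot 1=G$. No connectivity assumption is needed here: even though $G\cocont G$ may be $\bullet^{c}$ with $c$ the number of components, which is not equal to $\bullet$ when $G$ is disconnected, its counit is still $1$.

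The empty quiver is covered by the same argument, using $\Delta(\emptyset)=\emptyset\otimes\emptyset$ and $\epsilon(\emptyset)=1$. The only point needing care is the convention that $\epsilon$ kills every graph with an edge but takes the value $1$ on all edgeless graphs $\bullet^m$, not just on the unit. With that convention both identities reduce to selecting a single term, so I expect no real obstacle. If the surrounding development also needs it, one can add that $\epsilon$ is multiplicative, since $E_{G_1\sqcup G_2}=\emptyset$ if and only if both $E_{G_1}$ and $E_{G_2}$ are empty; this gives the bialgebra structure together with Propositions~\ref{prop:coproduct-multiplicative} and~\ref{p:coass}.
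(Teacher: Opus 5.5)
Your proposal is correct and matches the paper's proof: on the left, only the edgeless cut-sub-mixed quiver $G_\emptyset=\bullet^{|V_G|}$ survives and $G\cocont{G_\emptyset}=G$. On the right, $G\cocont{H}$ is edgeless exactly when $E_H=E_G$, so only $H=G$ contributes.
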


\begin{proof}
For any $G$, there is exactly one cut-sub-\mquiver with no edges,
namely $G_{\emptyset}=\bullet^{|V_G|}$. Hence
$(\epsilon\otimes\id)\Delta(G)
=G\cocont{G_{\emptyset}}=G$.
On the other hand, $G\cocont{H}$ is edgeless if and only if
$E_H=E_G$, equivalently $H=G$. Therefore
$(\id\otimes\epsilon)\Delta(G)=G$.
\end{proof}

The map $\epsilon$ is multiplicative, and
Propositions~\ref{prop:coproduct-multiplicative},
\ref{p:coass}, and~\ref{prop:mquiver-counit} show that 
$(\kk\calg,m,\eta,\Delta,\epsilon)$
is a commutative bialgebra. 

We next equip $\kk\calg\cong\kk[\calq]$ with a grading.
For a connected \mquiver $Q\in\calq$, define
$d(Q):=|E_Q|$.
Extending the degree additively under disjoint union gives $d(G_1\sqcup G_2)=d(G_1)+d(G_2)$
for arbitrary \mquivers $G_1,G_2$, and hence
$d(G)=|E_G|$ for every \mquiver $G$.
For $n\geq0$, let $\calg^{(n)}$ be the set of isomorphism classes of \mquivers of degree $n$, and let $(\kk\calg)^{(n)}$ be the spanned $\kk$-subspace. Then
\[
    \kk\calg
    =
    \bigoplus_{n\geq0}(\kk\calg)^{(n)}.
\]
Since
\begin{align*}
|E_H|
    +
    |E_{{G}\cocont{H}}|
    =
    |E_G|, \quad 
    |E_{G_1\sqcup G_2}|
    =
    |E_{G_1}|+|E_{G_2}|,
\end{align*} 
the product and coproduct are compatible with this grading.  Thus $\kk\calg$ is a
graded bialgebra.
The degree-zero part is
\[
    (\kk\calg)^{(0)}
    =
    \kk[\bullet].
\]
Recall that a graded $\kk$-algebra $A=\bigoplus_{n\geq0}A^{(n)}$
is called \emph{connected} if $A^{(0)}=\kk\cdot 1$.
Since the unit of $\kk\calg$ is the empty \mquiver $\emptyset$, whereas
\[
    \kk[\bullet]
    =
    \operatorname{span}_\kk
    \{1=\emptyset,\bullet,\bullet^2,\ldots\},
\]
the graded algebra $\kk\calg$ is not connected.
The element $\bullet$ is grouplike:
\[
    \Delta(\bullet)
    =
    \bullet\otimes\bullet,
    \qquad
    \epsilon(\bullet)=1.
\]
If $\kk\calg$ admitted an antipode, then
\[
    S(\bullet)\bullet=1=\emptyset,
\]
so $\bullet$ would have to be invertible.  It is not invertible in
$\kk\calg$.  Following the localization construction described after
\cite[Proposition III.3.1]{Man}, we therefore set
\begin{equation}
\label{eq:localized-mquiver-Hopf-algebra}
    \mathcal{H}_c
    :=
    \kk\calg[\bullet^{-1}],
\end{equation} 
which is identified with the localization of $\kk \mathcal{G}$ at the multiplicative subset $\{\bullet^n\,|\, n\geq 0\}$ \cite{AM}. 
Indeed, according to \cite[Corollary 69]{Takeuchi1971}, a commutative bialgebra $\mathcal{H}$ has an antipode if and only if the grouplike elements of $\mathcal{H}$ are invertible in $\mathcal{H}$. 

\begin{remark}
In contrast with the construction in \cite{CEM}, which is formulated for rooted trees and rooted forests and imposes the relation $\bullet=1$, we work with arbitrary mixed quivers and instead invert the grouplike element $\bullet$. 
\end{remark}

The coproduct and counit extend uniquely to $\mathcal{H}_c$ by
\begin{equation}
\label{eq:localized-coproduct-counit}
\Delta(\bullet^{-1})
=
\bullet^{-1}\otimes\bullet^{-1},
\qquad
\epsilon(\bullet^{-1})=1.
\end{equation}
The extended coproduct remains coassociative, since
$$
    (\Delta\otimes\id)\Delta(\bullet^{-1})
    =
    \bullet^{-1}\otimes\bullet^{-1}\otimes\bullet^{-1}
    =
    (\id\otimes\Delta)\Delta(\bullet^{-1}),
$$
and coassociativity already holds on $\kk\calg$.  Likewise, the counit
identities hold on the new generator:
$$
    (\epsilon\otimes\id)\Delta(\bullet^{-1})
    =
    \bullet^{-1}
    =
    (\id\otimes\epsilon)\Delta(\bullet^{-1}),
$$
and they already hold on $\kk\calg$.  Hence $\mathcal H_c$ is again a graded commutative
bialgebra, with
\[
    \mathcal{H}_c^{(0)}
    =
    \kk[\bullet,\bullet^{-1}].
\]

The coproduct defines convolution products on both
$\Hom_\kk(\kk\calg,\kk)$ and $\Hom_\kk(\mathcal H_c,\kk)$.
For linear maps $f,g$ in either space, define
\begin{equation}
\label{eq:convolution-product}
    f*g
    :=
    m_\kk\circ(f\otimes g)\circ\Delta.
\end{equation}
In particular, for every \mquiver $G$,
\begin{equation}
\label{eq:convolution-explicit}
    (f*g)(G)
    =
    \sum_{H\preceq G}
    f(H)\,
    g({G}\cocont{H}).
\end{equation}
Thus $(f*g)(\bullet^m)
    =
    f(\bullet^m)g(\bullet^m)$,
for $m\geq0$ in $\kk\calg$, and for every $m\in\mathbb Z$ in
$\mathcal H_c$, since $\bullet$ is grouplike.
If $f$ and $g$ are multiplicative with respect to disjoint union,
then so is $f*g$, because the coproduct is multiplicative.

\begin{theorem}
\label{t:ghopfm}
The localized bialgebra $\mathcal{H}_c=\kk\calg[\bullet^{-1}]$
is a graded commutative Hopf algebra.  Its unit is
\[
    \eta(1)=\emptyset,
\]
its counit is given by \eqref{eq:counit-definition}, together with
$\epsilon(\bullet^{-1})=1$, its antipode satisfies
\[
    S(\bullet^m)=\bullet^{-m},
    \qquad m\in\mathbb Z,
\]
and, for every connected \mquiver $G$ with $E_G\neq\emptyset$,
\begin{equation}
\label{eq:antipode-recursion}
    S(G)
    =
    -
    \left(
        \sum_{\substack{
            H\preceq G\\
            H\neq G
        }}
        S(H)({G}\cocont{H})
    \right)
    \bullet^{-1}.
\end{equation}
The antipode is extended multiplicatively to arbitrary \mquivers.
\end{theorem}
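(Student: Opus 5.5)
We need an antipode on $\mathcal H_c$, and the plan is to build it by the standard recursion on edge degree. Since $\mathcal H_c$ is commutative, an antipode, if it exists, is an algebra morphism. It is also determined by its values on the algebra generators $\bullet^{\pm1}$ and the connected \mquivers $G$ with $E_G\neq\emptyset$. We therefore define $S$ on these generators and extend multiplicatively. This is consistent because $\mathcal H_c\cong\kk[\calq\setminus\{\bullet\}][\bullet,\bullet^{-1}]$ is a Laurent polynomial ring over a free commutative algebra.

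\emph{Degree zero.} On the generators $\bullet^{\pm1}$ we set $S(\bullet^m)=\bullet^{-m}$. Grouplikeness, together with \eqref{eq:localized-coproduct-counit}, gives $m(S\otimes\id)\Delta(\bullet^m)=\bullet^{-m}\bullet^m=\emptyset=\epsilon(\bullet^m)1$, and the same identity holds on the other side.

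\emph{Positive degree.} For connected $G$ with $E_G\neq\emptyset$, $|V_G|=n$, we define $S(G)$ by induction on $d(G)=|E_G|$. By \eqref{eq:coproduct-extreme-terms}, the requirement $m(S\otimes\id)\Delta(G)=\epsilon(G)=0$ reads
\[
\sum_{H\preceq G,\,H\neq G}S(H)\,(G\cocont H)+S(G)\,\bullet=0 .
\]
Every $H\neq G$ has fewer edges than $G$, so each $S(H)$ is already defined. Here $H$ is a disjoint union of components, each either $\bullet$ or connected of smaller degree, and multiplicativity of $S$ gives $S(H)$. Solving the identity for $S(G)$ and multiplying by $\bullet^{-1}$ yields exactly \eqref{eq:antipode-recursion}. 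Thus $S$ is a well-defined algebra endomorphism, and it preserves the grading because each summand has total degree $d(G)$.

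\emph{Left antipode identity on all of $\mathcal H_c$.} Consider the set of $x\in\mathcal H_c$ satisfying $m(S\otimes\id)\Delta(x)=\epsilon(x)1$. It is a subalgebra, because $\Delta$, $S$ and $\epsilon$ are multiplicative and $\mathcal H_c$ is commutative: $(S*\id)$ is then multiplicative on products. The identity holds on all generators by construction, so $S*\id=\eta\epsilon$ everywhere.

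\emph{Right antipode identity.} One can check $\id*S=\eta\epsilon$ directly in the same way, but it is cleaner to argue in the convolution monoid $\Hom_\kk(\mathcal H_c,\mathcal H_c)$. We show $\id$ also has a right inverse $S'$, built by the symmetric recursion. Using the terms $G\otimes\bullet$ and $\bullet^n\otimes G$ of \eqref{eq:coproduct-extreme-terms}, the recursion solves for $S'(G)$ with coefficient $\bullet^n$, and inverting it uses $\bullet^{-n}$. Then associativity of convolution gives $S=S*(\id*S')=(S*\id)*S'=S'$.

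\emph{Alternative via Takeuchi.} Alternatively one can cite \cite[Corollary 69]{Takeuchi1971}. Every grouplike element of $\mathcal H_c$ is invertible: a grouplike element is homogeneous, and in degree $0$ the grouplikes are the $\bullet^m$. In positive degree there are none, since $\epsilon$ vanishes there. So an antipode exists, and by uniqueness it must satisfy the recursion above.

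\emph{Main obstacle.} The only delicate step is the degree-zero part. Because $\mathcal H_c$ is not connected, the usual Takeuchi recursion for connected graded bialgebras does not apply directly. The induction above works precisely because the degree-zero coefficient of the top term, $\bullet$ or $\bullet^n$, is invertible after localization. I would state this point carefully, checking that multiplying by $\bullet^{-1}$ keeps everything inside $\mathcal H_c$ and that the components of $H$ are handled correctly.
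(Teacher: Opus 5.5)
Your proposal is correct and follows essentially the paper's proof: define $S$ on $\bullet^{\pm1}$ and on connected $G$ with $E_G\neq\emptyset$ by the recursion read off from the extreme terms $\bullet^n\otimes G$ and $G\otimes\bullet$, extend multiplicatively, build a right inverse by the symmetric recursion with the factor $\bullet^{-n}$, and identify the two by associativity of convolution. Your subalgebra argument, which extends $S*\id=\eta\epsilon$ from the generators to all of $\mathcal H_c$, spells out a step the paper covers only with ``Thus''.
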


\begin{proof}
We construct a left convolution inverse of the identity by induction
on the number of edges.  On degree zero, define
\[
    S(\bullet^m)=\bullet^{-m},
    \qquad
    m\in\mathbb Z.
\]
Let $G$ be connected with $n=|V_G|$, $|E_G|>0$.
The edgeless cut-sub-\mquiver of $G$ is $G_{\emptyset}=\bullet^n$.
Using \eqref{eq:coproduct-extreme-terms}, the equation $(S*\id)(G)=0$
is
\[
    0
    =
    \bullet^{-n}G
    +
    S(G)\bullet
    +
    \sum_{\substack{
        H\preceq G\\
        E_H\neq\emptyset,\,
        H\neq G
    }}
    S(H)({G}\cocont{H}).
\]
Solving for $S(G)$ gives precisely
\eqref{eq:antipode-recursion}.  This is a valid induction because
every proper cut-sub-\mquiver appearing in the sum has fewer edges
than $G$.  For disconnected \mquivers, define $S$ multiplicatively.
Thus
\[
    S*\id=\eta\epsilon.
\]

For completeness, construct a right convolution inverse $T$. Set $T(\bullet^m)=\bullet^{-m}$, $m \in \ZZ$.
For connected $G$ as above, define
\begin{equation}
\label{eq:right-antipode-recursion}
\begin{split}
    T(G)
    :=
    -\bullet^{-n}
    \Bigg(
        G\bullet^{-1}
        +
        \sum_{\substack{
            H\preceq G\\
            E_H\neq\emptyset,\,
            H\neq G
        }}
        H\,T({G}\cocont{H})
    \Bigg).
\end{split}
\end{equation}
For disconnected \mquivers, define \(T\) multiplicatively from its connected components.
Since
\[
    |E_{{G}\cocont{H}}|
    =
    |E_G|-|E_H|
    <
    |E_G|
\]
for every nonempty $H$, this is again well defined by induction.
It gives $\id*T=\eta\epsilon$.
By associativity of convolution,
\[
    S
    =
    S*(\id*T)
    =
    (S*\id)*T
    =
    T.
\]
Hence $S*\id
    =
    \eta\epsilon
    =
    \id*S$,
and $S$ is the antipode.
\end{proof}

The convolution unit in both algebras $\Hom_\kk(\kk\calg,\kk)$
and $\Hom_\kk(\mathcal H_c,\kk)$ is $\eta_\kk\circ\epsilon=\epsilon$,
since the unit map $\eta_\kk:\kk\to\kk$ is the identity map. Therefore 
\begin{proposition}
\label{prop:convolution-unit}
The convolution algebras $\Hom_\kk(\kk\calg,\kk)$
and $\Hom_\kk(\mathcal H_c,\kk)$ 
both have identity element $\epsilon$. Thus, for every linear map
$f$ in the corresponding algebra, $\epsilon*f=f=f*\epsilon$.
\end{proposition}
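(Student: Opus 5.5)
The statement is the standard fact that the counit is the identity of a convolution algebra, so the plan is a direct verification from the counit axioms (Proposition~\ref{prop:mquiver-counit}) and the explicit convolution formula \eqref{eq:convolution-explicit}. The key identification is that the convolution unit is $\eta_\kk\circ\epsilon$. Since $\eta_\kk=\id_\kk$, this equals $\epsilon$ itself as a map to $\kk$.

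For $f$ in either $\Hom_\kk(\kk\calg,\kk)$ or $\Hom_\kk(\mathcal H_c,\kk)$, I would write
\[
    \epsilon*f=m_\kk\circ(\epsilon\otimes f)\circ\Delta
    =f\circ m_\kk\circ(\epsilon\otimes\id)\circ\Delta,
\]
where $m_\kk$ in the second expression denotes the canonical identification $\kk\otimes A\cong A$. This rewriting uses only $\kk$-linearity of $f$. The counit identity $(\epsilon\otimes\id)\Delta=\id$ then gives $\epsilon*f=f$, and the symmetric argument with $(\id\otimes\epsilon)\Delta=\id$ gives $f*\epsilon=f$.

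As a concrete check on a basis element $G$ via \eqref{eq:convolution-explicit}, $(\epsilon*f)(G)=\sum_{H\preceq G}\epsilon(H)f(G\cocont H)$. Only $H=G_\emptyset$ survives, and it contributes $f(G\cocont{G_\emptyset})=f(G)$ by \eqref{eq:cut-extreme-contractions}. Likewise $(f*\epsilon)(G)$ keeps only the term $H=G$, which contributes $f(G)\epsilon(\bullet^{c})=f(G)$, where $c$ is the number of components of $G$.

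For $\mathcal H_c$, the only extra point is that the counit identities hold on the localization. This was checked on the generator $\bullet^{-1}$ in the text following \eqref{eq:localized-coproduct-counit}, and it extends to all of $\mathcal H_c$ because $\Delta$ and $\epsilon$ are algebra maps. Linearity then finishes the argument for general elements. Nothing here is a genuine obstacle; the only step requiring any care is confirming that the counit identity holds on $\mathcal H_c$ and not merely on $\kk\calg$.
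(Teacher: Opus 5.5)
Your proof is correct and takes the same route as the paper. The paper simply notes that the convolution unit is $\eta_\kk\circ\epsilon$, which equals $\epsilon$ because $\eta_\kk$ is the identity of $\kk$; your check via the counit identities of Proposition~\ref{prop:mquiver-counit}, extended to $\mathcal H_c$ through the generator $\bullet^{-1}$, just spells out that standard fact.
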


\section{Graph functions:
\texorpdfstring{$e$}{e} and \texorpdfstring{$\omega$}{omega}} \label{sec:graph functions}

Throughout this section, we work only with finite quivers.
Let $\mathcal G_{\mathrm{dir}}\subseteq\mathcal G$ denote the
submonoid of quivers, i.e. mixed quivers all of whose edges are
directed. Since cut-subquivers and contractions preserve
directedness, the construction of Section~\ref{sec:hopf} restricts
to quivers and gives the Hopf subalgebra
\[
    \mathcal H_c^{\mathrm{dir}}
    :=
    \kk\mathcal G_{\mathrm{dir}}[\bullet^{-1}]
    \subseteq
    \mathcal H_c.
\]
In this section, we introduce the two graph functions that will govern
the rest of the paper.  The $e$-function is defined from linear
extensions, together with a tadpole factor, and extends to a character
of the Hopf algebra $\mathcal H_c^{\mathrm{dir}}$. We then define
$\omega$ as the convolution inverse of $e$ in
$\mathcal H_c^{\mathrm{dir}}$.  We establish its basic
properties, including tadpole factorization, and introduce the
connected function $\omega_c$ that appears in the graph expansion of
the quantum Magnusian.

\subsection{\texorpdfstring{$e$}{e}-function}

Let $G=(V_G,E_G,\rho_G)$
be a finite quiver.  Define
\[
    E_{\mathrm{tad}}(G)
    :=
    \{
        \varepsilon\in E
        \mid
        \rho(\varepsilon)=(v,v)
        \text{ for some }v\in V
    \}
\]
to be the set of tadpoles of $G$
, and let
\[
    G_{\mathrm{pruned}}
    :=
    \left(
        V,\,
        E\setminus E_{\mathrm{tad}}(G),\,
        \rho|_{E\setminus E_{\mathrm{tad}}(G)}
    \right)
\]
be the graph obtained by deleting all tadpoles while retaining all
vertices. For every edge $\varepsilon\in E_G$, write
\begin{align}
\label{eq:def of beta epsilon plus minus}
\beta_{\varepsilon\pm}
:=
\frac{\beta_\varepsilon\pm1}{2}.
\end{align}
If $G$ is acyclic, let $\operatorname{LE}(G)$ denote the set of
linear extensions of the partial order induced by the oriented edges
of $G$, and set
\[
    \phi(G):=|\operatorname{LE}(G)|.
\]

\begin{definition}[$e$-function] \label{def:e function}
For a tadpole-free quiver $G$, define
\begin{equation}
\label{eq:e-function_and_linear_extension}
    e_0(G)
    :=
    \begin{cases}
    \displaystyle
    \frac{\phi(G)}{|V_G|!},
    & \text{if $G$ is acyclic},\\[6pt]
    0,
    & \text{if $G$ contains a directed cycle}.
    \end{cases}  
\end{equation}
For an arbitrary
quiver, define
\begin{equation}
\label{eq:e-prescription}
    e(G)
    :=
    \left(
        \prod_{\varepsilon\in E_{\mathrm{tad}}(G)}
        (-\beta_{\varepsilon-})
    \right)
    e_0(G_{\mathrm{pruned}}).
\end{equation}
where $\bullet^0=\emptyset$.
We extend the $e$-function $\kk$-linearly to a map
$e:\kk\mathcal G_{\mathrm{dir}}\longrightarrow\kk$.
\end{definition}

Thus the $\beta$-dependence of $e(G)$ enters only through tadpoles.
Notice also that
\begin{equation}
\label{eq:e-edgeless}
    e(\bullet^m)=1
    \qquad
    (m\geq0),
\end{equation}
where $\bullet^0=\emptyset$.

\begin{remark}
\begin{enumerate}
    \item 
The function \(e\) should not be confused with the convolution unit. The latter coincides with the counit \(\epsilon\) in our case, as noted in Proposition \ref{prop:convolution-unit}. 
\item A linear extension is a much-studied notion for posets \cite{Chan-Pak-2025,richard2011enumerative}, where $e$  is usually used instead of $\phi$.  
For the most part, the evaluation of $e(G)$ for a quiver boils down to the classical case of a poset: For an acyclic quiver $G$, reachability induces a partial order $P_G$ on $V_G$, with $i<_{P_G}j$ if there is a directed path \(j\to\cdots\to i\). Thus $|\operatorname{LE}(G)|=|\operatorname{LE}(P_G)|$; in particular, parallel arrows and transitively redundant arrows do not affect the linear extensions. This is the sense in which the classical theory of linear extensions of posets applies here; see \cite{HR25} for the reachability order of a directed acyclic graph.
\item
The {\it critical distinction} in our general notion of the $e$-function is that a tadpole in $G$ introduces a factor of $(-\beta_{\varepsilon-})$ in $e(G)$, even though there are no linear extensions in this case and hence $\phi(G)=0$.  
\end{enumerate}
\label{rk:quiver-vs-poset-e}
\end{remark}

\begin{remark}
We use the edge parameters in two complementary ways. For the
Hopf-algebraic constructions, each edge $\varepsilon$ carries a fixed
scalar decoration $\beta_\varepsilon\in\kk$.
With these scalar decorations, the functions defined below are
$\kk$-valued graph functions on the decorated version of
$\mathcal H_c^{\mathrm{dir}}$.
For a fixed finite quiver $G$, the same values depend polynomially on
the edge decorations. We therefore also regard
$\{\beta_\varepsilon\}_{\varepsilon\in E_G}$ as algebraically
independent indeterminates and write
\[
R_G
:=
\kk[\beta_\varepsilon\mid\varepsilon\in E_G].
\]
In this polynomial viewpoint, we write
$e(G;\boldsymbol\beta)$,
$\omega(G;\boldsymbol\beta)\in R_G$.
If a quiver $H$ is obtained from $G$ by taking a cut-subquiver,
deleting edges, or contracting edges, then its surviving edges retain
their original parameters, and we regard $R_H$ as the corresponding
polynomial subring of $R_G$. We pass between the scalar-decoration and
polynomial viewpoints without further comment.
\end{remark}

\begin{proposition}
The $e$-function is multiplicative with respect to disjoint union:
\begin{equation}
\label{eq:e-multiplicative}
    e(G_1\sqcup G_2)
    =
    e(G_1)e(G_2),
\end{equation}
where $G_1$ and $G_2$ are finite quivers.
\end{proposition}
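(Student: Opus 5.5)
The plan is to reduce the statement to two elementary facts: the tadpole factor is multiplicative, and $e_0$ is multiplicative on tadpole-free quivers. The first step is to observe that tadpoles and pruning respect disjoint unions. Every edge of $G_1\sqcup G_2$ lies in exactly one $G_i$, and an edge is a tadpole in $G_1\sqcup G_2$ if and only if it is a tadpole in the component containing it. Hence
\[
E_{\mathrm{tad}}(G_1\sqcup G_2)=E_{\mathrm{tad}}(G_1)\sqcup E_{\mathrm{tad}}(G_2),
\qquad
(G_1\sqcup G_2)_{\mathrm{pruned}}=(G_1)_{\mathrm{pruned}}\sqcup(G_2)_{\mathrm{pruned}}.
\]
Because the edges keep their parameters, the product $\prod_{\varepsilon\in E_{\mathrm{tad}}}(-\beta_{\varepsilon-})$ splits as the product of the two factors. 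By \eqref{eq:e-prescription}, it then suffices to show $e_0(P_1\sqcup P_2)=e_0(P_1)e_0(P_2)$ for tadpole-free quivers $P_1,P_2$.

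The second step handles the cyclic case. Any directed cycle lies inside a single component, since edges never join $P_1$ to $P_2$. So $P_1\sqcup P_2$ contains a directed cycle exactly when $P_1$ or $P_2$ does, and in that case both sides vanish.

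The third step, the acyclic case, carries the only real content. Write $n_i=|V_{P_i}|$. The partial order induced by $P_1\sqcup P_2$ is the disjoint union of the two partial orders, with no relations between them. A linear extension of $P_1\sqcup P_2$ is therefore determined by three independent choices: which $n_1$ of the $n_1+n_2$ positions go to $V_{P_1}$, a linear extension of $P_1$ placed in those positions, and a linear extension of $P_2$ placed in the remaining positions. This shuffle count gives
\[
\phi(P_1\sqcup P_2)=\binom{n_1+n_2}{n_1}\phi(P_1)\phi(P_2).
\]
Dividing by $(n_1+n_2)!$ yields $e_0(P_1\sqcup P_2)=\frac{\phi(P_1)}{n_1!}\frac{\phi(P_2)}{n_2!}$, which is the claim.

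The last step is to check the degenerate cases against the conventions. If one $G_i$ is empty, we need $e(\emptyset)=1$, using $0!=1$ and $\phi(\emptyset)=1$ from the single empty extension; this matches \eqref{eq:e-edgeless}. I expect the shuffle bijection in the third step to be the only point requiring care, and it is a standard argument.
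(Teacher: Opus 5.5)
Your proof is correct and follows the paper's argument essentially verbatim: you split the tadpole factors and the pruning across the disjoint union, dispose of the cyclic case by noting that a directed cycle lies in a single component, and count linear extensions of the acyclic union as shuffles to get $\phi(P_1\sqcup P_2)=\binom{n_1+n_2}{n_1}\phi(P_1)\phi(P_2)$. Your separate check of the empty-graph case is a harmless addition that the paper leaves implicit, since the shuffle formula already covers $n_1=0$.
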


\begin{proof}
If one of $G_1,G_2$ contains a directed cycle after pruning, then
both sides vanish.  Otherwise, writing $n_i=|V_{G_i}|$, the linear
extensions of the disjoint union are obtained by interleaving linear
extensions of the two components, so
\[
    \phi(G_1\sqcup G_2)
    =
    \binom{n_1+n_2}{n_1}
    \phi(G_1)\phi(G_2).
\]
Hence $e_0(G_1\sqcup G_2)
    =
    e_0(G_1)e_0(G_2)$.
Since $E_{\mathrm{tad}}(G_1\sqcup G_2)
    =
    E_{\mathrm{tad}}(G_1)
    \sqcup
    E_{\mathrm{tad}}(G_2)$
and pruning commutes with disjoint union,
\eqref{eq:e-prescription} gives
\eqref{eq:e-multiplicative}.
\end{proof}
Since \(e\) is multiplicative and \(e(\bullet)=1\), we have the following lemma. 
\begin{lemma} \label{lem:extend e to Hc}
The $e$-function extends uniquely to a character
$e:\mathcal H_c^{\mathrm{dir}}
    =
    \kk\mathcal G_{\mathrm{dir}}[\bullet^{-1}]
    \rightarrow\kk$
by setting $e(\bullet^{-1})=1$.
\end{lemma}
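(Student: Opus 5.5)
The plan is to use the universal property of localization, applied to the multiplicative $\kk$-linear map $e:\kk\mathcal G_{\mathrm{dir}}\to\kk$. The previous proposition shows that $e$ is multiplicative with respect to disjoint union, and $e(\emptyset)=1$ by \eqref{eq:e-edgeless}. Hence $e$ is a unital algebra homomorphism $\kk\mathcal G_{\mathrm{dir}}\cong\kk[\mathcal Q_{\mathrm{dir}}]\to\kk$, where $\mathcal Q_{\mathrm{dir}}$ denotes the connected quivers. By \eqref{eq:e-edgeless}, $e$ sends every element of the multiplicative subset $\{\bullet^n\mid n\geq0\}$ to $1$, which is a unit of $\kk$.

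By the universal property of localization \cite{AM}, $e$ then factors uniquely through $\kk\mathcal G_{\mathrm{dir}}[\bullet^{-1}]=\mathcal H_c^{\mathrm{dir}}$. Explicitly, the extension is
\[
    e(x\,\bullet^{-n}) := e(x)\,e(\bullet)^{-n} = e(x),
\]
and in particular $e(\bullet^{-1})=1$. The only routine check is that this is well defined: if $x\bullet^{-n}=y\bullet^{-m}$, then $x\bullet^{m+k}=y\bullet^{n+k}$ for some $k$, and applying $e$ gives $e(x)=e(y)$. In our setting $\bullet$ is not a zero divisor in the polynomial ring $\kk[\mathcal Q_{\mathrm{dir}}]$, so one can even take $k=0$. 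Multiplicativity and unitality of the extension then follow directly.

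Uniqueness is also immediate. Any character $\tilde e$ extending $e$ must satisfy $\tilde e(\bullet^{-1})\tilde e(\bullet)=\tilde e(\emptyset)=1$, which forces $\tilde e(\bullet^{-1})=1$. Since every element of $\mathcal H_c^{\mathrm{dir}}$ has the form $x\bullet^{-n}$ with $x\in\kk\mathcal G_{\mathrm{dir}}$, $\tilde e$ is determined on all of $\mathcal H_c^{\mathrm{dir}}$.

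There is no real obstacle here. The only point requiring care is that "character" means a unital algebra map $\mathcal H_c^{\mathrm{dir}}\to\kk$, so no compatibility with $\Delta$ has to be verified. One also needs $e(\bullet)=1\neq0$, which holds because $\bullet$ has no edges and exactly one linear extension.
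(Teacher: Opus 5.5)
Your proposal is correct and follows the paper's approach: the paper deduces the lemma in one line from the multiplicativity of $e$ and $e(\bullet)=1$, which is the universal property of localization. You have simply written out the details the paper leaves implicit, namely well-definedness on fractions $x\bullet^{-n}$ and uniqueness of the extension.
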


\subsection{\texorpdfstring{$\omega$}{omega}-function}
\label{subsec:omega function}

\begin{definition} [$\omega$-function] \label{def:omega function}
The \(\omega\)-function is the unique graph function
$\omega:\mathcal H_c^{\mathrm{dir}}\longrightarrow\kk$
satisfying
\begin{equation}
\label{eq:omega-convolution-definition}
    \omega*e=\epsilon,
\end{equation}
where $*$ is the convolution product associated with the Hopf algebra
$\mathcal H_c^{\mathrm{dir}}
    =
    \kk\mathcal G_{\mathrm{dir}}[\bullet^{-1}]$.
\end{definition}

We make the definition explicit.  If $E_G=\emptyset$, then $\Delta(G)=G\otimes G$,
$e(G)=\epsilon(G)=1$,
so
\begin{equation}
\label{eq:omega-edgeless}
    \omega(G)=1, \quad \text{when } E_G = \emptyset.
\end{equation}
Now suppose that $E_G\neq\emptyset$, and let
$G_{\emptyset}
    =
    (V_G,\emptyset)
    =
    \bullet^{|V_G|}$
be the unique cut-sub-\mquiver of $G$ with no edges.  Since
\[
    {G}\cocont{G_{\emptyset}}=G,
    \qquad
    {G}\cocont{G}
    =
    \bullet^{c(G)},
\]
where $c(G)$ is the number of connected components of $G$, the
convolution equation gives
\begin{equation}
\label{omega-Hopf-cycle}
    \omega(G)
    =
    -e(G)
    -
    \sum_{\substack{
        H\preceq G\\
        0<|E_H|<|E_G|
    }}
    \omega(H)e({G}\cocont{H}).
\end{equation}
Indeed, the contribution of $H=G_{\emptyset}$ is $e(G)$, whereas
the contribution of $H=G$ is $\omega(G)$.

\eqref{omega-Hopf-cycle}, together with
\eqref{eq:omega-edgeless}, determines $\omega$ uniquely by induction
on the number of edges; see Fig.~\ref{fig:conv-example} for an example.  
\begin{figure}[htbp]
    \centering
    \includegraphics[width=0.84\linewidth]{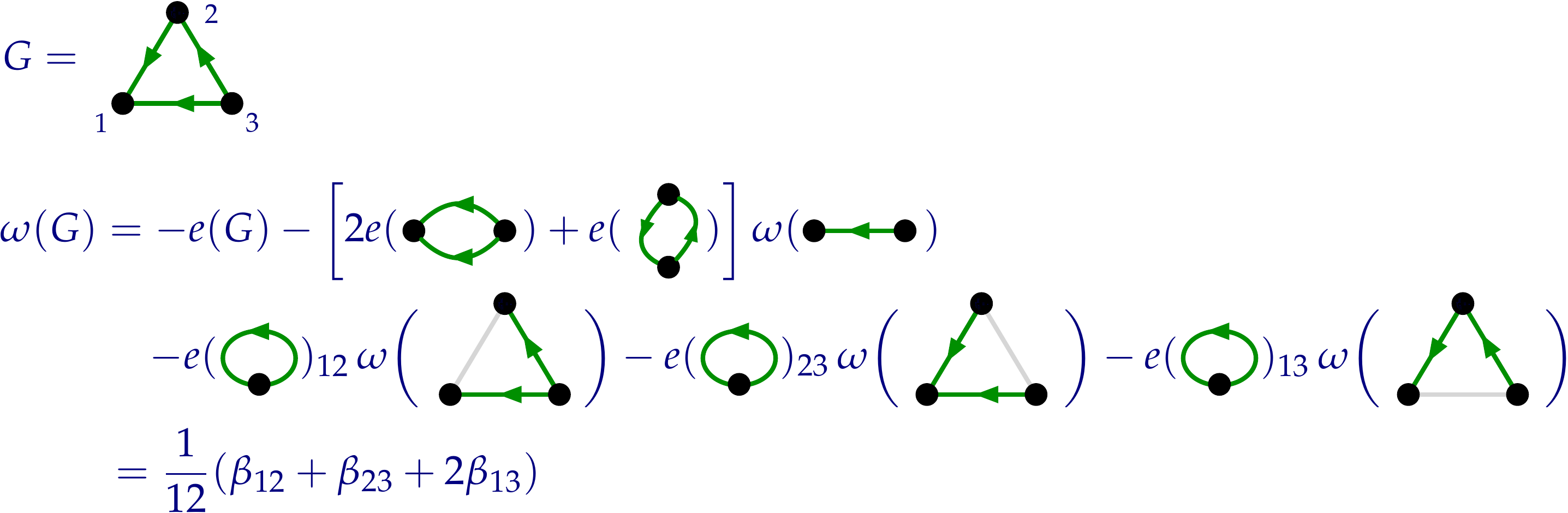}
    \caption{How the convolution formula computes $\omega(G)$, where $\beta_{\epsilon}$ is denoted by $\beta_{ij}$ if $\epsilon$ is the edge $j \to i$.}
    \label{fig:conv-example}
\end{figure}
 
\begin{proposition} \label{prop:convolution identities}
The function
$\omega\in\Hom_\kk(\mathcal H_c^{\mathrm{dir}},\kk)$
is the two-sided convolution inverse of $e$:
\begin{equation}
\label{eq:omega-two-sided}
    \omega*e
    =
    e*\omega
    =
    \epsilon.
\end{equation}
Moreover,
\begin{equation}
\label{eq:omega-antipode}
    \omega
    =
    e\circ S,
\end{equation}
where $S$ denotes the antipode of
$\mathcal H_c^{\mathrm{dir}}$.
Consequently, $\omega$ is multiplicative: for finite quivers
$G_1,G_2$,
\begin{equation}
\label{eq:omega-multiplicative}
\omega(G_1\sqcup G_2)
=
\omega(G_1)\omega(G_2).
\end{equation}
Since $\omega(\emptyset)=1$, the function
$\omega:\mathcal H_c^{\mathrm{dir}}\longrightarrow\kk$
is therefore also a character.
\end{proposition}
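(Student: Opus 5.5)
The plan is to use the Hopf-algebra structure of $\mathcal H_c^{\mathrm{dir}}$ established in Theorem~\ref{t:ghopfm} together with the fact that $e$ is a character (Lemma~\ref{lem:extend e to Hc}). The general principle is standard: in the convolution algebra $\Hom_\kk(\mathcal H,\kk)$ of a Hopf algebra $\mathcal H$, the character $e$ has $e\circ S$ as a two-sided convolution inverse. We will then identify $\omega$ with $e\circ S$ by uniqueness of the solution of \eqref{eq:omega-convolution-definition}.

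\textbf{Step 1: $e\circ S$ is a two-sided inverse of $e$.} For $x\in\mathcal H_c^{\mathrm{dir}}$ with Sweedler coproduct $\Delta(x)=\sum x_{(1)}\otimes x_{(2)}$, we compute
\[
\bigl((e\circ S)*e\bigr)(x)=\sum e(S(x_{(1)}))\,e(x_{(2)})=e\Bigl(\sum S(x_{(1)})x_{(2)}\Bigr)=e\bigl(\eta\epsilon(x)\bigr)=\epsilon(x).
\]
The second equality uses multiplicativity of $e$, and the third uses $S*\id=\eta\epsilon$ from Theorem~\ref{t:ghopfm}. The last equality uses $e(\emptyset)=1$. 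The symmetric computation, using $\id*S=\eta\epsilon$, gives $e*(e\circ S)=\epsilon$.

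\textbf{Step 2: identification with $\omega$.} Definition~\ref{def:omega function} characterizes $\omega$ as the unique solution of $\omega*e=\epsilon$. This uniqueness comes from the recursion \eqref{omega-Hopf-cycle} on quivers by induction on the number of edges. The values on $\bullet^{-1}$ are forced by grouplikeness: $\omega(\bullet^{m})e(\bullet^m)=1$ gives $\omega(\bullet^m)=1$. Since $e\circ S$ also satisfies this equation, we get $\omega=e\circ S$. Alternatively, we can avoid invoking uniqueness and use associativity of convolution: $\omega=\omega*(e*(e\circ S))=(\omega*e)*(e\circ S)=e\circ S$. This also yields $e*\omega=\epsilon$, which is \eqref{eq:omega-two-sided}.

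\textbf{Step 3: multiplicativity.} $\mathcal H_c^{\mathrm{dir}}$ is commutative, so its antipode is an algebra map; the theorem in fact constructs $S$ multiplicatively. Since $e$ is a character, $e\circ S$ is a character. Hence $\omega(G_1\sqcup G_2)=\omega(G_1)\omega(G_2)$ and $\omega(\emptyset)=1$.

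The only point needing care, rather than a genuine obstacle, is that Theorem~\ref{t:ghopfm} is stated for $\mathcal H_c$. We must check that $S$ preserves $\mathcal H_c^{\mathrm{dir}}$, so that $e\circ S$ makes sense. This is clear from the recursion \eqref{eq:antipode-recursion}: cut-subquivers and contractions of quivers are again quivers.
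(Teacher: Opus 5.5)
Your proposal is correct and follows the paper's proof essentially step for step. The antipode identities make $e\circ S$ a two-sided convolution inverse of $e$, this inverse is identified with $\omega$, and multiplicativity follows because $S$ is an anti-algebra (hence, by commutativity, algebra) morphism while $e$ is a character. Your associativity variant $\omega=(\omega*e)*(e\circ S)=e\circ S$ is a slightly cleaner replacement for the paper's appeal to uniqueness of the recursion \eqref{omega-Hopf-cycle}: it also covers localized elements such as $G\bullet^{-1}$ without a separate induction.
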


\begin{proof}
By Lemma~\ref{lem:extend e to Hc},
$e:\mathcal H_c^{\mathrm{dir}}\to\kk$ is a character. The antipode identities imply
\[
    (e\circ S)*e
    =
    e*\left(e\circ S\right)
    =
    \epsilon.
\]
Thus $e\circ S$ is a left convolution
inverse of $e$; see Proposition \ref{prop:convolution-unit}.  By the uniqueness following from
\eqref{omega-Hopf-cycle}, it equals $\omega$, proving
\eqref{eq:omega-two-sided} and \eqref{eq:omega-antipode}.

Finally, since the antipode is an anti-algebra morphism and $e$ is
multiplicative,
\[
\begin{aligned}
    \omega(G_1\sqcup G_2)
    &=
    e\bigl(S(G_1\sqcup G_2)\bigr)
    =
    e\bigl(S(G_2)S(G_1)\bigr)
    =
    \omega(G_1)\omega(G_2).
 \qquad   \qquad
\end{aligned} \qedhere
\]
\end{proof}

\begin{remark}
\label{rk:omega-beta}
Although the $\beta$-dependence of $e(G)$ occurs only through
tadpoles, the recursion \eqref{omega-Hopf-cycle} generally induces
$\beta$-dependence in $\omega(G)$ even when $G$ itself is
tadpole-free.
\end{remark}

\subsection{Tadpole factorization}

Definition \ref{def:e function} shows that the $e$-function has the tadpole factorization property \eqref{eq:e-prescription}. 

\begin{lemma}[Tadpole factorization for the $\omega$-function]
\label{lem:omega-tadpole-factorization}
Let $\varepsilon$ be a tadpole of a quiver $G$.  Then
\begin{equation}
\label{eq:omega-one-tadpole-factorization}
    \omega(G)
    =
    \beta_{\varepsilon-}\,
    \omega(G\backslash\varepsilon).
\end{equation}
Consequently, 
\begin{equation}
\label{omega-tadpole}
    \omega(G)
    =
    \left(
        \prod_{\varepsilon\in E_{\mathrm{tad}}(G)}
        \beta_{\varepsilon-}
    \right)
    \omega(G_{\mathrm{pruned}}).
\end{equation}
\end{lemma}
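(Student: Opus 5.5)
Let $G'=G\backslash\varepsilon$ denote $G$ with the tadpole $\varepsilon$ at vertex $v$ deleted, keeping all vertices. The plan is to prove \eqref{eq:omega-one-tadpole-factorization} by induction on $|E_G|$, using the recursion \eqref{omega-Hopf-cycle} or, equivalently, the defining identity $\omega*e=\epsilon$. Iterating it over all tadpoles then gives \eqref{omega-tadpole}, since removing one tadpole leaves the others as tadpoles.

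\textbf{Splitting the cut-subquivers.} The key structural observation is that cut-subquivers $H\preceq G$ split into two disjoint families: those with $\varepsilon\notin E_H$, which are exactly the $H'\preceq G'$, and those with $\varepsilon\in E_H$, which are exactly $H'\cup\{\varepsilon\}$ for $H'\preceq G'$. Since a tadpole joins $v$ to itself, adding or removing it does not change $\sim_H$. Consequently:
\begin{itemize}
\item in the first family, $G\cocont H=(G'\cocont H')\cup\{\text{tadpole }\varepsilon\text{ at }\pi_H(v)\}$;
\item in the second family, $G\cocont H=G'\cocont H'$.
\end{itemize}

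\textbf{Applying the tadpole prescription.} By \eqref{eq:e-prescription}, a tadpole multiplies $e$ by $-\beta_{\varepsilon-}$, so
\[
e(G\cocont H)=-\beta_{\varepsilon-}\,e(G'\cocont H')
\]
in the first family. Hence
\[
0=(\omega*e)(G)=\sum_{H'\preceq G'}\Bigl(-\beta_{\varepsilon-}\,\omega(H')+\omega(H'\cup\varepsilon)\Bigr)\,e(G'\cocont H').
\]
Here $G$ has at least one edge, so $\epsilon(G)=0$.

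\textbf{Induction.} For $H'\ne G'$, the quiver $H'\cup\varepsilon$ has fewer edges than $G$. By the induction hypothesis, $\omega(H'\cup\varepsilon)=\beta_{\varepsilon-}\,\omega(H')$, so those brackets vanish. The base case is automatic: if $E_{G'}=\emptyset$, only $H'=G'$ occurs. Only the term $H'=G'$ survives, where $G'\cocont G'=\bullet^{c(G')}$ and $e(\bullet^{c})=1$. This gives
\[
\omega(G)=\beta_{\varepsilon-}\,\omega(G'),
\]
which is \eqref{eq:omega-one-tadpole-factorization}.

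\textbf{Expected obstacle.} The only delicate step is the contraction bookkeeping: one must confirm that the surviving tadpole in the first family is still a tadpole, at the image vertex $\pi_H(v)$, and that it keeps its parameter $\beta_\varepsilon$. It must also contribute exactly one factor $-\beta_{\varepsilon-}$, independently of whether other contractions create new tadpoles. This holds because \eqref{eq:e-prescription} is a product over all tadpoles and $e_0$ depends only on the pruned graph. Tadpoles created by contracting other edges appear identically in $G\cocont H$ and $G'\cocont H'$, apart from $\varepsilon$ itself.
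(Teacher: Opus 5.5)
Your proof is correct and is essentially the paper's argument: you use the same splitting of cut-subquivers according to whether they contain $\varepsilon$, the same tadpole factor $-\beta_{\varepsilon-}$ from \eqref{eq:e-prescription}, and the same induction on $|E_G|$. The only difference is the convolution order. You expand the defining relation $\omega*e=\epsilon$, so the tadpole factor falls on $e(G\cocont{H})$, the induction is applied to the sub-quivers $H'\cup\{\varepsilon\}$ with $H'\neq G'$, and the surviving term is $H'=G'$. The paper instead expands $e*\omega=\epsilon$, inducts on the contracted graphs, and keeps only $H=\overline G_{\emptyset}$; that route relies on the two-sided-inverse statement of Proposition~\ref{prop:convolution identities}, whereas yours needs only the definition of $\omega$.
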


\begin{proof}
Set $\overline G:=G\backslash\varepsilon$.
We prove \eqref{eq:omega-one-tadpole-factorization} by induction on
$|E_G|$.  Every cut-sub-\mquiver of $G$ is uniquely of the form
$H$ or $H\cup\{\varepsilon\}$, where $H\preceq\overline G$.  By (\ref{eq:e-prescription}),
\[
    e(H\cup\{\varepsilon\})
    =
    -\beta_{\varepsilon-}e(H),
\]
while contraction of the tadpole identifies no vertices, so
\[
    {G}\cocont{H}
    =
    ({\overline G}\cocont{H})\cup\{\varepsilon\},
    \qquad
    {G}\cocont{(H\cup\{\varepsilon\})}
    =
    {\overline G}\cocont{H}.
\]
Using $e*\omega=\epsilon$ therefore gives
\begin{equation}
\label{eq:omega-tadpole-convolution-split}
    0
    =
    \sum_{H\preceq\overline G}
    e(H)
    \left[
        \omega\bigl(
            ({\overline G}\cocont{H})\cup\{\varepsilon\}
        \bigr)
        -
        \beta_{\varepsilon-}
        \omega({\overline G}\cocont{H})
    \right].
\end{equation}

If $|E_H|>0$, the graph in the first $\omega$-term has strictly
fewer edges than $G$, so the corresponding bracket vanishes by the
induction hypothesis.  The only remaining cut-sub-\mquiver is $H=\overline G_{\emptyset}$,
for which $e(H)=1$ and ${\overline G}\cocont{H}=\overline G$. Thus \eqref{eq:omega-tadpole-convolution-split} reduces to $\omega(G)
    -
    \beta_{\varepsilon-}\omega(\overline G)
    =
    0$,
proving \eqref{eq:omega-one-tadpole-factorization}.  Iterating over
all tadpoles gives \eqref{omega-tadpole}.
\end{proof}

In particular, the graph $G$ consisting of a single vertex with one
tadpole $\varepsilon$ satisfies $\omega(G)=\beta_{\varepsilon-}$.
Thus the nontrivial graph dependence of $\omega$ may be studied on
tadpole-free graphs.
We shall occasionally use the \emph{monochrome specialization}
\begin{equation}
\label{eq:monochrome-limit}
    \beta_\varepsilon=\beta
    \qquad
    \text{for all }\varepsilon\in E_G.
\end{equation}

The specialization $\beta=1$ has an especially simple consequence.

\begin{proposition}
\label{prop:omega-cyclic-vanishing}
If $G$ contains a directed cycle, then $\omega(G)|_{\beta_\varepsilon=1}=0$.
\end{proposition}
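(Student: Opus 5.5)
The plan is to argue by induction on $|E_G|$, using the recursion \eqref{omega-Hopf-cycle} and the observation that at $\beta_\varepsilon=1$ the $e$-function kills every graph with a directed cycle. First I will record that, after specializing all $\beta_\varepsilon=1$, $e(K)=0$ for every quiver $K$ that contains a directed cycle, where a tadpole counts as a directed cycle of length one. There are two cases.
\begin{enumerate}
\item If $K$ has a tadpole $\varepsilon$, then \eqref{eq:e-prescription} contains the factor $-\beta_{\varepsilon-}=-(1-1)/2=0$.
\item If $K$ is tadpole-free but has a directed cycle, then $K_{\mathrm{pruned}}=K$ and $e_0(K)=0$ by \eqref{eq:e-function_and_linear_extension}.
\end{enumerate}
Since cut-subquivers and contractions keep the original edge parameters, this specialization is compatible with the recursion.

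Next, let $G$ contain a directed cycle $C$, and assume the claim holds for all quivers with fewer edges. In \eqref{omega-Hopf-cycle} the term $-e(G)$ vanishes by the first step. Every remaining term has the form $\omega(H)e(G\cocont H)$ with $0<|E_H|<|E_G|$, and I will show each such term vanishes. If $H$ contains a directed cycle, then $\omega(H)|_{\beta=1}=0$ by the induction hypothesis, since $|E_H|<|E_G|$. Otherwise $H$ is acyclic as a quiver, so some edge of $C$ is not in $E_H$. I then show that $G\cocont H$ contains a directed cycle, which gives $e(G\cocont H)|_{\beta=1}=0$.

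This last point is the main step. Write $C$ as a cyclic sequence of edges $\varepsilon_1,\dots,\varepsilon_r$, where the head of $\varepsilon_k$ is the tail of $\varepsilon_{k+1}$. The edges of $C$ that lie in $H$ have both endpoints identified under $\pi_H$. Consequently the surviving edges $C\setminus E_H$, which form a nonempty set, still form a closed directed walk in $G\cocont H$ once their endpoints are replaced by their $\pi_H$-images. A nonempty closed directed walk contains a directed cycle, possibly a tadpole, so $e(G\cocont H)=0$ after specialization.

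Combining the two cases, every term on the right of \eqref{omega-Hopf-cycle} vanishes at $\boldsymbol\beta=\mathbf1$, so $\omega(G)|_{\beta_\varepsilon=1}=0$. The base case is automatic: a graph with a directed cycle has at least one edge, and for a single tadpole Lemma~\ref{lem:omega-tadpole-factorization} already gives $\omega=\beta_{\varepsilon-}=0$. I expect the only real care to be needed in the closed-walk argument of the previous paragraph, in checking that the contracted edges of $C$ glue the surviving ones into a closed walk.
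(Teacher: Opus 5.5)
Your proof is correct and follows the paper's argument. Both proceed by induction on $|E_G|$ through the recursion \eqref{omega-Hopf-cycle}, and each term is killed either by the induction hypothesis applied to $H$ or by a directed cycle (possibly a tadpole) surviving in $G\cocont H$. The differences are cosmetic. The paper disposes of graphs with tadpoles first via Lemma~\ref{lem:omega-tadpole-factorization} and splits on whether $H$ contains all of $C$. You instead absorb tadpoles into the vanishing of $e$ at $\boldsymbol\beta=\mathbf1$, and you spell out the closed-walk gluing that the paper states in one line.
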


\begin{proof}
Suppose that $G$ contains a directed cycle and $\beta_\varepsilon=1$ for every $\varepsilon\in E_G$. We prove that $\omega(G)=0$ by induction on $|E_G|$. If $G$ contains a tadpole, the
claim follows immediately from
Lemma~\ref{lem:omega-tadpole-factorization}, since
$\beta_{\varepsilon-}=\frac{\beta_{\varepsilon}-1}{2}=0$. 

Assume therefore that $G$ is tadpole-free and contains a directed
cycle $C$.  Then $e(G)=0$, and
\eqref{omega-Hopf-cycle} gives
\[
    \omega(G)
    =
    -
    \sum_{\substack{
        H\preceq G\\
        0<|E_H|<|E_G|
    }}
    \omega(H)e({G}\cocont{H}).
\]
Fix a term $\omega(H)e({G}\cocont{H})$ in this sum.  If $H$ contains every edge of $C$, then
$H$ contains a directed cycle, so $\omega(H)=0$ by induction.
Otherwise, the image of $C$ in ${G}\cocont{H}$ contains either a
directed cycle or a tadpole.  In the former case
$e({G}\cocont{H})=0$ by the definition of $e_0$, while in the latter
case it vanishes because $\beta_{\varepsilon-}=0$.  Thus every term
in the sum vanishes, and hence $\omega(G)=0$.
\end{proof}

\subsection{Connected \texorpdfstring{$\omega$}{omega}-function}
\label{subsec:connected-omega}

Since $\omega$ is multiplicative with respect to disjoint unions, its
values are determined by its values on connected graphs.  For later
use, we define the connected $\omega$-function
\begin{equation}
\label{eq:omega-connected}
    \omega_c(G)
    :=
    \begin{cases}
        \omega(G), & \text{if $G$ is connected},\\
        0, & \text{if $G$ is disconnected}.
    \end{cases}
\end{equation}
In particular,
\[
    \omega_c(\bullet)=1,
    \qquad
    \omega_c(\emptyset)=0,
\]
whereas $\omega(\emptyset)=1$.
If $G=G_1\sqcup\cdots\sqcup G_r$
is the decomposition of a quiver $G$ into connected components, then
\[
    \omega(G)
    =
    \prod_{a=1}^r\omega_c(G_a),
\]
where for $G=\emptyset$ the product on the right-hand side is understood
as the empty product, equal to $1$.
Thus $\omega$ and $\omega_c$ determine each other uniquely: $\omega_c$ is the restriction of $\omega$ to connected graphs, extended by zero on disconnected graphs, while $\omega$ is recovered from $\omega_c$ by multiplicativity over connected components, with $\omega(\emptyset)=1$.

\begin{lemma}[Connected-projected convolution identity]
\label{lem:connected-projected-convolution}
Let $G$ be a quiver with $E_G\neq\emptyset$. Then
\[
    \sum_{K\preceq G}
    e(K)\,\omega_c(G\cocont K)
    =0.
\]
\end{lemma}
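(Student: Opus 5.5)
The identity follows from $e*\omega=\epsilon$ once we know how connectivity behaves under contraction. The key observation is that contraction by a cut-subquiver never changes the set of connected components. For any $K\preceq G$, the quotient map $\pi_K:V_G\to V_G/{\sim_K}$ only identifies vertices joined by edges of $K\subseteq G$. Hence each class of $\sim_K$ lies inside a single connected component of $G$. Moreover, every surviving edge of $G\cocont K$ joins the images of endpoints that already lie in a common component of $G$. Conversely, any path in $G$ maps under $\pi_K$ to a walk in $G\cocont K$, where edges of $K$ collapse to single vertices and the remaining edges persist. Therefore $\pi_K$ induces a bijection between the connected components of $G$ and those of $G\cocont K$. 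In particular, $G\cocont K$ is connected if and only if $G$ is connected. Here $G$ is nonempty because $E_G\neq\emptyset$.

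I then split into two cases. If $G$ is disconnected, every $G\cocont K$ is disconnected. So every term $\omega_c(G\cocont K)$ vanishes by \eqref{eq:omega-connected}, and the sum is zero.

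If $G$ is connected, every $G\cocont K$ is connected, so $\omega_c(G\cocont K)=\omega(G\cocont K)$ for all $K\preceq G$. By \eqref{eq:convolution-explicit}, the sum becomes
\[
\sum_{K\preceq G} e(K)\,\omega(G\cocont K)=(e*\omega)(G).
\]
By Proposition~\ref{prop:convolution identities}, we have $e*\omega=\epsilon$, and $\epsilon(G)=0$ since $E_G\neq\emptyset$.

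The only step needing care is the component bijection. I would verify it directly from the definition of $\sim_K$ as the equivalence relation generated by the endpoints of edges of $K$. Every generator relates two vertices in the same component of $G$, which gives one direction. Surjectivity and connectivity of the image follow because the map $\pi_K$ is surjective and sends paths to walks. Apart from this, the argument is a direct application of the two-sided inverse property already established.
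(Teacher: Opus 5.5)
Your proposal is correct and follows the same approach as the paper's proof. Both split on whether $G$ is connected and use the fact that contraction by a cut-subquiver neither connects nor disconnects $G$; in the connected case, both identify the sum with $(e*\omega)(G)=\epsilon(G)=0$. Your explicit check that $\pi_K$ induces a bijection on connected components fills in a step that the paper states in one line.
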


\begin{proof}
If $G$ is connected, then $G\cocont K$ is connected for every
$K\preceq G$. Hence
\[
    \sum_{K\preceq G}
    e(K)\,\omega_c(G\cocont K)
    =
    (e*\omega)(G)
    =
    \epsilon(G)
    =
    0,
\]
where the last equality follows from $E_G\neq\emptyset$.
If $G$ is disconnected, contracting edges cannot join distinct
connected components. Hence $G\cocont K$ is disconnected for every
$K\preceq G$, and every term in the sum vanishes by the definition of
$\omega_c$.
\end{proof}

\section{Contraction rules}
\label{sec:contraction}

In this section, we develop the local contraction calculus for the
graph functions introduced above.  After fixing notation for edge
reversal, deletion, and contraction, we first establish the
contraction rule for the $e$-function and then derive the fundamental
two-vertex contraction rule for $\omega_c$.  We conclude with an
application showing how suitable composite-edge configurations can be
eliminated by repeated use of the local relation.

Throughout this section, we work only with finite quivers.

\subsection{Notation for local modifications}

Let $G=(V_G,E_G,\rho_G)$ be a quiver and let $\varepsilon=[i\to j]\in E_G$
be a distinguished edge with distinct endpoints. We use the following three local modifications.
\begin{enumerate}
\item
$G_{\bar{\varepsilon}}$ denotes the quiver obtained from $G$ by
replacing the distinguished edge
$\varepsilon:i\longrightarrow j$
by the oppositely oriented distinguished edge
$\bar{\varepsilon}:j\longrightarrow i$. 
The new edge $\bar{\varepsilon}$ is given a new edge variable $\beta_{\bar{\varepsilon}}$ independent of the original $\beta_{\varepsilon}$. 
All other vertices,
edges, incidences, orientations, decorations, and edge parameters are
left unchanged. In particular, if other edges join $i$ and $j$, they
are not modified.

\item
$G_{\setminus\varepsilon}$ denotes the quiver obtained from $G$ by
removing the distinguished edge $\varepsilon$ while retaining both
of its endpoints:
\[
    G_{\setminus\varepsilon}
    :=
    \left(
        V_G,\,
        E_G\setminus\{\varepsilon\},\,
        \rho_G\big|_{E_G\setminus\{\varepsilon\}}
    \right).
\]
Thus vertices which become isolated after removing $\varepsilon$ are
retained, and $G_{\setminus\varepsilon}$ may be connected or
disconnected.

\item
$G\cocont{\varepsilon}$ denotes the contraction of the distinguished
edge $\varepsilon$, using the quotient notation introduced in Section \ref{subsec:mixed quivers}.
More precisely, $\varepsilon$ is regarded as the cut-sub-\mquiver with
edge set $\{\varepsilon\}$ and the full vertex set $V_G$. Thus
$\varepsilon$ is removed, its endpoints $i$ and $j$ are identified,
and every other edge is retained with the induced incidence. In
particular, every other edge joining $i$ and $j$ becomes a tadpole at
the merged vertex.

\end{enumerate}

The notation extends to a specified set $F\subseteq E_G$ of
distinguished edges. We write
$G_{\setminus F}$
for the graph obtained by removing all edges in $F$, while $G\cocont{F}$
denotes their simultaneous contraction. If $R\subseteq F$ is a set of
oriented distinguished edges to be reversed, we write $G_{\bar R}$
for the graph obtained by reversing precisely the edges in $R$. These operations are edge-based and therefore
remain unambiguous when parallel edges are present.

\begin{remark}[Alternative vertex-based notation]
\label{rem:old-local-notation-dictionary}
There is an alternative notation for local modifications. The two notational systems have complementary advantages. Let $H=(V_H,E_H,\rho_H)$ be a quiver, and let
$i,j\in V_H$ be distinct vertices. The graph $H$ is regarded as a
background graph; in particular, it may already contain arbitrary
edges between $i$ and $j$, including multiple edges in either
direction. We use the following vertex-based shorthand.
\begin{enumerate}

\item  
$H_{[i\to j]}$ denotes the quiver obtained from $H$ by adjoining
one new distinguished edge $i\longrightarrow j$.
This new edge is distinct from all edges already present in $H$,
including any parallel edge between $i$ and $j$.
Similarly,
$H_{[i\leftarrow j]}$ denotes the quiver obtained from $H$ by
adjoining one new distinguished edge $j\longrightarrow i$.

\item
$H_{[i|j]}$ denotes the background graph itself: $H_{[i|j]}:=H$. 

\item
$H_{[i\cdot j]}$ denotes the quiver obtained from $H$ by
identifying the vertices $i$ and $j$. Every pre-existing edge of $H$ is retained with
the induced incidence. In particular, every pre-existing edge joining
$i$ and $j$ becomes a tadpole at the merged vertex.
\end{enumerate}
Let $\varepsilon$ be a distinguished edge of $G$ with
distinct endpoints, say $\varepsilon:i\longrightarrow j$, and set
$H:=G_{\setminus\varepsilon}$.
Then
\begin{equation}
\label{eq:background-fixed-edge-dictionary} 
    H_{[i\to j]}
    =
    G, \quad 
    H_{[i\leftarrow j]}
    =
    G_{\bar{\varepsilon}},
    \quad
    H_{[i|j]}
    =
    G_{\setminus\varepsilon},
    \quad
    H_{[i\cdot j]}
    =
    G\cocont{\varepsilon}. 
\end{equation}

The same vertex-based shorthand may be used for several marked
vertices. Dots indicate identifications, while vertical bars separate
groups of vertices which remain distinct. For example,
$H_{[1|2|3]}
    :=
    H$,
whereas
$H_{[1|2\cdot3]}$
denotes the graph obtained by identifying $2$ and $3$ while leaving
$1$ distinct, $H_{[1\cdot2|3]}$
denotes the graph obtained by identifying $1$ and $2$ while leaving
$3$ distinct, and
$H_{[1\cdot2\cdot3]}$
denotes the graph obtained by identifying all three marked vertices.
Likewise, $H_{[i_1\leftarrow i_2\leftarrow\cdots\leftarrow i_k]}$
denotes the graph obtained from $H$ by adjoining the new distinguished
edges
\[
    i_k\longrightarrow i_{k-1}
    \longrightarrow\cdots
    \longrightarrow i_2
    \longrightarrow i_1,
\]
while leaving all pre-existing graph data unchanged.
Fig.~\ref{fig:cut-cont-notation} illustrates this vertex-based
shorthand.
\begin{figure}[htbp]
    \centering
    \includegraphics[width=0.56\linewidth]{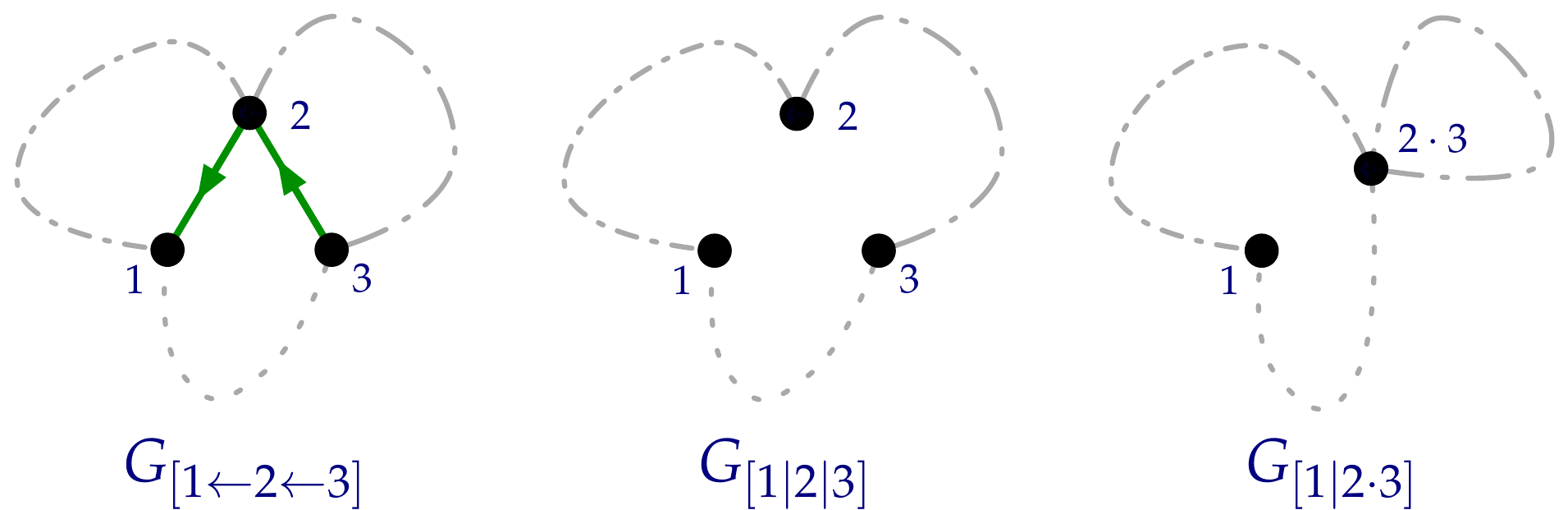}
    \caption{Vertex-based shorthand for local modifications. Vertical
    bars indicate marked vertices, or groups of marked vertices, which
    remain distinct, while dots indicate identifications. Arrows
    indicate newly adjoined distinguished edges.}
    \label{fig:cut-cont-notation}
\end{figure}
\end{remark}

\subsection{Contraction rule for the \texorpdfstring{$e$}{e}-function}

\begin{proposition}[Multi-vertex $e$-contraction]
\label{prop:e-multi-vertex-contraction}
Let $G$ be a quiver with distinct marked vertices
$i_1,\ldots,i_m$. Then
\begin{equation}
\label{eq:e-multi-vertex-contraction}
    \sum_{\sigma\in S_m}
    e\bigl(
        G_{[
        i_{\sigma(1)}
        \leftarrow
        i_{\sigma(2)}
        \leftarrow\cdots\leftarrow
        i_{\sigma(m)}
        ]}
    \bigr)
    =
    e(G).
\end{equation}
\end{proposition}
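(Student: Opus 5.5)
The idea is that adjoining the chain $i_{\sigma(m)}\to\cdots\to i_{\sigma(1)}$ forces the marked vertices into the relative order prescribed by $\sigma$. Summing over $\sigma$ then recovers all orderings of the marked vertices, hence all linear extensions of $G$.

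\textbf{Tadpoles.} Every adjoined edge joins distinct vertices. So $E_{\mathrm{tad}}$ is the same for $G$ and for each $G_{[i_{\sigma(1)}\leftarrow\cdots\leftarrow i_{\sigma(m)}]}$. Moreover, pruning commutes with adjoining the chain. By \eqref{eq:e-prescription}, both sides of \eqref{eq:e-multi-vertex-contraction} therefore carry the same factor $\prod_{\varepsilon\in E_{\mathrm{tad}}(G)}(-\beta_{\varepsilon-})$, which we cancel. It then suffices to prove the identity for $e_0$ on the tadpole-free quiver $G_{\mathrm{pruned}}$. From now on we assume $G$ is tadpole-free.

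\textbf{Cyclic case.} If $G$ contains a directed cycle, then so does every $G_\sigma:=G_{[i_{\sigma(1)}\leftarrow\cdots\leftarrow i_{\sigma(m)}]}$. Both sides are then $0$.

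\textbf{Acyclic case.} Let $G$ be acyclic with $n=|V_G|$. Since all graphs involved share the vertex set $V_G$, the normalization $1/n!$ is common to both sides. The claim reduces to the counting identity
\[
\sum_{\sigma\in S_m}\phi(G_\sigma)=\phi(G).
\]
We prove it with the following bijective argument. Fix the convention of the paper: an edge $j\to i$ requires $i$ to precede $j$ in a linear extension. For a linear extension $L\in\operatorname{LE}(G)$, the marked vertices $i_1,\dots,i_m$ appear in $L$ in a unique order, so there is a unique $\sigma$ with $i_{\sigma(1)}$ first, $i_{\sigma(2)}$ second, and so on. Then:
\begin{itemize}
\item[(a)] $L$ respects every chain edge $i_{\sigma(k+1)}\to i_{\sigma(k)}$, so $L\in\operatorname{LE}(G_\sigma)$.
\item[(b)] If $G_\sigma$ is cyclic, it has no linear extensions and contributes $0$, consistent with $e_0=0$.
\item[(c)] Conversely, every $L\in\operatorname{LE}(G_\sigma)$ is a linear extension of $G$ in which the marked vertices occur in the $\sigma$-order.
\end{itemize}
Hence $\operatorname{LE}(G)=\bigsqcup_{\sigma}\operatorname{LE}(G_\sigma)$, a disjoint union, and the counting identity follows.

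\textbf{Main obstacle.} The only delicate point is the orientation convention. We must check that the chain notation $H_{[i_1\leftarrow\cdots\leftarrow i_k]}$ (edges $i_k\to\cdots\to i_1$) matches the order convention in $\operatorname{LE}$, so that each chain forces one specific relative order. Even if the convention were reversed, the sum over all $\sigma\in S_m$ is invariant under reversal, so the identity holds either way. Parallel edges created by the chain are harmless, since by Remark~\ref{rk:quiver-vs-poset-e} they do not change $\operatorname{LE}$.
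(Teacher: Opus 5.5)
Your proof is correct and follows essentially the same route as the paper. Both arguments factor out the common tadpole weight, note that any directed cycle in $G_{\mathrm{pruned}}$ survives adjoining the chain so every term vanishes, and in the acyclic case partition $\operatorname{LE}(G_{\mathrm{pruned}})$ by the relative order of the marked vertices, so that it equals $\bigsqcup_{\sigma}\operatorname{LE}(G_\sigma)$ (your $G_\sigma$ taken after pruning).
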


\begin{proof}
Assume first that $G_{\mathrm{pruned}}$ is acyclic. Every linear
extension of $G_{\mathrm{pruned}}$ induces a unique relative order of
the marked vertices $i_1,\ldots,i_m$, and hence a unique permutation
$\sigma\in S_m$. For a fixed $\sigma$, the linear extensions of
$
    G_{[
    i_{\sigma(1)}
    \leftarrow
    i_{\sigma(2)}
    \leftarrow\cdots\leftarrow
    i_{\sigma(m)}
    ]}
$
are exactly the linear extensions of $G_{\mathrm{pruned}}$ in which
the marked vertices occur in that relative order. Thus these $m!$
sets (throwing away the empty ones) form a disjoint partition of the linear extensions of
$G_{\mathrm{pruned}}$.

All graphs in \eqref{eq:e-multi-vertex-contraction} have the same
vertex set, and the tadpole factors already present in $G$ are common
to all terms. Dividing the resulting identity for the numbers of
linear extensions by $|V_G|!$ and multiplying by the common tadpole
factor gives \eqref{eq:e-multi-vertex-contraction}.

If $G_{\mathrm{pruned}}$ contains a directed cycle, then $e(G)=0$.
The same directed cycle remains after adjoining the distinguished
edges, so every term on the left-hand side also vanishes.
\end{proof}

\begin{remark}
As a remark on when a set of linear extensions might be empty in the proof, in the event that there is already an arrow $\varepsilon=[i_1\to i_2]$ present in $G$, a linear extension of $G$ will have to preserve this order. Thus a summand for $\sigma\in S_m$ in \eqref{eq:e-multi-vertex-contraction} in which $i_1$ appears after $i_2$ in $[i_{\sigma(1)}\leftarrow \cdots \leftarrow i_{\sigma(m)}]$ will vanish due to the existence of the directed cycle $[i_1\to i_2\to i_1]$ in the corresponding graph. 
\end{remark}

For later use, we record the cases $m=2$ and $m=3$. For a
distinguished edge $\varepsilon$ with reverse $\bar\varepsilon$,
\eqref{eq:e-multi-vertex-contraction} gives
\begin{equation}
\label{eq:e-two-vertex-contraction}
    e(G)+e(G_{\bar\varepsilon})
    =
    e(G_{\setminus\varepsilon}).
\end{equation}
For three distinct marked vertices $1,2,3$, \eqref{eq:e-multi-vertex-contraction} gives
\begin{equation}
\label{eq:e-three-vertex-contraction}
    \sum_{\sigma\in S_3}
    e\bigl(
        G_{[\sigma(1)\leftarrow\sigma(2)\leftarrow\sigma(3)]}
    \bigr)
    =
    e(G).
\end{equation}

\begin{remark}
If $G$ is a tree and $\varepsilon$ is an edge, then removing
$\varepsilon$ disconnects $G$. Writing
\[
    G_{\setminus\varepsilon}=G_1\sqcup G_2,
\]
\eqref{eq:e-two-vertex-contraction} together with the
multiplicativity of $e$ gives
\[
    e(G)+e(G_{\bar\varepsilon})
    =
    e(G_{\setminus\varepsilon})
    =
    e(G_1)e(G_2).
\]
Thus, in the tree case, the general two-vertex contraction identity
reduces precisely to the contraction rule for the $e$-function in
\cite[Equation~4.22]{Kim:2024svw}.
\end{remark}

\subsection{Two-vertex contraction rule for the
\texorpdfstring{$\omega$}{omega}-function} 

For an arbitrary quiver $G$, denote
\begin{equation}
\label{eq:J-K-convolution}
    \mathcal J(G)
    :=
    (e*\omega)(G),
    \qquad
    \mathcal K(G)
    :=
    (\omega*e)(G).
\end{equation}
By Proposition \ref{prop:convolution identities}, if $E_G=\emptyset$, then $\mathcal J(G)=\mathcal K(G)=1$.
For a connected quiver $G$ with $E_G\neq\emptyset$, we have that
\begin{align}
\label{eq:J-explicit and K-explicit}
    0
    =
    \mathcal J(G)
    &=
    \sum_{K\preceq G}
    e(K)\,
    \omega\bigl(G\cocont{K}\bigr), \quad 
    0
    =
    \mathcal K(G)
    =
    \sum_{K\preceq G}
    \omega(K)\,
    e\bigl(G\cocont{K}\bigr).
\end{align}
If $G$ is connected, then $G\cocont{K}$ is connected for every
$K\preceq G$. Hence $\omega\bigl(G\cocont{K}\bigr)
    =
    \omega_c\bigl(G\cocont{K}\bigr)$.
This is one reason why the $\mathcal J$-convolution is particularly
convenient in the contraction arguments below.

\begin{theorem}
\label{thm:omega-two-vertex-contraction}
Let $G$ be a quiver and let $\varepsilon$ be an oriented edge of $G$ with distinct endpoints.
Then
\begin{equation}
\label{eq:omega_contraction-beta} 
    \omega_c(G)
    +
    \omega_c(G_{\bar{\varepsilon}})
    +
    \omega_c(G\cocont{\varepsilon})
    =
    \frac{
        \beta_{\varepsilon}
        +
        \beta_{\bar{\varepsilon}}
    }{2}
    \omega_c(G_{\setminus\varepsilon}). 
\end{equation} 
\end{theorem}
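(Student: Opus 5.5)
We argue by induction on $|E_G|$, using the connected-projected convolution identity of Lemma~\ref{lem:connected-projected-convolution}, $\sum_{K\preceq G}e(K)\,\omega_c(G\cocont K)=0$ for $E_G\neq\emptyset$. Write $\varepsilon:i\to j$ and $H:=G_{\setminus\varepsilon}$, so that $G=H_{[i\to j]}$, $G_{\bar\varepsilon}=H_{[i\leftarrow j]}$ and $G\cocont\varepsilon=H_{[i\cdot j]}$. Define the defect
\[
D(G):=\omega_c(G)+\omega_c(G_{\bar\varepsilon})+\omega_c(G\cocont\varepsilon)-\tfrac{\beta_\varepsilon+\beta_{\bar\varepsilon}}{2}\,\omega_c(G_{\setminus\varepsilon}).
\]
The goal is $D\equiv0$. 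The base case is $H$ edgeless, i.e.\ $G$ is a single edge (plus isolated vertices). If there are isolated vertices, every term vanishes because the graph is disconnected. Otherwise, by \eqref{omega-Hopf-cycle}, $\omega(G)=-e(G)=-\tfrac12$, and likewise for $G_{\bar\varepsilon}$. The contraction $G\cocont\varepsilon=\bullet$ has $\omega_c=1$, and $\omega_c(\bullet^2)=0$. So $D=-\tfrac12-\tfrac12+1-0=0$.

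For the inductive step, apply Lemma~\ref{lem:connected-projected-convolution} to the three graphs $G$, $G_{\bar\varepsilon}$ and $G\cocont\varepsilon$, and sum the three identities. Cut-subquivers of $G$ come in two kinds: $K\preceq H$, and $K\cup\{\varepsilon\}$ with $K\preceq H$; the same holds for $G_{\bar\varepsilon}$. Cut-subquivers of $G\cocont\varepsilon$ correspond to $K\preceq H$ via $K\mapsto K_{[i\cdot j]}$. We group terms by $K\preceq H$ and by the type of the $\varepsilon$-slot, and treat the two types separately.

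\emph{Terms with $\varepsilon$ (or $\bar\varepsilon$) inside $K$.} The contraction is $(H\cocont K)_{[i\cdot j]}$ in both cases, and the $e$-factors combine by the two-vertex $e$-rule \eqref{eq:e-two-vertex-contraction}: $e(K_{[i\to j]})+e(K_{[i\leftarrow j]})=e(K)$. There is one subtlety. If $i\sim_K j$, the added edge is a chord within a component of $K$, and the rule still holds. We then compare against the term $e(K_{[i\cdot j]})\,\omega_c((H\cocont K)_{[i\cdot j]})$ coming from $G\cocont\varepsilon$. These do not cancel directly, since $e(K)\neq e(K_{[i\cdot j]})$ in general.

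\emph{Terms with $\varepsilon$ surviving in the quotient.} Here $\omega_c$ is evaluated on $(H\cocont K)$ with an edge $[\pi i\to\pi j]$ or its reverse added, or with $\pi i,\pi j$ identified. When $\pi i\neq\pi j$ and $|E_K|>0$, the graph has fewer edges, and the induction hypothesis replaces each triple by $\tfrac{\beta_\varepsilon+\beta_{\bar\varepsilon}}{2}\,\omega_c(H\cocont K)$, minus the contracted term. When $\pi i=\pi j$, the surviving $\varepsilon$ is a tadpole. Lemma~\ref{lem:omega-tadpole-factorization} then gives the factors $\beta_{\varepsilon-}$ and $\beta_{\bar\varepsilon-}$. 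Combined with the corresponding tadpole factors $-\beta_{\varepsilon-}$ in $e$ when $\varepsilon\in K$ forms a tadpole-type chord, these should produce exactly $\tfrac{\beta_\varepsilon+\beta_{\bar\varepsilon}}{2}-1$ times the contracted term. This is the mechanism by which the $\beta$-average arises.

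Collecting everything, the sum of the three identities should reduce to
\[
D(G)+\tfrac{\beta_\varepsilon+\beta_{\bar\varepsilon}}{2}\sum_{K\preceq H}e(K)\,\omega_c(H\cocont K)+(\text{terms cancelling pairwise})=0 .
\]
The middle sum vanishes by Lemma~\ref{lem:connected-projected-convolution} applied to $H$ (if $E_H\neq\emptyset$), which leaves $D(G)=0$. The terms with $|E_K|=0$ are exactly the ones that produce $D(G)$ itself, since there $H\cocont K=H$.

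The main obstacle is the bookkeeping of the cut-subquivers $K$ of $H$ for which $i$ and $j$ are joined in $K$, versus those for which they lie in different components. In the first case $\varepsilon$ becomes a tadpole after contraction, and the $e$-contraction of $K_{[i\cdot j]}$ must be matched with the $e$-values of $K$. I would first try to show that $K\mapsto K_{[i\cdot j]}$ and the relation $e(K_{[i\cdot j]})$ versus $e(K)$ for $i\not\sim_K j$ are absorbed by reindexing $G\cocont\varepsilon$'s sum through subsets containing the image of $\varepsilon$. If the direct matching fails, the fallback is to prove the identity through the general master formula instead: there, pairing ordered set partitions where $i,j$ lie in different blocks against those merging them makes the $\beta_{\varepsilon,+}+\beta_{\bar\varepsilon,-}$ combination evident.
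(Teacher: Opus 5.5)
Your framework (induction on $|E_G|$ driven by Lemma~\ref{lem:connected-projected-convolution}) is the right one. However, the matching that makes the induction close is missing, and the place where you look for it cannot supply it.

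Adding $\mathcal J(G\cocont{\varepsilon})$ is a red herring. It vanishes by itself, and pairing its terms $e(K_{[i\cdot j]})\,\omega_c(\cdots)$ with the terms of $\mathcal J(G)+\mathcal J(G_{\bar\varepsilon})$ that contain the distinguished edge is exactly what produces the mismatch $e(K)\neq e(K_{[i\cdot j]})$ you get stuck on.

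The missing observation is that the contracted term already arises inside $\mathcal J(G)+\mathcal J(G_{\bar\varepsilon})$, with the correct coefficient. For $K\preceq H$ put $L=K\cup\{\varepsilon\}$ and $\bar L=K\cup\{\bar\varepsilon\}$. Then $G\cocont{L}\cong(G\cocont{\varepsilon})\cocont{K^{\varepsilon}}\cong{G_{\bar\varepsilon}}\cocont{\bar L}$, and \eqref{eq:e-two-vertex-contraction} gives $e(L)+e(\bar L)=e(K)$. Hence
\[
0=\mathcal J(G)+\mathcal J(G_{\bar\varepsilon})
=\sum_{K\preceq H}e(K)\Bigl[\omega_c(G\cocont{K})+\omega_c\bigl({G_{\bar\varepsilon}}\cocont{K}\bigr)+\omega_c\bigl((G\cocont{\varepsilon})\cocont{K^{\varepsilon}}\bigr)\Bigr].
\]
For each $K$, the bracket is the left-hand side of the theorem for the smaller graph $G\cocont{K}$, with distinguished edge the image of $\varepsilon$. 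All three of its terms carry the same coefficient $e(K)$.

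Equivalently: when you apply the induction hypothesis to the surviving pair, it leaves a term $-\omega_c\bigl((G\cocont{\varepsilon})\cocont{K^{\varepsilon}}\bigr)$. That term is cancelled by the $L,\bar L$ terms of the same $K$, not by anything in $\mathcal J(G\cocont{\varepsilon})$. Your ``terms cancelling pairwise'' is therefore not justified as written.

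Once this pairing is made, the argument closes:
\begin{itemize}
\item the $K=H_\emptyset$ bracket is the left-hand side for $G$;
\item every bracket with $E_K\neq\emptyset$ equals $\frac{\beta_\varepsilon+\beta_{\bar\varepsilon}}{2}\,\omega_c(H\cocont{K})$;
\item $\mathcal J(H)=0$ (or disconnectedness of $H$) gives $\sum_{E_K\neq\emptyset}e(K)\,\omega_c(H\cocont{K})=-\omega_c(H)$.
\end{itemize}

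Your account of the case where $K$ already joins $i$ and $j$ is also incorrect. Cut-subquivers keep the vertex set and the incidences, so $\varepsilon$ is never a tadpole in $L$, and $e(L)$ never carries a factor $-\beta_{\varepsilon-}$. The correct mechanism is this. Tadpole factorization gives $\omega_c(G\cocont{K})=\beta_{\varepsilon-}\,\omega_c(H\cocont{K})$ and $\omega_c\bigl({G_{\bar\varepsilon}}\cocont{K}\bigr)=\beta_{\bar\varepsilon-}\,\omega_c(H\cocont{K})$. The third term satisfies $(G\cocont{\varepsilon})\cocont{K^{\varepsilon}}\cong H\cocont{K}$, which supplies the remaining $+1$ in $\beta_{\varepsilon-}+\beta_{\bar\varepsilon-}+1=\frac{\beta_\varepsilon+\beta_{\bar\varepsilon}}{2}$.

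Finally, your fallback through the general master formula is circular as things stand. The identity $\omega_c=\Omega_n$ is deduced from Theorem~\ref{thm:master-uniqueness}, whose hypotheses include that $\omega_c$ already satisfies this very two-vertex rule. Proposition~\ref{prop:Omega-two-vertex-contraction} proves the rule only for $\Omega_n$. Transferring it to $\omega_c$ would need an independent proof that $\omega_c=\Omega_n$, for instance a direct check that $\Omega$ is a convolution inverse of $e$, and you do not supply one.
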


\begin{proof}
If $G$ is disconnected, then $G_{\bar{\varepsilon}}$,
$G\cocont{\varepsilon}$, and $G_{\setminus\varepsilon}$ are all
disconnected. Hence every term in \eqref{eq:omega_contraction-beta}
vanishes. We may therefore assume that $G$ is connected. We proceed by induction on $|E_G|$. If $|E_G|=1$, then $G$ and
$G_{\bar{\varepsilon}}$ are connected one-edge graphs, and
$G\cocont{\varepsilon}=\bullet$, $G_{\setminus\varepsilon}=\bullet\sqcup\bullet$.
From the convolution relation, $\omega_c(G)
    =
    \omega_c(G_{\bar{\varepsilon}})
    =
    -\frac12$,
while $\omega_c(G\cocont{\varepsilon})=1$,
$\omega_c(G_{\setminus\varepsilon})=0$.
Thus both sides of \eqref{eq:omega_contraction-beta} vanish.

Assume now that the theorem holds for all graphs with fewer than
$|E_G|$ edges. Set
\[
    H:=G_{\setminus\varepsilon},
    \qquad
    C:=G\cocont{\varepsilon}.
\]
Since $G$ and $G_{\bar{\varepsilon}}$ are connected and contain an
edge, $\mathcal J(G)
    =
    \mathcal J(G_{\bar{\varepsilon}})
    =
    0$.
Therefore
\begin{equation}
\label{contraction-induction}
    0
    =
    \mathcal J(G)
    +
    \mathcal J(G_{\bar{\varepsilon}}).
\end{equation}
Every cut-sub-\mquiver of $G$ is uniquely of one of the two forms $K$ or $L:=K\cup\{\varepsilon\}$,
where $K\preceq H$. Likewise, every cut-sub-\mquiver of
$G_{\bar{\varepsilon}}$ is uniquely of one of the two forms $K$ or $\bar L:=K\cup\{\bar{\varepsilon}\}$.
Here $K$, $L$, and $\bar L$ are spanning cut-sub-\mquivers, so their
vertex set is always $V_G$.

Let $q_{\varepsilon}:V_G\longrightarrow V_C$
be the quotient map which identifies the endpoints of $\varepsilon$.
For $K\preceq H$, let $K^{\varepsilon}\preceq C$ denote the
cut-sub-\mquiver whose edge set is the image of $E_K$ in $C$ and whose
incidence map is induced by $q_{\varepsilon}$. Then the associativity
of contraction gives canonical isomorphisms
\begin{equation}
\label{eq:paired-contractions}
    G\cocont{L}
    \cong
    C\cocont{K^{\varepsilon}}
    \cong
    {G_{\bar{\varepsilon}}}\cocont{\bar L}.
\end{equation}
Moreover, \eqref{eq:e-two-vertex-contraction}, applied to
the distinguished edge in the graph $L$, gives
\begin{equation}
\label{eq:paired-e}
    e(L)+e(\bar L)=e(K).
\end{equation}
Expanding \eqref{contraction-induction} and pairing the four terms
associated with each $K\preceq H$, we obtain
\begin{equation}
\label{eq:paired-J-fixed-edge}
\begin{split}
0
=
\sum_{K\preceq H}
 e(K)
\Bigl[
    \omega_c(G\cocont K)
    +
    \omega_c\bigl({G_{\bar{\varepsilon}}}\cocont K\bigr)
+
    \omega_c\bigl(C\cocont{K^{\varepsilon}}\bigr)
\Bigr].
\end{split}
\end{equation}
Indeed, the first two terms in the bracket come from the cut-subquivers
which do not contain the distinguished edge, while the last term is
the sum of the two contributions which do contain it, using
\eqref{eq:paired-contractions} and \eqref{eq:paired-e}.

We now consider a cut-sub-\mquiver $K\preceq H$ with
$E_K\neq\emptyset$.
Suppose first that contracting $K$ does not identify the two endpoints
of $\varepsilon$. Then the image of $\varepsilon$ in $G\cocont K$
is again a distinguished oriented edge with distinct endpoints. Its
reversal is the corresponding distinguished edge in
${G_{\bar{\varepsilon}}}\cocont K$, its deletion is $H\cocont K$,
and its contraction is canonically isomorphic to
$C\cocont{K^{\varepsilon}}$. Since $K$ contains at least one edge,
all these graphs have fewer edges than $G$. The induction hypothesis
therefore yields
\begin{equation}
\label{eq:inductive-bracket} 
    \omega_c(G\cocont K)
    +
    \omega_c\bigl({G_{\bar{\varepsilon}}}\cocont K\bigr)
    +
    \omega_c\bigl(C\cocont{K^{\varepsilon}}\bigr)
    =
    \frac{
        \beta_{\varepsilon}
        +
        \beta_{\bar{\varepsilon}}
    }{2}
    \,
    \omega_c(H\cocont K). 
\end{equation}

There is a second possibility: contracting $K$ may already identify
the endpoints of $\varepsilon$. In this case the image of
$\varepsilon$ in $G\cocont K$ is a tadpole with parameter
$\beta_{\varepsilon}$, while the image of $\bar{\varepsilon}$ in
${G_{\bar{\varepsilon}}}\cocont K$ is a tadpole with parameter
$\beta_{\bar{\varepsilon}}$. There is a canonical isomorphism $C\cocont{K^{\varepsilon}}
    \cong
    H\cocont K$.
Deleting the distinguished tadpole from $G\cocont K$ gives
$H\cocont K$, and similarly for the reversed graph. Hence
Lemma~\ref{lem:omega-tadpole-factorization} gives
\[
    \omega_c(G\cocont K)
    =
    \beta_{\varepsilon-}\,
    \omega_c(H\cocont K), \quad 
    \omega_c\bigl({G_{\bar{\varepsilon}}}\cocont K\bigr)
    =
    \beta_{\bar{\varepsilon}-}\,
    \omega_c(H\cocont K).
\]
Consequently,
\begin{align*}
    \omega_c(G\cocont K)
    +
    \omega_c\bigl({G_{\bar{\varepsilon}}}\cocont K\bigr)
    +
    \omega_c\bigl(C\cocont{K^{\varepsilon}}\bigr)
  =
    \left(
        \beta_{\varepsilon-}
        +
        \beta_{\bar{\varepsilon}-}
        +1
    \right)
    \omega_c(H\cocont K)
    =
    \frac{
        \beta_{\varepsilon}
        +
        \beta_{\bar{\varepsilon}}
    }{2}
    \,
    \omega_c(H\cocont K).
\end{align*}
Thus \eqref{eq:inductive-bracket} holds for every nonempty
$K\preceq H$, whether or not the endpoints of $\varepsilon$ have
already collapsed after contracting $K$.

Let $H_{\emptyset}$ be the edgeless cut-sub-\mquiver of $H$. Since $e(H_{\emptyset})=1$,
the contribution of $K=H_{\emptyset}$ in
\eqref{eq:paired-J-fixed-edge} is
\[
    \omega_c(G)
    +
    \omega_c(G_{\bar{\varepsilon}})
    +
    \omega_c(C).
\]
Using \eqref{eq:inductive-bracket} for all nonempty $K$, we obtain
\begin{equation}
\label{eq:nonempty-K-sum} 
0
=
    \omega_c(G)
    +
    \omega_c(G_{\bar{\varepsilon}})
    +
    \omega_c(C)
+
    \frac{
        \beta_{\varepsilon}
        +
        \beta_{\bar{\varepsilon}}
    }{2}
    \sum_{\substack{K\preceq H\\E_K\neq\emptyset}}
    e(K)\,
    \omega_c(H\cocont K). 
\end{equation}
We next claim that
\begin{equation}
\label{eq:cut-convolution}
    \sum_{\substack{K\preceq H\\E_K\neq\emptyset}}
    e(K)\,
    \omega_c(H\cocont K)
    =
    -\omega_c(H).
\end{equation}
Suppose first that $H$ is connected. In the induction step $H$ has at
least one edge, and hence
\[
    0
    =
    \mathcal J(H)
    =
    \omega(H)
    +
    \sum_{\substack{K\preceq H\\E_K\neq\emptyset}}
    e(K)\,
    \omega(H\cocont K).
\]
Contraction preserves connectedness, so
$\omega(H)=\omega_c(H)$,
$\omega(H\cocont K)=\omega_c(H\cocont K)$,
and \eqref{eq:cut-convolution} follows.

Suppose next that $H$ is disconnected. Contracting edges of $H$
cannot join distinct connected components. Hence $H\cocont K$ is
disconnected for every $K\preceq H$, and therefore
$\omega_c(H)=0$, $\omega_c(H\cocont K)=0$.
Thus both sides of \eqref{eq:cut-convolution} vanish.
Substituting \eqref{eq:cut-convolution} into
\eqref{eq:nonempty-K-sum}, and recalling that $H=G_{\setminus\varepsilon}$,
$C=G\cocont{\varepsilon}$,
gives
\[
    \omega_c(G)
    +
    \omega_c(G_{\bar{\varepsilon}})
    +
    \omega_c(G\cocont{\varepsilon})
    =
    \frac{
        \beta_{\varepsilon}
        +
        \beta_{\bar{\varepsilon}}
    }{2}
    \,
    \omega_c(G_{\setminus\varepsilon}).
\] 
\end{proof}

\begin{example}
\label{ex:omega-parallel-edge-contraction}
Let $G$ have two vertices and two parallel edges
\[
    \varepsilon:i\longrightarrow j,
    \qquad
    \delta:i\longrightarrow j,
\]
where $\varepsilon$ is the distinguished edge.  We have that
\begin{align*}
\omega_c(G\cocont{\varepsilon})
    =
   \frac{\beta_{\delta} - 1}{2}, \quad  \omega_c(G)
    =
    -\frac{\beta_{\varepsilon}+\beta_{\delta}}{4}, \quad \omega_c(G_{\bar{\varepsilon}})
    =
    -\frac{\beta_{\delta}+ \beta_{\bar{\varepsilon}} -2}{4},  \quad \omega_c(G_{\setminus\varepsilon})
    =
    -\frac12.
\end{align*}  
Therefore \eqref{eq:omega_contraction-beta} is satisfied. 
\end{example}

\begin{figure}[htbp]
    \centering
    \includegraphics[width=0.84\linewidth]{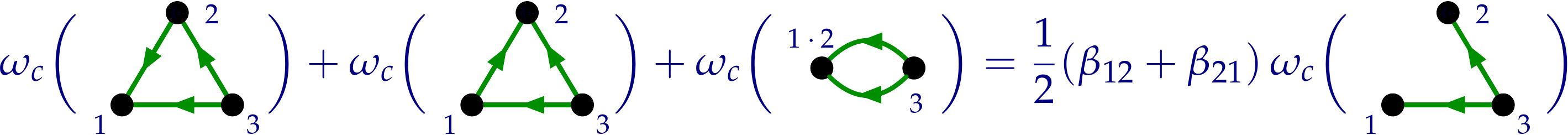}
    \caption{The two-vertex contraction rule for a triangle
    graph. Here $\beta_{\epsilon}$ is denoted by $\beta_{ij}$ if $\epsilon$ is the edge $j \to i$.}
    \label{fig:2-vertex-ex}
\end{figure}
 
\begin{example}[Triangle graph]
\label{ex:omega-triangle-contraction}
Let $G$ be the oriented triangle with vertex set $\{1,2,3\}$ and
edges
\[
    \varepsilon:2\longrightarrow1,
    \qquad
    \delta:3\longrightarrow2,
    \qquad
    \gamma:3\longrightarrow1,
\]
where $\varepsilon$ is the distinguished edge. 
Fig.~\ref{fig:2-vertex-ex} illustrates the two-vertex contraction rule
for this graph, with the distinguished edge $\varepsilon$ indicated
in the figure.
A direct convolution calculation gives
\begin{align*}
    \omega_c(G)
    =
    \frac{
        \beta_{\varepsilon}
        +
        \beta_{\delta}
        +
        2\beta_{\gamma}
    }{12}, \ 
    \omega_c(G_{\bar{\varepsilon}})
    =
    \frac{
        \beta_{\bar{\varepsilon}}
        +
        2\beta_{\delta}
        +
        \beta_{\gamma}
    }{12}, \ 
    \omega_c(G\cocont{\varepsilon})
    =
    -\frac{
        \beta_{\gamma}
        +
        \beta_{\delta}
    }{4}, \ 
    \omega_c(G_{\setminus\varepsilon})
    =
    \frac16.
\end{align*}
Consequently, \eqref{eq:omega_contraction-beta} is satisfied.
\end{example}

\begin{remark}[Trees]
If $\tau$ is a tree and $\varepsilon$ is an edge of $\tau$, then
$\tau_{\setminus\varepsilon}$ is disconnected. Hence $\omega_c(\tau_{\setminus\varepsilon})=0$,
and Theorem~\ref{thm:omega-two-vertex-contraction} reduces to
\begin{equation}
\label{eq:omega_contraction-tree}
    \omega_c(\tau)
    +
    \omega_c(\tau_{\bar{\varepsilon}})
    =
    -\omega_c(\tau\cocont{\varepsilon}).
\end{equation}
This recovers the two-vertex relation of 
\cite[Proposition 4.3]{Chartier_2010} for rooted trees and 
\cite[Equation (4.23)]{Kim:2024svw} for general trees. 

\end{remark}

\subsection{An application of the contraction rule: removing composite edges}

An edge $\varepsilon=[v_1\to v_2]$ is called \emph{composite} if
there exists a directed path from $v_1$ to $v_2$ which does not use
$\varepsilon$. Removing a composite edge does not change the
$e$-function, since the edge is already implied by the induced partial
order. If $\beta_{\varepsilon}=1$ for every $\varepsilon\in E_G$, we write
$\boldsymbol{\beta}=\mathbf{1}$. For the $\omega$-function, we have the following. 
\begin{corollary}
\label{cor:removable-edge}
Let $\varepsilon=[v_1\to v_2]$ be a composite edge of a quiver
$G$. When $\boldsymbol{\beta}=\mathbf{1}$, removing $\varepsilon$ does
not change the $\omega$-function:
$\left.\omega(G)\right|_{\boldsymbol{\beta}=\mathbf{1}}
    =
    \left.\omega(G_{\setminus\varepsilon})
    \right|_{\boldsymbol{\beta}=\mathbf{1}}$.
\end{corollary}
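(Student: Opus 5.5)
The plan is to reduce the statement to the two-vertex contraction rule, Theorem~\ref{thm:omega-two-vertex-contraction}, and then show that two of its three left-hand terms vanish at $\boldsymbol\beta=\mathbf 1$ by Proposition~\ref{prop:omega-cyclic-vanishing}.

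First I reduce to the connected case. A composite edge has distinct endpoints, since otherwise it would be a tadpole. By multiplicativity, \eqref{eq:omega-multiplicative}, it suffices to treat the connected component $G_0$ of $G$ containing $\varepsilon$; all other components appear identically on both sides. Because $\varepsilon$ is composite, there is a directed path $P$ from $v_1$ to $v_2$ that avoids $\varepsilon$. Hence $(G_0)_{\setminus\varepsilon}$ is still connected, and the corresponding component of $G_{\setminus\varepsilon}$ is exactly $(G_0)_{\setminus\varepsilon}$. It therefore suffices to prove $\omega_c(G)=\omega_c(G_{\setminus\varepsilon})$ at $\boldsymbol\beta=\mathbf 1$ for connected $G$.

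Next I apply \eqref{eq:omega_contraction-beta} to the distinguished edge $\varepsilon$. I specialize all parameters to $1$, including the independent variable $\beta_{\bar\varepsilon}$; this is legitimate because $G_{\bar\varepsilon}$ does not enter the statement. The right-hand side then becomes $\omega_c(G_{\setminus\varepsilon})$, and it remains to show that $\omega_c(G_{\bar\varepsilon})$ and $\omega_c(G\cocont{\varepsilon})$ vanish at $\boldsymbol\beta=\mathbf 1$.

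For $G_{\bar\varepsilon}$, the reversed edge $\bar\varepsilon:v_2\to v_1$ together with the path $P$ forms a directed cycle. Proposition~\ref{prop:omega-cyclic-vanishing} therefore gives $\omega(G_{\bar\varepsilon})|_{\boldsymbol\beta=\mathbf 1}=0$.

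For $G\cocont{\varepsilon}$, the step needing the most care is identifying the image of $P$. Under identification of $v_1$ and $v_2$, the path $P$, which survives since it avoids $\varepsilon$, becomes a closed directed walk of positive length based at the merged vertex. If $P$ is a single edge parallel to $\varepsilon$, its image is a tadpole. Otherwise, any closed directed walk of positive length contains either a directed cycle or a tadpole. In either case $G\cocont{\varepsilon}$ contains a directed cycle in the sense of Proposition~\ref{prop:omega-cyclic-vanishing}, where a tadpole counts as a cycle of length one as in its proof. So $\omega_c(G\cocont\varepsilon)$ also vanishes at $\boldsymbol\beta=\mathbf 1$.

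Combining the two vanishings yields $\omega_c(G)=\omega_c(G_{\setminus\varepsilon})$ at $\boldsymbol\beta=\mathbf 1$. Multiplicativity over the untouched components then gives the corollary.
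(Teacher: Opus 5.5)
Your proposal is correct and follows the paper's proof essentially step for step. Like the paper, you reduce to the connected component containing $\varepsilon$ by multiplicativity, which works because the path $P$ keeps that component connected after deleting $\varepsilon$. You then apply the two-vertex contraction rule with all parameters set to $1$, and kill $\omega_c(G_{\bar\varepsilon})$ and $\omega_c(G\cocont{\varepsilon})$ via Proposition~\ref{prop:omega-cyclic-vanishing}, using the directed cycle (or tadpole) produced by $P$.
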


\begin{proof}
Let $G_0$ be the connected component of $G$ containing $\varepsilon$.
Since $\varepsilon$ is composite, there is a directed path
$P:v_1\to\cdots\to v_2$ not using $\varepsilon$. Hence
$G_0\setminus\varepsilon$ is still connected, while all other connected
components of $G$ are unchanged. By the multiplicativity of $\omega$,
it is therefore enough to prove the result when $G$ is connected. Apply Theorem~\ref{thm:omega-two-vertex-contraction} to $\varepsilon$
and specialize all edge parameters to $1$. We obtain
\[
    \omega_c(G)
    +\omega_c(G_{\bar{\varepsilon}})
    +\omega_c(G\cocont{\varepsilon})
    =
    \omega_c(G_{\setminus\varepsilon}).
\]
The path $P$, together with
$\bar{\varepsilon}:v_2\to v_1$, forms a directed cycle in
$G_{\bar{\varepsilon}}$, while its image in
$G\cocont{\varepsilon}$ is a directed cycle, possibly a tadpole.
Therefore Proposition~\ref{prop:omega-cyclic-vanishing} gives
\[
    \left.\omega_c(G_{\bar{\varepsilon}})
    \right|_{\boldsymbol{\beta}=\mathbf{1}}
    =
    \left.\omega_c(G\cocont{\varepsilon})
    \right|_{\boldsymbol{\beta}=\mathbf{1}}
    =
    0.
\]
Thus
$\left.\omega_c(G)\right|_{\boldsymbol{\beta}=\mathbf{1}}
    =
    \left.\omega_c(G_{\setminus\varepsilon})
    \right|_{\boldsymbol{\beta}=\mathbf{1}}$.
Since both $G$ and $G_{\setminus\varepsilon}$ are connected in the
case under consideration, $\omega_c=\omega$ for both graphs, and the
result follows.
\end{proof}

\subsection{A three-vertex contraction rule}
\label{subsec:three-vertex-contraction-main}

The two-vertex contraction rule admits the following three-vertex refinement.  The statement is valid for an arbitrary background quiver, including directed cycles, parallel edges, and tadpoles.
Let $H$ be a quiver with three distinct marked vertices
$1,2,3$. We use the vertex-based notation introduced in
Remark~\ref{rem:old-local-notation-dictionary}. Fix three distinguished
edge species
\[
    \varepsilon:2\longrightarrow1,
    \qquad
    \delta:3\longrightarrow2,
    \qquad
    \gamma:1\longrightarrow3,
\]
and write $\bar\varepsilon$, $\bar\delta$, and $\bar\gamma$ for the
same distinguished edges with the opposite orientations. Thus, for
example, $H_{[1\leftarrow2\leftarrow3]}$ is obtained from $H$ by
adjoining $\varepsilon$ and $\delta$, while
$H_{[3\leftarrow2\leftarrow1]}$ is obtained by adjoining
$\bar\varepsilon$ and $\bar\delta$. The distinguished edges are new
edges and are distinct from all edges already present in $H$, including
parallel edges.

For $a\in\{\varepsilon,\delta,\gamma\}$, set
\begin{equation}
\label{eq:bd-definition}
    b_a
    :=
    \frac{\beta_a+\beta_{\bar a}}{2},
    \qquad
    d_a
    :=
    \frac{\beta_a-\beta_{\bar a}}{2}.
\end{equation}
Thus reversal preserves $b_a$ and changes the sign of $d_a$.
Define the three orientation-skew terms by
\begin{align}
\label{eq:D-skew-definition}
    \mathcal D_{\varepsilon}(H)
    &:=
    \omega_c(H_{[3|1\leftarrow2]})
    -
    \omega_c(H_{[3|1\to2]}),
\nonumber\\
    \mathcal D_{\delta}(H)
    &:=
    \omega_c(H_{[1|2\leftarrow3]})
    -
    \omega_c(H_{[1|2\to3]}),
\nonumber\\
    \mathcal D_{\gamma}(H)
    &:=
    \omega_c(H_{[2|3\leftarrow1]})
    -
    \omega_c(H_{[2|3\to1]}).
\end{align}
Set
\begin{equation}
\label{eq:cij-definition}
    c_{\varepsilon}
    :=\frac12(d_{\delta}+d_{\gamma}),
    \qquad
    c_{\delta}
    :=\frac12(d_{\gamma}+d_{\varepsilon}),
    \qquad
    c_{\gamma}
    :=\frac12(d_{\varepsilon}+d_{\delta}),
\end{equation}

\begin{equation}
\label{eq:C123-general}
\begin{split}
    C_{\triangle}
    :=\frac12\Bigl(
       -1
       +b_{\varepsilon}b_{\delta}
       +b_{\delta}b_{\gamma}
       +b_{\gamma}b_{\varepsilon}
       -d_{\varepsilon}d_{\delta}
       -d_{\delta}d_{\gamma}
       -d_{\gamma}d_{\varepsilon}
    \Bigr),
\end{split}
\end{equation}
and
\begin{equation}
\label{eq:Phi-definition}
\begin{split}
    \Phi(H)
    :={}&
    \sum_{\sigma\in S_3}
    \omega_c\bigl(
        H_{[\sigma(1)\leftarrow\sigma(2)\leftarrow\sigma(3)]}
    \bigr)
    -2\omega_c(H_{[1\cdot2\cdot3]})
\\
&+
    \frac12\Bigl[
        (b_{\gamma}+b_{\varepsilon})
        \omega_c(H_{[1|2\cdot3]})
        +(b_{\varepsilon}+b_{\delta})
        \omega_c(H_{[2|3\cdot1]})
        +(b_{\delta}+b_{\gamma})
        \omega_c(H_{[3|1\cdot2]})
    \Bigr].
\end{split}
\end{equation}

\begin{theorem}[Three-vertex contraction rule]
\label{thm:general-three-contraction}
With the notation above,  
\begin{align}
\label{eq:general-three-contraction}
\Phi(H)
    =
    C_{\triangle}\omega_c(H)
    +c_{\varepsilon}\mathcal D_{\varepsilon}(H)
    +c_{\delta}\mathcal D_{\delta}(H)
    +c_{\gamma}\mathcal D_{\gamma}(H).
\end{align} 
\end{theorem}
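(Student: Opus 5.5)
The plan is to derive the three-vertex rule from repeated applications of the two-vertex rule, Theorem~\ref{thm:omega-two-vertex-contraction}, applied to $H$ with distinguished edges adjoined. I work with the vertex-based notation of Remark~\ref{rem:old-local-notation-dictionary}. For any background $K$ and distinct marked vertices $i,j$, the two-vertex rule reads
\[
\omega_c(K_{[i\leftarrow j]})+\omega_c(K_{[i\to j]})+\omega_c(K_{[i\cdot j]})=b\,\omega_c(K_{[i|j]}),
\]
where $b$ is the $b$-parameter of the adjoined edge. Using $\beta_{a,\pm}$-linearity, I also record the antisymmetric companion: the difference $\omega_c(K_{[i\leftarrow j]})-\omega_c(K_{[i\to j]})$ is the skew term.

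Each path graph $H_{[\sigma(1)\leftarrow\sigma(2)\leftarrow\sigma(3)]}$ carries two of the three species $\varepsilon,\delta,\gamma$. The first step is to group the six permutations into three pairs that share the middle vertex. For example, $1\leftarrow2\leftarrow3$ and $3\leftarrow2\leftarrow1$ use the edges $\varepsilon,\delta$ or their reverses. I then apply the two-vertex rule to one adjoined edge in each graph.

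\textbf{Expansion.} Expanding $\omega_c(K_{[i\leftarrow j]})=\tfrac12\bigl(b\,\omega_c(K_{[i|j]})-\omega_c(K_{[i\cdot j]})\bigr)+\tfrac12(\text{skew})$, each path term splits into three pieces:
\begin{itemize}
\item a single-edge graph such as $H_{[3|1\leftarrow2]}$ weighted by $b$,
\item a contracted graph such as $H_{[1\cdot2\leftarrow 3]}$,
\item an orientation-skew correction.
\end{itemize}
Applying the rule a second time reduces the single-edge graphs to $H$, the one-identification graphs $H_{[i|j\cdot k]}$, and the skew terms $\mathcal D_a(H)$. The contracted single-edge graphs reduce further to $H_{[1\cdot2\cdot3]}$ and $H_{[i|j\cdot k]}$. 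For a reversed edge that has been merged into the background, the parameter dependence is not just through $b_a$. It also brings in the difference $d_a$, coming from $\beta_{a,+}-\beta_{\bar a,+}=d_a$ in tadpole-type and skew contributions. This is the source of the $c_a$ and of the $d_ad_{a'}$ terms in $C_\triangle$.

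\textbf{Main obstacle.} The two-vertex rule fixes only symmetric combinations. Terms like $\omega_c(H_{[1\leftarrow2\leftarrow3]})$ individually involve the "mixed" graphs $H_{[1\leftarrow2\to3]}$ (a source or sink at vertex $2$), which have no counterpart in \eqref{eq:general-three-contraction}. I need to show that these mixed graphs cancel in the full $S_3$ sum, once combined with the transitive $e$-identity~\eqref{eq:e-three-vertex-contraction}.

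If the naive reduction leaves mixed terms, I would instead imitate the proof of Theorem~\ref{thm:omega-two-vertex-contraction} directly. The steps are:
\begin{enumerate}
\item Sum $\mathcal J=e*\omega=0$ over the six path graphs, together with suitable copies of the single-edge graphs.
\item Split the cut-subquivers $K\preceq H$ according to which adjoined edges they contain.
\item Use~\eqref{eq:e-three-vertex-contraction} and~\eqref{eq:e-two-vertex-contraction} to pair the terms.
\item Induct on $|E_H|$, handling separately the cases where contracting $K$ already identifies some of the vertices $1,2,3$. In those cases adjoined edges become tadpoles, and Lemma~\ref{lem:omega-tadpole-factorization} produces the $\beta_{a,-}$ factors that assemble into $C_\triangle$ and the $c_a$.
\end{enumerate}
The $K=\emptyset$ term yields $\Phi(H)$, and the remaining sum collapses via $\mathcal J(H)=0$ to $-C_\triangle\omega_c(H)$ plus skew terms. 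This gives \eqref{eq:general-three-contraction}.

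I expect the main difficulty to be the bookkeeping of the partially collapsed cases, where exactly two of the marked vertices are identified by $K$. In each such case I must verify that the tadpole coefficients match $C_\triangle$ and the $c_a$ exactly. I would first check the base case $H=\bullet^3$ against the triangle computation in Example~\ref{ex:omega-triangle-contraction} to fix the constants.
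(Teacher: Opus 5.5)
Your primary route cannot work. Write $X_{\pm\pm}$ for the four orientation choices of two adjoined edges that share a marked vertex. The two-vertex rule only fixes the four sums obtained by flipping one edge while the other is held fixed. Those four relations have rank three, so the checkerboard combination $X_{++}+X_{--}-X_{+-}-X_{-+}$ is left undetermined. The $\beta$-dependence from the edge differential equation does not help, since this free direction is parameter-independent.

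Consequently the source/sink graphs such as $H_{[1\leftarrow2\to3]}$ cannot be eliminated by two-vertex reductions. Nor do they cancel in the $S_3$ sum, because each middle vertex involves a different pair of species, and the $e$-identity plays no role in a pure $\omega_c$ reduction. The three-vertex rule therefore carries information beyond the two-vertex rule.

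Your fallback is the right strategy and is the paper's: apply Lemma~\ref{lem:connected-projected-convolution} to the six path graphs and group the terms by how many adjoined edges lie in the cut-subquiver. Use the three-vertex $e$-identity for the doubly selected terms. Absorb the singly selected terms through the skew differences $\Delta_a(K)$, matched against the identity obtained by applying the same lemma to $H_{[3|1\leftarrow2]}$ and $H_{[3|1\to2]}$ and subtracting.

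The genuine gap is in the collapsed cases. You say that when contracting $K$ identifies marked vertices, the adjoined edges become tadpoles and tadpole factorization supplies the coefficients. That holds only when all three marked vertices merge.

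If $K$ identifies only $1$ and $2$, just the $\varepsilon$-species becomes a tadpole. The $\delta$- and $\gamma$-species become parallel edges between the merged vertex and $3$, with opposite reference orientations. The two graphs $Q_{[2\leftarrow3\leftarrow1]}$ and $Q_{[1\leftarrow3\leftarrow2]}$ (with $Q=H\cocont K$) become directed $2$-cycles through the merged vertex. Their sum is exactly the kind of checkerboard quantity that neither the two-vertex rule nor tadpole factorization determines. Yet the induction step needs the formula for every such $Q$.

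The paper supplies the missing input with a separate two-edge identity for $\omega_c(Q_{a,b})+\omega_c(Q_{\bar a,\bar b})$. When the two adjoined edges carry parameter $1$, both terms vanish by the directed-cycle vanishing of Proposition~\ref{prop:omega-cyclic-vanishing}. That vanishing remains valid with arbitrary background parameters, since only the parameters on the cycle are used. The general case then follows from the parameter-shift identity $\omega_c(X^{(\lambda)})-\omega_c(X^{(\mu)})=\frac{\lambda-\mu}{2}\,\omega_c(X_{\setminus a})$. Without an input of this kind, your bookkeeping for the partially collapsed cases cannot close.

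Minor point: the base case $E_H=\emptyset$ needs no calibration against the triangle example. Each path graph has $\omega_c=1/3$, the full contraction gives $1$, and every other term vanishes by disconnectedness.
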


The proof is more involved than the two-vertex argument because one must keep track of the three distinguished edge species simultaneously.  We defer it to Appendix~\ref{appendix_sec:three-vertex-contraction}.  The same appendix also records examples and the specialization to the three-vertex relation for rooted trees. 

\begin{corollary}[Three-vertex tree relation]
\label{cor:three-contraction-tree}
Suppose that $H=H_1\sqcup H_2\sqcup H_3$,
where each $H_i$ is a tree and contains the vertex $i$. Then
\begin{equation}
\label{eq:three-vertex-tree-relation}
    \sum_{\sigma\in S_3}
    \omega_c\bigl(
        H_{[\sigma(1)\leftarrow\sigma(2)\leftarrow\sigma(3)]}
    \bigr)
    =
    2\,\omega_c(H_{[1\cdot2\cdot3]}).
\end{equation}
\end{corollary}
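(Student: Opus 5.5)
The plan is to specialize the three-vertex contraction rule, Theorem~\ref{thm:general-three-contraction}, to the given background $H=H_1\sqcup H_2\sqcup H_3$. The idea is that every term of \eqref{eq:general-three-contraction} other than the $S_3$-sum and the term $-2\omega_c(H_{[1\cdot2\cdot3]})$ is $\omega_c$ evaluated on a disconnected quiver, and so vanishes by definition \eqref{eq:omega-connected}. No further information about $\omega$ is needed, and the coefficients $b_a$, $d_a$, $c_a$ and $C_\triangle$ never have to be evaluated.

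\emph{Right-hand side.} First, $H$ itself has the three components $H_1,H_2,H_3$, so $\omega_c(H)=0$. Next I treat the skew term $\mathcal D_\varepsilon(H)$. Its two graphs $H_{[3|1\leftarrow2]}$ and $H_{[3|1\to2]}$ are obtained by adjoining a single edge between vertices $1$ and $2$. This joins $H_1$ and $H_2$, but $H_3$ remains a separate component. Both graphs are therefore disconnected, and $\mathcal D_\varepsilon(H)=0$. The same argument, with the roles of the vertices permuted, gives $\mathcal D_\delta(H)=\mathcal D_\gamma(H)=0$. Hence the right-hand side of \eqref{eq:general-three-contraction} vanishes, whatever the values of $C_\triangle$, $c_\varepsilon$, $c_\delta$ and $c_\gamma$.

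\emph{Left-hand side.} Consider the bracketed terms in $\Phi(H)$. For example, $H_{[1|2\cdot3]}$ identifies vertex $2$ with vertex $3$. This merges $H_2$ and $H_3$ but leaves $H_1$ as a separate component, so the graph is disconnected. The same holds for $H_{[2|3\cdot1]}$ and $H_{[3|1\cdot2]}$. All three $\omega_c$-values therefore vanish, and with them the whole bracket. What remains is
\[
\Phi(H)=\sum_{\sigma\in S_3}\omega_c\bigl(H_{[\sigma(1)\leftarrow\sigma(2)\leftarrow\sigma(3)]}\bigr)-2\,\omega_c(H_{[1\cdot2\cdot3]}).
\]
Setting this equal to $0$ gives \eqref{eq:three-vertex-tree-relation}.

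The only step needing care is the connectivity bookkeeping. Adjoining one edge, or identifying one pair of marked vertices, merges exactly two of the three components and never the third. I do not expect any genuine obstacle. I would note, as a consistency remark, that the surviving graphs are again trees: two adjoined edges join the three trees into one tree, and gluing three trees at a single vertex also gives a tree. This is why the identity specializes the classical rooted-tree three-vertex relation. Since the argument holds for arbitrary edge parameters, the identity holds without any specialization of $\boldsymbol\beta$.
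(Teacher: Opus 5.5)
Your proposal is correct and follows the paper's proof: both specialize Theorem~\ref{thm:general-three-contraction} and note that $H$, the three partial contractions, and every single-edge graph in the skew terms $\mathcal D_a(H)$ are disconnected, so their $\omega_c$-values vanish. Hence only the $S_3$-sum and $-2\,\omega_c(H_{[1\cdot2\cdot3]})$ survive in $\Phi(H)=0$.
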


\begin{proof}
The graph $H$ is disconnected. Each partial contraction is still
disconnected, and each single-edge graph occurring in
$\mathcal D_{\varepsilon}(H)$,
$\mathcal D_{\delta}(H)$, and
$\mathcal D_{\gamma}(H)$ is also disconnected. Hence all the
corresponding connected $\omega$-values vanish, and
\eqref{eq:three-vertex-tree-relation} follows immediately from
Theorem~\ref{thm:general-three-contraction}.
\end{proof}

\begin{remark}
After restricting to rooted trees, \eqref{eq:three-vertex-tree-relation} is equivalent, by the
two-vertex tree contraction rule \eqref{eq:omega_contraction-tree},
to the three-vertex relation in
\cite[Proposition~4.3]{Chartier_2010}.
To see this, for distinct $i,j,k\in\{1,2,3\}$, write
$H_{[i\leftarrow j,k]}$ for the tree obtained from $H$ by adjoining
the two edges $j\to i$ and $k\to i$, and write
$H_{[j\cdot k\leftarrow i]}$ for the tree obtained by identifying
$j$ and $k$ and adjoining an edge from $i$ to the resulting vertex.
In this notation, the relation of
\cite[Proposition~4.3]{Chartier_2010} reads
\begin{equation}
\label{eq:CHV-three-tree-relation}
    \sum_{\mathrm{cyc}(i,j,k)}
    \Bigl(
        \omega_c(H_{[i\leftarrow j,k]})
        +\omega_c(H_{[j\cdot k\leftarrow i]})
    \Bigr)
    =
    -\omega_c(H_{[1\cdot2\cdot3]}),
\end{equation}
where the cyclic sum is over
$(i,j,k)=(1,2,3),(2,3,1),(3,1,2)$.

Indeed, apply \eqref{eq:omega_contraction-tree} to each of the two
edges of
\[
    H_{[1\leftarrow2,3]},
    \qquad
    H_{[2\leftarrow3,1]},
    \qquad
    H_{[3\leftarrow1,2]}.
\]
Adding the resulting six identities gives
\begin{equation}
\label{eq:tree-six-to-CHV}
    \sum_{\sigma\in S_3}
    \omega_c\bigl(
        H_{[\sigma(1)\leftarrow\sigma(2)\leftarrow\sigma(3)]}
    \bigr)
    =
    -2
    \sum_{\mathrm{cyc}(i,j,k)}
    \Bigl(
        \omega_c(H_{[i\leftarrow j,k]})
        +\omega_c(H_{[j\cdot k\leftarrow i]})
    \Bigr).
\end{equation}
Substituting \eqref{eq:three-vertex-tree-relation} into
\eqref{eq:tree-six-to-CHV} and dividing by $-2$ gives
\eqref{eq:CHV-three-tree-relation}.

In the notation of \cite{Chartier_2010}, if the three components
$H_1,H_2,H_3$ correspond to rooted trees $u,v,w$, respectively, then
\[
    H_{[1\leftarrow2,3]},\ 
    H_{[2\leftarrow3,1]},\ 
    H_{[3\leftarrow1,2]}
\]
correspond to
$u\circ(vw)$, $v\circ(wu)$, $w\circ(uv)$,
while
\[
    H_{[2\cdot3\leftarrow1]},\ 
    H_{[3\cdot1\leftarrow2]},\ 
    H_{[1\cdot2\leftarrow3]}
\]
correspond to
$(v\times w)\circ u$, $(w\times u)\circ v$,
$(u\times v)\circ w$, and
$H_{[1\cdot2\cdot3]}$ corresponds to $u\times v\times w$.
Thus \eqref{eq:CHV-three-tree-relation} is precisely the relation of
\cite[Proposition~4.3]{Chartier_2010}.
\end{remark}

\section{Differential equations} 
\label{sec:differential-equations}

In this section, we study a simple system of first-order
partial differential equations for $\omega_c$. 
The two-vertex contraction rule
gives a first-order differential equation with respect to each edge
parameter; iterating it yields mixed derivative formulas and, in
particular, the multiaffinity of $\omega_c$.  We show that the differential equations,
tadpole factorization, and boundary values uniquely determine
$\omega_c$.  The section concludes by determining the boundary value of $\omega_c$
at $\boldsymbol\beta=\mathbf1$.

\subsection{Differential equation}

Recall that, for a specified subset $F \subseteq E_G$,
$G_{\setminus F}$ denotes the graph obtained by removing all edges in $F$ while retaining all vertices. In particular,
$G_{\setminus\varepsilon}$ denotes the graph obtained by removing a
single edge $\varepsilon$.

\begin{proposition}
\label{prop:omega-PDE}
Let $G$ be a quiver whose edges carry independent parameters
$\{\beta_\varepsilon\}_{\varepsilon\in E_G}$. Then, for every
$\varepsilon\in E_G$,
\begin{equation}
\label{eq:omega-PDE}
    2\frac{\partial}{\partial\beta_\varepsilon}\omega_c(G)
    =
    \omega_c(G_{\setminus\varepsilon}).
\end{equation}
\end{proposition}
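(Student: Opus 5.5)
We derive \eqref{eq:omega-PDE} directly from the two-vertex contraction rule, Theorem~\ref{thm:omega-two-vertex-contraction}, together with tadpole factorization for the case where $\varepsilon$ is a tadpole. The idea is that in \eqref{eq:omega_contraction-beta} the variable $\beta_\varepsilon$ occurs in only one term on the left and in the scalar coefficient on the right.

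\emph{Tadpole case.} Suppose first that $\varepsilon$ is a tadpole. Lemma~\ref{lem:omega-tadpole-factorization} gives
\[
\omega(G)=\tfrac{\beta_\varepsilon-1}{2}\,\omega(G_{\setminus\varepsilon}).
\]
Connectedness is unaffected by removing a tadpole, so the same identity holds for $\omega_c$. Moreover $\omega_c(G_{\setminus\varepsilon})$ does not involve $\beta_\varepsilon$, since the surviving edges retain their own parameters. Differentiating in $\beta_\varepsilon$ therefore gives $2\partial_{\beta_\varepsilon}\omega_c(G)=\omega_c(G_{\setminus\varepsilon})$ at once.

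\emph{Edge with distinct endpoints.} Now let $\varepsilon$ have distinct endpoints, and apply \eqref{eq:omega_contraction-beta}, regarding everything as an identity in the polynomial ring $\kk[\beta_\delta]$ together with the independent variable $\beta_{\bar\varepsilon}$. I then check which terms depend on $\beta_\varepsilon$.
\begin{itemize}
\item $G_{\bar\varepsilon}$ carries $\beta_{\bar\varepsilon}$ in place of $\beta_\varepsilon$, so it is independent of $\beta_\varepsilon$.
\item $G\cocont{\varepsilon}$ has edge set $E_G\setminus\{\varepsilon\}$, so it is independent of $\beta_\varepsilon$.
\item $G_{\setminus\varepsilon}$ is likewise independent of $\beta_\varepsilon$.
\end{itemize}
Differentiating \eqref{eq:omega_contraction-beta} with respect to $\beta_\varepsilon$ thus kills every term except two, yielding $\partial_{\beta_\varepsilon}\omega_c(G)=\tfrac12\omega_c(G_{\setminus\varepsilon})$, which is the claim.

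\emph{Main obstacle.} The only real subtlety is justifying that the values of $\omega_c$ on $G_{\bar\varepsilon}$, $G\cocont{\varepsilon}$ and $G_{\setminus\varepsilon}$ are genuinely polynomials free of $\beta_\varepsilon$. This follows from the polynomial viewpoint remark: each value lies in $R_H$ for the corresponding graph $H$, and $R_H$ is generated by that graph's surviving edge parameters. It also uses that Theorem~\ref{thm:omega-two-vertex-contraction} holds as a polynomial identity with $\beta_{\bar\varepsilon}$ independent, which is how it was stated. If $G$ is disconnected, both sides of \eqref{eq:omega-PDE} vanish when $G_{\setminus\varepsilon}$ is disconnected. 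This is consistent with the argument above, which handles all cases uniformly, so no separate treatment is needed.
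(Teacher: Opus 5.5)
Your proof is correct and follows the paper's argument. For a non-tadpole edge you differentiate the two-vertex contraction rule in $\beta_\varepsilon$, and for a tadpole you use tadpole factorization; the only difference is that the paper disposes of disconnected $G$ up front, whereas you rightly note that both the contraction rule and tadpole factorization already cover that case.
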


\begin{proof}
If $G$ is disconnected, then $\omega_c(G)=0$. Since removing an edge
cannot join distinct connected components, $G_{\setminus\varepsilon}$
is also disconnected, and hence both sides of
\eqref{eq:omega-PDE} vanish. We may therefore assume that $G$ is
connected.

Suppose first that $\varepsilon$ is not a tadpole. The two-vertex
contraction rule \eqref{eq:omega_contraction-beta} gives
\begin{equation}
\label{eq:PDE-two-vertex}
    \omega_c(G)
    +\omega_c(G_{\bar\varepsilon})
    +\omega_c(G\cocont{\varepsilon})
    =
    \frac{\beta_\varepsilon+\beta_{\bar\varepsilon}}{2}\,
    \omega_c(G_{\setminus\varepsilon}).
\end{equation}
In this expression, the only function involving $\beta_\varepsilon$ is $\omega_c(G)$. The other terms do not involve $\varepsilon$ and hence are independent of $\beta_\varepsilon$. Thus differentiating the two sides with respect to $\beta_\varepsilon$ gives \eqref{eq:omega-PDE}. 

If $\varepsilon$ is a tadpole, the tadpole factorization (Lemma \ref{lem:omega-tadpole-factorization}) gives
\[
    \omega_c(G)
    =
    \frac{\beta_\varepsilon-1}{2}\,
    \omega_c(G_{\setminus\varepsilon}).
\]
Since $G_{\setminus\varepsilon}$ does not contain $\varepsilon$,
differentiating with respect to $\beta_\varepsilon$ again yields
\eqref{eq:omega-PDE}.
\end{proof}

\begin{example}
Consider the four-vertex, three-loop graph $G$ in
Fig.~\ref{fig:refined-beta3}. Its connected 
$\omega$-function is
\begin{align}
\label{max-4-omega}
\begin{split}
\omega_c(G)
={}&
-\frac{1}{48}
(\beta_{13}+\beta_{24}+\beta_{23})
-\frac{1}{48}
(\beta_{12}\beta_{14}\beta_{24}
+\beta_{13}\beta_{14}\beta_{34})
\\
&-\frac{1}{96}
\bigl(
3\beta_{13}\beta_{24}\beta_{14}
+\beta_{12}\beta_{23}\beta_{34}
+\beta_{13}\beta_{23}\beta_{24}
+\beta_{12}\beta_{14}\beta_{34}
\\
&\hspace{1.1cm}
+\beta_{13}\beta_{24}\beta_{34}
+\beta_{12}\beta_{14}\beta_{23}
+\beta_{12}\beta_{13}\beta_{24}
+\beta_{14}\beta_{23}\beta_{34}
\\
&\hspace{1.1cm}
+\beta_{13}\beta_{23}\beta_{14}
+\beta_{12}\beta_{24}\beta_{34}
+\beta_{14}\beta_{23}\beta_{24}
+\beta_{12}\beta_{13}\beta_{34}
\bigr).
\end{split}
\end{align}

\begin{figure}[htbp]
    \centering
    \includegraphics[width=0.5\linewidth]{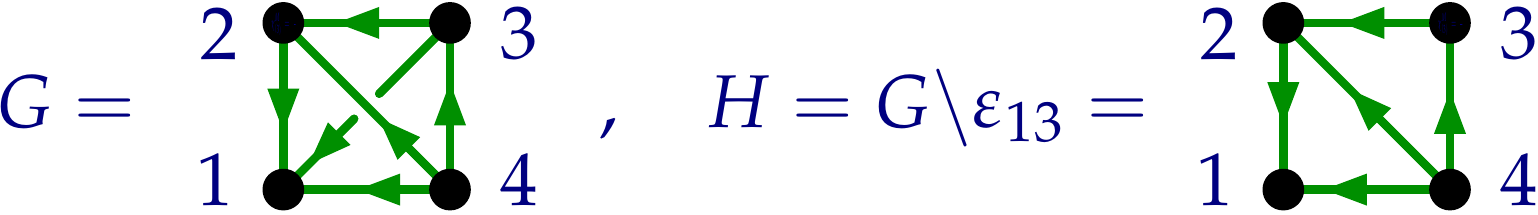}
    \caption{A graph with 4 vertices and 3 loops, and one of its subgraphs. The edge $j \to i$ is denoted by $\varepsilon_{ij}$ and $\beta_{ij} = \beta_{\varepsilon_{ij}}$.}
    \label{fig:refined-beta3}
\end{figure}

Let $H=G_{\backslash\varepsilon_{13}}$.
Direct differentiation of \eqref{max-4-omega} gives
\begin{align}
\begin{split}
2\frac{\partial}{\partial\beta_{13}}\omega(G)
={}&
-\frac{1}{48}
\bigl(
2
+3\beta_{14}\beta_{24}
+2\beta_{14}\beta_{34}
+\beta_{14}\beta_{23}
+\beta_{12}\beta_{24}
+\beta_{23}\beta_{24}
+\beta_{12}\beta_{34}
+\beta_{24}\beta_{34}
\bigr).
\end{split}
\end{align}
This agrees with $\omega_c(H)$. 
\end{example}

\subsection{Mixed derivatives and edge deletions}

We next consider mixed derivatives with respect to the parameters of
several distinct edges.

\begin{corollary}
\label{cor:omega-multiple-PDE}
Let $A=\{\varepsilon_1,\ldots,\varepsilon_m\}\subseteq E_G$
be a set of distinct edges. For any ordering
$\varepsilon_{i_1},\ldots,\varepsilon_{i_m}$ of the edges in $A$,
\begin{equation}
\label{eq:omega-multiple-PDE}
    2^m
    \frac{\partial}{\partial\beta_{\varepsilon_{i_m}}}
    \cdots
    \frac{\partial}{\partial\beta_{\varepsilon_{i_1}}}
    \omega_c(G)
    =
    \omega_c(G_{\setminus A}).
\end{equation}
\end{corollary}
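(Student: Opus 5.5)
The plan is to argue by induction on $m=|A|$, using Proposition~\ref{prop:omega-PDE} as both the base case and the inductive step. For $m=0$ the statement is trivial, and for $m=1$ it is exactly \eqref{eq:omega-PDE}. Fix an ordering $\varepsilon_{i_1},\ldots,\varepsilon_{i_m}$ and set $A'=\{\varepsilon_{i_1},\ldots,\varepsilon_{i_{m-1}}\}$. By the induction hypothesis,
\[
2^{m-1}\frac{\partial}{\partial\beta_{\varepsilon_{i_{m-1}}}}\cdots\frac{\partial}{\partial\beta_{\varepsilon_{i_1}}}\omega_c(G)=\omega_c(G_{\setminus A'}).
\]
This is an identity in the polynomial ring $R_G$. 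We will differentiate both sides with respect to $\beta_{\varepsilon_{i_m}}$ and multiply by $2$.

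The key observation is that $\varepsilon_{i_m}\in E_{G_{\setminus A'}}$, since the edges of $A$ are distinct. The surviving edges of $G_{\setminus A'}$ retain their original independent parameters, as in the convention that $R_{G_{\setminus A'}}$ is a polynomial subring of $R_G$. We can therefore apply Proposition~\ref{prop:omega-PDE} to the quiver $G_{\setminus A'}$ and its edge $\varepsilon_{i_m}$. That proposition holds for arbitrary quivers, including disconnected ones and tadpole edges. It gives
\[
2\frac{\partial}{\partial\beta_{\varepsilon_{i_m}}}\omega_c(G_{\setminus A'})=\omega_c\bigl((G_{\setminus A'})_{\setminus\varepsilon_{i_m}}\bigr)=\omega_c(G_{\setminus A}).
\]
Combining the two displays yields \eqref{eq:omega-multiple-PDE}.

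Independence of the ordering follows automatically, since the right-hand side $\omega_c(G_{\setminus A})$ depends only on the set $A$. Alternatively, partial derivatives commute on polynomials, and Proposition~\ref{prop:omega-PDE} shows that $\omega_c(G)$ is polynomial in the edge parameters.

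The only point needing care is bookkeeping rather than a real obstacle. Deleting edges must not alter the parameters attached to the remaining edges, so that the derivative taken in $R_G$ agrees with the one taken in the subring $R_{G_{\setminus A'}}$. Both facts are part of the paper's stated convention.
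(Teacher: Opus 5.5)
Your proposal is correct and is essentially the paper's argument: apply Proposition~\ref{prop:omega-PDE} one edge at a time, and get order-independence from the fact that $G_{\setminus A}$ depends only on the set $A$. Your induction on $m$ simply formalizes the paper's ``after the first $r$ derivatives'' step. One minor quibble, which does not affect your proof: polynomiality of $\omega_c(G)$ in the $\beta_\varepsilon$ comes from the convolution recursion \eqref{omega-Hopf-cycle}, not from Proposition~\ref{prop:omega-PDE}.
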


\begin{proof}
Apply Proposition~\ref{prop:omega-PDE} successively to
$\varepsilon_{i_1},\ldots,\varepsilon_{i_m}$. After the first $r$
derivatives,
\[
    2^r
    \frac{\partial}{\partial\beta_{\varepsilon_{i_r}}}
    \cdots
    \frac{\partial}{\partial\beta_{\varepsilon_{i_1}}}
    \omega_c(G)
    =
    \omega_c\bigl(
        G_{\setminus\{\varepsilon_{i_1},\ldots,\varepsilon_{i_r}\}}
    \bigr).
\]
Taking $r=m$ gives \eqref{eq:omega-multiple-PDE}. Since removing
distinct edges is independent of their order, the right-hand side is
$\omega_c(G_{\setminus A})$ for every ordering of the edges in $A$.
\end{proof}

In particular, the mixed derivatives with respect to distinct edge
parameters are independent of the order of differentiation.

\subsection{Multiaffinity and the loop number}

Recall that a polynomial $F(x_1,\ldots,x_n)$ is called
\emph{multiaffine} if it has degree at most one in each variable.

\begin{corollary}
\label{cor:omega-multiaffine}
Let $G$ be a quiver. For every edge $\varepsilon\in E_G$,
\begin{equation}
\label{eq:omega-affine}
    \frac{\partial^2}{\partial\beta_\varepsilon^2}\omega_c(G)=0.
\end{equation}
Consequently, $\omega_c(G)$ is multiaffine in the edge parameters.
\end{corollary}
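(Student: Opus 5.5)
The plan is to apply Proposition~\ref{prop:omega-PDE} twice to the same edge. Fix $\varepsilon\in E_G$. By \eqref{eq:omega-PDE},
\[
    2\frac{\partial}{\partial\beta_\varepsilon}\omega_c(G)=\omega_c(G_{\setminus\varepsilon}).
\]
The right-hand side is the connected $\omega$-function of a quiver whose edge set no longer contains $\varepsilon$. The parameters of the surviving edges are exactly those inherited from $G$, and we regard $R_{G_{\setminus\varepsilon}}$ as a polynomial subring of $R_G$. Hence $\omega_c(G_{\setminus\varepsilon})$ does not depend on $\beta_\varepsilon$. Differentiating once more with respect to $\beta_\varepsilon$ gives \eqref{eq:omega-affine}.

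The one point to check is that $\omega_c(G_{\setminus\varepsilon})$ really lies in $R_{G_{\setminus\varepsilon}}$. In other words, no hidden $\beta_\varepsilon$-dependence can enter through the recursion \eqref{omega-Hopf-cycle}. I would verify this by induction on $|E_G|$ using that recursion.
\begin{itemize}
\item Every term in the recursion involves $e$ or $\omega$ evaluated on cut-subquivers and contractions of $G_{\setminus\varepsilon}$.
\item The edges of these graphs are subsets of $E_G\setminus\{\varepsilon\}$.
\item The $e$-function depends on parameters only through the tadpoles of its argument, by \eqref{eq:e-prescription}.
\end{itemize}
Hence only parameters of edges actually present in the graph ever appear. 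This check also covers the case where $\varepsilon$ is a tadpole, since Proposition~\ref{prop:omega-PDE} already treats that case through Lemma~\ref{lem:omega-tadpole-factorization}.

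For multiaffinity, the same inductive observation shows that $\omega_c(G)$ is a polynomial in the finitely many variables $\{\beta_\delta\}_{\delta\in E_G}$. A polynomial whose second partial derivative in each variable vanishes has degree at most one in each variable, so $\omega_c(G)$ is multiaffine. I would also note that Corollary~\ref{cor:omega-multiple-PDE} is consistent with this: it only involves derivatives in distinct edges, and it gives the coefficient of each squarefree monomial $\prod_{\delta\in A}\beta_\delta$ as $2^{-|A|}\omega_c(G_{\setminus A})|_{\boldsymbol\beta=0}$.

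No step here is a real obstacle. The only subtlety is the variable-independence bookkeeping above, which follows from the polynomial-ring convention already fixed in the paper.
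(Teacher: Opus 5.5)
Your proposal is correct and follows the paper's proof: apply Proposition~\ref{prop:omega-PDE} once, observe that $\omega_c(G_{\setminus\varepsilon})$ does not depend on $\beta_\varepsilon$, and differentiate a second time. Your inductive check that no hidden $\beta_\varepsilon$-dependence enters through \eqref{omega-Hopf-cycle} makes explicit the polynomial-ring convention for $R_G$ that the paper uses without further comment.
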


\begin{proof}
By Proposition~\ref{prop:omega-PDE},
\[
    2\frac{\partial}{\partial\beta_\varepsilon}\omega_c(G)
    =
    \omega_c(G_{\setminus\varepsilon}).
\]
Since $G_{\setminus\varepsilon}$ no longer contains the edge
$\varepsilon$, the polynomial
$\omega_c(G_{\setminus\varepsilon})$ does not depend on
$\beta_\varepsilon$. Differentiating once more with respect to
$\beta_\varepsilon$ gives \eqref{eq:omega-affine}.
\end{proof} 

For a connected graph $G$, let
\[
    L(G):=|E_G|-|V_G|+1
\]
denote its loop number.

For a polynomial $f$ in the edge parameters
$\boldsymbol\beta=\{\beta_\varepsilon\}_{\varepsilon\in E_G}$,
we write $\deg_{\boldsymbol\beta} f$ for its total degree.

\begin{corollary}
\label{cor:omega-loop-degree}
If $G$ is connected, then
\begin{equation}
\label{eq:omega-loop-degree}
    \deg_{\boldsymbol\beta}\omega_c(G)
    \leq L(G).
\end{equation}
\end{corollary}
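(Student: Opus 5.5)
We argue by induction on $L(G)$, using the mixed-derivative formula of Corollary~\ref{cor:omega-multiple-PDE} together with multiaffinity (Corollary~\ref{cor:omega-multiaffine}). Since $\omega_c(G)$ is multiaffine, it is a linear combination of square-free monomials $\prod_{\varepsilon\in A}\beta_\varepsilon$ with $A\subseteq E_G$. The coefficient of such a monomial equals
\[
    \frac{\partial^{|A|}}{\prod_{\varepsilon\in A}\partial\beta_\varepsilon}\,\omega_c(G)\Big|_{\boldsymbol\beta=\mathbf 0},
\]
and by \eqref{eq:omega-multiple-PDE} this is
\[
    2^{-|A|}\,\omega_c(G_{\setminus A})\big|_{\boldsymbol\beta=\mathbf 0}.
\]
Hence a monomial indexed by $A$ can occur only if $G_{\setminus A}$ is connected, because $\omega_c$ vanishes on disconnected graphs.

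The key step is therefore purely graph-theoretic. If $G$ is connected and $G_{\setminus A}$ is also connected (it retains all vertices $V_G$), then $G_{\setminus A}$ contains a spanning tree of $G$. This forces
\[
    |E_G|-|A|\ \ge\ |V_G|-1,
\]
equivalently $|A|\le |E_G|-|V_G|+1=L(G)$. Tadpoles need no separate treatment: deleting a tadpole never disconnects the graph, and a tadpole contributes $1$ to $L(G)$, so the count is consistent.

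Combining the two observations, every monomial appearing in $\omega_c(G)$ has degree at most $L(G)$, which gives \eqref{eq:omega-loop-degree}. No induction is actually needed; the argument reduces to reading off Taylor coefficients of a multiaffine polynomial.

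The only point needing care is the coefficient extraction. A multiaffine polynomial in $\boldsymbol\beta$ equals its Taylor expansion at $\mathbf 0$, and the coefficient of $\prod_{\varepsilon\in A}\beta_\varepsilon$ is the iterated derivative evaluated at $\mathbf 0$. This step is immediate, so I expect no serious obstacle.
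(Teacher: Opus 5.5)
Your argument is correct and is essentially the paper's: both combine multiaffinity (Corollary~\ref{cor:omega-multiaffine}) with the mixed-derivative identity of Corollary~\ref{cor:omega-multiple-PDE} and the fact that deleting too many edges disconnects $G$. The paper instead shows by contradiction that every mixed derivative of order $L(G)+1$ vanishes. You read off the Taylor coefficients at $\boldsymbol\beta=\mathbf 0$. This handles the one-vertex case uniformly and gives the slightly sharper conclusion that only monomials indexed by $A$ with $G_{\setminus A}$ connected can occur. The opening mention of induction is superfluous, as you yourself note.
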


\begin{proof}
If $|V_G|=1$, then every edge is a tadpole and $L(G)=|E_G|$.
The assertion follows immediately from the multiaffinity of
$\omega_c(G)$ in Corollary~\ref{cor:omega-multiaffine}.

Suppose now that $|V_G|\geq2$, and let
$A\subseteq E_G$, $|A|=L(G)+1$.
Since removing the edges in $A$ retains all vertices,
\[
    |E_{G_{\setminus A}}|
    =
    |E_G|-L(G)-1
    =
    |V_G|-2.
\]
Thus $G_{\setminus A}$ is disconnected, and hence
$\omega_c(G_{\setminus A})=0$.
Applying Corollary~\ref{cor:omega-multiple-PDE} gives
\begin{align} \label{eq:mixed derivative of order LGplusOne vanishes}
   2^{L(G)+1}
    \frac{\partial}{\partial\beta_{\varepsilon_{L(G)+1}}}
    \cdots
    \frac{\partial}{\partial\beta_{\varepsilon_1}}
    \omega_c(G)
    =
    \omega_c(G_{\setminus A})
    =
    0 
\end{align} 
for every set
$A=\{\varepsilon_1,\ldots,\varepsilon_{L(G)+1}\}\subseteq E_G$.

Since $\omega_c(G)$ is multiaffine, a monomial of total degree
greater than $L(G)$ would involve at least $L(G)+1$ distinct edge
parameters. Choosing these parameters as a subset $A$ would give a
nonzero mixed derivative of order $L(G)+1$, contradicting the
vanishing in (\ref{eq:mixed derivative of order LGplusOne vanishes}). Hence $\deg_{\boldsymbol\beta}\omega_c(G)\leq L(G)$.
\end{proof}

\subsection{Monochrome differential equation}

We now specialize all edge parameters to a single parameter $\beta$,
namely,
\[
    \beta_\varepsilon=\beta,
    \qquad
    \varepsilon\in E_G,
\]
and write
\[
    \omega_c^{\mathrm{mono}}(G;\beta)
    :=
    \left.
    \omega_c(G)
    \right|_{\beta_\varepsilon=\beta
    \text{ for all }\varepsilon\in E_G}.
\]

\begin{corollary}
\label{cor:omega-monochrome-PDE}
For every graph $G$,
\begin{equation}
\label{eq:omega-monochrome-PDE}
    2\frac{d}{d\beta}
    \omega_c^{\mathrm{mono}}(G;\beta)
    =
    \sum_{\varepsilon\in E_G}
    \omega_c^{\mathrm{mono}}
    (G_{\setminus\varepsilon};\beta).
\end{equation}
\end{corollary}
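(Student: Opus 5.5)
This is the chain rule applied to Proposition~\ref{prop:omega-PDE}. Regard $\omega_c(G)$ as a polynomial in the independent indeterminates $\{\beta_\varepsilon\}_{\varepsilon\in E_G}$. The monochrome specialization is the composition of $\omega_c(G)$ with the diagonal map $\beta\mapsto(\beta,\ldots,\beta)$. The chain rule for polynomials then gives
\[
    \frac{d}{d\beta}\omega_c^{\mathrm{mono}}(G;\beta)
    =
    \sum_{\varepsilon\in E_G}
    \left.
    \frac{\partial\omega_c(G)}{\partial\beta_\varepsilon}
    \right|_{\beta_{\varepsilon'}=\beta\text{ for all }\varepsilon'\in E_G}.
\]
This identity is purely formal, since the chain rule for polynomial maps holds over any field of characteristic zero.

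Next, multiply by $2$ and apply \eqref{eq:omega-PDE} term by term. This replaces each summand $2\,\partial_{\beta_\varepsilon}\omega_c(G)$ by $\omega_c(G_{\setminus\varepsilon})$. Proposition~\ref{prop:omega-PDE} covers every edge, including tadpoles, and both connected and disconnected $G$, so no case split is needed.

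The one point to check is that specializing $\omega_c(G_{\setminus\varepsilon})$ at $\beta_{\varepsilon'}=\beta$ for all $\varepsilon'\in E_G$ gives exactly $\omega_c^{\mathrm{mono}}(G_{\setminus\varepsilon};\beta)$. This holds because $\omega_c(G_{\setminus\varepsilon})$ lies in the subring $R_{G_{\setminus\varepsilon}}\subseteq R_G$: its surviving edges keep their original parameters, and it does not involve $\beta_\varepsilon$. Summing over $\varepsilon\in E_G$ then yields \eqref{eq:omega-monochrome-PDE}.

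The statement is essentially immediate, so there is no real obstacle. The only care needed is in the bookkeeping: the edges of $G_{\setminus\varepsilon}$ must be identified with $E_G\setminus\{\varepsilon\}$, so that the specializations agree and parallel edges are counted separately in the sum.
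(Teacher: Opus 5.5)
Your proposal is correct and follows the paper's proof: the chain rule along the diagonal $\beta_\varepsilon=\beta$, then Proposition~\ref{prop:omega-PDE} applied to each edge, then specialization of the remaining parameters to $\beta$. Your remark that $\omega_c(G_{\setminus\varepsilon})$ lies in $R_{G_{\setminus\varepsilon}}\subseteq R_G$ spells out a bookkeeping step that the paper leaves implicit.
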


\begin{proof}
By the chain rule,
\[
    \frac{d}{d\beta}
    \omega_c^{\mathrm{mono}}(G;\beta)
    =
    \left.
    \sum_{\varepsilon\in E_G}
    \frac{\partial}{\partial\beta_\varepsilon}
    \omega_c(G)
    \right|_{\beta_\eta=\beta
    \text{ for all }\eta\in E_G}.
\]
By Proposition~\ref{prop:omega-PDE},
\[
    2\frac{\partial}{\partial\beta_\varepsilon}\omega_c(G)
    =
    \omega_c(G_{\setminus\varepsilon})
\]
for every $\varepsilon\in E_G$. Specializing all remaining edge
parameters to $\beta$ therefore gives
\[
    2\frac{d}{d\beta}
    \omega_c^{\mathrm{mono}}(G;\beta)
    =
    \sum_{\varepsilon\in E_G}
    \omega_c^{\mathrm{mono}}
    (G_{\setminus\varepsilon};\beta),
\]
which is \eqref{eq:omega-monochrome-PDE}.
\end{proof}

The higher derivatives admit a similar description. For
$0\leq m\leq |E_G|$, define
\begin{equation}
\label{eq:Dm-cut-subquivers}
    D_m(G)
    :=
    \left\{
        G_{\setminus A}
        \,\middle|\,
        A\subseteq E_G,\ |A|=m
    \right\}.
\end{equation}
Thus $D_m(G)$ consists of the subgraphs obtained from $G$ by deleting
exactly $m$ edges while retaining all vertices. In particular,
$D_0(G)=\{G\}$.

\begin{corollary}
\label{cor:omega-monochrome-higher}
For $0\leq m\leq |E_G|$,
\begin{equation}
\label{eq:omega-monochrome-higher}
    \frac{2^m}{m!}
    \frac{d^m}{d\beta^m}
    \omega_c^{\mathrm{mono}}(G;\beta)
    =
    \sum_{K\in D_m(G)}
    \omega_c^{\mathrm{mono}}(K;\beta).
\end{equation}
\end{corollary}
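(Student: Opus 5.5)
We prove \eqref{eq:omega-monochrome-higher} by combining the chain rule with the mixed-derivative formula of Corollary~\ref{cor:omega-multiple-PDE}, using the multiaffinity of Corollary~\ref{cor:omega-multiaffine} to discard repeated derivatives.

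Let $F(\boldsymbol\beta):=\omega_c(G;\boldsymbol\beta)$, a polynomial in the independent variables $\{\beta_\varepsilon\}_{\varepsilon\in E_G}$. Then $\omega_c^{\mathrm{mono}}(G;\beta)=F(\beta,\ldots,\beta)$. Iterating the chain rule $m$ times gives
\[
    \frac{d^m}{d\beta^m}\omega_c^{\mathrm{mono}}(G;\beta)
    =
    \sum_{(\varepsilon_1,\ldots,\varepsilon_m)\in E_G^m}
    \left.
    \frac{\partial}{\partial\beta_{\varepsilon_m}}\cdots
    \frac{\partial}{\partial\beta_{\varepsilon_1}}F
    \right|_{\beta_\eta=\beta}.
\]
Here the sum runs over all ordered $m$-tuples of edges, repetitions allowed. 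Since the partial derivatives commute on polynomials, any tuple with a repeated edge $\varepsilon$ contributes a factor $\partial^2 F/\partial\beta_\varepsilon^2$. Such terms vanish by Corollary~\ref{cor:omega-multiaffine}. The surviving tuples consist of distinct edges, and they are exactly the $m!$ orderings of each $m$-element subset $A\subseteq E_G$.

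By Corollary~\ref{cor:omega-multiple-PDE}, each ordering of $A$ contributes $2^{-m}\omega_c(G_{\setminus A})$. Hence
\[
    \frac{d^m}{d\beta^m}\omega_c^{\mathrm{mono}}(G;\beta)
    =
    \frac{m!}{2^m}\sum_{|A|=m}\omega_c^{\mathrm{mono}}(G_{\setminus A};\beta),
\]
where we use that $\omega_c(G_{\setminus A})$ involves only the parameters of the surviving edges, so specializing those to $\beta$ gives the monochrome value. Multiplying by $2^m/m!$ yields \eqref{eq:omega-monochrome-higher}. The case $m=0$ is trivial.

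The main point to handle carefully is the meaning of $D_m(G)$. It must be read as indexed by edge subsets $A$, that is, as a multiset, rather than as a set of isomorphism classes. Distinct subsets $A$ can produce isomorphic graphs, and each must be counted separately for the identity to hold. With this reading the sum over $K\in D_m(G)$ is the sum over $|A|=m$ above.

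Alternatively, one can induct on $m$ using Corollary~\ref{cor:omega-monochrome-PDE}. Applying it to each $G_{\setminus A}$ with $|A|=m$ produces every $(m+1)$-subset exactly $m+1$ times, which accounts for the factor $\frac{2}{m+1}$ passing from $\frac{2^m}{m!}$ to $\frac{2^{m+1}}{(m+1)!}$. This gives the same result without invoking multiaffinity directly.
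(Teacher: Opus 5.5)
Your argument is correct and is essentially the paper's own proof. Both use the chain rule, then multiaffinity (Corollary~\ref{cor:omega-multiaffine}) to discard tuples with a repeated edge, then the $m!$ orderings of each $m$-subset $A$, then Corollary~\ref{cor:omega-multiple-PDE} for each ordering. Your remark that $D_m(G)$ must be read as indexed by the edge subsets $A$, rather than by isomorphism classes, is the right reading of the paper's final step. Your alternative induction via Corollary~\ref{cor:omega-monochrome-PDE}, which produces each $(m+1)$-subset exactly $m+1$ times, is also valid.
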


\begin{proof}
Repeated application of the chain rule gives
\[
    \frac{d^m}{d\beta^m}
    \omega_c^{\mathrm{mono}}(G;\beta)
    =
    \left.
    \sum_{\varepsilon_1,\ldots,\varepsilon_m\in E_G}
    \frac{\partial}{\partial\beta_{\varepsilon_m}}
    \cdots
    \frac{\partial}{\partial\beta_{\varepsilon_1}}
    \omega_c(G)
    \right|_{\beta_\eta=\beta
    \text{ for all }\eta\in E_G}.
\]
By Corollary~\ref{cor:omega-multiaffine}, every term in which an edge
occurs more than once vanishes. Hence only ordered $m$-tuples of
distinct edges contribute.
Each subset
\[
    A=\{\varepsilon_1,\ldots,\varepsilon_m\}\subseteq E_G,
    \qquad |A|=m,
\]
occurs exactly $m!$ times among these ordered tuples. By
Corollary~\ref{cor:omega-multiple-PDE}, for every ordering of the
edges in $A$,
\[
    \frac{\partial}{\partial\beta_{\varepsilon_m}}
    \cdots
    \frac{\partial}{\partial\beta_{\varepsilon_1}}
    \omega_c(G)
    =
    \frac{1}{2^m}\omega_c(G_{\setminus A}).
\]
After specializing the remaining edge parameters to $\beta$, we
therefore obtain
\[
    \frac{d^m}{d\beta^m}
    \omega_c^{\mathrm{mono}}(G;\beta)
    =
    \frac{m!}{2^m}
    \sum_{\substack{A\subseteq E_G\\ |A|=m}}
    \omega_c^{\mathrm{mono}}(G_{\setminus A};\beta).
\]
By the definition of $D_m(G)$, this is equivalent to
\eqref{eq:omega-monochrome-higher}.
\end{proof}

\subsection{Characterization and uniqueness of
\texorpdfstring{$\omega_c$}{omega-c}}
\label{subsec:uniqueness-from-contraction}

Recall that $\beta_{\varepsilon,-}=\frac{\beta_\varepsilon-1}{2}$.
\begin{theorem}
\label{thm:master-uniqueness}
Let $F_1$ and $F_2$ be polynomial-valued functions on finite quivers, with independent parameters attached to the edges. Assume
that:
\begin{enumerate}
\item
$F_1$ and $F_2$ vanish on disconnected graphs;

\item
both functions satisfy the two-vertex contraction rule;

\item
for every tadpole $\varepsilon\in E_G$,
\begin{equation}
\label{eq:F-master-tadpole}
    F_1(G)
    =
    \beta_{\varepsilon,-}F_1(G_{\setminus\varepsilon}),
    \qquad
    F_2(G)
    =
    \beta_{\varepsilon,-}F_2(G_{\setminus\varepsilon});
\end{equation}

\item
for every connected quiver $G$,
\begin{equation}
\label{eq:F-master-boundary}
    F_1(G;\mathbf1)
    =
    F_2(G;\mathbf1).
\end{equation}
\end{enumerate}
Then $F_1(G)=F_2(G)$
for every quiver $G$.
\end{theorem}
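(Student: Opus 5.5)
Set $D:=F_1-F_2$. Each of the hypotheses (1)--(3) is linear in $F$, so $D$ also vanishes on disconnected graphs, satisfies the two-vertex contraction rule \eqref{eq:omega_contraction-beta}, and satisfies the tadpole factorization \eqref{eq:F-master-tadpole}. Hypothesis (4) gives $D(G;\mathbf 1)=0$ for every connected quiver $G$. We show $D(G)=0$ for every quiver $G$ by induction on $|E_G|$.

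If $G$ is disconnected, then $D(G)=0$ by (1). If $G$ has no edges and is connected, then $G=\bullet$. In that case $D(\bullet)$ has no parameters, so it equals its boundary value, which is $0$. Similarly, if $G$ is connected and contains a tadpole $\varepsilon$, then
$D(G)=\beta_{\varepsilon,-}D(G_{\setminus\varepsilon})$, and $D(G_{\setminus\varepsilon})=0$ by induction. This leaves the case where $G$ is connected, tadpole-free, and has $|E_G|\ge1$.

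In this case we first obtain the differential equation. As in the proof of Proposition~\ref{prop:omega-PDE}, take the two-vertex contraction rule for $D$ at an edge $\varepsilon$. The terms $D(G_{\bar\varepsilon})$, $D(G\cocont{\varepsilon})$ and $D(G_{\setminus\varepsilon})$ do not involve $\beta_\varepsilon$, because $\bar\varepsilon$ carries the independent parameter $\beta_{\bar\varepsilon}$ and $\varepsilon$ has been removed in the other two graphs. Differentiating therefore gives
\[
2\frac{\partial}{\partial\beta_\varepsilon}D(G)=D(G_{\setminus\varepsilon}).
\]
The graph $G_{\setminus\varepsilon}$ has fewer edges, so $D(G_{\setminus\varepsilon})=0$ by the induction hypothesis. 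Hence $\partial D(G)/\partial\beta_\varepsilon=0$ for every $\varepsilon\in E_G$.

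A polynomial in the variables $\{\beta_\varepsilon\}_{\varepsilon\in E_G}$ whose partial derivatives all vanish is constant, since we are in characteristic zero. Therefore $D(G)=D(G;\mathbf 1)=0$ by (4). This completes the induction.

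The one step that needs care is the claim that $D(G_{\bar\varepsilon})$ and $D(G\cocont{\varepsilon})$ are independent of $\beta_\varepsilon$. This depends on the convention, fixed in the notation for local modifications, that the reversed edge receives a new independent variable $\beta_{\bar\varepsilon}$. It also depends on reading $F_1,F_2$ as functions whose values on a graph are polynomials in the parameters of that graph's own edges only. Once both points are stated explicitly, the argument contains no further obstacle.
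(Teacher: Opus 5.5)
Your proof is correct and follows the paper's argument. You set $D=F_1-F_2$, induct on $|E_G|$, use the two-vertex rule to get $2\partial_{\beta_\varepsilon}D(G)=D(G_{\setminus\varepsilon})=0$, and conclude by evaluating at $\boldsymbol\beta=\mathbf1$. The only cosmetic difference is that you handle tadpoles directly via $D(G)=\beta_{\varepsilon,-}D(G_{\setminus\varepsilon})=0$, whereas the paper first turns the tadpole factorization into the same differential equation.
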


\begin{proof}
Set $D:=F_1-F_2$
and argue by induction on $|E_G|$. If $E_G=\emptyset$, then either
$G=\bullet$, in which case \eqref{eq:F-master-boundary} gives
$D(G)=0$, or $G$ is disconnected, in which case $D(G)=0$ by
assumption.

Suppose now that $|E_G|>0$ and that the result holds for all graphs
with fewer edges. There is nothing to prove if $G$ is disconnected,
so assume that $G$ is connected and fix an edge
$\varepsilon:j\to i$.

Suppose first that $\varepsilon$ is not a tadpole, and put
$H:=G_{\setminus\varepsilon}$.
Thus $G=H_{[i\leftarrow j]}$.
Let $\bar\varepsilon:i\to j$ denote the distinguished edge with the
opposite orientation, carrying the independent parameter
$\beta_{\bar\varepsilon}$. The two-vertex contraction rule (Theorem \ref{thm:omega-two-vertex-contraction}) for $F_1$
gives
\[
    F_1(H_{[i\leftarrow j]})
    +
    F_1(H_{[i\to j]})
    +
    F_1(H_{[i\cdot j]})
    =
    \frac{\beta_\varepsilon+\beta_{\bar\varepsilon}}{2}\,
    F_1(H).
\]
Among the three terms on the left-hand side, only
$F_1(H_{[i\leftarrow j]})=F_1(G)$ depends on
$\beta_\varepsilon$. Differentiating the above equation with respect to
$\beta_\varepsilon$ therefore gives
\[
    2\frac{\partial}{\partial\beta_\varepsilon}F_1(G)
    =
    F_1(G_{\setminus\varepsilon}).
\]
The same argument applies to $F_2$.

Suppose next that $\varepsilon$ is a tadpole. By
\eqref{eq:F-master-tadpole},
$F_1(G)
    =
    \beta_{\varepsilon,-}F_1(G_{\setminus\varepsilon})$.
Since $G_{\setminus\varepsilon}$ no longer contains the edge
$\varepsilon$, the polynomial $F_1(G_{\setminus\varepsilon})$ is
independent of $\beta_\varepsilon$. As $\frac{\partial}{\partial\beta_\varepsilon}
    \beta_{\varepsilon,-}
    =
    \frac12$,
we obtain
$2\frac{\partial}{\partial\beta_\varepsilon}F_1(G)
    =
    F_1(G_{\setminus\varepsilon})$.
The same argument gives
$2\frac{\partial}{\partial\beta_\varepsilon}F_2(G)
    =
    F_2(G_{\setminus\varepsilon})$.
Consequently, for every edge $\varepsilon\in E_G$,
\begin{equation}
\label{eq:D-master-differential}
    2\frac{\partial}{\partial\beta_\varepsilon}D(G)
    =
    D(G_{\setminus\varepsilon}).
\end{equation}
The graph $G_{\setminus\varepsilon}$ has fewer edges than $G$, so the
induction hypothesis gives $D(G_{\setminus\varepsilon})=0$.
Hence $\frac{\partial}{\partial\beta_\varepsilon}D(G)=0$
for every $\varepsilon\in E_G$. Thus $D(G)$ is independent of all
edge parameters. Evaluating at $\boldsymbol\beta=\mathbf1$ and using
\eqref{eq:F-master-boundary} gives $D(G)=D(G;\mathbf1)=0$.
This completes the induction.
\end{proof}

\begin{remark}
\label{rem:master-boundary-value}
The preceding theorem may be summarized as follows: the two-vertex
contraction rule determines the dependence on every non-tadpole edge
parameter, the tadpole factorization determines the dependence on
tadpole parameters, and the values at
$\boldsymbol\beta=\mathbf1$ determine the remaining
parameter-independent terms. 
\end{remark}

\subsection{The boundary value at
\texorpdfstring{$\boldsymbol\beta=\mathbf1$}{beta=1}}
\label{ss:beta-boundary}

We now relate the boundary values of $\omega_c$ at $\beta=1$ with the strict order polynomial of quivers, a notion generalized here from the usual case of posets. For the latter, we refer the reader to \cite{Chan-Pak-2025, richard2011enumerative} for details. 

Let $G$ be a quiver with $|V_G|=n$.
For a positive integer $r$, let $\operatorname{Sur}_r(G)$
denote the set of surjective maps
$f:V_G\longrightarrow[r]$
satisfying
\begin{equation}
\label{eq:strict-order-preserving-map}
    f(i)<f(j)
    \qquad
    \text{for every edge }j\to i.
\end{equation}
Set
\[
    N_r(G):=|\operatorname{Sur}_r(G)|.
\]
For every positive integer $q$, define the {\it strict order polynomial} of $G$ by 
\begin{equation}
\label{eq:strict-order-polynomial}
    \chi_G(q)
    :=
    \#\left\{
        f:V_G\longrightarrow[q]
        \,\middle|\,
        f(i)<f(j)
        \text{ for every edge }j\to i
    \right\}.
\end{equation}
The polynomial is often denoted by $\Omega^0_G(q)$ for a poset $G$ \cite{Chan-Pak-2025}. We avoid this notation since $\Omega$ has been reserved as the master function for $\omega_c$ in Theorem \ref{thm:general-master-formula}. As noted in Remark \ref{rk:quiver-vs-poset-e}, if $G$ is acyclic, its oriented
edges induce a partial order on $V_G$, and the linear extensions and
strict order polynomial of $G$ are precisely those of the underlying
reachability poset $P_G$.

As in the case of posets, every map counted by $\chi_G(q)$ has an image of some cardinality
$r$. Identifying its image increasingly with $[r]$ gives an element
of $\operatorname{Sur}_r(G)$. Hence
\begin{equation}
\label{eq:strict-order-polynomial-surjections}
    \chi_G(q)
    =
    \sum_{r=1}^n
    \binom{q}{r}N_r(G).
\end{equation} 
We write
\begin{equation}
\label{eq:chi-prime-definition}
    \chi_G'(0)
    :=
    \left.
    \frac{d}{dq}\chi_G(q)
    \right|_{q=0}.
\end{equation}
This is simply the coefficient of the linear term in $\chi_G$ which, in the case of posets, is closely related to $P$-partitions and quasisymmetric functions, and has attracted sustained interest in combinatorics, from the classical formula \cite{richard2011enumerative}
$$ \chi'_G(0)=\frac{1}{n} \sum_{\sigma\in \operatorname{LE}(G)}
\frac{(-1)^{n-1-k(\sigma)}}{\binom{n-1}k(\sigma)}, 
$$
where 
\begin{equation}
\label{eq:descent-number-master}
    k(\sigma)
    :=
    \#\left\{
        a\in\{1,\ldots,n-1\}
        \,\middle|\,
        \sigma_a>\sigma_{a+1}
    \right\}
\end{equation}
is the number of descents of $\sigma$, to the recent articles \cite{FMP26,Kah26,LL26}. In this regard, the following Lemma \ref{lem:omega-beta-one} is of independent interest since it gives a convolutional and thus Hopf algebraic interpretation of this linear coefficient. To avoid digressing further, we will present the proof with basic combinatorics and refer the reader to the appendix \ref{sec:qsym}.   

Since
$\left.
    \frac{d}{dq}\binom{q}{r}
    \right|_{q=0}
    =
    \frac{(-1)^{r-1}}{r}$,
we obtain
\begin{equation}
\label{eq:strict-order-polynomial-derivative}
    \chi_G'(0)
    =
    \sum_{r=1}^n
    \frac{(-1)^{r-1}}{r}N_r(G).
\end{equation}
If $G$ contains a directed cycle, including a tadpole, then
\eqref{eq:strict-order-preserving-map} cannot hold along that cycle.
Consequently,
\begin{equation}
\label{eq:chi-cyclic-zero}
    \chi_G(q)=0,
    \quad
    \chi_G'(0)=0, \quad \text{if $G$ contains a directed cycle.}
\end{equation}

\begin{lemma}
\label{lem:omega-beta-one}
For every quiver $G$ with $V_G\neq\emptyset$,
$\omega_c(G;\mathbf1)=\chi_G'(0)$.
\end{lemma}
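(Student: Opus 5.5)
We will prove that $F(G):=\chi_G'(0)$, extended by zero to disconnected graphs, satisfies the same convolution recursion as $\omega_c$ at $\boldsymbol\beta=\mathbf1$, and then conclude by induction on $|E_G|$. We work in the form of Lemma~\ref{lem:connected-projected-convolution}: for every quiver $G$ with $E_G\neq\emptyset$,
\[
\sum_{K\preceq G} e(K;\mathbf1)\,\omega_c(G\cocont K;\mathbf1)=0,
\]
and the term $K=G_\emptyset$ isolates $\omega_c(G;\mathbf1)$. At $\boldsymbol\beta=\mathbf1$ every tadpole factor $-\beta_{\varepsilon-}$ vanishes. Hence $e(K;\mathbf1)=e_0(K)$ when $K$ is tadpole-free, and $e(K;\mathbf1)=0$ otherwise; in either case this is $\chi$-type data. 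The base cases are immediate. For $G=\bullet$ we have $\chi_\bullet(q)=q$, so $\chi'(0)=1=\omega_c(\bullet)$. For an edgeless disconnected graph, $\chi=q^m$ with $m\ge2$, so $\chi'(0)=0$. The same vanishing holds for every disconnected $G$, because $\chi_{G_1\sqcup G_2}=\chi_{G_1}\chi_{G_2}$ and each factor vanishes at $q=0$.

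The key identity to establish is, for every connected $G$ with $E_G\neq\emptyset$,
\[
\sum_{K\preceq G} e_0(K)\,\chi'_{G\cocont K}(0)=0,
\]
with the convention $e_0(K)=0$ when $K$ has a tadpole. Since this relation determines its unknown uniquely by recursion in the number of edges, the lemma follows. To prove it, we work with the full polynomial. We claim that
\[
\sum_{K\preceq G} e_0(K)\,\chi_{G\cocont K}(q)
\]
equals the polynomial counting all maps $V_G\to[q]$, namely $q^{|V_G|}$ up to normalization. Differentiating at $q=0$ then kills this right side whenever $|V_G|\ge2$, which holds for connected non-tadpole graphs. Pure tadpole graphs are handled directly via Lemma~\ref{lem:omega-tadpole-factorization}, since both sides vanish at $\beta=1$.

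For the identity itself, we interpret $\chi_{G\cocont K}(q)$ as counting maps $f:V_G\to[q]$ that are constant on the components of $K$ and strictly decreasing along the edges of $E_G\setminus E_K$. We also interpret $e_0(K)$ as the probability that a uniformly random total order of $V_G$ is a linear extension of $K$. The sum should then telescope by an inclusion–exclusion over edges. Fix an arbitrary $f:V_G\to[q]$ together with a random tiebreak order. For each edge $j\to i$, exactly one of three things holds: $f(i)<f(j)$, or $f(i)=f(j)$ with the tiebreak agreeing, or the edge is violated. We must then match the weights.

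This matching of the weights is where we expect the main difficulty. The normalization $1/|V_G|!$ in $e_0$, combined with the contraction, does not obviously produce a clean bijection. If the direct bijection resists, we will fall back on the quasisymmetric route. We will write $\chi_G$ via $P$-partitions as the principal specialization of a quasisymmetric function, and write the convolution as a product of characters on $\QSym$, where $e$ corresponds to the $q\to$ "exponential" specialization and $\omega$ to its inverse, the logarithm. The linear coefficient in $q$ of $\ps(\cdot)$ is exactly the logarithm of the specialization character, and this gives $\omega=\frac{d}{dq}\big|_{0}$ on connected graphs. This is presumably the argument deferred to Appendix~\ref{sec:qsym}.
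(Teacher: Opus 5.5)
The overall strategy works, and the identity you aim for is true in the sharp form
\[
\sum_{K\preceq G} e(K;\mathbf1)\,\chi_{G\cocont K}(q)=e(G;\mathbf1)\,q^{|V_G|}
\]
for every quiver $G$; the ``normalization'' is $e(G;\mathbf1)$. Differentiating at $q=0$ would then give $\epsilon(G)$ on connected graphs, including the pure-tadpole case.

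The gap is that you never prove this identity. You flag the weight matching as the main difficulty and leave it open. The $\QSym$ fallback is not an argument either. The relation $\lambda=\log_*\zeta_Q$ holds for the deconcatenation convolution of $\QSym$, not for the restriction--contraction convolution that defines $\omega$. Transporting one to the other would need a Hopf morphism intertwining the two convolutions, which you do not construct. The paper's appendix offers the $\QSym$ picture only as an interpretation, and the paper proves this lemma directly in the main text. As written, the heart of the lemma is missing.

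The missing idea is that no inclusion--exclusion or telescoping occurs at all.
\begin{itemize}
\item \emph{$K$ is forced.} Let $f:V_G\to[q]$ be the pullback of a map counted by $\chi_{G\cocont K}(q)$. Every edge of $K$ has equal endpoint values, and every edge outside $K$ has strictly unequal ones (an edge that would become a tadpole is excluded). So $K$ must equal $K_f:=\{j\to i\in E_G\mid f(i)=f(j)\}$, and $f$ satisfies $f(i)\le f(j)$ on all edges. Conversely, every such $f$ arises from $K_f$.
\item \emph{Reduction.} Hence the left-hand side equals $\sum_f e(K_f;\mathbf1)$, summed over weakly order-preserving $f:V_G\to[q]$.
\item \emph{Cyclic case.} If $G$ has a tadpole or a directed cycle, then $K_f$ contains it. So every term vanishes, and so does $e(G;\mathbf1)$.
\item \emph{Acyclic case.} Here your tiebreak picture finishes the job. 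Let $\tau$ be a uniform total order of $V_G$, and let $\rho(f,\tau)$ list $V_G$ by increasing $f$ with ties broken by $\tau$. Then
\[
[f\text{ weakly order-preserving}]\cdot\Pr_\tau[\tau\in\operatorname{LE}(K_f)]=\Pr_\tau[\rho(f,\tau)\in\operatorname{LE}(G)].
\]
For independent uniform $(f,\tau)$ the order $\rho$ is uniform. Indeed, a fixed $\rho_0$ is produced by exactly $\sum_s n!/\prod_c s_c!=q^n$ pairs, where $s$ ranges over weak compositions of $n=|V_G|$ into $q$ parts. This gives $q^n\phi(G)/n!$.
\end{itemize}

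The paper uses the same bijection $(K,\bar f)\leftrightarrow f$, but only for surjections and directly at the level of $\chi'(0)=\sum_r\frac{(-1)^{r-1}}{r}N_r$. It concatenates linear extensions of the blocks to obtain $\phi(G)A_n$ with $\sum_nA_nx^n=\log(e^x)=x$. Your full-polynomial version, once completed, is equivalent: that generating function is the $q$-derivative at $0$ of $[x^n]e^{qx}=q^n/n!$. It also yields the slightly stronger identity displayed above.
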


\begin{proof}
Set
\[
    \mu(G):=\chi_G'(0)
    =\sum_{r=1}^{|V_G|}\frac{(-1)^{r-1}}{r}N_r(G).
\]
We show that, on connected graphs, $\mu$ satisfies the same convolution
relation as $\omega_c$ at $\boldsymbol\beta=\mathbf1$. If $G$ is connected,
then $G\cocont{K}$ is connected for every cut-subquiver $K\preceq G$.
Hence, on connected graphs, the convolution relation for $\omega$ may be
written with $\omega_c$ in the second factor.

Suppose first that $G$ is connected and acyclic, and put
$n:=|V_G|$. Consider
\[
    \sum_{K\preceq G}
    e(K;\mathbf1)\,\mu(G\cocont{K}).
\]
Every cut-subquiver $K\preceq G$ is acyclic and has the same vertex set
as $G$, so
\[
    e(K;\mathbf1)=\frac{\phi(K)}{n!}.
\]
After expanding $\mu(G\cocont{K})$, a contribution is specified by a
cut-subquiver $K\preceq G$ together with a surjective strictly
order-preserving map
$\bar f:V_{G\cocont{K}}\to[r]$. Let
$\pi_K:V_G\to V_{G\cocont{K}}$ be the contraction map and set
$f:=\bar f\circ\pi_K$. If $j\to i$ belongs to $K$, then
$f(i)=f(j)$. If it does not belong to $K$, its image is an oriented
edge of $G\cocont{K}$, possibly a tadpole; the latter is excluded by
the strict order-preserving property of $\bar f$, and hence
$f(i)<f(j)$. Thus
\[
    f(i)\leq f(j)
    \qquad
    \text{for every }j\to i\in E_G.
\]

This construction gives a bijection between such pairs $(K,\bar f)$ and
surjective maps $f:V_G\to[r]$ satisfying the above weak
order-preserving condition. Indeed, for such an $f$, define the
cut-subquiver $K_f\preceq G$ by
\[
    E_{K_f}
    :=
    \left\{
        \varepsilon:j\to i\in E_G
        \,\middle|\,
        f(i)=f(j)
    \right\}.
\]
Starting from $(K,\bar f)$, every edge of $K$ belongs to $K_f$.
Conversely, if $j\to i$ belonged to $K_f\setminus K$, then its image in
$G\cocont{K}$ would either join two vertices with the same $\bar f$-value
or be a tadpole, contradicting the strict order-preserving property of
$\bar f$. Hence $K=K_f$.

Conversely, let $f:V_G\to[r]$ be surjective and weakly
order-preserving, and define $K_f$ as above. Since $f$ is constant along
every edge of $K_f$, it is constant on every connected component of
$K_f$ and hence descends to a surjective map
$\bar f:V_{G\cocont{K_f}}\to[r]$. If an edge $j\to i$ survives in
$G\cocont{K_f}$, then $f(i)\neq f(j)$; together with
$f(i)\leq f(j)$, this gives $f(i)<f(j)$. Thus $\bar f$ is strictly
order-preserving, and the two constructions are inverse.
For such an $f$, put $B_a:=f^{-1}(a)$ and $s_a:=|B_a|$ for
$a=1,\ldots,r$. Then
$    K_f=G[B_1]\sqcup\cdots\sqcup G[B_r]$.
By multiplicativity of $e$,
\[
    e(K_f;\mathbf1)
    =
    \prod_{a=1}^r
    \frac{\phi(G[B_a])}{s_a!}.
\]
Choose a linear extension of each $G[B_a]$ and concatenate the resulting
words in the order 
\[B_1,B_2,\ldots,B_r.\]
Since $f$ is weakly
order-preserving, the resulting word is a linear extension of $G$.
Conversely, fix a linear extension $\tau$ of $G$ and a composition
$s_1+\cdots+s_r=n$ with $s_a\geq1$. Divide $\tau$ into consecutive
blocks of lengths $s_1,\ldots,s_r$ and let $f$ take the value $a$ on the
$a$-th block. Then $f$ is surjective and weakly order-preserving, and
the restriction of $\tau$ to each block is a linear extension of the
corresponding induced graph $G[B_a]$.
Therefore, for each fixed linear extension $\tau$ of $G$, the total
coefficient contributed to the convolution sum is
\[
    A_n
    :=
    \sum_{r=1}^n\frac{(-1)^{r-1}}{r}
    \sum_{\substack{s_1+\cdots+s_r=n\\ s_a\geq1}}
    \frac{1}{s_1!\cdots s_r!}.
\]
Its generating series is
\[
    \sum_{n\geq1}A_nx^n
    =
    \sum_{r\geq1}\frac{(-1)^{r-1}}{r}
    \left(\sum_{s\geq1}\frac{x^s}{s!}\right)^r
    =
    \sum_{r\geq1}\frac{(-1)^{r-1}}{r}(e^x-1)^r
    =
    \log(e^x)
    =
    x.
\]
Hence $A_1=1$ and $A_n=0$ for $n\geq2$. Summing over the
$\phi(G)$ linear extensions of $G$, we obtain
\[
    \sum_{K\preceq G}
    e(K;\mathbf1)\,\mu(G\cocont{K})
    =
    \phi(G)A_n
    =
    \epsilon(G)
\]
for every connected acyclic graph $G$.

Suppose next that $G$ is connected and contains a directed cycle $C$.
Consider a term
$e(K;\mathbf1)\mu(G\cocont{K})$ in the same convolution sum. If $K$
contains a directed cycle, then $e(K;\mathbf1)=0$. Suppose therefore
that $K$ is acyclic. Then $K$ cannot contain all edges of $C$. After
contracting the connected components of $K$, the image of $C$ in
$G\cocont{K}$ is a directed closed walk of positive length, so
$G\cocont{K}$ contains a directed cycle, possibly a tadpole. Hence
$\mu(G\cocont{K})=\chi_{G\cocont{K}}'(0)=0$. Thus every term vanishes,
and
\[
    \sum_{K\preceq G}
    e(K;\mathbf1)\,\mu(G\cocont{K})
    =
    0
    =
    \epsilon(G).
\]
We have therefore proved
$
    \sum_{K\preceq G}
    e(K;\mathbf1)\,\mu(G\cocont{K})
    =
    \epsilon(G)$
for every connected graph $G$. This relation determines the connected
values recursively in the number of edges. Indeed, the edgeless
cut-subquiver $G_{\emptyset}=\bullet^{|V_G|}$ contributes
$e(G_{\emptyset};\mathbf1)\mu(G)=\mu(G)$, whereas every other
cut-subquiver $K\preceq G$ contains at least one edge, so
$G\cocont{K}$ has fewer edges than $G$. Since
$\omega_c(\,\cdot\,;\mathbf1)$ satisfies the same convolution relation,
induction on the number of edges gives
$\mu(G)=\omega_c(G;\mathbf1)$
for every connected graph $G$.

Finally, suppose that
$G=G_1\sqcup\cdots\sqcup G_s$ is disconnected, with $s\geq2$ and
$V_{G_a}\neq\emptyset$ for every $a$. Then
$\chi_G(q)=\prod_{a=1}^s\chi_{G_a}(q)$. Since
$\chi_{G_a}(0)=0$ for every $a$, we have $\chi_G'(0)=0$, which agrees
with $\omega_c(G;\mathbf1)=0$. Therefore
$\omega_c(G;\mathbf1)=\chi_G'(0)$ for every quiver $G$ with
$V_G\neq\emptyset$.
\end{proof}

\begin{remark}
\label{rk:quiver-vs-poset-o} 
As noted in Remarks~\ref{rk:quiver-vs-poset-e}
and~\ref{rk:omega-beta}, the definition of the $\omega$-function on
arbitrary quivers involves quiver-theoretic data that are not captured
by ordinary poset theory.  At first sight, this may seem to be in
tension with Lemma~\ref{lem:omega-beta-one}, which states that the
boundary value of $\omega_c$ is determined by the strict order
polynomial.  For acyclic quivers, this polynomial is precisely that of
the underlying poset, while for quivers containing a directed cycle it
vanishes.  The apparent tension disappears at
$\boldsymbol\beta=\mathbf{1}$: the additional
$\boldsymbol\beta$-dependence of the $e$-function comes from tadpoles,
whose contributions vanish at this specialization.  Thus the boundary
value reduces to the usual poset-theoretic one in the acyclic case and
vanishes in the cyclic case.
\end{remark}

\section{Master formulas for the \texorpdfstring{$\omega$}{omega}-function}
\label{sec:master-formulas}

In this section, we give explicit closed formulas for the connected
$\omega$-function.  The general master formula is a finite sum over
ordered set partitions and applies to arbitrary quivers,
including graphs with directed cycles and tadpoles.  For acyclic
graphs, it reduces to a simpler permutation formula whose coefficients
depend only on descent numbers.  We first formulate the two expressions
and then prove the general and acyclic master formulas in turn.

\subsection{General and acyclic master formulas}
\label{subsec:general-special-master-formulas}

\subsubsection{The general master function}
Let $G$ be a quiver with nonempty vertex set $V_G$, where $|V_G|=n$. Denote by $\operatorname{OSP}(V_G)$ the set of ordered set partitions (also called set compositions) of $V_G$. Thus an element $\mathcal B\in\operatorname{OSP}(V_G)$ is of the form
\[
    \mathcal B=(B_1|\cdots|B_\ell),
\]
where the blocks $B_1,\ldots,B_\ell$ are nonempty and pairwise disjoint, and $V_G=B_1\sqcup\cdots\sqcup B_\ell$. For $v\in V_G$, let $b_{\mathcal B}(v)=a$ if $v\in B_a$. For an oriented edge $\varepsilon:j\to i$, define
\begin{equation}
\label{eq:s-B-master}
    s_{\mathcal B}(\varepsilon)
    :=
    \begin{cases}
        +,& b_{\mathcal B}(i)<b_{\mathcal B}(j),\\[1mm]
        -,& b_{\mathcal B}(i)\geq b_{\mathcal B}(j).
    \end{cases}
\end{equation}
In particular, if $\varepsilon$ is a tadpole, then $s_{\mathcal B}(\varepsilon)=-$. Associated with $\mathcal B$, define
\begin{equation}
\label{eq:MGB-master}
    M_G(\mathcal B)
    :=
    \prod_{\varepsilon\in E_G}
    \beta_{\varepsilon,s_{\mathcal B}(\varepsilon)},
\end{equation}
where $\beta_{\varepsilon,\pm}$ are defined in \eqref{eq:def of beta epsilon plus minus}. As usual, an empty product is understood to be $1$. We also set
\[
    c_\ell:=\frac{(-1)^{\ell-1}}{\ell},
    \qquad \ell\geq1.
\]
\begin{definition}[General master function]
\label{def:general-master-function}
For a quiver $G$ with $n\geq1$ vertices, define
\begin{equation}
\label{eq:general-master-function}
    \Omega_n(G)
    :=
    \sum_{\mathcal B\in\operatorname{OSP}(V_G)}
    c_{|\mathcal B|}M_G(\mathcal B).
\end{equation}
Equivalently,
\begin{equation}
\label{eq:general-master-function-expanded}
    \Omega_n(G)
    =
    \sum_{\ell=1}^n
    \frac{(-1)^{\ell-1}}{\ell}
    \sum_{\substack{
        \mathcal B=(B_1|\cdots|B_\ell)\\
        \in\operatorname{OSP}(V_G)
    }}
    \prod_{\varepsilon\in E_G}
    \beta_{\varepsilon,s_{\mathcal B}(\varepsilon)}.
\end{equation}
\end{definition}
The subscript $n$ is determined by $G$ and is therefore not formally necessary; we retain it to make the dependence on the number of vertices explicit.
\begin{example}
Let $r\geq0$, and suppose that $G$ consists of a single vertex carrying tadpoles $\varepsilon_1,\ldots,\varepsilon_r$. There is only one ordered set partition of $V_G$, consisting of a single block. Since $c_1=1$ and every tadpole has sign $-$, we obtain
\begin{equation}
\label{eq:Omega1-master}
    \Omega_1(G)
    =
    \prod_{a=1}^r\beta_{\varepsilon_a,-}.
\end{equation}
For $r=0$, the graph is $G=\bullet$, so the product is empty and $\Omega_1(\bullet)=1$.
\end{example}
\begin{example}
Let $V_G=\{1,2\}$ and let $r_{12},r_{21}\geq0$. Suppose that $G$ has edges
$\varepsilon_{12}^{(a)}:2\to1$, $1\leq a\leq r_{12}$, and
$\varepsilon_{21}^{(b)}:1\to2$, $1\leq b\leq r_{21}$, and no tadpoles. The three ordered set partitions of $V_G$ are $(\{1,2\})$, $(\{1\}|\{2\})$, and $(\{2\}|\{1\})$. The one-block partition contributes the product of the minus factors. For $(\{1\}|\{2\})$, the edges $2\to1$ have sign $+$ and the edges $1\to2$ have sign $-$, whereas for $(\{2\}|\{1\})$ the signs are reversed. Since $c_1=1$ and $c_2=-1/2$, we obtain
\begin{equation}
\label{eq:Omega2-master}
\begin{split}
    \Omega_2(G)
    =
    \prod_{a=1}^{r_{12}}\beta_{\varepsilon_{12}^{(a)},-}
    \prod_{b=1}^{r_{21}}\beta_{\varepsilon_{21}^{(b)},-}
    -\frac12\left(
    \prod_{a=1}^{r_{12}}\beta_{\varepsilon_{12}^{(a)},+}
    \prod_{b=1}^{r_{21}}\beta_{\varepsilon_{21}^{(b)},-}
    +
    \prod_{a=1}^{r_{12}}\beta_{\varepsilon_{12}^{(a)},-}
    \prod_{b=1}^{r_{21}}\beta_{\varepsilon_{21}^{(b)},+}
    \right).
\end{split}
\end{equation}
In particular, if $r_{12}=r_{21}=0$, then $G=\bullet\sqcup\bullet$ and
$\Omega_2(G)=1-\frac12-\frac12=0$.
\end{example}

The general master function also admits a generating function over
edge-labeled quivers with a fixed vertex set.  Fix $[n]$ and let
$\mathscr E_n$ be an arbitrary finite set of labeled oriented edges
on $[n]$; distinct parallel edges and tadpoles are allowed.  Each edge
$\varepsilon\in\mathscr E_n$ carries its own parameter
$\beta_\varepsilon$.  For every subset $A\subseteq\mathscr E_n$, let
$G_A$ denote the quiver with vertex set $[n]$ and edge set $A$, and
introduce an auxiliary variable $t_\varepsilon$ for each
$\varepsilon\in\mathscr E_n$.  Define
\begin{equation}
\label{eq:Omega-generating-function-definition}
    \mathcal G_n(\mathbf t)
    :=
    \sum_{A\subseteq\mathscr E_n}
    \Omega_n(G_A)
    \prod_{\varepsilon\in A} t_\varepsilon.
\end{equation}

\begin{proposition}[Generating function]
\label{prop:Omega-generating-function}
We have
\begin{equation}
\label{eq:Omega-generating-function}
    \mathcal G_n(\mathbf t)
    =
    \sum_{\mathcal B\in\operatorname{OSP}([n])}
    c_{|\mathcal B|}
    \prod_{\varepsilon\in\mathscr E_n}
    \left(
        1+
        t_\varepsilon
        \beta_{\varepsilon,s_{\mathcal B}(\varepsilon)}
    \right).
\end{equation}
\end{proposition}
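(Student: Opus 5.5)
The identity is a direct interchange of finite sums, so the plan is to substitute Definition~\ref{def:general-master-function} into \eqref{eq:Omega-generating-function-definition}, exchange the order of summation, and recognize a product expansion. The one preliminary observation is that every $G_A$ has the same vertex set $[n]$. Hence the index set $\operatorname{OSP}(V_{G_A})=\operatorname{OSP}([n])$ does not depend on $A$. Moreover, for a fixed $\mathcal B$, the sign $s_{\mathcal B}(\varepsilon)$ of an edge $\varepsilon\in A$ depends only on $\mathcal B$ and on the incidence of $\varepsilon$ in $\mathscr E_n$, not on which other edges lie in $A$. This is immediate from \eqref{eq:s-B-master}, since $s_{\mathcal B}(\varepsilon)$ is determined by the block indices of the endpoints of $\varepsilon$.

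Using this observation, I would write
\[
\mathcal G_n(\mathbf t)=\sum_{A\subseteq\mathscr E_n}\ \sum_{\mathcal B\in\operatorname{OSP}([n])} c_{|\mathcal B|}\prod_{\varepsilon\in A}\beta_{\varepsilon,s_{\mathcal B}(\varepsilon)}\,t_\varepsilon ,
\]
where the factor $\prod_{\varepsilon\in A}t_\varepsilon$ has been merged into $M_{G_A}(\mathcal B)$. Both sums are finite, so I would swap them. For each fixed $\mathcal B$, the inner sum over subsets is then the elementary expansion
\[
\sum_{A\subseteq\mathscr E_n}\prod_{\varepsilon\in A}x_\varepsilon=\prod_{\varepsilon\in\mathscr E_n}(1+x_\varepsilon),
\qquad x_\varepsilon:=t_\varepsilon\beta_{\varepsilon,s_{\mathcal B}(\varepsilon)} .
\]
Substituting this product gives \eqref{eq:Omega-generating-function}.

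Tadpoles and parallel edges need no separate treatment. A tadpole always has sign $-$, and distinct parallel edges are distinct elements of $\mathscr E_n$ with their own parameters, so the subset expansion treats each of them as an independent factor. The only point requiring care is the independence of $s_{\mathcal B}(\varepsilon)$ from $A$, which legitimizes factoring the product over $\mathscr E_n$; I do not expect any real obstacle beyond that. As a sanity check, setting $\mathbf t=0$ recovers $\Omega_n(\bullet^n)=\sum_{\mathcal B}c_{|\mathcal B|}$, which equals $\delta_{n,1}$ by the same $\log(e^x)=x$ computation used in Lemma~\ref{lem:omega-beta-one}.
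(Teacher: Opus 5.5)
Your proposal is correct and matches the paper's proof: substitute Definition~\ref{def:general-master-function} into \eqref{eq:Omega-generating-function-definition}, interchange the two finite sums, and use $\sum_{A\subseteq\mathscr E_n}\prod_{\varepsilon\in A}x_\varepsilon=\prod_{\varepsilon\in\mathscr E_n}(1+x_\varepsilon)$ for each fixed $\mathcal B$. Your sanity check at $\mathbf t=0$ is also right, since $\sum_{\mathcal B\in\operatorname{OSP}([n])}c_{|\mathcal B|}=n!\,A_n=\delta_{n,1}$.
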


\begin{proof}
By Definition~\ref{def:general-master-function},
\[
    \Omega_n(G_A)
    =
    \sum_{\mathcal B\in\operatorname{OSP}([n])}
    c_{|\mathcal B|}
    \prod_{\varepsilon\in A}
    \beta_{\varepsilon,s_{\mathcal B}(\varepsilon)}.
\]
Substituting this into
\eqref{eq:Omega-generating-function-definition} and interchanging the
two finite sums gives
\begin{align*}
    \mathcal G_n(\mathbf t)
    &=
    \sum_{\mathcal B\in\operatorname{OSP}([n])}
    c_{|\mathcal B|}
    \sum_{A\subseteq\mathscr E_n}
    \prod_{\varepsilon\in A}
    t_\varepsilon
    \beta_{\varepsilon,s_{\mathcal B}(\varepsilon)}
=
    \sum_{\mathcal B\in\operatorname{OSP}([n])}
    c_{|\mathcal B|}
    \prod_{\varepsilon\in\mathscr E_n}
    \left(
        1+
        t_\varepsilon
        \beta_{\varepsilon,s_{\mathcal B}(\varepsilon)}
    \right). \qedhere
\end{align*} 
\end{proof}

\begin{theorem}[General master formula]
\label{thm:general-master-formula}
Let $G$ be a quiver with $n\geq1$ vertices. Then
\begin{equation}
\label{eq:general-master-equals-omega}
    \omega_c(G)=\Omega_n(G).
\end{equation}
\end{theorem}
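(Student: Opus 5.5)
The natural route is the uniqueness theorem, Theorem~\ref{thm:master-uniqueness}. We take $F_1=\omega_c$ and $F_2=\Omega_n$, with $\Omega_n$ extended by zero on the empty graph. We already know that $\omega_c$ satisfies all four hypotheses: it vanishes on disconnected graphs by definition, it satisfies the two-vertex rule (Theorem~\ref{thm:omega-two-vertex-contraction}), and it obeys tadpole factorization (Lemma~\ref{lem:omega-tadpole-factorization}). It therefore suffices to verify the same four properties for $\Omega_n$, and then check that the boundary values at $\boldsymbol\beta=\mathbf1$ agree.

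\emph{Tadpole factorization and boundary values.} Tadpole factorization is immediate from the definition: a tadpole always has sign $-$, so it contributes the common factor $\beta_{\varepsilon,-}$ to every $M_G(\mathcal B)$. For the boundary value at $\boldsymbol\beta=\mathbf1$, note that $\beta_{\varepsilon,+}=1$ and $\beta_{\varepsilon,-}=0$. Hence $M_G(\mathcal B;\mathbf1)=1$ exactly when every edge $j\to i$ satisfies $b(i)<b(j)$, and is $0$ otherwise. Such ordered set partitions are precisely the surjections in $\operatorname{Sur}_\ell(G)$. So $\Omega_n(G;\mathbf1)=\sum_\ell \frac{(-1)^{\ell-1}}{\ell}N_\ell(G)=\chi_G'(0)$, and Lemma~\ref{lem:omega-beta-one} shows this equals $\omega_c(G;\mathbf1)$.

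\emph{Vanishing on disconnected graphs.} Suppose $V_G=V'\sqcup V''$ with no edges crossing between the parts. An ordered set partition of $V_G$ restricts to ordered set partitions of $V'$ and $V''$ (dropping empty blocks). Conversely, a pair of such partitions, with $p$ and $q$ blocks, arises from a "quasi-shuffle": some blocks may be merged and the rest interleaved. Edge signs depend only on the relative order within each part, so $M_G$ factors as $M_{G'}M_{G''}$. The statement then reduces to the identity
\[\sum_{\ell} c_\ell\,\#\{\text{quasi-shuffles of }p,q\text{ into }\ell\text{ blocks}\}=0 \quad\text{for } p,q\ge1.\]
This is the standard fact that $\log$ is primitive for the quasi-shuffle (or $\QSym$) coproduct. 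Concretely, it is the coefficient extraction in $\log\bigl((1+x)(1+y)\bigr)=\log(1+x)+\log(1+y)$, whose generating function has no mixed terms $x^py^q$. I would prove it via the generating function $\sum_\ell c_\ell(\cdots)^\ell$, in the same style as the proof of Lemma~\ref{lem:omega-beta-one}.

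\emph{Two-vertex contraction rule (main obstacle).} Fix a non-tadpole edge $\varepsilon:j\to i$, and write $P(\mathcal B)$ for the product of the factors from the other edges. Then
\[\Omega(G)+\Omega(G_{\bar\varepsilon})=\sum_{\mathcal B}c_{|\mathcal B|}P(\mathcal B)\bigl(\beta_{\varepsilon,s}+\beta_{\bar\varepsilon,\bar s}\bigr).\]
If $i$ and $j$ lie in different blocks, then $s$ and $\bar s$ are opposite, and the bracket equals $\frac{\beta_\varepsilon+\beta_{\bar\varepsilon}}2$. If $i$ and $j$ lie in the same block, both signs are $-$, and the bracket equals $\frac{\beta_\varepsilon+\beta_{\bar\varepsilon}}2-1$. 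The partitions with $i,j$ in a common block correspond bijectively to ordered set partitions of $V_{G\cocont\varepsilon}$, and this bijection preserves the signs of all other edges, including edges that become tadpoles. Their contribution $-\sum c_{|\mathcal B|}P(\mathcal B)$ is therefore exactly $-\Omega(G\cocont\varepsilon)$. What remains is $\frac{\beta_\varepsilon+\beta_{\bar\varepsilon}}2\Omega(G_{\setminus\varepsilon})$, which is the required rule. The care needed here is in the bookkeeping for parallel edges and for edges that become tadpoles under contraction.

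Once all four hypotheses are in place, Theorem~\ref{thm:master-uniqueness} gives $\omega_c=\Omega_n$.
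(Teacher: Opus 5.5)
Your proposal is correct and follows the paper's proof. Both apply Theorem~\ref{thm:master-uniqueness} and verify tadpole factorization, the boundary value via $\chi_G'(0)$ together with Lemma~\ref{lem:omega-beta-one}, and the two-vertex rule by the same blockwise bookkeeping as Proposition~\ref{prop:Omega-two-vertex-contraction}. The only difference is cosmetic and lies in the disconnected case: the paper writes $\Phi_G(q)=\Phi_{G_1}(q)\Phi_{G_2}(q)$ and uses $\Phi_{G_i}(0)=0$, whereas you use the quasi-shuffle count and $\log\bigl((1+x)(1+y)\bigr)=\log(1+x)+\log(1+y)$; these are the same identity in different form.
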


We will prove Theorem \ref{thm:general-master-formula} in Section \ref{subsec:proof-general-master-formula}.

\subsubsection{The acyclic master function}
\label{sss:acyclic-master}
We now state the simpler formula for acyclic graphs.  Its proof will
be obtained later as a specialization of the general master formula in Section \ref{subsec:proof-special-master-formula}.

Let $G$ be an acyclic quiver with $n$ vertices. Since $G$ is
acyclic, we may label its vertices as
$V_G=\{1,\ldots,n\}$
so that every edge is of the form
\begin{equation}
\label{eq:acyclic-topological-orientation}
    j\longrightarrow i,
    \qquad i<j.
\end{equation}

For a permutation $\sigma=(\sigma_1,\ldots,\sigma_n)\in S_n$,
let $p_\sigma(i):=\sigma^{-1}(i)$
be the position of $i$ in $\sigma$ and let $k(\sigma)$ be the number of descents defined in \eqref{eq:descent-number-master}.
For an edge $\varepsilon:j\longrightarrow i$,
$i<j$,
define
\begin{equation}
\label{eq:s-sigma-master}
    s_\sigma(\varepsilon)
    :=
    \begin{cases}
        +,
        & p_\sigma(i)<p_\sigma(j),\\[2mm]
        -,
        & p_\sigma(i)>p_\sigma(j).
    \end{cases}
\end{equation}
Also, define
\begin{equation}
\label{eq:d-nk-master}
    d_{n,k}
    :=
    \frac{(-1)^{n-1-k}}
    {n\binom{n-1}{k}},
    \qquad
    0\le k\le n-1.
\end{equation}

Recall that a quiver is called acyclic if it contains no directed cycles, including tadpoles.

\begin{definition}[Acyclic master function]
\label{def:special-master-function}
For an acyclic quiver $G$ labeled as above, define
\begin{equation}
\label{eq:special-master-function}
    \overline{\Omega}_n(G)
    :=
    \sum_{\sigma\in S_n}
    d_{n,k(\sigma)}
    \prod_{\varepsilon\in E_G}
    \beta_{\varepsilon,s_\sigma(\varepsilon)}.
\end{equation}
\end{definition}

\begin{theorem}[Acyclic master formula]
\label{thm:special-master-formula}
Let $G$ be an acyclic quiver with $n\geq1$ vertices. Then
\begin{equation}
\label{eq:special-master-equals-omega}
    \omega_c(G)
    =
    \overline{\Omega}_n(G).
\end{equation}
\end{theorem}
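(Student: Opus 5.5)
We deduce Theorem~\ref{thm:special-master-formula} from the general master formula, Theorem~\ref{thm:general-master-formula}, which we take as given. It then suffices to prove the purely combinatorial identity $\Omega_n(G)=\overline{\Omega}_n(G)$ for an acyclic quiver $G$ whose vertices are labeled as in \eqref{eq:acyclic-topological-orientation}, so that every edge is $j\to i$ with $i<j$. The idea is to regroup the sum over $\operatorname{OSP}(V_G)$ in \eqref{eq:general-master-function} according to a canonical permutation attached to each ordered set partition.

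\textbf{Step 1: the map from ordered set partitions to permutations.} To $\mathcal B=(B_1|\cdots|B_\ell)$ we attach the word $\sigma(\mathcal B)\in S_n$ obtained by listing the elements of $B_1$ in decreasing order, then those of $B_2$ in decreasing order, and so on. We claim that $M_G(\mathcal B)=\prod_\varepsilon\beta_{\varepsilon,s_{\sigma(\mathcal B)}(\varepsilon)}$, and we check it edge by edge for $\varepsilon:j\to i$ with $i<j$.
\begin{enumerate}
\item If $i$ and $j$ lie in different blocks, then $s_{\mathcal B}(\varepsilon)=+$ exactly when the block of $i$ comes first. This happens exactly when $p_\sigma(i)<p_\sigma(j)$, so the two signs agree.
\item If $i$ and $j$ lie in the same block, then $s_{\mathcal B}(\varepsilon)=-$ by \eqref{eq:s-B-master}. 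Inside the block, decreasing order puts $j$ before $i$, so $p_\sigma(i)>p_\sigma(j)$ and $s_\sigma(\varepsilon)=-$ as well.
\end{enumerate}
Acyclicity is used only here, to have the labeling with $i<j$ on every edge; in particular there are no tadpoles to worry about.

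\textbf{Step 2: the fibres of this map.} The ordered set partitions mapping to a fixed $\sigma$ are exactly the ways of cutting the word $\sigma$ into consecutive decreasing segments. A cut is forced at each of the $n-1-k(\sigma)$ ascents of $\sigma$. A cut may be placed or omitted freely at each of the $k=k(\sigma)$ descents. Choosing $m$ of the descents therefore gives $\ell=n-k+m$ blocks. Hence
\[
\Omega_n(G)=\sum_{\sigma\in S_n}\Bigl(\sum_{m=0}^{k}\binom{k}{m}\frac{(-1)^{n-k+m-1}}{n-k+m}\Bigr)\prod_{\varepsilon\in E_G}\beta_{\varepsilon,s_\sigma(\varepsilon)} .
\]

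\textbf{Step 3: evaluating the coefficient.} Put $a=n-k\ge1$. Then
\[
\sum_{m}\binom{k}{m}\frac{(-1)^m}{a+m}=\int_0^1x^{a-1}(1-x)^k\,dx=\frac{(a-1)!\,k!}{(a+k)!}=\frac{(n-k-1)!\,k!}{n!}=\frac{1}{n\binom{n-1}{k}} .
\]
Multiplying by the overall sign $(-1)^{n-k-1}$ gives exactly $d_{n,k}$ from \eqref{eq:d-nk-master}. This yields $\Omega_n(G)=\overline{\Omega}_n(G)$, and then $\omega_c(G)=\overline{\Omega}_n(G)$ by Theorem~\ref{thm:general-master-formula}.

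The main obstacle is Step 2: one must get exactly right which cuts are forced and which are optional, including the boundary case $k=0$. The Beta-integral identity in Step 3 is then routine.
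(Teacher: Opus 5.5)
Your proposal is correct and follows the paper's proof essentially verbatim. The paper likewise deduces the theorem from Theorem~\ref{thm:general-master-formula} via Proposition~\ref{prop:general-reduces-to-special}, using the same decreasing-within-blocks map $\mathcal B\mapsto\sigma(\mathcal B)$, the same forced-ascent/optional-descent fibre count, and the same Beta-integral evaluation (it indexes by uncut rather than cut descents, which is immaterial).
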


We will prove Theorem \ref{thm:special-master-formula} in Section \ref{subsec:proof-special-master-formula}.

\begin{example}
Let $G$ be an acyclic quiver with vertex set
$V_G=\{1,2\}$, labeled so that every edge is oriented from $2$ to
$1$. Suppose that $G$ has $m\geq0$ parallel edges
$\varepsilon_a:2\to1,
    \qquad 1\leq a\leq m$.
There are two permutations in $S_2$. For
$\sigma=(1,2)$, we have $k(\sigma)=0$ and
$s_\sigma(\varepsilon_a)=+$ for every $a$, whereas for
$\sigma=(2,1)$, we have $k(\sigma)=1$ and
$s_\sigma(\varepsilon_a)=-$ for every $a$. Since
$d_{2,0}=-\frac12$,
$d_{2,1}=\frac12$,
Definition~\ref{def:special-master-function} gives
\begin{equation}
\label{eq:acyclic-master-n2}
    \overline{\Omega}_2(G)
    =
    -\frac12
    \left(
        \prod_{a=1}^{m}\beta_{\varepsilon_a,+}
        -
        \prod_{a=1}^{m}\beta_{\varepsilon_a,-}
    \right).
\end{equation}
If $m=0$, then $G=\bullet\sqcup\bullet$, and the two empty products
are equal to $1$, so $\overline{\Omega}_2(G)=0$, as expected.
\end{example}

We record a parity property which follows directly from the acyclic master formula.

\begin{corollary}[Parity under $\boldsymbol\beta$-reversal]
\label{cor:beta-reversal}
Let $G$ be a connected acyclic quiver. Then
\begin{equation}
\label{beta-flip}
    \omega_c(G;-\boldsymbol\beta)
    =
    (-1)^{L(G)}
    \omega_c(G;\boldsymbol\beta),
\end{equation}
where $L(G)=|E_G|-|V_G|+1$, and $-\boldsymbol\beta$ denotes the simultaneous replacement
$\beta_\varepsilon\mapsto-\beta_\varepsilon$ for all
$\varepsilon\in E_G$.
\end{corollary}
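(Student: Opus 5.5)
We read the identity off the acyclic master formula (Theorem~\ref{thm:special-master-formula}) by pairing each permutation with its reversal. Label $V_G=\{1,\ldots,n\}$ so that every edge is $j\to i$ with $i<j$. For $\sigma\in S_n$ let $\sigma^{\mathrm{rev}}=(\sigma_n,\ldots,\sigma_1)$. Then $p_{\sigma^{\mathrm{rev}}}(v)=n+1-p_\sigma(v)$, so $s_{\sigma^{\mathrm{rev}}}(\varepsilon)=-s_\sigma(\varepsilon)$ for every edge, and $k(\sigma^{\mathrm{rev}})=n-1-k(\sigma)$.

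The first step is the coefficient relation. Since $\binom{n-1}{n-1-k}=\binom{n-1}{k}$,
\[
d_{n,n-1-k}=\frac{(-1)^{k}}{n\binom{n-1}{k}}=(-1)^{n-1}d_{n,k}.
\]
The second step is the edge factors. Replacing $\beta_\varepsilon$ by $-\beta_\varepsilon$ gives
\[
\frac{-\beta_\varepsilon\pm1}{2}=-\beta_{\varepsilon,\mp},
\]
so for each $\sigma$,
\[
\prod_{\varepsilon}\beta_{\varepsilon,s_\sigma(\varepsilon)}\Big|_{-\boldsymbol\beta}=(-1)^{|E_G|}\prod_\varepsilon\beta_{\varepsilon,-s_\sigma(\varepsilon)}=(-1)^{|E_G|}\prod_\varepsilon\beta_{\varepsilon,s_{\sigma^{\mathrm{rev}}}(\varepsilon)}.
\]

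Combining the two steps in \eqref{eq:special-master-function} gives
\[
\omega_c(G;-\boldsymbol\beta)=(-1)^{|E_G|}\sum_{\sigma}d_{n,k(\sigma)}\prod_\varepsilon\beta_{\varepsilon,s_{\sigma^{\mathrm{rev}}}(\varepsilon)}.
\]
We then reindex by $\tau=\sigma^{\mathrm{rev}}$, which is a bijection of $S_n$. Using $d_{n,k(\sigma)}=d_{n,n-1-k(\tau)}=(-1)^{n-1}d_{n,k(\tau)}$, this becomes
\[
\omega_c(G;-\boldsymbol\beta)=(-1)^{|E_G|+n-1}\,\overline{\Omega}_n(G)=(-1)^{L(G)}\omega_c(G;\boldsymbol\beta),
\]
since $|E_G|+n-1\equiv|E_G|-n+1=L(G)\pmod 2$.

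There is no real obstacle. The only point needing care is that reversal negates the sign $s_\sigma(\varepsilon)$ for every edge, and this uses the strictness $p_\sigma(i)\neq p_\sigma(j)$, which holds because the acyclic labeling leaves no tadpoles. Connectedness is used only so that $L(G)$ is the loop number; the identity itself holds for any acyclic $G$ with this exponent.
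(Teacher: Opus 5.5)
Your proposal is correct and follows essentially the same route as the paper: reverse each permutation (so $s_{\sigma^{\mathrm{rev}}}=-s_\sigma$ and $k(\sigma^{\mathrm{rev}})=n-1-k(\sigma)$), use $(-\beta_\varepsilon)_\pm=-\beta_{\varepsilon,\mp}$ and $d_{n,n-1-k}=(-1)^{n-1}d_{n,k}$, then reindex to obtain the sign $(-1)^{|E_G|+n-1}=(-1)^{L(G)}$. Your side remark is also right: for disconnected acyclic $G$ the identity holds trivially, since $\omega_c=0$.
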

\begin{proof}
Set $n=|V_G|$ and $m=|E_G|$. For
$\sigma=(\sigma_1,\ldots,\sigma_n)\in S_n$, let
$\sigma^{\mathrm{rev}}=(\sigma_n,\ldots,\sigma_1)$. Then
\[
    s_{\sigma^{\mathrm{rev}}}(\varepsilon)
    =
    -s_\sigma(\varepsilon),
    \qquad
    k(\sigma^{\mathrm{rev}})
    =
    n-1-k(\sigma),
\]
where the first identity means that the two signs are opposite. Since
$(-\beta_\varepsilon)_+
    =
    -\beta_{\varepsilon,-}$, 
$(-\beta_\varepsilon)_-
    =
    -\beta_{\varepsilon,+}$,
the acyclic master formula gives
\[
    \omega_c(G;-\boldsymbol\beta)
    =
    (-1)^m
    \sum_{\sigma\in S_n}
    d_{n,k(\sigma)}
    \prod_{\varepsilon\in E_G}
    \beta_{\varepsilon,s_{\sigma^{\mathrm{rev}}}(\varepsilon)}.
\]
Reindexing by $\sigma\mapsto\sigma^{\mathrm{rev}}$ and using
$
    d_{n,n-1-k}=(-1)^{n-1}d_{n,k}$,
we obtain
\[
    \omega_c(G;-\boldsymbol\beta)
    =
    (-1)^{m+n-1}\omega_c(G;\boldsymbol\beta).
\]
Since
$m+n-1\equiv m-n+1=L(G)\pmod 2$, this proves
\eqref{beta-flip}.
\end{proof}

\subsection{Proof of the general master formula}
\label{subsec:proof-general-master-formula}

We now prove Theorem~\ref{thm:general-master-formula}. The main
ingredient is the uniqueness theorem, Theorem~\ref{thm:master-uniqueness}.
We show that the general master function has the same boundary values
as $\omega_c$ at $\boldsymbol\beta=\mathbf1$, vanishes on disconnected
graphs, satisfies the same tadpole prescription, and obeys the
two-vertex contraction rule. Theorem~\ref{thm:master-uniqueness} then
gives the general master formula.

\subsubsection{Properties of the general master function}

We next verify the properties required by
Theorem~\ref{thm:master-uniqueness} for the general master function.

\begin{lemma}[Tadpole factorization]
\label{lem:Omega-tadpole}
Let $G$ be a quiver with $n$ vertices and let $\varepsilon$
be a tadpole of $G$. Then
\begin{equation}
\label{eq:Omega-tadpole}
    \Omega_n(G)
    =
    \beta_{\varepsilon,-}
    \Omega_n(G\backslash\varepsilon).
\end{equation}
Consequently,
\begin{equation}
\label{eq:Omega-all-tadpoles}
    \Omega_n(G)
    =
    \left(
        \prod_{\varepsilon\in E_{\mathrm{tad}}(G)}
        \beta_{\varepsilon,-}
    \right)
    \Omega_n(G_{\mathrm{pruned}}).
\end{equation}
\end{lemma}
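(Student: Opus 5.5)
The plan is to read the identity off directly from Definition~\ref{def:general-master-function}, since the tadpole contributes the same factor to every term of the sum. Let $\varepsilon$ be a tadpole at a vertex $v$, so $\rho_G(\varepsilon)=(v,v)$. For any $\mathcal B\in\operatorname{OSP}(V_G)$, both endpoints of $\varepsilon$ lie in the same block. Hence $b_{\mathcal B}(v)\geq b_{\mathcal B}(v)$, and by \eqref{eq:s-B-master} we get $s_{\mathcal B}(\varepsilon)=-$. This holds independently of $\mathcal B$, as already noted right after \eqref{eq:s-B-master}.

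Next I would observe that $G\backslash\varepsilon$ has the same vertex set $V_G$. Therefore $\operatorname{OSP}(V_{G\backslash\varepsilon})=\operatorname{OSP}(V_G)$, the coefficients $c_{|\mathcal B|}$ coincide, and for each remaining edge $\eta\in E_G\setminus\{\varepsilon\}$ the sign $s_{\mathcal B}(\eta)$ is the same whether computed in $G$ or in $G\backslash\varepsilon$. Consequently
\[
    M_G(\mathcal B)=\beta_{\varepsilon,-}\,M_{G\backslash\varepsilon}(\mathcal B)
\]
for every $\mathcal B$. Summing against $c_{|\mathcal B|}$ over $\operatorname{OSP}(V_G)$ and pulling out the common factor $\beta_{\varepsilon,-}$ gives \eqref{eq:Omega-tadpole}.

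For \eqref{eq:Omega-all-tadpoles}, I would iterate: remove the tadpoles one at a time, noting that the remaining tadpoles are still tadpoles of the smaller graph and the vertex set never changes. After all of $E_{\mathrm{tad}}(G)$ has been removed, the graph is $G_{\mathrm{pruned}}$; alternatively, a straightforward induction on $|E_{\mathrm{tad}}(G)|$ gives the same result. There is no real obstacle here. The only point needing care is that deletion keeps all vertices, so that the index set $\operatorname{OSP}(V_G)$ and the value of $n$ are unchanged; this is exactly the convention $G_{\setminus\varepsilon}$ used in the paper.
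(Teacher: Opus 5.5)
Your proposal is correct and matches the paper's proof: a tadpole always receives the sign $-$ for every ordered set partition, so $M_G(\mathcal B)=\beta_{\varepsilon,-}M_{G\backslash\varepsilon}(\mathcal B)$ termwise. Summing over $\operatorname{OSP}(V_G)$ gives the first identity, and iterating over the tadpoles gives the second.
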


\begin{proof}
For every $\mathcal B\in\operatorname{OSP}(V(G))$,
the two endpoints of a tadpole coincide and hence lie in the same
block.  Therefore
$s_{\mathcal B}(\varepsilon)=-$,
and $M_G(\mathcal B)
    =
    \beta_{\varepsilon,-}
    M_{G\backslash\varepsilon}(\mathcal B)$.
Summing over all ordered set partitions proves the first identity.
The second follows by iteration.
\end{proof}

For a positive integer $q$, write $[q]:=\{1,\ldots,q\}$. For a quiver $G$, define
\begin{equation}
\label{eq:PhiG-definition}
    \Phi_G(q)
    :=
    \sum_{f:V_G\to[q]}
    \prod_{\varepsilon:j\to i\in E_G}
    \beta_{\varepsilon,s_f(\varepsilon)},
\end{equation}
where
\begin{equation}
\label{eq:s-f-definition}
    s_f(\varepsilon)
    :=
    \begin{cases}
        +,&f(i)<f(j),\\
        -,&f(i)\geq f(j).
    \end{cases}
\end{equation}

\begin{lemma}
\label{lem:Phi-Omega}
Let $G$ be a quiver with $n\geq1$ vertices. For every
positive integer $q$,
\begin{equation}
\label{eq:Phi-OSP}
    \Phi_G(q)
    =
    \sum_{\mathcal B\in\operatorname{OSP}(V_G)}
    \binom{q}{|\mathcal B|}M_G(\mathcal B).
\end{equation}
In particular, the right-hand side is a polynomial in $q$ agreeing
with $\Phi_G(q)$ for every positive integer $q$. Denoting this
polynomial by the same symbol $\Phi_G(q)$, we have
\begin{equation}
\label{eq:Omega-Phi}
    \Omega_n(G)
    =
    \left.\frac{d}{dq}\Phi_G(q)\right|_{q=0}.
\end{equation}
\end{lemma}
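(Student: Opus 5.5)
The plan is to group maps $f:V_G\to[q]$ by the ordered set partition they induce. Any map $f:V_G\to[q]$ has some image $S=\{a_1<\cdots<a_\ell\}\subseteq[q]$ with $1\le\ell\le\min(n,q)$. Setting $B_r:=f^{-1}(a_r)$ gives an ordered set partition
\[
    \mathcal B_f=(B_1|\cdots|B_\ell)\in\operatorname{OSP}(V_G).
\]
Conversely, a pair $(\mathcal B,S)$ with $|S|=|\mathcal B|$ determines $f$ uniquely: send the $r$-th block to the $r$-th smallest element of $S$. So $f\mapsto(\mathcal B_f,\operatorname{im}f)$ is a bijection onto
\[
    \{(\mathcal B,S)\mid \mathcal B\in\operatorname{OSP}(V_G),\ S\subseteq[q],\ |S|=|\mathcal B|\}.
\]

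Next I check that the summand in \eqref{eq:PhiG-definition} depends only on $\mathcal B_f$. The map $S\to\{1,\dots,\ell\}$ is order preserving, so $f(i)<f(j)$ holds if and only if $b_{\mathcal B_f}(i)<b_{\mathcal B_f}(j)$. Hence $s_f(\varepsilon)=s_{\mathcal B_f}(\varepsilon)$ for every edge, including tadpoles, where both sides equal $-$. The product in \eqref{eq:PhiG-definition} is therefore $M_G(\mathcal B_f)$. For a fixed $\mathcal B$ there are $\binom{q}{|\mathcal B|}$ choices of $S$; this count is $0$ when $|\mathcal B|>q$, which is consistent. Summing over these choices gives \eqref{eq:Phi-OSP}.

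The right-hand side of \eqref{eq:Phi-OSP} is a finite $R_G$-linear combination of the polynomials $\binom{q}{\ell}$ with $1\le\ell\le n$. It is therefore a polynomial in $q$ that agrees with $\Phi_G(q)$ at every positive integer. To get \eqref{eq:Omega-Phi}, I differentiate term by term and use
\[
    \left.\frac{d}{dq}\binom{q}{\ell}\right|_{q=0}=\frac{(-1)^{\ell-1}}{\ell}=c_\ell\qquad(\ell\ge1),
\]
which was already noted before \eqref{eq:strict-order-polynomial-derivative}. It holds because $\binom{q}{\ell}=\frac{q}{\ell!}\prod_{m=1}^{\ell-1}(q-m)$, whose linear coefficient is $(-1)^{\ell-1}(\ell-1)!/\ell!$. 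Comparing with \eqref{eq:general-master-function} gives \eqref{eq:Omega-phi}. No step is a serious obstacle; the only point needing care is that $s_f$ matches $s_{\mathcal B_f}$, including the weak inequality in the $-$ case.
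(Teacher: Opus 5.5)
Your proof is correct and follows essentially the same route as the paper: group the maps $f:V_G\to[q]$ by the ordered set partition of their nonempty fibers, observe that $s_f=s_{\mathcal B_f}$ so each $f$ has weight $M_G(\mathcal B_f)$, count $\binom{q}{|\mathcal B|}$ choices of image for each $\mathcal B$, and differentiate using $\left.\frac{d}{dq}\binom{q}{\ell}\right|_{q=0}=c_\ell$. Your explicit check of the linear coefficient of $\binom{q}{\ell}$ and your remark that $\binom{q}{|\mathcal B|}$ vanishes when $|\mathcal B|>q$ are details the paper leaves implicit.
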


\begin{proof}
For $f:V_G\to[q]$, write its image as
$a_1<\cdots<a_\ell$. The nonempty fibers
$B_r=f^{-1}(a_r)$ form an ordered set partition
$\mathcal B=(B_1|\cdots|B_\ell)$, and for every edge
$\varepsilon:j\to i$,
\[
    f(i)<f(j)
    \quad \text{if and only if} \quad
    b_{\mathcal B}(i)<b_{\mathcal B}(j).
\]
Thus the weight of $f$ is $M_G(\mathcal B)$. Conversely, for a fixed
$\mathcal B$ with $\ell$ blocks, there are exactly
$\binom{q}{\ell}$ choices of $a_1<\cdots<a_\ell$, which proves
\eqref{eq:Phi-OSP}. Finally,
\[
    \left.\frac{d}{dq}\binom{q}{\ell}\right|_{q=0}
    =
    \frac{(-1)^{\ell-1}}{\ell}
    =
    c_\ell,
\]
so differentiating \eqref{eq:Phi-OSP} at $q=0$ gives
\eqref{eq:Omega-Phi}.
\end{proof}

\begin{lemma}[Vanishing on disconnected graphs]
\label{lem:Omega-disconnected}
If $G$ is a disconnected quiver with $n\geq1$ vertices, then
\begin{equation}
\label{eq:Omega-disconnected}
    \Omega_n(G)=0.
\end{equation}
\end{lemma}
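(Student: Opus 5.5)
The plan is to use Lemma~\ref{lem:Phi-Omega}, which expresses $\Omega_n(G)$ as the linear coefficient in $q$ of the polynomial $\Phi_G(q)$. The key observation is that $\Phi_G$ is multiplicative over disjoint unions.

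First, suppose $G=G_1\sqcup G_2$ with $V_{G_1},V_{G_2}$ both nonempty. Such a splitting exists since $G$ is disconnected with $n\geq1$ vertices; we take $G_1$ to be one connected component and $G_2$ the rest. A map $f:V_G\to[q]$ is the same as a pair of maps $f_1:V_{G_1}\to[q]$ and $f_2:V_{G_2}\to[q]$. Every edge $\varepsilon:j\to i$ has both endpoints in the same $G_a$, so the sign $s_f(\varepsilon)$ depends only on the corresponding $f_a$. The weight in \eqref{eq:PhiG-definition} therefore factors, and summing gives
\[
\Phi_G(q)=\Phi_{G_1}(q)\,\Phi_{G_2}(q)
\]
for every positive integer $q$. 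Hence the identity holds as polynomials, since both sides are polynomials by \eqref{eq:Phi-OSP}.

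Next, we show $\Phi_{G_a}(0)=0$ whenever $V_{G_a}\neq\emptyset$. By \eqref{eq:Phi-OSP}, every ordered set partition of a nonempty set has $|\mathcal B|\geq1$. Since $\binom{q}{\ell}$ vanishes at $q=0$ for $\ell\geq1$, each term in that expansion vanishes at $q=0$.

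Finally, by the product rule,
\[
\left.\frac{d}{dq}\Phi_G\right|_{q=0}=\Phi_{G_1}'(0)\Phi_{G_2}(0)+\Phi_{G_1}(0)\Phi_{G_2}'(0)=0,
\]
and \eqref{eq:Omega-Phi} then gives $\Omega_n(G)=0$.

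There is no real obstacle here. The only point needing care is that the polynomial identity must be justified from its agreement at all positive integers, which the polynomiality part of Lemma~\ref{lem:Phi-Omega} provides.
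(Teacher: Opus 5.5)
Your proposal is correct and follows the paper's proof essentially verbatim: split $G=G_1\sqcup G_2$ with nonempty vertex sets, use the factorization $\Phi_G=\Phi_{G_1}\Phi_{G_2}$ together with $\Phi_{G_a}(0)=0$ from \eqref{eq:Phi-OSP}, and conclude via the product rule and Lemma~\ref{lem:Phi-Omega}. Your explicit justification of the edgewise factorization of the weights, and of passing from agreement at positive integers to a polynomial identity, only fills in details the paper leaves implicit.
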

\begin{proof}
Write $G=G_1\sqcup G_2$, where $G_1$ and $G_2$ have nonempty vertex
sets. Since there are no edges between them,
\[
    \Phi_G(q)=\Phi_{G_1}(q)\Phi_{G_2}(q).
\]
By \eqref{eq:Phi-OSP}, $\Phi_{G_1}(0)=\Phi_{G_2}(0)=0$. Therefore
\[
    \Phi_G'(0)
    =
    \Phi_{G_1}'(0)\Phi_{G_2}(0)
    +
    \Phi_{G_1}(0)\Phi_{G_2}'(0)
    =
    0.
\]
Lemma~\ref{lem:Phi-Omega} now gives $\Omega_n(G)=0$.
\end{proof}

\begin{lemma}[Boundary value of the general master function]
\label{lem:Omega-beta-one}
Let $G$ be a quiver with $n\geq1$ vertices. Then
\begin{equation}
\label{eq:Omega-beta-one}
    \Omega_n(G;\mathbf1)
    =
    \chi_G'(0).
\end{equation}
\end{lemma}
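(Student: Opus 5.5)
At $\boldsymbol\beta=\mathbf1$ we have $\beta_{\varepsilon,+}=1$ and $\beta_{\varepsilon,-}=0$ for every edge $\varepsilon$. Hence the monomial $M_G(\mathcal B)$ evaluated at $\mathbf1$ equals $1$ if $s_{\mathcal B}(\varepsilon)=+$ for every $\varepsilon\in E_G$, and $0$ otherwise. The plan is to read off the claim from this observation together with Lemma~\ref{lem:Phi-Omega}, which identifies $\Omega_n(G)$ with $\frac{d}{dq}\Phi_G(q)|_{q=0}$.

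First we evaluate $\Phi_G(q)$ at $\mathbf1$ for a positive integer $q$. In \eqref{eq:PhiG-definition}, the weight of a map $f:V_G\to[q]$ becomes $\prod_{\varepsilon}\beta_{\varepsilon,s_f(\varepsilon)}|_{\mathbf1}$. This equals $1$ exactly when $f(i)<f(j)$ for every edge $j\to i$, and $0$ otherwise. So $\Phi_G(q;\mathbf1)$ counts precisely the maps in \eqref{eq:strict-order-polynomial}, and therefore $\Phi_G(q;\mathbf1)=\chi_G(q)$ for all positive integers $q$. This includes the cyclic case: a tadpole or a directed cycle forces some edge to have sign $-$, so both sides vanish.

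Next, both $\Phi_G(q;\mathbf1)$ and $\chi_G(q)$ are polynomials in $q$. The first is $\sum_{\mathcal B}\binom{q}{|\mathcal B|}M_G(\mathcal B;\mathbf1)$ by \eqref{eq:Phi-OSP}, and the second is given by \eqref{eq:strict-order-polynomial-surjections}. Two polynomials agreeing on all positive integers are equal. Specialization at $\boldsymbol\beta=\mathbf1$ commutes with differentiation in $q$, so \eqref{eq:Omega-Phi} gives $\Omega_n(G;\mathbf1)=\chi_G'(0)$.

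As a cross-check, one can argue directly at the level of ordered set partitions. The partitions $\mathcal B$ with all signs $+$ correspond bijectively to the surjections in $\operatorname{Sur}_{|\mathcal B|}(G)$, via $f(v)=b_{\mathcal B}(v)$. This gives $\Omega_n(G;\mathbf1)=\sum_r c_r N_r(G)$, which is \eqref{eq:strict-order-polynomial-derivative}.

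There is no real obstacle here. The only care needed is the convention that $s=-$ when $b_{\mathcal B}(i)=b_{\mathcal B}(j)$: this is what kills non-strict maps and tadpoles at $\mathbf1$, and it matches the strictness in \eqref{eq:strict-order-preserving-map}.
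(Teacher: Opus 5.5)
Your proof is correct and follows essentially the same route as the paper: at $\boldsymbol\beta=\mathbf1$ the weight of each map $f$ in $\Phi_G(q)$ is the indicator of strict order preservation, so $\Phi_G(q;\mathbf1)=\chi_G(q)$, and Lemma~\ref{lem:Phi-Omega} then gives the result. Your remark that the two polynomials agree on all positive integers is sound, and so is your direct ordered-set-partition check against \eqref{eq:strict-order-polynomial-derivative}; both are valid, though not needed.
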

\begin{proof}
At $\boldsymbol\beta=\mathbf1$, we have
$\beta_{\varepsilon,+}=1$ and $\beta_{\varepsilon,-}=0$. Hence the
summand corresponding to $f:V_G\to[q]$ in \eqref{eq:PhiG-definition}
is $1$ precisely when
\[
    f(i)<f(j)
    \qquad\text{for every edge }\varepsilon:j\to i\in E_G,
\]
and is $0$ otherwise. Thus
$\Phi_G(q;\mathbf1)=\chi_G(q)$. Differentiating at $q=0$ and applying
Lemma~\ref{lem:Phi-Omega} gives \eqref{eq:Omega-beta-one}.
\end{proof}

\subsubsection{The two-vertex contraction rule for the general master function}

It remains to verify that the general master function satisfies the
two-vertex contraction rule.

\begin{proposition}
\label{prop:Omega-two-vertex-contraction}
Let $H$ be a quiver with $n\geq2$ vertices, and let $i,j\in V_H$
be distinct. Let $\varepsilon:j\to i$ and $\bar{\varepsilon}:i\to j$ be the
distinguished edges added to $H$ to form $H_{[i\leftarrow j]}$ and
$H_{[i\to j]}$, respectively. Then
\begin{equation}
\label{eq:Omega-two-vertex-contraction}
    \Omega_n(H_{[i\leftarrow j]})
    +
    \Omega_n(H_{[i\to j]})
    +
    \Omega_{n-1}(H_{[i\cdot j]})
    =
    \frac{\beta_{\varepsilon}+\beta_{\bar{\varepsilon}}}{2}\,
    \Omega_n(H).
\end{equation}
\end{proposition}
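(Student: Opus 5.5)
The plan is to prove the identity one level up, for the polynomials $\Phi_G(q)$ of \eqref{eq:PhiG-definition}, and then pass to $\Omega$ by differentiating at $q=0$ using Lemma~\ref{lem:Phi-Omega}. Set $b:=\frac{\beta_\varepsilon+\beta_{\bar\varepsilon}}{2}$. For a map $f:V_H\to[q]$, write $w_H(f):=\prod_{\eta\in E_H}\beta_{\eta,s_f(\eta)}$ for its weight in $H$. I will show that for every positive integer $q$,
\begin{equation*}
\Phi_{H_{[i\leftarrow j]}}(q)+\Phi_{H_{[i\to j]}}(q)+\Phi_{H_{[i\cdot j]}}(q)=b\,\Phi_H(q).
\end{equation*}
All four sides are polynomials in $q$ by \eqref{eq:Phi-OSP}, so the identity then holds as polynomials. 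Since $H_{[i\cdot j]}$ has $n-1\geq1$ vertices, \eqref{eq:Omega-Phi} applies to every term, and differentiating at $q=0$ gives \eqref{eq:Omega-two-vertex-contraction}.

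The first two terms are handled map by map. All three graphs $H$, $H_{[i\leftarrow j]}$, $H_{[i\to j]}$ have vertex set $V_H$. The weight of $f$ in $H_{[i\leftarrow j]}$ is $w_H(f)\beta_{\varepsilon,s_f(\varepsilon)}$, and in $H_{[i\to j]}$ it is $w_H(f)\beta_{\bar\varepsilon,s_f(\bar\varepsilon)}$. I split according to the relative values of $f(i)$ and $f(j)$.
\begin{enumerate}
\item If $f(i)<f(j)$, then $s_f(\varepsilon)=+$ and $s_f(\bar\varepsilon)=-$. The sum of the two edge factors is $\beta_{\varepsilon,+}+\beta_{\bar\varepsilon,-}=b$.
\item If $f(i)>f(j)$, the signs are reversed, and the sum is $\beta_{\varepsilon,-}+\beta_{\bar\varepsilon,+}=b$.
\item If $f(i)=f(j)$, both signs are $-$ by \eqref{eq:s-f-definition}, and the sum is $\beta_{\varepsilon,-}+\beta_{\bar\varepsilon,-}=b-1$.
\end{enumerate}
Summing over $f$ gives
\begin{equation*}
\Phi_{H_{[i\leftarrow j]}}(q)+\Phi_{H_{[i\to j]}}(q)=b\,\Phi_H(q)-\sum_{f:\,f(i)=f(j)}w_H(f).
\end{equation*}

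It remains to identify $\sum_{f(i)=f(j)}w_H(f)$ with $\Phi_{H_{[i\cdot j]}}(q)$. Maps $f:V_H\to[q]$ with $f(i)=f(j)$ are in bijection with maps $\bar f:V_{H_{[i\cdot j]}}\to[q]$, via $f=\bar f\circ\pi$ where $\pi$ is the quotient map. I must check that the weights agree edge by edge.
\begin{enumerate}
\item An edge of $H$ not joining $i$ and $j$ keeps its endpoints up to $\pi$, so its sign under $\bar f$ equals its sign under $f$.
\item An edge of $H$ joining $i$ and $j$, in either direction, has sign $-$ under $f$ because $f(i)=f(j)$. In $H_{[i\cdot j]}$ it becomes a tadpole, which also has sign $-$.
\item Tadpoles already present in $H$ have sign $-$ on both sides.
\end{enumerate}
Hence $w_{H_{[i\cdot j]}}(\bar f)=w_H(f)$, and the displayed identity for $\Phi$ follows.

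I expect the only delicate step to be this last weight matching, namely the treatment of pre-existing parallel and antiparallel edges between $i$ and $j$ once they become tadpoles. It works precisely because $s_f$ assigns $-$ to ties, matching the tadpole convention $s_{\mathcal B}=-$. The rest is bookkeeping.
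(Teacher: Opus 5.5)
Your proof is correct and is essentially the paper's argument. The paper makes the same case split (endpoints in different blocks in either order, giving $\frac{\beta_\varepsilon+\beta_{\bar\varepsilon}}{2}$, versus the same block, giving $\frac{\beta_\varepsilon+\beta_{\bar\varepsilon}}{2}-1$) and uses the same bijection with the contracted graph, where pre-existing $i$--$j$ edges become tadpoles. The only difference is that it works per ordered set partition $\mathcal B$ with coefficient $c_{|\mathcal B|}$, whereas you work per map $f:V_H\to[q]$ at the level of $\Phi_G(q)$ and then apply Lemma~\ref{lem:Phi-Omega}; this shows the identity for the full polynomials $\Phi$, which the paper's argument also gives implicitly because the bijection preserves the number of blocks.
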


\begin{proof}
We compare the contributions associated with
$\mathcal B=(B_1|\cdots|B_\ell)\in\operatorname{OSP}(V_H)$.
Suppose first that $i$ and $j$ lie in different blocks. If
$b_{\mathcal B}(i)<b_{\mathcal B}(j)$, then the distinguished edges
in $H_{[i\leftarrow j]}$ and $H_{[i\to j]}$ contribute
$\beta_{\varepsilon,+}$ and $\beta_{\bar{\varepsilon},-}$,
respectively, and
$\beta_{\varepsilon,+}+\beta_{\bar{\varepsilon},-}
    =
    \frac{\beta_{\varepsilon}+\beta_{\bar{\varepsilon}}}{2}$.
If $b_{\mathcal B}(j)<b_{\mathcal B}(i)$, they contribute
$\beta_{\varepsilon,-}$ and $\beta_{\bar{\varepsilon},+}$,
respectively, and again
$
    \beta_{\varepsilon,-}+\beta_{\bar{\varepsilon},+}
    =
    \frac{\beta_{\varepsilon}+\beta_{\bar{\varepsilon}}}{2}$.
Thus the first two terms in
\eqref{eq:Omega-two-vertex-contraction} contribute
\[
    \frac{\beta_{\varepsilon}+\beta_{\bar{\varepsilon}}}{2}\,
    c_\ell M_H(\mathcal B).
\]
There is no corresponding contribution from the contracted term, since
$i$ and $j$ become a single vertex in $H_{[i\cdot j]}$.

Suppose now that $i$ and $j$ lie in the same block. Then the
distinguished edge in each of the first two graphs receives the minus
sign. Thus their combined contribution is
\[
    \beta_{\varepsilon,-}+\beta_{\bar{\varepsilon},-}
    =
    \frac{\beta_{\varepsilon}+\beta_{\bar{\varepsilon}}}{2}-1.
\]
Ordered set partitions of the vertex set of $H_{[i\cdot j]}$ are
naturally in bijection with ordered set partitions of $V_H$ in which
$i$ and $j$ lie in the same block. Under this bijection, the number of
blocks is unchanged, and every edge of $H$ contributes the same factor.
Indeed, an edge between $i$ and $j$ receives the minus sign before
contraction and becomes a tadpole after contraction, which also
receives the minus sign. Therefore the contracted term contributes
$c_\ell M_H(\mathcal B)$, and the total contribution associated with
$\mathcal B$ is
\[
    \left(
        \frac{\beta_{\varepsilon}+\beta_{\bar{\varepsilon}}}{2}-1+1
    \right)c_\ell M_H(\mathcal B)
    =
    \frac{\beta_{\varepsilon}+\beta_{\bar{\varepsilon}}}{2}\,
    c_\ell M_H(\mathcal B).
\]
Summing over all $\mathcal B\in\operatorname{OSP}(V_H)$ gives
\eqref{eq:Omega-two-vertex-contraction}.
\end{proof}

\begin{remark}[Quasisymmetric interpretation]
\label{rem:qsym-interpretation-main}
The ordered-set-partition form of the general master formula admits a natural interpretation in the Hopf algebra of quasisymmetric functions.  More precisely, Appendix~\ref{sec:qsym} associates to every finite quiver $G$ a quasisymmetric function $\mathcal K_G(X;\boldsymbol\beta)$ such that
\[
    \operatorname{ps}_q\mathcal K_G(X;\boldsymbol\beta)=\Phi_G(q),
    \qquad
    [q]\Phi_G(q)=\omega_c(G).
\]
Here $\operatorname{ps}_q$ is polynomial principal specialization and $[q]$ extracts the coefficient of $q$.  
\end{remark}

\subsection{Proof of the acyclic master formula}
\label{subsec:proof-special-master-formula}

Theorem \ref{thm:special-master-formula} follows immediately from
Theorem~\ref{thm:general-master-formula} and the following proposition.  

\begin{proposition}[Acyclic specialization]
\label{prop:general-reduces-to-special}
Let $G$ be an acyclic quiver with $n\geq1$ vertices, labeled
so that $V_G=\{1,\ldots,n\}$ and every edge $j\to i$ satisfies $i<j$.
Then
\begin{equation}
\label{eq:general-reduces-to-special}
    \Omega_n(G)=\overline{\Omega}_n(G).
\end{equation}
\end{proposition}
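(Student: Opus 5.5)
We regroup the ordered-set-partition sum defining $\Omega_n(G)$ according to permutations, which is the standard passage from the monomial to the fundamental expansion. Every permutation $\sigma\in S_n$, read as a word $\sigma_1\cdots\sigma_n$, determines a coarsest ordered set partition: cut the word at each descent. Conversely, an ordered set partition $\mathcal B=(B_1|\cdots|B_\ell)$ determines a unique permutation $\sigma_{\mathcal B}$, obtained by listing each block in increasing order and concatenating. The descent set of $\sigma_{\mathcal B}$ is contained in the set $S(\mathcal B)\subseteq[n-1]$ of block boundaries. Conversely, for fixed $\sigma$ with descent set $D$, the partitions $\mathcal B$ with $\sigma_{\mathcal B}=\sigma$ are in bijection with the sets $S$ satisfying $D\subseteq S\subseteq[n-1]$, by cutting $\sigma$ at the positions in $S$. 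This gives $\ell=|S|+1$ blocks.

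\textbf{Step 1: the edge weight depends only on $\sigma$.} Fix $\mathcal B$ with $\sigma=\sigma_{\mathcal B}$, and an edge $\varepsilon:j\to i$ with $i<j$. If $i,j$ lie in the same block, then $s_{\mathcal B}(\varepsilon)=-$. Since blocks are listed increasingly, $i$ precedes $j$ in $\sigma$, so $s_\sigma(\varepsilon)=+$. This is the mismatch we must resolve. We claim the sign conventions are consistent after reversing the word. List each block in \emph{decreasing} order and concatenate; call the result $\tau_{\mathcal B}$, with block boundaries now forced at the ascents. If $i,j$ are in the same block, then $j$ precedes $i$ in $\tau$, so $p_\tau(i)>p_\tau(j)$ and $s_\tau=-$, which matches. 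If $b(i)<b(j)$, then $i$ precedes $j$, giving $+$ in both conventions. If $b(i)>b(j)$, both give $-$. Hence $M_G(\mathcal B)=\prod_\varepsilon\beta_{\varepsilon,s_\tau(\varepsilon)}$, where $\tau=\tau_{\mathcal B}$. The admissible $S$ for fixed $\tau$ are the supersets of the ascent set of $\tau$. If $\tau$ has $k$ descents, it has $n-1-k$ ascents.

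\textbf{Step 2: the coefficient identity.} We get
\[
\Omega_n(G)=\sum_{\tau\in S_n}\Bigl(\sum_{m=0}^{k(\tau)}\binom{k(\tau)}{m}\frac{(-1)^{n-1-k(\tau)+m}}{n-k(\tau)+m}\Bigr)\prod_{\varepsilon}\beta_{\varepsilon,s_\tau(\varepsilon)}.
\]
Here $m$ counts the extra cuts placed at descents, so $\ell=n-k+m$. It remains to show that the inner sum equals $d_{n,k}$. Write $1/(a+m)=\int_0^1x^{a+m-1}\,dx$ with $a=n-k$. The inner sum becomes
\[
(-1)^{n-1-k}\int_0^1x^{n-k-1}(1-x)^k\,dx=(-1)^{n-1-k}B(n-k,k+1)=\frac{(-1)^{n-1-k}(n-k-1)!\,k!}{n!},
\]
and this is $\frac{(-1)^{n-1-k}}{n\binom{n-1}{k}}=d_{n,k}$. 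This yields \eqref{eq:general-reduces-to-special}.

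The main obstacle is Step 1: getting the within-block ordering convention right so that same-block edges receive the sign $-$. Sorting blocks increasingly would give the wrong sign, and reversing the order within each block is exactly what fixes it. Acyclicity, via the labeling $i<j$, is used precisely here. The Beta-integral evaluation is routine.
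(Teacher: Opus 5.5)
Your proof is correct and is essentially the paper's argument. You list each block in decreasing order so that same-block edges get the sign $-$, identify the fiber over $\tau$ with the cut sets containing all ascents, and evaluate $\sum_m\binom{k}{m}\frac{(-1)^{n-1-k+m}}{n-k+m}$ by the Beta integral; your $m$ is the paper's $k-j$. The opening increasing-order setup is a false start that Step 1 supersedes, so you could simply delete it.
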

\begin{proof}
For
$\mathcal B=(B_1|\cdots|B_\ell)\in\operatorname{OSP}(V_G)$, let
$\sigma(\mathcal B)\in S_n$ be obtained by writing the elements of each
block in decreasing order and then concatenating the blocks in the
order $B_1,\ldots,B_\ell$.

Consider an edge $\varepsilon:j\to i$, where $i<j$. If $i$ and $j$
lie in different blocks, then
$b_{\mathcal B}(i)<b_{\mathcal B}(j)$ if and only if $i$ occurs before
$j$ in $\sigma(\mathcal B)$, so
$s_{\mathcal B}(\varepsilon)=s_{\sigma(\mathcal B)}(\varepsilon)$.
If they lie in the same block, then
$s_{\mathcal B}(\varepsilon)=-$, while the decreasing order within the
block places $j$ before $i$, so again
$s_{\sigma(\mathcal B)}(\varepsilon)=-$. Hence
\[
    M_G(\mathcal B)
    =
    \prod_{\varepsilon\in E_G}
    \beta_{\varepsilon,s_{\sigma(\mathcal B)}(\varepsilon)}.
\]

Fix $\sigma\in S_n$ and set $k=k(\sigma)$. The ordered set partitions
$\mathcal B$ satisfying $\sigma(\mathcal B)=\sigma$ are obtained by
cutting $\sigma$ into consecutive decreasing blocks. Thus every ascent
must be a block boundary, whereas each of the $k$ descents may or may
not be a block boundary. If exactly $j$ descents are left uncut, there
are $\binom{k}{j}$ choices and the resulting partition has $n-j$
blocks. Therefore the total coefficient of the monomial corresponding
to $\sigma$ is
$    \sum_{j=0}^k
    \binom{k}{j}
    \frac{(-1)^{n-j-1}}{n-j}$.
Using
$\frac{1}{n-j}=\int_0^1 t^{n-j-1}\,dt$ and the binomial theorem, this equals
\[
    (-1)^{n-1-k}
    \int_0^1 t^{n-k-1}(1-t)^k\,dt
    =
    \frac{(-1)^{n-1-k}}{n\binom{n-1}{k}}
    =
    d_{n,k}.
\]
Grouping the ordered set partitions according to
$\sigma(\mathcal B)$ now gives
\[
    \Omega_n(G)
    =
    \sum_{\sigma\in S_n}
    d_{n,k(\sigma)}
    \prod_{\varepsilon\in E_G}
    \beta_{\varepsilon,s_\sigma(\varepsilon)}
    =
    \overline{\Omega}_n(G).
\qedhere \] 
\end{proof}

\section{Magnus expansion and operator products}
\label{sec:operator-products-Hopf}

In this and the following three sections, we relate the operator-product
description of the Magnus expansion to the Hopf-algebraic construction
developed above. Our goal is to establish the right-hand path in
Fig.~\ref{fig:overview},
\[
    \text{Magnus expansion}
    \longrightarrow
    \text{operator products}
    \longrightarrow
    \text{acyclic master formula}.
\]
The present section develops the local dictionary from
operator words to red-blue graphs.  Section~\ref{sec:global-RB-compatibility}
then identifies the connected RB series defined by the Hopf $\omega$-function
with the Magnusian.  Section~\ref{sec:RB-BW-bases-reorg} introduces independent
edge parameters and undirected edges by an edgewise change of basis, and
Section~\ref{sec:operator-acyclic-master} uses the purely directed part of this
change of basis to derive the acyclic master formula.

Two distinct orderings enter the operator-product description.  The time
ordering fixes the direction of an edge, while the relative ordering of the
operators determines which ordered contraction, $W_{ij}$ or $W_{ji}$, occurs.
The Wick expansion then fixes the multiplicity of these contractions and hence
allows parallel edges.  The purpose of this section is to make this local
operator-to-graph correspondence precise.  Throughout this section, the parameters $\beta_\varepsilon$ are specialized to the fixed values $\pm1$; independent edge parameters are introduced only in
Section~\ref{sec:RB-BW-bases-reorg}.

We work formally throughout, so no convergence assumptions are
required.

\subsection{Magnus expansion and Magnusian} \label{subsec:Magnus expansion}

The Magnus expansion arises from a first-order differential equation
on a Lie group $\mathcal G$ with Lie algebra $\mathfrak g$.  
Given a time-dependent element $A(t) \in \mathfrak{g}$, 
the differential equation is given by 
\begin{align} \label{eq:magnus_Lie_group_DE}
    \left[\frac{d}{dt}U(t)\right] U(t)^{-1} = A(t) \,, 
    \quad 
    U(t) \in \mathcal{G} \,,
    \quad 
    U(0) = I \,, 
\end{align}
where $I \in \mathcal{G}$ is the identity element. 
The Magnusian $\Omega(t) \in  \mathfrak{g}$ is defined by  
\begin{align} \label{eq:magnus_Lie_group_Lie_algebra_Exp}
    U(t) = \exp\left[ \Omega(t) \right] \,,
    \quad 
    \Omega(t) \in \mathfrak{g} \,,
    \quad 
    \Omega(0) = 0 \,. 
\end{align}
We use the bracket notation for the Lie algebra: $A,B \in \mathfrak{g} \,\Longrightarrow \, [A, B] \in \mathfrak{g}$. 
The Lie algebra structure allows us to express the Magnus series
entirely in terms of Lie brackets. Writing
\begin{equation}
    \Omega(t)=\sum_{n=1}^{\infty}\Omega_n(t),
\end{equation}
the first two terms are
\begin{align}
\begin{split}
    \Omega_1(t)
    &=
    \int_0^t A(t_1)\,dt_1,
    \\
    \Omega_2(t)
    &=
    \frac12
    \int_0^t\int_0^t
    [A(t_1),A(t_2)]
    \theta(t_1-t_2)\,dt_1dt_2,
\end{split}
\end{align}
where $\theta$ is the Heaviside step function,
\[
    \theta(x)
    =
    \begin{cases}
        1,&x>0,\\
        0,&x<0.
    \end{cases}
\]
Its value at $x=0$ is irrelevant for the integrals considered here.
 
(\ref{eq:magnus_Lie_group_DE})-(\ref{eq:magnus_Lie_group_Lie_algebra_Exp})
are written intrinsically on the Lie group~$G$.
For the operator-product realization used below, we realize
$\mathfrak g$ as a Lie subalgebra of the commutator Lie algebra
associated with the associative algebra
$(\mathcal A,\star,\mathbf 1)$, with bracket
\begin{equation}
\label{eq:star-commutator}
    [A,B]=A\star B-B\star A.
\end{equation}
Under this realization, the group exponential is represented formally
by the $\star$-exponential.  
Thus every term in the Magnus expansion is a linear combination of
noncommutative operator words.  We write
\[
    A_i:=A(t_i),
    \qquad
    A_{(\sigma)}
    :=
    A_{\sigma_1}\star\cdots\star A_{\sigma_n},
    \qquad
    \sigma=(\sigma_1,\ldots,\sigma_n)\in S_n.
\]
Then the second and third Magnus terms take the form
\begin{align}
    \Omega_2
    &=
    \int
    \frac12\bigl(A_{(12)}-A_{(21)}\bigr)
    \theta_{12}\,dt_{12}, \label{eq:Omega2-operator-words}
    \\
    \Omega_3
    &=
    \int
    \left[
        \frac13\bigl(A_{(123)}+A_{(321)}\bigr)
        -
        \frac16\bigl(
            A_{(132)}+A_{(213)}+A_{(231)}+A_{(312)}
        \bigr)
    \right]
    \theta_{123}\,dt_{123}.  \label{eq:Omega3-operator-words}
\end{align}
Here
\[
    \theta_{i_1\cdots i_r}
    :=
    \prod_{a=1}^{r-1}
    \theta(t_{i_a}-t_{i_{a+1}}),
    \qquad
    dt_{i_1\cdots i_r}
    :=
    dt_{i_1}\cdots dt_{i_r}.
\]
The coefficients are the $d_{n,k}$ in (\ref{eq:d-nk-master}) (up to a sign convention); see, for example, \cite{Mielnik_1970,Burum1981,Strichartz_1987,Ebrahimi_Fard_2014,arnal2017,bandiera2017,bandiera2020}. 

In the rest of the section, we will explain how a fixed operator word becomes a graph.

\subsection{Star products and ordered contractions}
\label{subsec:star-products-ordered-contractions}

We use a simple oscillator realization to extract the graph data from
operator products.
Let $a$ and $a^\dagger$ satisfy
\[
    [a,a^\dagger]=1.
\] 
Recall that normal ordering is defined such that for any function of $a$, $a^\dagger$, 
defined as a (possibly infinite) power series, 
all $a^\dagger$'s are pushed to the left and $a$'s are pushed to the right. For example, 
\begin{align}
    : a a^\dagger a^2: \;\; = \;\; a^\dagger a^3 \,.
\end{align}
For normally ordered functions, define the star product by
\begin{equation}
\label{eq:operator-star-product}
    F\star G
    :=
    F
    \exp\left(
        \frac{\overleftarrow{\partial}}{\partial a}
        \frac{\overrightarrow{\partial}}{\partial a^\dagger}
    \right)
    G.
\end{equation}
Here the arrows indicate the factors on which the derivatives act:
$\frac{\overleftarrow{\partial}}{\partial a}$ acts on the function to its left, whereas
$\frac{\overrightarrow{\partial}}{\partial a^\dagger}$ acts on the function to its
right. Equivalently,
\[
    F\star G
    =
    \sum_{r\geq0}
    \frac{1}{r!}
    \frac{\partial^r F}{\partial a^r}
    \frac{\partial^r G}{\partial (a^\dagger)^r}.
\]
This is the symbol-level realization of the operator product in Section \ref{subsec:Magnus expansion}.
This star product arises from
the deformation quantization of K\"ahler manifolds
given in refs.~\cite{Karabegov1996,BordemannWaldmann1997},
the use of which to explore the quantum Magnusian
is due to ref.~\cite{Kim:2025ebl}.

The diagrammatics of the quantum Magnusian is independent of the
choice of vertex symbol. A particularly convenient choice is the
time-dependent normally ordered symbol 
\begin{equation}
\label{eq:operator-vertex}
    A(t)
    := \;\;
    :\exp\left(
        f(t)a^\dagger+\bar f(t)a
    \right): \;\;
    =
    e^{f(t)a^\dagger}e^{\bar f(t)a}.
\end{equation}
For time variables $t_1,\ldots,t_n$, we use the shorthand
\[
    A_i:=A(t_i),
    \qquad
    f_i:=f(t_i),
    \qquad
    \bar f_i:=\bar f(t_i).
\]
We define the ordered contraction by
\begin{equation}
\label{eq:operator-Wij}
    W_{ij}
    :=
    \bar f_i f_j
    =
    \bar f(t_i)f(t_j).
\end{equation}
In general, $W_{ij}\neq W_{ji}$.

We take
$(\mathcal A,\star,\mathbf1)$ to be the algebra of normally ordered
oscillator symbols, with multiplication given by
\eqref{eq:operator-star-product}.  This provides a concrete realization
of the associative algebra introduced in
Section~\ref{subsec:Magnus expansion}. We work in a suitable completion of
$\mathcal A$ whenever formal exponentials such as $A(t)$ are involved.

Before stating the Wick rule, we fix one piece of notation. On the
right-hand side of Lemma~\ref{lem:exponentiated-Wick}, juxtaposition
denotes the ordinary commuting product of normally ordered symbols.
Thus
\begin{align} \label{eq:commuting product of Ai}
\prod_{i=1}^n A_i
    =
    \exp\left(
        \sum_{i=1}^n f_i\,a^\dagger
    \right)
    \exp\left(
        \sum_{i=1}^n \bar f_i\,a
    \right).
\end{align} 
\begin{lemma}[Exponentiated Wick rule]
\label{lem:exponentiated-Wick}
For every permutation $\sigma=(\sigma_1,\ldots,\sigma_n)\in S_n$,
one has
\begin{equation}
\label{eq:exponentiated-Wick}
\begin{split}
    A_{\sigma_1}
    \star\cdots\star
    A_{\sigma_n}
    &=
    \exp\left(
        \sum_{1\leq a<b\leq n}
        W_{\sigma_a\sigma_b}
    \right)
    \prod_{i=1}^n A_i.
\end{split}
\end{equation}
\end{lemma}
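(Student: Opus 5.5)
The natural route is induction on $n$, built on a two-factor identity. For two normally ordered symbols of the form $F=e^{\phi a^\dagger}e^{\bar\phi a}$ and $G=e^{\psi a^\dagger}e^{\bar\psi a}$, we compute $F\star G$ directly from \eqref{eq:operator-star-product}. Since $\partial^r F/\partial a^r=\bar\phi^{\,r}F$ and $\partial^r G/\partial(a^\dagger)^r=\psi^r G$, the series collapses to
\[
    F\star G=\sum_{r\ge0}\frac{(\bar\phi\psi)^r}{r!}\,FG=e^{\bar\phi\psi}\,FG ,
\]
where $FG$ denotes the commuting product. In particular, the product $FG=e^{(\phi+\psi)a^\dagger}e^{(\bar\phi+\bar\psi)a}$ is again of the same exponential form. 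The class of such symbols is therefore closed under $\star$, up to a scalar prefactor.

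The induction runs on the length of the word. The case $n=1$ is trivial. For the inductive step, assume
\[
    A_{\sigma_1}\star\cdots\star A_{\sigma_{n-1}}
    =\exp\Bigl(\sum_{1\le a<b\le n-1}W_{\sigma_a\sigma_b}\Bigr)\prod_{a<n}A_{\sigma_a}.
\]
The product $\prod_{a<n}A_{\sigma_a}$ has $\phi=\sum_{a<n}f_{\sigma_a}$ and $\bar\phi=\sum_{a<n}\bar f_{\sigma_a}$. Scalar prefactors pull out of $\star$ because it is bilinear. Applying the two-factor identity with $G=A_{\sigma_n}$ then gives the additional factor
\[
    \exp\Bigl(\sum_{a<n}\bar f_{\sigma_a}f_{\sigma_n}\Bigr)=\exp\Bigl(\sum_{a<n}W_{\sigma_a\sigma_n}\Bigr).
\]
This supplies exactly the missing pairs $(a,n)$ in the exponent. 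The remaining commuting product is $\prod_{i=1}^n A_i$ in the form \eqref{eq:commuting product of Ai}, and it is independent of $\sigma$.

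The only point requiring care is justifying the derivative computation inside the completion. Everything is formal in $f_i,\bar f_i$, so we work in the ring of formal power series in these variables with coefficients in polynomials in $a,a^\dagger$. In that ring the star-product series converges termwise, because each $\bar\phi\psi$ has positive degree. The exponential identities then hold coefficientwise, and associativity of $\star$ is not even needed, since we bracket the word from the left.
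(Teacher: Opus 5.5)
Your proof is correct and follows essentially the same route as the paper: induction on the word length, where right multiplication by $A_{\sigma_n}$ produces the cross-contractions $W_{\sigma_a\sigma_n}$ for every $a<n$. Your two-factor identity for general symbols $e^{\phi a^\dagger}e^{\bar\phi a}$ makes explicit a step the paper leaves implicit, namely that the commuting product of the first $n-1$ factors has this same form with $\bar\phi=\sum_{a<n}\bar f_{\sigma_a}$.
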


\begin{proof}
For two vertices, \eqref{eq:operator-star-product} gives
\[
    A_i\star A_j
    =
    e^{\bar f_i f_j}A_i A_j
    =
    e^{W_{ij}}A_iA_j.
\]
Suppose the formula holds for $n-1$ vertices.  Multiplication on the
right by $A_{\sigma_n}$ produces the cross-contraction
$W_{\sigma_a\sigma_n}$ for every $a<n$.  Therefore
\begin{align*}
    A_{\sigma_1}\star\cdots\star A_{\sigma_n}
    &=
    \exp\left(
        \sum_{1\leq a<b<n}
        W_{\sigma_a\sigma_b}
    \right)
    \exp\left(
        \sum_{a=1}^{n-1}
        W_{\sigma_a\sigma_n}
    \right)
    \prod_{i=1}^n A_i
=
    \exp\left(
        \sum_{1\leq a<b\leq n}
        W_{\sigma_a\sigma_b}
    \right)
    \prod_{i=1}^n A_i.
\end{align*}
The result follows by induction.
\end{proof}

Thus, once an operator ordering has been fixed, the ordered
contraction between two labels is determined completely:
if $i$ occurs before $j$, the contraction is $W_{ij}$, whereas if
$j$ occurs before $i$, it is $W_{ji}$.
Moreover, expanding the exponential in
\eqref{eq:exponentiated-Wick} turns powers of the same contraction into
parallel edges.

\subsection{Red-Blue graphs and their \texorpdfstring{$\omega$}{omega}-function}
\label{subsec:double-color-graphs-reorg}

The operator ordering records the relative order of the operators in
the noncommutative product, whereas the time ordering records the
relative order of their time variables. Thus, for a pair of vertices
$i,j$, the factor $\theta_{ij}=\theta(t_i-t_j)$ imposes
$t_i>t_j$, while $W_{ij}$ and $W_{ji}$ correspond to the two possible
relative operator orderings.
As in refs.~\cite{Kim:2025ebl,Guo:2026xaw}, 
we introduce two types of colored edges (blue/red) 
to distinguish the pairing between the operator ordering and the time ordering. Our convention in this paper for colored directed edges reads 
\begin{align}
\label{eq:operator-color-convention}
    W_{ij} \,\theta_{ij} \; \rightarrow \; \raisebox{-2.5mm}{\includegraphics[width=1.2cm]{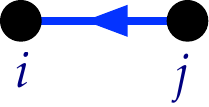}} \,,
    \qquad 
    W_{ji} \,\theta_{ij} \; \rightarrow \; \raisebox{-2.5mm}{\includegraphics[width=1.2cm]{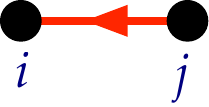}} \,. 
\end{align}
That is, $\text{blue}_{ij} = W_{ij}\theta_{ij}$, $\text{red}_{ij}=W_{ji}\theta_{ij}$ (we use the convention that an edge $j \to i$ is denoted by $\varepsilon_{ij}$ for some $\varepsilon$). Our red and blue labels are interchanged relative to those of
\cite{Kim:2025ebl,Guo:2026xaw}; see Remark~\ref{rem:RB-physical-convention} below.

\eqref{eq:Omega2-operator-words} gives the simplest example:
on the sector $t_1>t_2$, the word $A_1\star A_2$ produces blue
contractions, while $A_2\star A_1$ produces red contractions.  Thus
red and blue graphs are not an additional graph-theoretic input; they
are the Wick images of the operator words occurring in the Magnus
expansion.

For a fixed blue contraction, the exponentiated Wick rule produces the
factor $e^{W_{ij}}$. Expanding this exponential gives
\[
    e^{W_{ij}}\theta_{ij}
    =
    \sum_{m\geq0}\frac{W_{ij}^m}{m!}\theta_{ij}.
\]
The term of degree $m$ is represented by $m$ parallel blue edges
between the two vertices. In particular, $m=1$ gives a single edge,
whereas $m\geq2$ gives a banana graph of positive loop number.
Diagrammatically,
\begin{align}
    e^{W_{ij}} \theta_{ij} = \;\raisebox{-2.5mm}{\includegraphics[width=8cm]{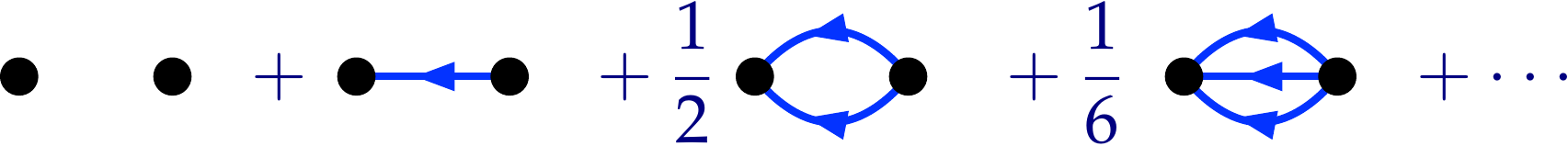}} \; \,.
\end{align}

We now specialize the general \mquiver formalism to quivers
with two edge colors.

\begin{definition}[RB graph]
\label{def:double-color-graph-reorg}
A \emph{red-blue graph}, or \emph{RB graph}, is a quiver
$G=(V_G,E_G,\rho_G)$ together with a coloring
\[
    \kappa_G:E_G\longrightarrow\{r,b\}.
\]
We write
\[
    E_r(G):=\kappa_G^{-1}(r),
    \qquad
    E_b(G):=\kappa_G^{-1}(b),
    \qquad
    b(G):=|E_b(G)|.
\]
An RB graph is called \emph{acyclic} if its underlying quiver is
acyclic.
\end{definition}

\begin{remark}
The RB graphs produced directly by time ordering are acyclic RB graphs,
although parallel edges are allowed. For the Hopf-algebraic identities
below, however, it is useful to define the abstract RB functions on
arbitrary RB graphs, whose underlying quivers may contain directed
cycles and tadpoles.
\end{remark}

For an RB graph $G$, we specialize the edge parameters by
\begin{equation}
\label{eq:double-color-parameters-reorg}
    \beta_\varepsilon
    =
    \begin{cases}
        1,&\varepsilon\in E_r(G),\\
        -1,&\varepsilon\in E_b(G).
    \end{cases}
\end{equation}

\begin{definition}[RB $e$- and $\omega$-functions]
\label{def:RB-e-omega-reorg}
For an RB graph $G$, define
\begin{equation}
\label{eq:RB-e-omega-reorg}
    e_{RB}(G)
    :=
    (-1)^{b(G)}
    \left.e(G;\boldsymbol\beta)\right|_{\beta_r=1,\beta_b=-1},
    \qquad
    \omega_{RB}(G)
    :=
    (-1)^{b(G)}
    \left.\omega(G;\boldsymbol\beta)\right|_{\beta_r=1,\beta_b=-1}.
\end{equation}
For a connected $G$, we similarly set
\begin{equation}
\label{eq:RB-omega-c-reorg}
    \omega_{RB,c}(G)
    :=
    (-1)^{b(G)}
    \left.\omega_c(G;\boldsymbol\beta)\right|_{\beta_r=1,\beta_b=-1}.
\end{equation}
\end{definition}

Cut-sub-\mquivers and contractions inherit the colors of all surviving
edges. Hence, for every $K\preceq G$,
\begin{equation}
\label{eq:RB-blue-splitting-reorg}
    b(G)=b(K)+b(G\cocont K).
\end{equation}

\begin{proposition}
\label{prop:RB-convolution-reorg}
The functions $e_{RB}$ and $\omega_{RB}$ are multiplicative and are
convolution inverses:
\begin{equation}
\label{eq:RB-convolution-reorg}
    e_{RB}*\omega_{RB}
    =
    \omega_{RB}*e_{RB}
    =
    \epsilon.
\end{equation}
\end{proposition}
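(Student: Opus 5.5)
The plan is to derive everything from the corresponding statements for $e$ and $\omega$ on $\mathcal H_c^{\mathrm{dir}}$, decorated by the two colours $\{r,b\}$, with the edge parameters specialized as in \eqref{eq:double-color-parameters-reorg}. Since colours are edge decorations, Remark~\ref{rem:decoration-compatibility} shows that the coproduct and convolution apply verbatim to RB graphs. Cut-subquivers and contractions keep the colours of their surviving edges, and therefore also keep their specialized parameters. Hence specializing $\beta_r=1,\beta_b=-1$ commutes with convolution, and the identities $\omega*e=e*\omega=\epsilon$ of Proposition~\ref{prop:convolution identities} remain valid after specialization.

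\emph{Multiplicativity.} The number of blue edges is additive under disjoint union: $b(G_1\sqcup G_2)=b(G_1)+b(G_2)$. Moreover, $e$ and $\omega$ are multiplicative by \eqref{eq:e-multiplicative} and \eqref{eq:omega-multiplicative}. Since the specialization is compatible with disjoint union, both $e_{RB}$ and $\omega_{RB}$ are multiplicative. We also have $e_{RB}(\bullet^{\pm1})=\omega_{RB}(\bullet^{\pm1})=1$, so both extend as characters to the localization.

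\emph{Convolution identity.} For an RB graph $G$, I would write
\[
(e_{RB}*\omega_{RB})(G)=\sum_{K\preceq G}(-1)^{b(K)}e(K)\,(-1)^{b(G\cocont K)}\omega(G\cocont K).
\]
By the blue-splitting identity \eqref{eq:RB-blue-splitting-reorg}, $(-1)^{b(K)+b(G\cocont K)}=(-1)^{b(G)}$, and this sign factors out of the sum. What remains is $(-1)^{b(G)}(e*\omega)(G)$ evaluated at the specialized parameters, which equals $(-1)^{b(G)}\epsilon(G)$. Now $\epsilon(G)\neq0$ only when $E_G=\emptyset$, and in that case $b(G)=0$. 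So the result is $\epsilon(G)$. The same computation with $\omega*e$ gives the other side.

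The only point needing care is that the specialization really commutes with convolution. This holds because the sign twist $(-1)^{b}$ is itself a character compatible with the coproduct: it is grouplike in the dual, since $b$ is additive over $K$ and $G\cocont K$. Twisting the characters $e$ and $\omega$ by this grouplike character is an algebra automorphism of the convolution algebra, so it preserves inverses. I expect no real obstacle beyond this bookkeeping.
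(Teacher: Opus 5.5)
Your proposal is correct and is essentially the paper's proof: multiplicativity from $b(G_1\sqcup G_2)=b(G_1)+b(G_2)$ together with the multiplicativity of $e$ and $\omega$, and the convolution identity by using $b(K)+b(G\cocont K)=b(G)$ to pull $(-1)^{b(G)}$ out of the sum, which leaves $(-1)^{b(G)}\epsilon(G)=\epsilon(G)$. One terminological point about your last paragraph: the property you actually use is the termwise additivity of $b$ under the coproduct, which makes pointwise multiplication by $(-1)^{b}$ an automorphism of the convolution algebra. Calling $(-1)^{b}$ ``grouplike in the dual'' would only say that it is a character, and that is a different property.
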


\begin{proof}
Multiplicativity follows from
$b(G_1\sqcup G_2)=b(G_1)+b(G_2)$ and the multiplicativity of the
general functions $e$ and $\omega$. Moreover, using
\eqref{eq:RB-blue-splitting-reorg},
\[
    (e_{RB}*\omega_{RB})(G)
    =
    (-1)^{b(G)}
    \left.(e*\omega)(G;\boldsymbol\beta)
    \right|_{\beta_r=1,\beta_b=-1}.
\]
If $G$ has at least one edge, the right-hand side is zero. If $G$ is
edgeless, then $b(G)=0$. Thus
\[
    (e_{RB}*\omega_{RB})(G)=\epsilon(G).
\]
The proof of $\omega_{RB}*e_{RB}=\epsilon$ is similar.
\end{proof}

\begin{remark}
\label{rem:RB-physical-convention}
Our red-blue convention is related to that of \cite{Guo:2026xaw} by
interchanging the two color labels: in our convention
$W_{ij}\theta_{ij}$ is blue and $W_{ji}\theta_{ij}$ is red, whereas
these two colors are interchanged in \cite{Guo:2026xaw}.

In addition, after the specialization
$\beta_r=1$ and $\beta_b=-1$, each blue edge selected through a
$\beta_-$ factor contributes an additional sign $-1$. We therefore
include the factor $(-1)^{b(G)}$ in the definitions of
$e_{RB}(G)$ and $\omega_{RB}(G)$. This compensates for the edgewise
sign and gives the normalization used for comparison with the color
basis of \cite{Guo:2026xaw}.
\end{remark}

\section{Hopf--operator compatibility in the RB basis}
\label{sec:global-RB-compatibility}

The preceding section shows how Magnus operator words give rise to RB graphs
and defines the corresponding Hopf functions $e_{RB}$ and $\omega_{RB}$.  The
purpose of the present section is to explain how they are organized globally by the star
product.  Let $\Omega_{\mathrm{RB}}$ denote the connected RB graph series with
coefficients $\omega_{RB,c}$, and let $U$ denote the Dyson series.  We show
that $\exp_\star(\Omega_{\mathrm{RB}})=U$.  The graphwise mechanism behind this
identity is the RB convolution relation $\omega_{RB}*\widetilde e=\mathbf b$,
where $\widetilde e(G)=e(\mathfrak r(G);\mathbf1)$ is the ordering character
associated with the redification $\mathfrak r(G)$, obtained by keeping the
orientations of red edges, reversing those of blue edges, and then forgetting
the colors; $\mathbf b$ is the characteristic function of pure-blue graphs.
Consequently, $\Omega_{\mathrm{RB}}=\Omega$, where $\Omega$ denotes the
Magnusian introduced in Subsection~\ref{subsec:Magnus expansion}.  Thus the RB
convolution identity exposes the Hopf-algebraic structure governing how the
connected Magnus coefficients assemble, under the star product, into the full
time-ordered evolution.

For later use, we define the color graph integral of an acyclic RB graph $G$ by
\begin{equation}
\label{eq:I-color}
    I_{\mathrm{color}}(G;t)
:=
\int_{[0,t]^{|V_G|}}
\left(
\prod_{\varepsilon\ {\rm blue}}
W_{ij}\theta_{ij}
\right)
\left(
\prod_{\varepsilon\ {\rm red}}
W_{ji}\theta_{ij}
\right)
\left(
\prod_{v\in V_G}A(t_v)
\right)
\prod_{v\in V_G}dt_v,
\end{equation}
where for each edge the ordered pair $(i,j)$ is fixed by its
time-ordering factor $\theta_{ij}$, equivalently by the edge
$j\to i$, and the product $\prod_{v\in V_G}A(t_v)$ is commutative, see (\ref{eq:commuting product of Ai}).

\subsection{Dyson series as a pure-blue graph series}
\label{subsec:Dyson-pure-blue}

Introduce a formal parameter $\lambda$ recording the homogeneous degree in
$A(t)$ and write
\begin{equation}
\label{eq:operator-Dyson-series}
    U(\lambda;t)
    :=
    \mathbf1+
    \sum_{n\geq1}\lambda^nU_n(t),
    \qquad
    U_n(t)
    :=
    \int_{\Delta_n(t)}
    A(t_1)\star\cdots\star A(t_n)\,
    dt_1\cdots dt_n,
\end{equation}
where
\begin{equation}
\label{eq:operator-simplex}
    \Delta_n(t)
    :=
    \{(t_1,\ldots,t_n)\mid 0<t_n<\cdots<t_1<t\}.
\end{equation}
Likewise set
\begin{equation}
\label{eq:lambda-Magnus-series}
    \Omega(\lambda;t)
    :=
    \sum_{n\geq1}\lambda^n\Omega_n(t).
\end{equation}
In the completed algebra $\mathcal A[[\lambda]]$,
\begin{equation}
\label{U-vs-Omega_star}
    U(\lambda;t)
    =
    \exp_\star\bigl(\Omega(\lambda;t)\bigr),
    \qquad
    \Omega(\lambda;t)
    =
    \log_\star U(\lambda;t),
\end{equation}
where $\exp_\star$ and $\log_\star$ are understood as formal power series.

If $G$ is purely blue, write $I_{\mathrm{blue}}(G;t):=I_{\mathrm{color}}(G;t)$ (see \eqref{eq:I-color}).
Equivalently,
\begin{equation}
\label{eq:I-blue-cube}
    I_{\mathrm{blue}}(G;t)
    =
    \int_{[0,t]^{|V_G|}}
    \left(
        \prod_{\varepsilon\in E_G}
        W_\varepsilon\theta_\varepsilon
    \right)
    \left(\prod_{v\in V_G}A(t_v)\right)
    \prod_{v\in V_G}dt_v.
\end{equation}

For a directed colored multigraph $G$, let
$\operatorname{Aut}(G)$ be the group of pairs of bijections
\[
    \phi_V:V_G\longrightarrow V_G,
    \qquad
    \phi_E:E_G\longrightarrow E_G,
\]
such that, for every edge $\varepsilon:j\to i$,
the edge $\phi_E(\varepsilon)$ is directed from
$\phi_V(j)$ to $\phi_V(i)$ and has the same color as $\varepsilon$.
The \emph{symmetry factor} of $G$ is
\begin{equation}
\label{eq:graph-symmetry-factor}
    \sigma(G):=|\operatorname{Aut}(G)|.
\end{equation} 

\begin{proposition}[Pure-blue graph expansion of the Dyson series]
\label{prop:Dyson-blue-graph-expansion}
The Dyson series has the graph expansion
\begin{equation}
\label{eq:Dyson-blue-graph-expansion}
    U(\lambda;t)
    =
    \sum_G
    \frac{\lambda^{|V_G|}}{\sigma(G)}
    I_{\mathrm{blue}}(G;t),
\end{equation}
where the sum runs over isomorphism classes of finite purely blue acyclic 
quivers, including the empty graph.
\end{proposition}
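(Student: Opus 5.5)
The plan is to start from the Dyson series \eqref{eq:operator-Dyson-series} and apply the exponentiated Wick rule, Lemma~\ref{lem:exponentiated-Wick}, to each integrand. On the simplex $\Delta_n(t)$ the operator word is $A_1\star\cdots\star A_n$ with $t_1>\cdots>t_n$. Lemma~\ref{lem:exponentiated-Wick} with $\sigma=\mathrm{id}$ gives
\[
A_1\star\cdots\star A_n=\prod_{1\le a<b\le n}e^{W_{ab}}\prod_{v}A_v .
\]
Since $a<b$ means $t_a>t_b$, each factor $W_{ab}$ is a blue contraction $W_{ab}\theta_{ab}$ in the convention \eqref{eq:operator-color-convention}. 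Next, expand each exponential as $\sum_{m_{ab}\ge0}W_{ab}^{m_{ab}}/m_{ab}!$. The resulting terms are indexed by the multiplicity matrices $(m_{ab})_{a<b}$. Each such matrix determines a purely blue quiver on the labeled vertex set $[n]$, with $m_{ab}$ parallel edges $b\to a$. This quiver is acyclic, because all edges go from larger to smaller labels. Finally, I would replace $\int_{\Delta_n(t)}$ by $\int_{[0,t]^n}\theta_{12}\theta_{23}\cdots\theta_{n-1,n}$.

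The second step converts these labeled-simplex terms into the cube integrals $I_{\mathrm{blue}}(G;t)$ of \eqref{eq:I-blue-cube}. Fix an acyclic pure-blue quiver $G$ with $n$ vertices. Here I would use the linear-extension decomposition, as in the proof of Proposition~\ref{prop:e-multi-vertex-contraction}. The product $\prod_\varepsilon\theta_\varepsilon$ over the edges of $G$ equals, up to measure zero, $\sum_{\tau\in\operatorname{LE}(G)}\theta_{\tau_1\cdots\tau_n}$, so
\[
I_{\mathrm{blue}}(G;t)=\sum_{\tau\in \operatorname{LE}(G)}\int_{[0,t]^n}\theta_{\tau_1\cdots\tau_n}\prod_\varepsilon W_\varepsilon\prod_v A_v\,dt .
\]
Each summand is exactly a simplex term from the first step: relabel the vertices along $\tau$ and read off the multiplicities $m_{ab}$. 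The factor $1/\prod m_{ab}!$ from the expansion must then be matched against the number of edge-labelings. This is the standard count: labeled structures on $[n]$ whose edge-unlabeled multiplicities are fixed, with the vertex order forced by time ordering.

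The main obstacle is the symmetry-factor bookkeeping. Consider the pairs consisting of an edge-labeled, vertex-labeled copy of $G$ on $[n]$ whose identity ordering is a linear extension. I would count these pairs in two ways. First, the number of vertex-labeled, edge-labeled copies of $G$ is $n!\,|E_G|!/\sigma(G)$. Second, in the simplex expansion one must edge-label to compare: the multinomial $|E_G|!/\prod m_{ab}!$ appears. The requirement that $\mathrm{id}$ be a linear extension selects, among the $n!$ vertex labelings, exactly those labelings whose order is a linear extension. Hence
\[
\sum_{(m_{ab})\,\mapsto\,G}\prod_{a<b}\frac{1}{m_{ab}!}=\frac{1}{\sigma(G)}\,\bigl(\text{number of labelings}\bigr),
\]
where the number of labelings is matched term by term with the $\tau$-sum. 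Checking that the automorphism group acts freely on pairs (vertex labeling, edge labeling), with parallel-edge permutations accounted for in $\sigma(G)$ and in the $m!$'s, is the key verification. After it, summing over $n$ with weight $\lambda^n$, and including the empty graph as the term $\mathbf1$, yields \eqref{eq:Dyson-blue-graph-expansion}.
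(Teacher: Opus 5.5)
Your proposal is correct and follows the same route as the paper. You apply the Wick rule with $\sigma=\mathrm{id}$, expand into labeled pure-blue acyclic quivers compatible with $1<\cdots<n$ with weights $1/\prod m_{ab}!$, and regroup by isomorphism class using linear extensions and orbit--stabilizer; your decomposition of $\prod_\varepsilon\theta_\varepsilon$ over $\operatorname{LE}(G)$ spells out what the paper leaves implicit. One point to tighten: the simplex integrals differ for different multiplicity matrices $(m_{ab})$, so the double count must be done for each fixed $(m_{ab})$ rather than in aggregate. That is, each matrix realizing $G$ is hit by exactly $\sigma(G)/\prod_{a<b}m_{ab}!$ linear extensions, which is precisely what your free-action argument gives when restricted to a single matrix.
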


\begin{proof}
At degree $n$, the Dyson integral is taken over the simplex
$0<t_n<\cdots<t_1<t$ with operator word $A_1\star\cdots\star A_n$.
Lemma~\ref{lem:exponentiated-Wick} gives
\[
    A_1\star\cdots\star A_n
    =
    \exp\left(\sum_{1\leq i<j\leq n}W_{ij}\right)
    \prod_{i=1}^nA_i.
\]
The operator ordering agrees with the time ordering; hence every contraction is
$W_{ij}\theta_{ij}$ and is blue.  Expanding the exponential chooses a
multiplicity $m_{ij}\geq0$ for each ordered pair $i<j$ and therefore produces
all labeled purely blue acyclic quivers compatible with the
total order $1<\cdots<n$, with the factor $1/m_{ij}!$ accounting for the
permutation of parallel contractions.

Now group the labeled terms by graph isomorphism class.  The compatible
vertex labelings of a fixed acyclic graph are its linear extensions, while
vertex automorphisms and permutations of parallel edges form precisely its
automorphism group.  Orbit--stabilizer therefore converts the labeled sum
into $\sigma(G)^{-1}I_{\mathrm{blue}}(G;t)$.  Summing over $n$ proves
\eqref{eq:Dyson-blue-graph-expansion}.
\end{proof}

\subsection{RB convolution identity}
\label{subsec:RB-convolution}

Recall from Proposition~\ref{prop:RB-convolution-reorg} that
\begin{equation}
\label{eq:RB-convolution-recall}
    \omega_{RB}*e_{RB}
    =
    e_{RB}*\omega_{RB}
    =
    \epsilon.
\end{equation}
To describe the ordering constraints arising when connected RB contributions
are multiplied by the star product, we introduce the following operation.

\begin{definition}[Redification]
\label{def:redification}
For an RB graph $G$, its \emph{redification} $\mathfrak r(G)$ is the quiver 
obtained by keeping every red-edge orientation, reversing every
blue-edge orientation, and then forgetting the colors:
\begin{equation}
\label{eq:redification}
    \text{red }(i\to j)\longmapsto i\to j,
    \qquad
    \text{blue }(i\to j)\longmapsto j\to i.
\end{equation}
Define
\begin{equation}
\label{eq:tilde-e-definition}
    \widetilde e(G)
    :=
    e\bigl(\mathfrak r(G);\mathbf1\bigr),
\end{equation}
where $\mathbf1$ means that all edge parameters are set equal to $1$.
\end{definition}

\begin{definition}[Pure-blue character]
\label{def:pure-blue-character}
Define
\begin{equation}
\label{eq:pure-blue-character}
    \mathbf b(G)
    :=
    \begin{cases}
        1,&E_r(G)=\emptyset,\\
        0,&E_r(G)\neq\emptyset.
    \end{cases}
\end{equation}
Thus $\mathbf b(G)=1$ precisely when $G$ is purely blue; in particular, the
edgeless graph is purely blue.
\end{definition}

The operator meaning of $\widetilde e$ explains the reversal in
\eqref{eq:redification}.  Let $Q$ be a tadpole-free RB graph on $m$
vertices, and let
$    \pi=(\pi_1,\ldots,\pi_m)$
be an ordering of its vertices, regarded as an operator word.  Write
$\pi^{\mathrm{rev}}=(\pi_m,\ldots,\pi_1)$
for the reversed ordering.  For an edge $j\to i$, a blue color requires
$i$ to occur before $j$ in $\pi$, whereas a red color requires $j$ to
occur before $i$.  After reversing the operator word, these conditions
are precisely the order relations encoded by the redification
$\mathfrak r(Q)$.  Hence
\begin{equation}
\label{eq:RB-ordering-redification}
    \pi\text{ produces the prescribed RB coloring of }Q
    \quad\Longleftrightarrow\quad
    \pi^{\mathrm{rev}}
    \in
    \operatorname{LE}\bigl(\mathfrak r(Q)\bigr).
\end{equation}
Here $\operatorname{LE}(H)$ denotes the set of linear extensions of
the ordering constraints defined by $H$; in particular, it is empty
when these constraints are inconsistent.

Since reversal is a
bijection on $S_m$, the fraction of operator orderings producing $Q$ is
\begin{equation}
\label{eq:tilde-e-ordering-fraction}
    \frac{|\operatorname{LE}(\mathfrak r(Q))|}{m!}
    =
    \widetilde e(Q).
\end{equation}
If $\mathfrak r(Q)$ contains a directed cycle, both sides are zero; if $Q$
contains a tadpole, no ordering can realize it and
$\widetilde e(Q)=e(\mathfrak r(Q);\mathbf1)=0$.  Thus
\eqref{eq:tilde-e-ordering-fraction} remains the correct normalized ordering
factor for every quotient graph that occurs below.

\begin{proposition}[Redification identity]
\label{prop:redification-character-identity}
On RB graphs, $\widetilde e
    =
    e_{RB}*\mathbf b$. 
Equivalently, for every RB graph $G$,
\begin{equation}
\label{eq:redification-character-expanded}
    e\bigl(\mathfrak r(G);\mathbf1\bigr)
    =
    \sum_{K\preceq G}
    e_{RB}(K)\,\mathbf b(G\cocont K).
\end{equation}
\end{proposition}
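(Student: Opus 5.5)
The plan is to reduce the identity to an inclusion–exclusion over the blue edges, driven by the two-vertex $e$-contraction rule \eqref{eq:e-two-vertex-contraction}.

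\emph{Step 1: evaluate the right-hand side.} By Definition~\ref{def:pure-blue-character}, $\mathbf b(G\cocont K)=1$ exactly when every red edge of $G$ lies in $K$. So the sum over $K\preceq G$ reduces to cut-subquivers of the form
\[
K_S:=(V_G,\,E_r(G)\sqcup S),\qquad S\subseteq E_b(G).
\]
For these, $b(K_S)=|S|$, and Definition~\ref{def:RB-e-omega-reorg} gives
\[
e_{RB}(K_S)=(-1)^{|S|}\,e(K_S;\boldsymbol\beta)\big|_{\beta_r=1,\beta_b=-1}.
\]
Thus \eqref{eq:redification-character-expanded} is equivalent to
\[
e(\mathfrak r(G);\mathbf1)=\sum_{S\subseteq E_b(G)}(-1)^{|S|}\,e(K_S)\big|_{\beta_r=1,\beta_b=-1}.
\]

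\emph{Step 2: dispose of tadpoles.} Recall from Definition~\ref{def:e function} that $e$ depends on the edge parameters only through the tadpole factors $-\beta_{\varepsilon,-}$.
\begin{itemize}
\item If $G$ has a red tadpole, it is also a tadpole of $\mathfrak r(G)$ with parameter $1$, so the left-hand side vanishes. It lies in every $K_S$, again with $\beta=1$, so every term on the right vanishes too.
\item If $\varepsilon$ is a blue tadpole, the left-hand side vanishes for the same reason. On the right, pair $S$ with $S\triangle\{\varepsilon\}$. The term containing $\varepsilon$ carries the extra factor $(-1)\cdot(-\beta_{\varepsilon,-})|_{\beta_\varepsilon=-1}=-1$ relative to its partner, so the pairs cancel.
\end{itemize}
Hence we may assume $G$ is tadpole-free. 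Then $e$ is $\boldsymbol\beta$-independent on every graph that appears, and the specialization can be dropped.

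\emph{Step 3: induct on the number of blue edges.} For a tadpole-free blue edge $\varepsilon$, rewrite \eqref{eq:e-two-vertex-contraction} as $e(G_{\bar\varepsilon})=e(G_{\setminus\varepsilon})-e(G)$. This is valid for an arbitrary background quiver, including one with other blue edges not yet reversed and with directed cycles. Reverse the blue edges of $G$ one at a time; in the end every blue edge is reversed and all colors are forgotten, which is exactly $\mathfrak r(G)$. Each reversal splits every current term into a term where that blue edge is deleted (sign $+$) and a term where it is kept in its original orientation (sign $-$). After all blue edges are processed, the terms are indexed by the subsets $S\subseteq E_b(G)$ of kept edges, each with sign $(-1)^{|S|}$ and graph $K_S$. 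This is exactly the expansion obtained in Step~1.

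The only delicate point is the bookkeeping in Step~3. One must check that each application of \eqref{eq:e-two-vertex-contraction} is legitimate at intermediate stages: the graph there mixes reversed blue edges, original blue edges and red edges, may be cyclic, and may contain parallel edges. Proposition~\ref{prop:e-multi-vertex-contraction} is stated for an arbitrary background quiver, and its cyclic case ($e=0$ on both sides) is already covered there, so I expect this check to go through without extra work. The tadpole cancellation in Step~2 is the other place where care is needed, because the sign $(-1)^{b(K)}$ interacts with the blue tadpole factor.
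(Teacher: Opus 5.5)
Your proof is correct and follows the paper's argument: you restrict to the cut-subquivers $K_S$ with $E_{K_S}=E_r(G)\sqcup S$, handle red tadpoles (both sides vanish) and blue tadpoles (pairwise cancellation) separately, and finish with an inclusion--exclusion over the blue edges, exactly as the paper does. The only difference is how the last step is carried out. You iterate the two-vertex rule \eqref{eq:e-two-vertex-contraction}, while the paper writes the indicator that a total order $\pi$ is a linear extension of $\mathfrak r(G)$ as a product of edge indicators, with $1-X_\eta(\pi)$ for each blue edge, and expands it before averaging over $\pi$; this is the same identity, used before rather than after summing over orders.
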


\begin{proof}
Assume first that $G$ has no tadpoles.  Expanding the convolution gives
\[
    (e_{RB}*\mathbf b)(G)
    =
    \sum_{K\preceq G}
    e_{RB}(K)\,\mathbf b(G\cocont K).
\]
By definition, $\mathbf b(G\cocont K)$ is nonzero precisely when
$G\cocont K$ is purely blue.  Since the edges of $G\cocont K$ are
exactly the edges of $G$ not contained in $K$, this means that every
red edge of $G$ must belong to $K$.  Thus the contributing
cut-sub-\mquivers are precisely
$K_S$, $E_{K_S}=E_r(G)\sqcup S$,
$S\subseteq E_b(G)$.
For such a $K_S$, we have
$e_{RB}(K_S)
    =
    (-1)^{|S|}e(K_S;\mathbf1)$,
and hence
\[
    (e_{RB}*\mathbf b)(G)
    =
    \sum_{S\subseteq E_b(G)}
    (-1)^{|S|}e(K_S;\mathbf1).
\]

We now identify this alternating sum with
$e(\mathfrak r(G);\mathbf1)$.  For a total ordering $\pi$ of $V_G$
and a non-tadpole edge $\varepsilon:u\to v$, define
\[
    X_\varepsilon(\pi)
    :=
    \begin{cases}
        1, & \text{if $v$ occurs before $u$ in $\pi$},\\
        0, & \text{otherwise}.
    \end{cases}
\]
Then $X_{\bar{\varepsilon}}(\pi)
    =
    1-X_\varepsilon(\pi)$, where $\bar{\varepsilon}$ is obtained from $\varepsilon$ by reversing the direction.
Since redification preserves the orientation of every red edge and
reverses that of every blue edge, we obtain
\[
    \mathbf1_{\{\pi\in\operatorname{LE}(\mathfrak r(G))\}}
    =
    \prod_{\varepsilon\in E_r(G)}
    X_\varepsilon(\pi)
    \prod_{\eta\in E_b(G)}
    \bigl(1-X_\eta(\pi)\bigr).
\]
Expanding the product over the blue edges gives
\[
    \mathbf1_{\{\pi\in\operatorname{LE}(\mathfrak r(G))\}}
    =
    \sum_{S\subseteq E_b(G)}
    (-1)^{|S|}
    \prod_{\varepsilon\in E_r(G)\sqcup S}
    X_\varepsilon(\pi).
\]
Summing over all total orderings $\pi$ and dividing by $|V_G|!$
therefore yields
\[
    e\bigl(\mathfrak r(G);\mathbf1\bigr)
    =
    \sum_{S\subseteq E_b(G)}
    (-1)^{|S|}e(K_S;\mathbf1)
    =
    (e_{RB}*\mathbf b)(G).
\]

It remains to consider tadpoles.  If $G$ contains a red tadpole, then
$\mathfrak r(G)$ contains a tadpole, so
$e(\mathfrak r(G);\mathbf1)=0$; on the convolution side, every
nonzero contribution would have to contain that red tadpole in $K$,
and hence also vanishes.  If $G$ contains a blue tadpole, the terms in
the convolution sum cancel in pairs according to whether that tadpole
is included in $K$: the two terms have the same quotient contribution
and opposite RB signs.  Hence the identity holds for arbitrary RB
graphs.
\end{proof}

\begin{theorem}[RB convolution identity]
\label{thm:two-color-convolution}
For every RB graph $G$,
\begin{equation}
\label{eq:two-color-convolution}
    \sum_{K\preceq G}
    \omega_{RB}(K)\,
    e\bigl(\mathfrak r(G\cocont K);\mathbf1\bigr)
    =
    \mathbf b(G).
\end{equation}
Equivalently, 
$\omega_{RB}*\widetilde e
    =
    \mathbf b$. 
\end{theorem}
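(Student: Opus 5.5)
The identity should follow formally from two facts already established: the redification identity $\widetilde e=e_{RB}*\mathbf b$ of Proposition~\ref{prop:redification-character-identity}, and the inverse relation $\omega_{RB}*e_{RB}=\epsilon$ of Proposition~\ref{prop:RB-convolution-reorg}. Associativity of convolution and Proposition~\ref{prop:convolution-unit} then give
\[
    \omega_{RB}*\widetilde e
    =
    \omega_{RB}*(e_{RB}*\mathbf b)
    =
    (\omega_{RB}*e_{RB})*\mathbf b
    =
    \epsilon*\mathbf b
    =
    \mathbf b .
\]

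The only care needed is to set up the convolution algebra in which this computation is legitimate. RB graphs are $\{r,b\}$-decorated quivers. By Remark~\ref{rem:decoration-compatibility}, the decorated quivers span a Hopf subalgebra of $\mathcal H_c$ (localized at $\bullet$), with the same coproduct $\Delta(G)=\sum_{K\preceq G}K\otimes G\cocont K$. Cut-subquivers and contractions inherit colors, so this coproduct stays within RB graphs. Convolution of linear functionals on this space is associative, because $\Delta$ is coassociative (Proposition~\ref{p:coass}, applied verbatim to decorated quivers). Its unit is $\epsilon$.

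The three functions $\omega_{RB}$, $e_{RB}$ and $\mathbf b$ are defined on all RB graphs, including those with directed cycles and tadpoles. This generality is exactly why the redification identity was proved for arbitrary RB graphs. All three extend to the localization by setting their value on $\bullet^{-1}$ to $1$, since each is multiplicative with value $1$ on $\bullet$. This matters because $\mathbf b$ is a character: a disjoint union is pure blue exactly when each component is.

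Writing the associativity step out explicitly, for every RB graph $G$,
\[
    \sum_{K\preceq G}\omega_{RB}(K)\,\widetilde e(G\cocont K)
    =
    \sum_{K\preceq G}\omega_{RB}(K)\sum_{L\preceq G\cocont K}e_{RB}(L)\,\mathbf b\bigl((G\cocont K)\cocont L\bigr).
\]
Lemma~\ref{l:quoteqm} reindexes the pairs $(K,L)$ as pairs $K\preceq K'\preceq G$, with $L\cong K'\cocont K$ and $(G\cocont K)\cocont L\cong G\cocont K'$. The right-hand side therefore equals
\[
    \sum_{K'\preceq G}\Bigl(\sum_{K\preceq K'}\omega_{RB}(K)\,e_{RB}(K'\cocont K)\Bigr)\mathbf b(G\cocont K')
    =
    \sum_{K'\preceq G}\epsilon(K')\,\mathbf b(G\cocont K').
\]
Only $K'=G_\emptyset$ survives, and it gives $\mathbf b(G)$.

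The one point that could go wrong is that Lemma~\ref{l:quoteqm} supplies isomorphisms of quivers, and these must also respect colors so that the values of $e_{RB}$, $\widetilde e$ and $\mathbf b$ agree on both sides. They do, because the isomorphisms are the identity on edge sets. Everything else is routine.
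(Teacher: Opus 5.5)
Your proposal is correct and is the paper's proof: it combines the redification identity $\widetilde e=e_{RB}*\mathbf b$ with $\omega_{RB}*e_{RB}=\epsilon$ through associativity of convolution, $\omega_{RB}*(e_{RB}*\mathbf b)=(\omega_{RB}*e_{RB})*\mathbf b=\epsilon*\mathbf b=\mathbf b$. Your explicit reindexing via Lemma~\ref{l:quoteqm}, and the check that those isomorphisms preserve colors, just spell out the coassociativity that the paper invokes implicitly.
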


\begin{proof}
By Proposition~\ref{prop:redification-character-identity},
$\widetilde e=e_{RB}*\mathbf b$.  Associativity of convolution gives
\[
    \omega_{RB}*\widetilde e
    =
    \omega_{RB}*(e_{RB}*\mathbf b)
    =
    (\omega_{RB}*e_{RB})*\mathbf b
    =
    \epsilon*\mathbf b
    =
    \mathbf b.
\]
\end{proof}

Thus the RB convolution identity is the analogue, in the red-blue basis, of
the ordinary Hopf relation $\omega*e=\epsilon$.  It is not merely the latter
identity restricted to two colors: the second convolution factor is replaced
by the ordering character $\widetilde e$, and the counit is replaced by the
pure-blue character $\mathbf b$.

\subsection{Identification of the RB series with the Magnusian}
\label{subsec:RB-Magnus-identification}

Define the connected RB series by
\begin{equation}
\label{eq:Omega-RB-definition}
    \Omega_{\mathrm{RB}}(\lambda;t)
    :=
    \sum_{\substack{H\ \mathrm{conn}\\H\ \mathrm{acyclic}}}
    \frac{\lambda^{|V_H|}\omega_{RB,c}(H)}{\sigma(H)}
    I_{\mathrm{color}}(H;t),
\end{equation}
where the sum runs over isomorphism classes of finite connected
acyclic RB graphs.  For a graph series
\[
    \mathcal F
    =
    \sum_G\frac{c_G}{\sigma(G)}I_{\mathrm{color}}(G;t),
\]
define its normalized coefficient by
\begin{equation}
\label{eq:normalized-coefficient}
    [G]\mathcal F:=c_G.
\end{equation}

\begin{proposition}[Coefficient of the star exponential]
\label{prop:star-exponential-convolution-coefficient}
Let $G$ be an acyclic RB graph.  Then
\begin{equation}
\label{eq:star-exponential-convolution-coefficient}
    [G]\exp_\star\bigl(\Omega_{\mathrm{RB}}\bigr)
    =
    \sum_{K\preceq G}
    \omega_{RB}(K)\,
    \widetilde e(G\cocont K).
\end{equation}
\end{proposition}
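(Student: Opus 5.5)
We expand $\exp_\star(\Omega_{\mathrm{RB}})=\sum_{m\ge0}\frac1{m!}\Omega_{\mathrm{RB}}^{\star m}$ and compute the coefficient of a fixed acyclic RB graph $G$ at the level of labeled graphs. We work with vertex-labeled objects throughout and pass to isomorphism classes only at the end, using the normalization $[G]\mathcal F=c_G$ with the $1/\sigma(G)$ weights.

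\textbf{Step 1: star product of graph integrals.} Take connected acyclic RB graphs $H_1,\ldots,H_m$ on disjoint labeled vertex sets. By Lemma~\ref{lem:exponentiated-Wick}, the star product of the integrands $I_{\mathrm{color}}(H_a;t)$ is obtained in two stages. First, the vertex symbols $A(t_v)$ are multiplied commutatively. Second, every pair of vertices lying in different factors receives the factor $\exp(W_{uv})$, where $u$ belongs to the earlier factor in the product. Expanding these exponentials produces new cross edges with multiplicities $m_{uv}$ and weights $1/m_{uv}!$.

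Each cross edge is then given its time orientation, by splitting the integration domain according to which of $t_u,t_v$ is larger. It is colored blue if the operator order agrees with the time order and red otherwise. So the coloring of cross edges is determined by the order of the factors $H_1,\ldots,H_m$ in the word.

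\textbf{Step 2: identify the resulting labeled graph.} The result is a labeled RB graph $G$ whose cross edges carry prescribed colors. The $H_a$ are exactly the connected components of a spanning cut-subquiver $K\preceq G$, namely the one whose edge set is $\bigcup_a E_{H_a}$. In the quotient $G\cocont K$, every component $H_a$ becomes a single vertex, and the cross edges become its edges.

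Conversely, fix $G$ together with $K\preceq G$ whose components are $H_1,\ldots,H_m$. The orderings of the $m$ factors that yield exactly the observed colors of the cross edges are counted by \eqref{eq:RB-ordering-redification}. There are $|\operatorname{LE}(\mathfrak r(G\cocont K))|$ of them, and this set is empty if $\mathfrak r(G\cocont K)$ has a cycle or a tadpole. A tadpole arises when a cross edge would join $H_a$ to itself, which is impossible for components; equivalently such terms are zero.

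Dividing by $m!$ gives exactly $\widetilde e(G\cocont K)$ by \eqref{eq:tilde-e-ordering-fraction}. The weight attached to the components is $\prod_a\omega_{RB,c}(H_a)=\omega_{RB}(K)$, since $\omega$ is multiplicative (Proposition~\ref{prop:convolution identities}). Here we use that every component of $K\preceq G$ is acyclic because $G$ is acyclic. Components that are single vertices come with $\omega_{RB,c}(\bullet)=1$ and correspond to the $\lambda$-degree-one term $\int A$.

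\textbf{Step 3: symmetry factors, the main obstacle.} It remains to check that the combinatorial weights assemble into $\frac{1}{\sigma(G)}\sum_{K\preceq G}(\cdots)$. These weights are the $1/\sigma(H_a)$, the $1/m_{uv}!$ for parallel cross edges, and the multinomial count of ways to distribute labels among equal components. I would argue as in Proposition~\ref{prop:Dyson-blue-graph-expansion}.

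Rewrite both sides as sums over labeled graphs on $[n]$ divided by $n!$. A labeled RB graph on $[n]$ with $\lambda^n$ corresponds to $n!/\sigma(G)$ labelings, and edge-labelings of parallel edges contribute the $m_{uv}!$ factors. Similarly, a labeled $\Omega_{\mathrm{RB}}$-term is a labeled connected graph weighted $1/|V_H|!$. In the labeled setting, the product of $m$ factors, divided by $m!$ and combined with the shuffle coefficient $n!/\prod|V_{H_a}|!$, becomes a sum over set partitions of $[n]$ into the vertex sets of the components together with labeled structures on each block.

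Choosing labeled cross edges then exactly reproduces the labeled graph $G$ with a marked edge subset $K$. Parallel cross edges have to be compared with $1/m_{uv}!$ by the same edge-labeling trick; this is where the care is needed. Once this bookkeeping is in place, the coefficient of labeled $G$ is $\sum_{K\preceq G}\omega_{RB}(K)\widetilde e(G\cocont K)$. Passing back to isomorphism classes introduces exactly $1/\sigma(G)$, which proves \eqref{eq:star-exponential-convolution-coefficient}.
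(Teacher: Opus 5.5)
Your proposal is correct and follows essentially the same route as the paper. Both arguments expand $\exp_\star(\Omega_{\mathrm{RB}})$, identify the star factors with the components of a spanning cut-subquiver $K$ so that the internal weight is $\omega_{RB}(K)$ by multiplicativity, and read the $1/m!$ average over factor orderings as $\widetilde e(G\cocont K)$ via the redification count, with cyclic and tadpole quotients vanishing. The only difference is that your Step~3 spells out the labeled bookkeeping explicitly: shuffle coefficients, and matching the $1/m_{uv}!$ from the Wick exponentials with the parallel-edge part of $\sigma(G)$. The paper compresses this into an appeal to equivariance and the orbit--stabilizer theorem.
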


\begin{proof}
Expand
\[
    \exp_\star(\Omega_{\mathrm{RB}})
    =
    \sum_{m\geq0}\frac1{m!}\Omega_{\mathrm{RB}}^{\star m}.
\]
Fix a target RB graph $G$.  Choosing the connected contribution in each of the
$m$ star factors selects a spanning cut-sub-\mquiver $K\preceq G$ whose
connected components are the internal graphs of those factors; isolated
vertices correspond to one-vertex factors.  By multiplicativity, their
internal coefficient is $\omega_{RB}(K)$.

Contract the connected components of $K$ and set $Q:=G\cocont K$.  Its
vertices represent the $m$ connected factors, while its surviving edges record
contractions between distinct factors.  The factor $1/m!$ averages over all
operator orderings of these factors.  By
\eqref{eq:RB-ordering-redification}--\eqref{eq:tilde-e-ordering-fraction}, the
normalized fraction of orderings producing the prescribed quotient is
$\widetilde e(Q)$.  Quotients containing a directed cycle or a tadpole
contribute zero, exactly as encoded by $\widetilde e$.

This argument may be carried out first with fixed vertex labels.  All constructions are equivariant under graph automorphisms. Passing from the labeled expansion to isomorphism classes by the orbit--stabilizer theorem gives the usual weight $1/\sigma(G)$ for the isomorphism class of $G$. Hence, with the normalized coefficient convention, the contribution of $K$ is
$\omega_{RB}(K)\widetilde e(G\cocont K)$. Summing over $K$ proves the formula.
\end{proof}

Combining Proposition~\ref{prop:star-exponential-convolution-coefficient} with
Theorem~\ref{thm:two-color-convolution} gives
\begin{equation}
\label{eq:star-exponential-pure-blue-coefficient}
    [G]\exp_\star\bigl(\Omega_{\mathrm{RB}}\bigr)
    =
    \mathbf b(G).
\end{equation}
The right-hand side is exactly the normalized coefficient of $G$ in the
Dyson expansion \eqref{eq:Dyson-blue-graph-expansion}.  We therefore obtain
the structural identification underlying the RB expansion.

\begin{theorem}[RB star-exponential identity]
\label{thm:global-RB-Hopf-operator-compatibility}
The connected RB series $\Omega_{\mathrm{RB}}$, defined by the
coefficients $\omega_{RB,c}$, satisfies
\begin{equation}
\label{eq:U-star-connected}
    \exp_\star\bigl(\Omega_{\mathrm{RB}}(\lambda;t)\bigr)
    =
    U(\lambda;t).
\end{equation}
Consequently,
$\Omega_{\mathrm{RB}}(\lambda;t)=\Omega(\lambda;t)$.
\end{theorem}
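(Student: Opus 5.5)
The plan is to compare normalized graph coefficients on both sides of \eqref{eq:U-star-connected}, and then pass to logarithms. Both sides lie in the completed algebra $\mathcal A[[\lambda]]$ and can be expanded as graph series
\[
    \sum_G\frac{c_G}{\sigma(G)}I_{\mathrm{color}}(G;t)
\]
over isomorphism classes of finite acyclic RB graphs.

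First, for the left-hand side, Proposition~\ref{prop:star-exponential-convolution-coefficient} expresses $[G]\exp_\star(\Omega_{\mathrm{RB}})$ as $\sum_{K\preceq G}\omega_{RB}(K)\widetilde e(G\cocont K)$. Theorem~\ref{thm:two-color-convolution} then identifies this sum with $\mathbf b(G)$, which is \eqref{eq:star-exponential-pure-blue-coefficient}. Hence
\[
    \exp_\star(\Omega_{\mathrm{RB}})=\sum_G\frac{\lambda^{|V_G|}\,\mathbf b(G)}{\sigma(G)}\,I_{\mathrm{color}}(G;t).
\]
The sum runs over acyclic RB graphs and includes the empty graph with coefficient $1$. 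The $\lambda$-power is additive, since the vertex counts of the star factors sum to $|V_G|$.

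Second, only purely blue graphs survive, because $\mathbf b$ vanishes whenever a red edge is present. For those graphs $I_{\mathrm{color}}=I_{\mathrm{blue}}$. So the series above is exactly the right-hand side of Proposition~\ref{prop:Dyson-blue-graph-expansion}, which equals $U(\lambda;t)$. This proves \eqref{eq:U-star-connected}.

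The consequence follows from uniqueness of the formal logarithm. $\Omega_{\mathrm{RB}}$ has no $\lambda^0$ term, because every connected graph has at least one vertex. Therefore $\exp_\star$ is a bijection from $\lambda\mathcal A[[\lambda]]$ onto $\mathbf1+\lambda\mathcal A[[\lambda]]$, with inverse $\log_\star$. Combining this with \eqref{U-vs-Omega_star} gives
\[
    \Omega_{\mathrm{RB}}=\log_\star U=\Omega.
\]

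The step I expect to need the most care is the claim that graph series can be compared term by term. I would not argue that the integrals $I_{\mathrm{color}}(G;t)$ are linearly independent; instead I would treat every identity as an identity of formal graph series before integration. Concretely, Proposition~\ref{prop:star-exponential-convolution-coefficient} shows that $\exp_\star(\Omega_{\mathrm{RB}})$ \emph{is} the graph-indexed sum with the stated coefficients. Once the coefficient of each $G$ agrees with that of the Dyson expansion, the two sums are literally the same sum, so no independence statement is needed.

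A second point to check is that acyclicity is preserved on both sides. The star products of acyclic connected pieces produce only acyclic total graphs, since time ordering fixes the edge directions. Quotients that contain cycles or tadpoles are killed by $\widetilde e$, as noted after \eqref{eq:tilde-e-ordering-fraction}. So the indexing sets of the two series match.
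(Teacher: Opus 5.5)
Your proposal is correct and is the paper's argument: Proposition~\ref{prop:star-exponential-convolution-coefficient} together with Theorem~\ref{thm:two-color-convolution} gives $[G]\exp_\star(\Omega_{\mathrm{RB}})=\mathbf b(G)$. This matches the pure-blue Dyson expansion of Proposition~\ref{prop:Dyson-blue-graph-expansion}, and uniqueness of $\log_\star$ on $\mathbf 1+\lambda\mathcal A[[\lambda]]$ then gives $\Omega_{\mathrm{RB}}=\Omega$.

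One slip in your last paragraph does not affect the proof. The function $\widetilde e$ kills quotients $Q$ whose redification $\mathfrak r(Q)$ has a directed cycle, or which carry a tadpole; it does not kill every quotient containing a directed cycle. For example, a red edge $a\to b$ together with a blue edge $b\to a$ can occur in $G\cocont K$ for acyclic $G$, and then $\widetilde e(Q)=1/2$. In any case, both expansions are indexed by the acyclic target graphs $G$, not by the quotients.
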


\begin{proof}
\eqref{eq:star-exponential-pure-blue-coefficient} and
Proposition~\ref{prop:Dyson-blue-graph-expansion} show that
$\exp_\star(\Omega_{\mathrm{RB}})$ and $U$ have the same normalized coefficient
for every graph in the time-ordered Wick expansion.  Hence
$\exp_\star(\Omega_{\mathrm{RB}})=U$.  Since both series have constant term
$\mathbf1$, applying $\log_\star$ and using \eqref{U-vs-Omega_star} gives
$\Omega_{\mathrm{RB}}=\Omega$.
\end{proof}

In particular, the Magnusian has the RB expansion
\begin{equation}
\label{eq:Omega-RB-expansion-proved}
    \Omega(\lambda;t)
    =
    \sum_{\substack{H\ \mathrm{conn}\\H\ \mathrm{acyclic}}}
    \frac{\lambda^{|V_H|}\omega_{RB,c}(H)}{\sigma(H)}
    I_{\mathrm{color}}(H;t).
\end{equation}

Finally, since $I_{\mathrm{blue}}$ is multiplicative under disjoint union and
\[
    \sigma\bigl(H_1^{\sqcup m_1}\sqcup\cdots\sqcup H_r^{\sqcup m_r}\bigr)
    =
    \prod_{a=1}^r m_a!\,\sigma(H_a)^{m_a},
\]
the standard exponential formula gives the ordinary connected expansion of
the Dyson series.

\begin{corollary}[Star versus ordinary exponential]
\label{thm:star-vs-ordinary-exponential}
One has
\begin{equation}
\label{U-exp-easier}
    \exp_\star\bigl(\Omega(\lambda;t)\bigr)
    =
    \exp\left[
        \sum_{\substack{H\ \mathrm{conn}\\ H\ \mathrm{acyclic}\\H\ \mathrm{blue}}}
        \frac{\lambda^{|V_H|}}{\sigma(H)}
        I_{\mathrm{blue}}(H;t)
    \right].
\end{equation}
\end{corollary}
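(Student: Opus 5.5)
The plan is to derive the corollary from Theorem~\ref{thm:global-RB-Hopf-operator-compatibility}, which gives $\exp_\star(\Omega)=U$, together with Proposition~\ref{prop:Dyson-blue-graph-expansion}, which writes
\[
    U(\lambda;t)=\sum_G\frac{\lambda^{|V_G|}}{\sigma(G)}I_{\mathrm{blue}}(G;t),
\]
with the sum over all finite purely blue acyclic quivers, including the empty one. What remains is the exponential formula: we must show that this sum over possibly disconnected graphs equals the ordinary exponential of the sum over connected ones.

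First, multiplicativity of $I_{\mathrm{blue}}$. The integrand in \eqref{eq:I-blue-cube} factors over connected components. The edge factors $W_\varepsilon\theta_\varepsilon$ only involve the time variables of one component. The vertex product $\prod_v A(t_v)$ is the commutative product \eqref{eq:commuting product of Ai}, so it splits as a product over components. Each component is integrated over its own cube $[0,t]^{|V_{H}|}$, so Fubini gives
\[
    I_{\mathrm{blue}}(H_1\sqcup H_2)=I_{\mathrm{blue}}(H_1)\,I_{\mathrm{blue}}(H_2),
\]
where the product is the ordinary commutative product of normally ordered symbols, not $\star$. This distinction is the key point: the right-hand side of \eqref{U-exp-easier} is an ordinary exponential precisely because the Wick contractions have already been recorded as edges.

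Second, the symmetry factor. Write $G=H_1^{\sqcup m_1}\sqcup\cdots\sqcup H_r^{\sqcup m_r}$ with the $H_a$ pairwise non-isomorphic connected graphs. An automorphism of $G$ permutes isomorphic components and then acts by automorphisms within each component, so
\[
    \operatorname{Aut}(G)\cong\prod_a \operatorname{Aut}(H_a)\wr S_{m_a},
\]
which gives $\sigma(G)=\prod_a m_a!\,\sigma(H_a)^{m_a}$. Since $|V_G|=\sum_a m_a|V_{H_a}|$, the $\lambda$-weight factors as well.

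Third, collect terms. Because $G\mapsto(m_a)_a$ is a bijection between isomorphism classes of purely blue acyclic quivers and finitely supported multiplicity vectors on connected classes, we get
\[
    \sum_G\frac{\lambda^{|V_G|}}{\sigma(G)}I_{\mathrm{blue}}(G)
    =\prod_H\sum_{m\ge0}\frac1{m!}\left(\frac{\lambda^{|V_H|}I_{\mathrm{blue}}(H)}{\sigma(H)}\right)^m
    =\exp\left[\sum_H\frac{\lambda^{|V_H|}}{\sigma(H)}I_{\mathrm{blue}}(H)\right].
\]
Here acyclicity and pure-blueness pass to components and back under disjoint union. The infinite product is well defined in $\mathcal A[[\lambda]]$ because each connected graph carries at least $\lambda^1$, so only finitely many graphs contribute in each $\lambda$-degree.

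The main obstacle is the multiplicativity step: one has to make sure the commutative product in \eqref{eq:I-color} is the one used in the exponential. Everything else is the standard exponential formula.
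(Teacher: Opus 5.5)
Your proposal is correct and is essentially the paper's own argument: identify the left side with $U(\lambda;t)$, use the pure-blue Dyson expansion of Proposition~\ref{prop:Dyson-blue-graph-expansion}, and apply the exponential formula using multiplicativity of $I_{\mathrm{blue}}$ under disjoint union and $\sigma\bigl(\bigsqcup_a H_a^{\sqcup m_a}\bigr)=\prod_a m_a!\,\sigma(H_a)^{m_a}$. One side remark is wrong, however: it is not true that only finitely many graphs contribute in each $\lambda$-degree, since the banana graphs with any number $m\ge1$ of parallel blue edges all sit in degree $\lambda^2$; well-definedness should instead be argued with a finer grading by the number of edges (for instance, a formal parameter attached to each Wick contraction), because each (vertex, edge) bidegree contains only finitely many graphs, which is the same formal completion already implicit in Proposition~\ref{prop:Dyson-blue-graph-expansion}.
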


Thus the RB convolution identity identifies the Hopf-algebraic mechanism
governing the passage from connected Magnus coefficients to the full
time-ordered evolution.

\section{Change of basis and undirected edges} 
\label{sec:RB-BW-bases-reorg}

The preceding section proves that the Magnusian admits the RB expansion
\eqref{eq:Omega-RB-expansion-proved}.  We now pass from this fixed two-color
description to independent edge parameters.  The change of basis is performed edgewise: it produces directed edges
carrying arbitrary parameters $\beta_\varepsilon$ and undirected edges
carrying parameters $\gamma_u$.
The main result is the directed-core factorization
$\omega_{\mathrm{DU},c}
    \bigl(G_{\mathrm{DU}};\boldsymbol\beta,\boldsymbol\gamma\bigr)
    =
    \left(\prod_{u\in E_U(G_{\mathrm{DU}})}\gamma_u\right)
    \omega_c(G_D;\boldsymbol\beta)$,
where $G_D$ is the spanning directed core.  Thus undirected edges contribute
only multiplicative dressing factors, while all nontrivial
$\boldsymbol\beta$-dependence is carried by the directed core.  The purely
directed part of the same change of basis will be the only ingredient from
this section needed in Section~\ref{sec:operator-acyclic-master}.

\subsection{BW graphs in physics}
\label{subsec:BW graphs and edge parameters-reorg}

As in \cite{Guo:2026xaw}, we introduce black and white graphs 
by means of linear combinations of the red/blue ordered contractions first defined in \eqref{eq:operator-color-convention}. 
In physics parlance the black edges correspond to ``retarded propagators" and the white edges to ``cut propagators".  

We begin the basis change by setting 
\begin{equation}
\label{eq:change-of-basis-BW-RB}
    B_{ij}:=W_{ji}-W_{ij},
    \qquad
    C_{ij}:=\frac12(W_{ji}+W_{ij}).
\end{equation}
Then $B_{ji}=-B_{ij}$ and $C_{ji}=C_{ij}$, so the black kernel
$B_{ij}$ is antisymmetric, whereas the white kernel $C_{ij}$ is
symmetric. Recall from Section~\ref{subsec:double-color-graphs-reorg}
that $\mathrm{red}_{ij}=W_{ji}\theta_{ij}$ and
$\mathrm{blue}_{ij}=W_{ij}\theta_{ij}$. Multiplying
\eqref{eq:change-of-basis-BW-RB} by $\theta_{ij}$ gives
\begin{align}
    B_{ij}\theta_{ij}
    =
    \mathrm{red}_{ij}-\mathrm{blue}_{ij},
    \qquad
    C_{ij}\theta_{ij}
    =
    \frac12\bigl(\mathrm{red}_{ij}+\mathrm{blue}_{ij}\bigr).
\end{align} 
We write $\mathrm{black}_{ij}:=B_{ij}\theta_{ij}$ for the directed
black edge $j\to i$, and $\mathrm{white}_{ij}:=C_{ij}$ for the
undirected white edge joining $i$ and $j$. Hence 
\[
 \mathrm{black}_{ij}=\mathrm{red}_{ij}-\mathrm{blue}_{ij}, \quad   
 \mathrm{white}_{ij}
    =
    \frac12\bigl(
        \mathrm{red}_{ij}+\mathrm{blue}_{ij}
        +\mathrm{red}_{ji}+\mathrm{blue}_{ji}
    \bigr),
\]
where we used $\theta_{ij}+\theta_{ji}=1$ away from the diagonal.

\begin{definition}[BW graph]
\label{def:BW-graph-reorg}
A \emph{black-white graph}, or \emph{BW graph}, is a mixed quiver
whose directed edges are black and whose undirected edges are white.
Thus its edge set is the disjoint union
\[
    E_G=E_B(G)\sqcup E_W(G),
\]
where every edge in $E_B(G)$ carries an orientation, whereas the edges
in $E_W(G)$ are unoriented.  
A BW graph refers to the physical specialization in which the directed
black edges have parameter $\beta=0$ and the undirected white edges
have unit dressing parameter $\gamma=1$.  
\end{definition}

For example, under this change of basis, the tree Magnusian in Fig.~\ref{fig:Dyson-Magnus-2-reorg} 
transforms into Fig.~\ref{fig:Dyson-Magnus-3-reorg} in agreement with well-known $\omega$ values for trees. 

\begin{figure}[htbp]
    \centering
    \includegraphics[width=0.64\linewidth]{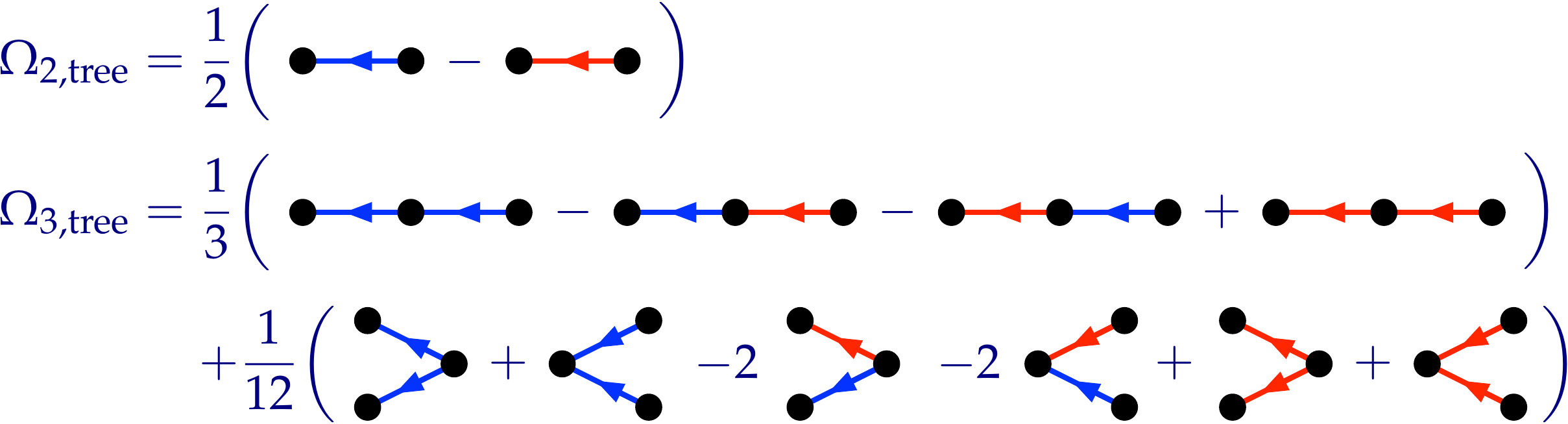}
    \caption{Tree Magnusian with 2 or 3 vertices in the RB basis.}
    \label{fig:Dyson-Magnus-2-reorg}
\end{figure}

\begin{figure}[htbp]
    \centering
    \includegraphics[width=0.46\linewidth]{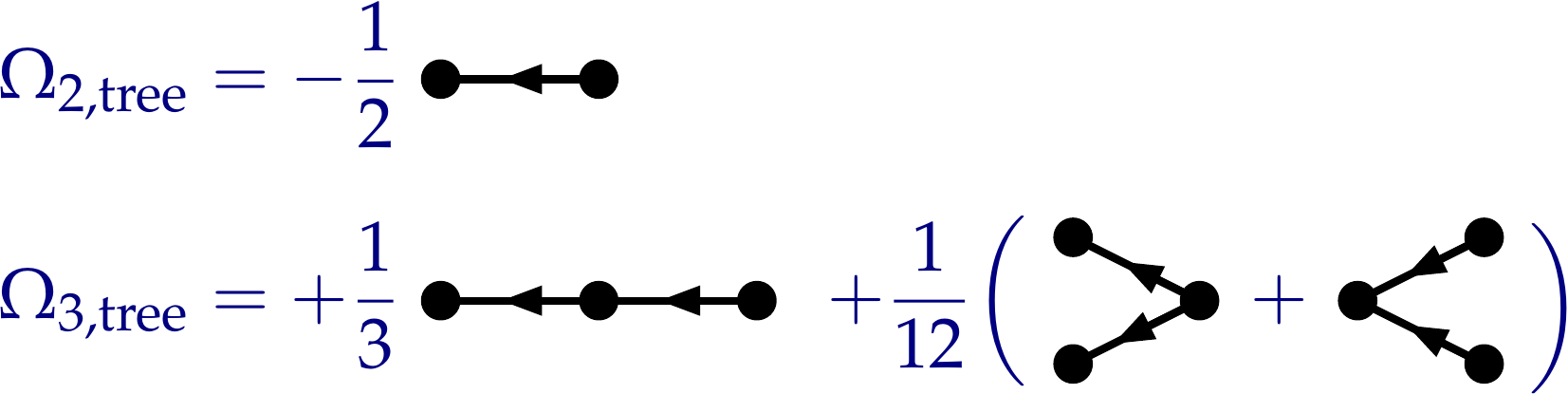}
    \caption{Tree Magnusian with 2 or 3 vertices in the BW basis.}
    \label{fig:Dyson-Magnus-3-reorg}
\end{figure}

\begin{remark} \label{rem:RB-BW-physics-convention-2}
The difference in the red-blue labeling and the
corresponding sign normalization of the RB $\omega$-function were
explained in Remark~\ref{rem:RB-physical-convention}. 

In addition,
ref.~\cite{Guo:2026xaw} keeps certain factors of $i$ in the
color-basis expansion and in the passage from the color basis to the
black-white basis. The conventions in this paper and in \cite{Guo:2026xaw} are related by $(i\hbar)\Omega_{\mathrm{here}}
=\chi_{\mathrm{there}}$. Such factors are important in physics for manifest Hermiticity of the operators. In the current paper, we choose not to include any factor of $(\pm i)$, in order to focus on the algebra with minimal distraction. 
\end{remark}

\subsection{Change of basis with edge parameters}
\label{subsec:turn-on-beta}

In the Hopf-algebraic part of this paper, each directed edge
$\varepsilon$ may carry an independent parameter $\beta_\varepsilon$,
with no restriction on its value.  By contrast, the operator-product
description of Section~\ref{sec:operator-products-Hopf} naturally
produces the two ordered-contraction kernels $W_{ji}$ and $W_{ij}$.
The directed red and blue edges arise only after these kernels are
multiplied by the time-ordering factor $\theta_{ij}$. We introduce the edge parameters at the level of the kernels, before imposing time ordering.

Choose a reference orientation
$\varepsilon:j\to i$ for an edge, and let $u=|\varepsilon|$ denote the
corresponding unoriented edge.  Introduce an antisymmetric directed
kernel $\mathrm D_\varepsilon = \mathrm D_{ij}$ ($\mathrm D_{\bar\varepsilon}=-\mathrm D_\varepsilon$) and a symmetric undirected kernel
$\mathrm U_u$, and set
$\beta_{\varepsilon,\pm}
    :=
    \frac{\beta_\varepsilon\pm1}{2}$.
We define the kernel-level deformation by
\begin{equation}
\label{basis-change-beta-gamma-reorg}
    W_{ji}
    \longmapsto
    \beta_{\varepsilon,+}\,\mathrm D_\varepsilon
    +\gamma_u\,\mathrm U_u,
    \qquad
    W_{ij}
    \longmapsto
    \beta_{\varepsilon,-}\,\mathrm D_\varepsilon
    +\gamma_u\,\mathrm U_u.
\end{equation}
Here $\beta_\varepsilon$ is attached to the directed edge
$\varepsilon$, while $\gamma_u$ is attached to the underlying
undirected edge $u$.

Taking the difference and the average in
\eqref{basis-change-beta-gamma-reorg} gives
\begin{equation}
\label{eq:deformed-BW-kernels-reorg}
    W_{ji}-W_{ij}
    \longmapsto
    \mathrm D_\varepsilon,
    \qquad
    \frac12(W_{ji}+W_{ij})
    \longmapsto
    \frac{\beta_\varepsilon}{2}\,\mathrm D_\varepsilon
    +\gamma_u\,\mathrm U_u.
\end{equation}
At the physical point
$\beta_\varepsilon=0$ and $\gamma_u=1$, these are precisely the black
and white kernels of \eqref{eq:change-of-basis-BW-RB}:
\[
    \mathrm D_\varepsilon=B_{ij},
    \qquad
    \mathrm U_u=C_{ij}.
\]
Thus the physical BW basis is recovered at
$\boldsymbol\beta=\mathbf0$ and
$\boldsymbol\gamma=\mathbf1$.  The determinant of the local
transformation \eqref{basis-change-beta-gamma-reorg} is $\gamma_u$, so
it is invertible for every value of $\beta_\varepsilon$ whenever
$\gamma_u\neq0$.

Let us emphasize the distinction between the kernel-level deformation
and the graph edges after time ordering. Section~\ref{subsec:double-color-graphs-reorg}
defines
\[
    \mathrm{red}_{ij}=W_{ji}\theta_{ij},
    \qquad
    \mathrm{blue}_{ij}=W_{ij}\theta_{ij}.
\]
Hence \eqref{basis-change-beta-gamma-reorg} gives
\begin{align} \label{eq:basis-change-RB-to-DU-with-arrows}
\mathrm{red}_{ij}
    \longmapsto
    \beta_{\varepsilon,+}\,
    \mathrm D_\varepsilon\theta_{ij}
    +\gamma_u\,\mathrm U_u\theta_{ij},
    \qquad
    \mathrm{blue}_{ij}
    \longmapsto
    \beta_{\varepsilon,-}\,
    \mathrm D_\varepsilon\theta_{ij}
    +\gamma_u\,\mathrm U_u\theta_{ij}.
\end{align} 
The two terms on the right-hand side have different roles.
The term $D_\varepsilon\theta_{ij}$ represents the directed edge
$\varepsilon:j\to i$, whereas $U_u\theta_{ij}$ is the contribution
of the undirected kernel $U_u$ in the time-ordering sector
$t_i>t_j$.  In Section~9.3 we show that the complementary
time-ordering sectors carry the same coefficient and therefore
recombine as
$U_u\theta_{ij}+U_u\theta_{ji}=U_u$.

\begin{definition}[DU graph]
\label{def:DU-graph-reorg}
A \emph{directed-undirected graph}, or \emph{DU graph}, is a mixed
quiver whose edge set is the disjoint union
\[
    E_G=E_D(G)\sqcup E_U(G),
\]
where the edges in $E_D(G)$ are directed and the edges in $E_U(G)$
are undirected.  Each directed edge $\varepsilon\in E_D(G)$ may carry
an independent parameter $\beta_\varepsilon$, and each undirected edge
$u\in E_U(G)$ may carry an independent dressing parameter $\gamma_u$.

The \emph{directed core} $G_D$ of a DU graph $G$ is the spanning
directed graph obtained by deleting all undirected edges and retaining
all vertices.

A DU graph is called \emph{acyclic} if it is tadpole-free and its
directed core $G_D$ is acyclic.
\end{definition}

\subsection{Graphs with undirected edges and their
\texorpdfstring{$\omega$}{omega}-function}
\label{subsec:BW-omega-from-basis-reorg}

We now keep both terms in
\eqref{eq:basis-change-RB-to-DU-with-arrows} and determine the resulting
graph coefficients.

Recall the RB expansion established in
Theorem~\ref{thm:global-RB-Hopf-operator-compatibility}.  Setting the
bookkeeping parameter $\lambda=1$, we obtain
\begin{equation}
\label{eq:Omega-RB-expansion-reorg}
    \Omega(t)
    =
    \sum_{G_{RB}}
    \frac{
        \omega_{RB,c}(G_{RB})
    }{
        \sigma(G_{RB})
    }
    I_{\mathrm{color}}(G_{RB};t),
\end{equation}
where the sum runs over isomorphism classes of finite connected
acyclic RB graphs.
The physical specialization
$(\beta_r,\beta_b)=(1,-1)$ and the factor
$(-1)^{b(G_{RB})}$ are already included in
$\omega_{RB,c}$.

Apply \eqref{eq:basis-change-RB-to-DU-with-arrows} edge by edge.
For each edge occurrence, choosing the $\mathrm D$-term produces a
directed edge, whereas choosing the $\mathrm U$-term produces a
time-ordered piece
$\mathrm U_u\theta_{ij}$ of an undirected edge.
Thus a fixed DU graph is obtained together with a choice of
time-ordering sector for each of its undirected edges.

Let $G_{\mathrm{DU}}$ be an acyclic DU graph, and let $G_D$ denote its
directed core.  An \emph{admissible directed completion}
$\widetilde G_{\mathrm{DU}}$ is obtained by orienting every edge in
$E_U(G_{\mathrm{DU}})$ so that the resulting quiver is
acyclic and extends the orientations already present in $G_D$.
Such a completion exists: choose a linear extension of the partial
order defined by $G_D$ and orient all undirected edges consistently
with it.

For a fixed admissible completion
$\widetilde G_{\mathrm{DU}}$, let
\[
    X(\widetilde G_{\mathrm{DU}})
    :=
    \left\{
        \widetilde G_{\mathrm{DU}}^\kappa
        \,\middle|\,
        \kappa:
        E_{\widetilde G_{\mathrm{DU}}}
        \longrightarrow
        \{r,b\}
    \right\},
\]
where
$\widetilde G_{\mathrm{DU}}^\kappa$ is the RB graph obtained by
coloring every edge occurrence red or blue.  For
$G_{RB}\in X(\widetilde G_{\mathrm{DU}})$, set
\[
    E_{D,r}(G_{RB})
    :=
    E_D(G_{\mathrm{DU}})\cap E_r(G_{RB}),
    \qquad
    E_{D,b}(G_{RB})
    :=
    E_D(G_{\mathrm{DU}})\cap E_b(G_{RB}).
\]
Define the coefficient of the sector
$\widetilde G_{\mathrm{DU}}$ by
\begin{equation}
\label{eq:DU-sector-coefficient-reorg}
\begin{split}
    \mathcal C_{\widetilde G_{\mathrm{DU}}}
    \bigl(
        G_{\mathrm{DU}};
        \boldsymbol\beta,\boldsymbol\gamma
    \bigr)
    :={}&
    \left(
        \prod_{u\in E_U(G_{\mathrm{DU}})}
        \gamma_u
    \right)
    \sum_{G_{RB}\in X(\widetilde G_{\mathrm{DU}})}
    \left(
        \prod_{\varepsilon\in E_{D,r}(G_{RB})}
        \beta_{\varepsilon,+}
    \right)
    \left(
        \prod_{\varepsilon\in E_{D,b}(G_{RB})}
        \beta_{\varepsilon,-}
    \right)
    \omega_{RB,c}(G_{RB}).
\end{split}
\end{equation}
Here an edge that remains directed contributes
$\beta_{\varepsilon,+}$ or $\beta_{\varepsilon,-}$ according to
whether its RB preimage is red or blue.  By contrast, an edge that is
to become undirected contributes the same factor $\gamma_u$ for
either color.  It is this equality of the two prefactors that leads to
the cancellation below.

\begin{example}
\label{ex:DU-two-parallel-edges}
Consider two vertices joined by two parallel edges, with one directed
edge $\varepsilon:2\to1$ and one undirected edge $u$.  Choose the
auxiliary orientation $2\to1$ for $u$, and order the RB colors so that
the first color refers to $\varepsilon$ and the second to the auxiliary
edge.  The four RB preimages $rr,rb,br,bb$ have connected coefficients
$-\frac12,0,0,\frac12$, respectively.  Thus, for each fixed color of
$\varepsilon$, summing over the two colors of the auxiliary edge gives
$-\frac12+0=-\frac12$ and $0+\frac12=\frac12$, which are precisely the
connected RB coefficients obtained after deleting the auxiliary edge.
Since both color choices of the auxiliary edge carry the same factor
$\gamma_u$,
\[
    \mathcal C_{\widetilde G_{\mathrm{DU}}}
    =
    \gamma_u
    \left(
        -\frac12\beta_{\varepsilon,+}
        +
        \frac12\beta_{\varepsilon,-}
    \right)
    =
    -\frac{\gamma_u}{2}
    =
    \gamma_u\,\omega_c(G_D),
\]
where $G_D$ is the one-edge directed core.  If both parallel edges are
undirected, summing over the RB colors deletes both auxiliary oriented
representatives, leaving the disconnected directed core
$\bullet\sqcup\bullet$, so the connected coefficient vanishes.
\end{example}

This example exhibits the basic mechanism.  For a target undirected
edge, summing over the two colors of its auxiliary oriented
representative has the effect of deleting that auxiliary edge from the
connected RB coefficient.  The following lemma formalizes this
edgewise identity.

\begin{lemma}[RB deletion identity]
\label{lem:RB-white-edge-sum-reorg}
Fix an oriented edge $\varepsilon$ and keep all other RB data fixed.
Let $G_\varepsilon^r$ and $G_\varepsilon^b$ be the two RB graphs
obtained by coloring $\varepsilon$ red or blue.  Then
\begin{equation}
\label{eq:RB-white-edge-sum-reorg}
    \omega_{RB,c}(G_\varepsilon^r)
    +
    \omega_{RB,c}(G_\varepsilon^b)
    =
    \omega_{RB,c}(G_{\setminus\varepsilon}),
\end{equation}
where all remaining RB colors are retained, and the right-hand side is
understood to be zero if $G_{\setminus\varepsilon}$ is disconnected.
\end{lemma}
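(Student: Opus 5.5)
Since $\omega_{RB,c}$ is, up to the sign $(-1)^{b(\cdot)}$, a specialization of the multiaffine polynomial $\omega_c(G;\boldsymbol\beta)$, the identity should follow directly from the edge differential equation, Proposition~\ref{prop:omega-PDE}. The approach is to write $\omega_c(G)$ as an affine function of the single variable $\beta_\varepsilon$, specialize $\beta_\varepsilon=\pm1$ with the correct color sign, and add the two results.

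First dispose of the disconnected case. If $G$ (equivalently $G^r_\varepsilon$, $G^b_\varepsilon$, which share their underlying quiver) is disconnected, all three terms vanish. Since deletion cannot connect components, we may assume $G$ is connected; the right-hand side is then either $\omega_{RB,c}(G_{\setminus\varepsilon})$ or zero, matching the convention of $\omega_c$.

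Now fix the parameters of all edges other than $\varepsilon$ at their RB values. By Corollary~\ref{cor:omega-multiaffine} and Proposition~\ref{prop:omega-PDE},
\[
\omega_c(G;\boldsymbol\beta)=a+\tfrac12\beta_\varepsilon\,\omega_c(G_{\setminus\varepsilon}),
\]
where $a$ is independent of $\beta_\varepsilon$. This also covers the case where $\varepsilon$ is a tadpole, by Lemma~\ref{lem:omega-tadpole-factorization}. Writing $b'$ for the number of blue edges other than $\varepsilon$, Definition~\ref{def:RB-e-omega-reorg} gives
\[
\omega_{RB,c}(G^r_\varepsilon)=(-1)^{b'}\bigl(a+\tfrac12\omega_c(G_{\setminus\varepsilon})\bigr),\qquad
\omega_{RB,c}(G^b_\varepsilon)=(-1)^{b'+1}\bigl(a-\tfrac12\omega_c(G_{\setminus\varepsilon})\bigr).
\]
Adding, the constant $a$ cancels, leaving $(-1)^{b'}\omega_c(G_{\setminus\varepsilon})|_{\mathrm{RB}}$. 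Because $b(G_{\setminus\varepsilon})=b'$, this equals $\omega_{RB,c}(G_{\setminus\varepsilon})$.

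The only delicate point is sign bookkeeping: the extra factor $(-1)$ attached to the blue color must convert the difference of the two specializations into the derivative, rather than into twice $a$. Once multiaffinity is used, everything else is immediate.
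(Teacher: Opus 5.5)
Your proposal is correct and follows essentially the same route as the paper: you use that $\omega_c(G;\boldsymbol\beta)$ is affine in $\beta_\varepsilon$ with $2\partial_{\beta_\varepsilon}\omega_c(G)=\omega_c(G_{\setminus\varepsilon})$, and the extra blue sign turns the sum of the two colorings into the difference $\omega_c(G;\beta_\varepsilon=1)-\omega_c(G;\beta_\varepsilon=-1)=\omega_c(G_{\setminus\varepsilon})$. Your explicit handling of the disconnected and tadpole cases is only a minor elaboration of what the paper leaves implicit.
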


\begin{proof}
Let $b_0$ be the number of blue edges among the fixed remaining
edges.  By \eqref{eq:RB-omega-c-reorg},
\[
    \omega_{RB,c}(G_\varepsilon^r)
    +
    \omega_{RB,c}(G_\varepsilon^b)
    =
    (-1)^{b_0}
    \left(
        \omega_c(G;\beta_\varepsilon=1)
        -
        \omega_c(G;\beta_\varepsilon=-1)
    \right),
\]
with all remaining edge parameters fixed at their RB values.
Since $\omega_c(G)$ is affine in $\beta_\varepsilon$ and
\[
    2\partial_{\beta_\varepsilon}\omega_c(G)
    =
    \omega_c(G_{\setminus\varepsilon}),
\]
we have
\[
    \omega_c(G;1)
    -
    \omega_c(G;-1)
    =
    \omega_c(G_{\setminus\varepsilon}).
\]
Restoring the factor $(-1)^{b_0}$ gives
\eqref{eq:RB-white-edge-sum-reorg}.
\end{proof}

\begin{example}
\label{ex:DU-transitive-triangle}
Consider the transitive triangle with directed edges
\[
    \varepsilon_{12}:2\to1,
    \qquad
    \varepsilon_{23}:3\to2,
    \qquad
    \varepsilon_{13}:3\to1.
\]
For the purely directed graph,
$    \omega_c(G;\boldsymbol\beta)
    =
    \frac{\beta_{\varepsilon_{12}}}{12}
    +
    \frac{\beta_{\varepsilon_{23}}}{12}
    +
    \frac{\beta_{\varepsilon_{13}}}{6}$.
Direct summation of the RB preimages gives
$    \frac{\gamma_{|\varepsilon_{13}|}}{3}$
when $\varepsilon_{13}$ is made undirected, and respectively
$    \frac{\gamma_{|\varepsilon_{12}|}}{6}$,
$    \frac{\gamma_{|\varepsilon_{23}|}}{6}$
when $\varepsilon_{12}$ or $\varepsilon_{23}$ is made undirected.
If two or three edges are made undirected, the directed core is
disconnected and the connected coefficient vanishes.
In the uniform specialization
$\beta_\varepsilon=\beta$ and $\gamma_u=\gamma$, these are the
coefficients
$    \frac{\beta}{3}$, 
$\frac{\gamma}{3}$, 
$    \frac{\gamma}{6}$, 
$    \frac{\gamma}{6}$
shown in Fig.~\ref{fig:triangle-beta-re}.
\end{example}

\begin{figure}[htbp]
    \centering
    \includegraphics[width=0.75\linewidth]{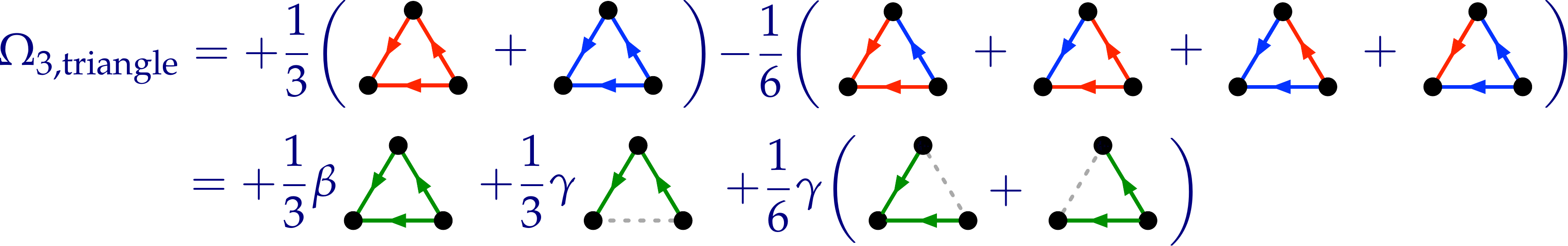}
    \caption{The change of basis for the transitive triangle in the
    uniform specialization.}
    \label{fig:triangle-beta-re}
\end{figure}


\begin{theorem}[Directed-core factorization]
\label{thm:BW-black-core-factorization-reorg}
Let $G_{\mathrm{DU}}$ be an acyclic DU graph, and let $G_D$ denote its
directed core.  For every admissible directed completion
$\widetilde G_{\mathrm{DU}}$,
\begin{equation}
\label{eq:BW-black-core-factorization-reorg}
    \mathcal C_{\widetilde G_{\mathrm{DU}}}
    \bigl(
        G_{\mathrm{DU}};
        \boldsymbol\beta,\boldsymbol\gamma
    \bigr)
    =
    \left(
        \prod_{u\in E_U(G_{\mathrm{DU}})}
        \gamma_u
    \right)
    \omega_c(G_D;\boldsymbol\beta),
\end{equation}
where the right-hand side is understood to be zero when $G_D$ is
disconnected.  In particular, the sector coefficient is independent
of the admissible directed completion.
\end{theorem}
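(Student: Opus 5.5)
We fix an admissible completion $\widetilde G_{\mathrm{DU}}$ and write $E_U:=E_U(G_{\mathrm{DU}})$; each $u\in E_U$ corresponds to an auxiliary oriented edge $\tilde u$ of $\widetilde G_{\mathrm{DU}}$. The sum over $X(\widetilde G_{\mathrm{DU}})$ in \eqref{eq:DU-sector-coefficient-reorg} factors as an outer sum over the colorings of the directed edges $E_D(G_{\mathrm{DU}})$ and an inner sum over the colorings of the auxiliary edges $\{\tilde u\}_{u\in E_U}$. The $\beta$-prefactors depend only on the outer coloring, and the $\gamma$-prefactor is the common factor $\prod_u\gamma_u$. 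The plan is therefore to evaluate the inner sum first, then identify the resulting outer sum with $\omega_c(G_D;\boldsymbol\beta)$.

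\emph{Step 1 (inner sum).} We apply Lemma~\ref{lem:RB-white-edge-sum-reorg} repeatedly, one auxiliary edge at a time, to delete all $\tilde u$. For a fixed outer coloring $\kappa_D$ this gives
\[
\sum_{\kappa_U}\omega_{RB,c}(\widetilde G^{\kappa_D\sqcup\kappa_U})=\omega_{RB,c}(G_D^{\kappa_D}),
\]
with the convention that the right-hand side vanishes if $G_D$ is disconnected. The iteration is legitimate because the lemma holds for arbitrary RB data on the remaining edges, including graphs that have already become disconnected, where both sides are $0$. The underlying quivers stay acyclic throughout, although acyclicity is not needed for the lemma.

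\emph{Step 2 (outer sum).} By \eqref{eq:RB-omega-c-reorg}, $\omega_{RB,c}(G_D^{\kappa_D})=(-1)^{b}\,\omega_c(G_D;\boldsymbol\beta')$, where $\beta'_\varepsilon=+1$ on red edges and $-1$ on blue edges. What remains is the identity
\[
\sum_{\kappa_D}\prod_{\varepsilon\ \mathrm{red}}\beta_{\varepsilon,+}\prod_{\varepsilon\ \mathrm{blue}}\bigl(-\beta_{\varepsilon,-}\bigr)\,\omega_c(G_D;\boldsymbol\beta')=\omega_c(G_D;\boldsymbol\beta).
\]
This holds because $\omega_c(G_D)$ is multiaffine (Corollary~\ref{cor:omega-multiaffine}). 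For an affine function $F$ of one variable, $F(\beta)=\beta_{+}F(1)-\beta_{-}F(-1)$, since the weights $\beta_+=(\beta+1)/2$ and $-\beta_-=(1-\beta)/2$ are the Lagrange interpolation weights at the nodes $\pm1$. Applying this edge by edge over $E_D$ is multilinear interpolation at the vertices of the cube $\{\pm1\}^{E_D}$, which gives the identity exactly.

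Combining the two steps yields \eqref{eq:BW-black-core-factorization-reorg}. The result is independent of the choice of completion because the completion enters only through the auxiliary edges, and these are deleted in Step 1.

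The main obstacle is sign bookkeeping in Step 2. We must check that the $(-1)^{b}$ normalization counts only blue edges among the surviving directed edges: Lemma~\ref{lem:RB-white-edge-sum-reorg} already absorbs the blue signs of the auxiliary edges. We must also check that this sign combines with $\beta_{\varepsilon,-}$ to produce the interpolation weight $(1-\beta)/2$. A further point is that the tadpole-freeness and acyclicity of the DU graph guarantee that the auxiliary edges are genuine non-tadpole oriented edges, so the differential identity underlying the lemma (Proposition~\ref{prop:omega-PDE}) applies directly.
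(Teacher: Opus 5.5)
Your proposal is correct and follows essentially the same route as the paper: you use the fact that both colors of an auxiliary edge carry the same prefactor $\gamma_u$ to apply Lemma~\ref{lem:RB-white-edge-sum-reorg} repeatedly and strip the auxiliary edges, and you then identify the remaining weighted sum over colorings of $E_D(G_{\mathrm{DU}})$ as the multiaffine interpolation of $\omega_c(G_D;\boldsymbol\beta)$ from its values on $\{\pm1\}^{E_D(G_{\mathrm{DU}})}$. Your sign check, in which the factor $(-1)^{b}$ combines with $\beta_{\varepsilon,-}$ to give the weight $(1-\beta_\varepsilon)/2$, is exactly the step the paper uses.
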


\begin{proof}
Start from \eqref{eq:DU-sector-coefficient-reorg}.  Let
$u\in E_U(G_{\mathrm{DU}})$ and let $\varepsilon$ be its auxiliary
orientation in $\widetilde G_{\mathrm{DU}}$.  The two RB preimages of
this edge have the same prefactor $\gamma_u$.  Summing them and
applying Lemma~\ref{lem:RB-white-edge-sum-reorg} therefore deletes the
auxiliary edge $\varepsilon$ while leaving all other RB data
unchanged.

Repeating this operation for every undirected edge removes all
auxiliary edges.  If the directed core $G_D$ is disconnected, the
resulting connected coefficient is zero.  Suppose henceforth that
$G_D$ is connected.  We obtain
\begin{equation}
\label{eq:DU-core-RB-sum-reorg}
\begin{split}
    \mathcal C_{\widetilde G_{\mathrm{DU}}}
    ={}&
    \left(
        \prod_{u\in E_U(G_{\mathrm{DU}})}
        \gamma_u
    \right)
    \sum_{\kappa:E_D(G_{\mathrm{DU}})\to\{r,b\}}
    \left(
        \prod_{\varepsilon\in E_r(G_D^\kappa)}
        \beta_{\varepsilon,+}
    \right)
    \left(
        \prod_{\varepsilon\in E_b(G_D^\kappa)}
        \beta_{\varepsilon,-}
    \right)
    \omega_{RB,c}(G_D^\kappa),
\end{split}
\end{equation}
where $G_D^\kappa$ denotes the RB coloring $\kappa$ of the directed
core.

For a coloring $\kappa$, set
\[
    \eta_\varepsilon
    :=
    \begin{cases}
        1,&\kappa(\varepsilon)=r,\\
        -1,&\kappa(\varepsilon)=b.
    \end{cases}
\]
By the definition of $\omega_{RB,c}$,
\[
    \omega_{RB,c}(G_D^\kappa)
    =
    (-1)^{|E_b(G_D^\kappa)|}
    \omega_c(G_D;\boldsymbol\eta).
\]
Hence a red edge contributes $\beta_{\varepsilon,+}$, while for a blue
edge the sign in $\omega_{RB,c}$ combines with
$\beta_{\varepsilon,-}$ to give $-\beta_{\varepsilon,-}$.  The sum in
\eqref{eq:DU-core-RB-sum-reorg} is therefore
\[
    \sum_{\boldsymbol\eta\in
        \{\pm1\}^{E_D(G_{\mathrm{DU}})}}
    \omega_c(G_D;\boldsymbol\eta)
    \prod_{\varepsilon\in E_D(G_{\mathrm{DU}})}
    \frac{1+\eta_\varepsilon\beta_\varepsilon}{2}.
\]
Since $\omega_c(G_D;\boldsymbol\beta)$ is multiaffine in the edge
parameters, this is its interpolation from the corner values
$\beta_\varepsilon=\pm1$.  Hence it equals
$\omega_c(G_D;\boldsymbol\beta)$, proving
\eqref{eq:BW-black-core-factorization-reorg}.
\end{proof}

For an acyclic DU graph $G$, define its graph integral by
\begin{equation}
\label{eq:I-DU}
\begin{split}
    I_{\mathrm{DU}}(G;t)
    :={}&
    \int_{[0,t]^{|V_G|}}
    \left(
        \prod_{\substack{\varepsilon:j\to i\\
        \varepsilon\in E_D(G)}}
        \mathrm D_\varepsilon\theta_{ij}
    \right)
    \left(
        \prod_{u\in E_U(G)}
        \mathrm U_u
    \right)
    \left(
        \prod_{v\in V_G}A(t_v)
    \right)
    \prod_{v\in V_G}dt_v.
\end{split}
\end{equation}
Thus a directed edge $\varepsilon:j\to i$ contributes
$\mathrm D_\varepsilon\theta_{ij}$, whereas an undirected edge $u$
contributes the symmetric kernel $\mathrm U_u$ without a preferred
time orientation.

\begin{corollary}[DU expansion]
\label{cor:DU-expansion-reorg}
The RB expansion \eqref{eq:Omega-RB-expansion-reorg} can be rewritten as
\begin{equation}
\label{eq:Omega-DU-expansion-reorg}
    \Omega(t)
    =
    \sum_{G_{\mathrm{DU}}}
    \frac{
        \omega_{\mathrm{DU},c}
        (G_{\mathrm{DU}};
        \boldsymbol\beta,\boldsymbol\gamma)
    }{
        \sigma(G_{\mathrm{DU}})
    }
    I_{\mathrm{DU}}(G_{\mathrm{DU}};t),
\end{equation}
where the sum runs over isomorphism classes of finite connected acyclic DU graphs, and
\begin{equation}
\label{eq:DU-core-factorization-final}
    \omega_{\mathrm{DU},c}
    \bigl(
        G_{\mathrm{DU}};
        \boldsymbol\beta,\boldsymbol\gamma
    \bigr)
    =
    \left(
        \prod_{u\in E_U(G_{\mathrm{DU}})}
        \gamma_u
    \right)
    \omega_c(G_D;\boldsymbol\beta).
\end{equation}
\end{corollary}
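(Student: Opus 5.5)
The plan is to start from the RB expansion \eqref{eq:Omega-RB-expansion-reorg}, apply the edgewise substitution \eqref{eq:basis-change-RB-to-DU-with-arrows} to every edge occurrence, and regroup the resulting terms by DU graphs. Fix a connected acyclic RB graph $G_{RB}$ with labeled vertices and edges. Expanding the product over its edges, each term chooses, for every edge, either the $\mathrm D$-term or the $\mathrm U\theta$-term. This produces a DU graph $G_{\mathrm{DU}}$ together with an admissible directed completion $\widetilde G_{\mathrm{DU}}$, namely the original orientation of every edge that became undirected. The integrand is then $\prod\mathrm D_\varepsilon\theta_{ij}\prod\mathrm U_u\theta_{u}\prod A$. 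Conversely, given a labeled DU graph and a labeled completion, its RB preimages are exactly the colorings in $X(\widetilde G_{\mathrm{DU}})$. Collecting coefficients therefore shows that the labeled coefficient of the integrand $\prod\mathrm D\theta\prod\mathrm U\theta_{\widetilde u}\prod A$ is the sector coefficient $\mathcal C_{\widetilde G_{\mathrm{DU}}}$ of \eqref{eq:DU-sector-coefficient-reorg}.

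Next we invoke Theorem~\ref{thm:BW-black-core-factorization-reorg}, which says that $\mathcal C_{\widetilde G_{\mathrm{DU}}}$ is independent of the admissible completion and equals $\prod\gamma_u\,\omega_c(G_D;\boldsymbol\beta)$. It remains to sum over completions. For fixed directed core, the completions range over all orientations of the undirected edges that make the whole quiver acyclic. Non-admissible orientations contain a directed cycle; their product of $\theta$'s vanishes almost everywhere, since it requires $t_{i_1}>\cdots>t_{i_1}$. So the sum over admissible completions equals the sum over all $2^{|E_U|}$ orientations of $\prod_u\theta_{\widetilde u}$ times the directed factors. By $\theta_{ij}+\theta_{ji}=1$ off the diagonal, this is $\prod_u(\theta_{ij}+\theta_{ji})=1$ almost everywhere, and each $\mathrm U_u\theta$ recombines to $\mathrm U_u$. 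Because the coefficient is common to all these terms, this recombination is legitimate, and it yields the integrand of $I_{\mathrm{DU}}$. Connectedness of $G_{\mathrm{DU}}$ follows from connectedness of $G_{RB}$. Disconnected directed cores give zero, which is consistent with the statement, although such $G_{\mathrm{DU}}$ still appear in the sum with coefficient zero.

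The main obstacle is the bookkeeping of symmetry factors when passing from labeled to isomorphism-class sums. For this I would work throughout with the labeled version: sum over vertex labelings of $[n]$ and edge multisets, with weights $1/n!$ and $1/\prod m!$ from the exponential Wick expansion. The edgewise substitution is a bijection on labeled edge occurrences that respects these weights: parallel RB edges between the same pair become parallel D/U edges, and the multinomial splitting of $W^m/m!$ into D- and U-parts gives exactly $1/(m_D!\,m_U!)$ per pair and orientation. The remaining step is to verify that the labeled DU sum with these weights converts, by the same orbit--stabilizer argument as in Proposition~\ref{prop:Dyson-blue-graph-expansion}, into $\sum_{G_{\mathrm{DU}}}\sigma(G_{\mathrm{DU}})^{-1}$, where automorphisms of undirected edges need not preserve any orientation. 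The point here is that the completion is summed out before the graph classes are formed, so the automorphism group is that of the mixed quiver itself, and not that of a completed quiver.
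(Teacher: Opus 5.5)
Your proposal is correct and follows essentially the same route as the paper. Both arguments split each undirected kernel as $\mathrm U_u=\mathrm U_u(\theta_{ij}+\theta_{ji})$ into time-ordering sectors, note that cyclic sectors vanish and the surviving ones are exactly the admissible completions, use Theorem~\ref{thm:BW-black-core-factorization-reorg} to give every such sector the common coefficient $\bigl(\prod_u\gamma_u\bigr)\omega_c(G_D;\boldsymbol\beta)$, and then recombine the sectors. Your labeled bookkeeping of the weights $1/(m_D!\,m_U!)$ and of $\sigma(G_{\mathrm{DU}})$ fills in a step the paper leaves implicit, and that bookkeeping is correct.
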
 

\begin{proof}
For each undirected edge $u=|\varepsilon|$, write
\[
    \mathrm U_u
    =
    \mathrm U_u
    \bigl(
        \theta_{ij}+\theta_{ji}
    \bigr).
\]
Expanding these factors decomposes the DU graph integral into
time-ordering sectors indexed by orientations of the undirected
edges.  Cyclic orientations give empty sectors, while the nonempty
ones are precisely the admissible directed completions.

By
Theorem~\ref{thm:BW-black-core-factorization-reorg}, every admissible
sector has the same coefficient
\[
    \left(
        \prod_{u\in E_U(G_{\mathrm{DU}})}
        \gamma_u
    \right)
    \omega_c(G_D;\boldsymbol\beta).
\]
This common coefficient can therefore be factored out, and the
complementary time-ordering sectors recombine edgewise as
\[
    \mathrm U_u\theta_{ij}
    +
    \mathrm U_u\theta_{ji}
    =
    \mathrm U_u.
\]
This reconstructs $I_{\mathrm{DU}}(G_{\mathrm{DU}})$ and proves
\eqref{eq:Omega-DU-expansion-reorg}.
\end{proof}

For a purely directed DU graph $G$, one has
\begin{equation}
\label{eq:pure-directed-DU-coefficient}
    \omega_{\mathrm{DU},c}
    (G_{\mathrm{DU}};\boldsymbol\beta)
    =
    \omega_c(G_{\mathrm{DU}};\boldsymbol\beta).
\end{equation}
Thus the purely directed sector of the basis change introduces no new
graph function: it is exactly the original connected
$\omega$-function with independent edge parameters.  This is the
sector used in Section~\ref{sec:operator-acyclic-master}.

If the target graph is purely directed, every edge is extracted from
the $\mathrm D$-part of
\eqref{eq:basis-change-RB-to-DU-with-arrows}.  Thus, for a target edge
$\varepsilon:j\to i$,
\begin{equation}
\label{eq:pure-directed-beta-evaluation}
    W_{ji}
    \longmapsto
    \beta_{\varepsilon,+},
    \qquad
    W_{ij}
    \longmapsto
    \beta_{\varepsilon,-}.
\end{equation}
This is precisely the local evaluation rule used in
Section~\ref{sec:operator-acyclic-master} to derive the acyclic master
formula from operator products.

At the physical BW point
$\boldsymbol\beta=\mathbf{0}$,
$\boldsymbol\gamma=\mathbf{1}$,
the directed DU edges are precisely the black edges, while the
undirected DU edges are precisely the white edges. Hence
$
E_B(G_{\mathrm{BW}})
=
E_D(G_{\mathrm{BW}})$,
$E_W(G_{\mathrm{BW}})
=
E_U(G_{\mathrm{BW}})$.
Accordingly, we set
\[
\omega_{\mathrm{BW},c}(G_{\mathrm{BW}})
:=
\omega_{\mathrm{DU},c}
(G_{\mathrm{BW}};\mathbf{0},\mathbf{1}).
\]

\section{Acyclic master formula from Magnus expansion}
\label{sec:operator-acyclic-master}

Section~\ref{sec:master-formulas} established the acyclic master formula by
Hopf-algebraic methods.  We now derive the same formula independently from the
Magnus permutation formula and the operator-product dictionary.  The two local
ingredients have already been prepared: Section~\ref{sec:operator-products-Hopf}
identifies the ordered contractions $W_{ij}$ and $W_{ji}$ produced by a fixed
operator word, while Section~\ref{sec:RB-BW-bases-reorg} shows that, in the
purely directed sector, an edge $\varepsilon:j\to i$ is evaluated by
$W_{ji}\mapsto\beta_{\varepsilon,+}$ and
$W_{ij}\mapsto\beta_{\varepsilon,-}$. 

\subsection{Permutation coefficients in the Magnus expansion}
\label{subsec:Magnus-permutation-coefficients}

For $\sigma=(\sigma_1,\ldots,\sigma_n)\in S_n$, write
\[
    A_{(\sigma)}
    :=
    A(t_{\sigma_1})\star\cdots\star A(t_{\sigma_n}),
\]
and define
\begin{equation}
\label{eq:operator-descent-set}
    \operatorname{Des}(\sigma)
    :=
    \{a\in\{1,\ldots,n-1\}\mid \sigma_a>\sigma_{a+1}\},
    \qquad
    k(\sigma):=|\operatorname{Des}(\sigma)|.
\end{equation}
Set
\begin{equation}
\label{eq:operator-cnk}
    c_{n,k}
    :=
    \frac{(-1)^k}{n\binom{n-1}{k}}.
\end{equation}

The following classical permutation form of the Magnus expansion is due to
Mielnik--Pleba\'nski and Strichartz; see
\cite{Mielnik_1970,Strichartz_1987} and Eq. (24) in \cite{Ebrahimi_Fard_2014}.

\begin{theorem}[Mielnik--Pleba\'nski--Strichartz formula]
\label{thm:Magnus-permutation-form}
For every $n\geq1$,
\begin{equation}
\label{eq:Magnus-permutation-form}
    \Omega_n(t)
    =
    \int_{\Delta_n(t)}
    \left(
        \sum_{\sigma\in S_n}
        c_{n,k(\sigma)}A_{(\sigma)}
    \right)
    dt_1\cdots dt_n.
\end{equation}
Thus the coefficient of an operator word depends only on the number of descents
of the corresponding permutation.
\end{theorem}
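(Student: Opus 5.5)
The plan is to derive \eqref{eq:Magnus-permutation-form} directly from the star-logarithm relation \eqref{U-vs-Omega_star}, $\Omega(\lambda;t)=\log_\star U(\lambda;t)$. The key step is an ordered-set-partition bookkeeping of the same type as in the proof of Proposition~\ref{prop:general-reduces-to-special}. First expand
\[
    \Omega(\lambda;t)=\sum_{m\geq1}\frac{(-1)^{m-1}}{m}\bigl(U(\lambda;t)-\mathbf1\bigr)^{\star m}.
\]
Then extract the coefficient of $\lambda^n$, which gives
\[
    \Omega_n=\sum_{m\geq1}\frac{(-1)^{m-1}}{m}\sum_{n_1+\cdots+n_m=n,\ n_a\geq1}U_{n_1}\star\cdots\star U_{n_m}.
\]
Everything here is a finite sum in each degree, so no convergence questions arise in the formal setting.

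Next, rewrite each product $U_{n_1}\star\cdots\star U_{n_m}$ as an integral over the single simplex $\Delta_n(t)$. The product is an integral over $\Delta_{n_1}(t)\times\cdots\times\Delta_{n_m}(t)$. Off a measure-zero set of coinciding times, each point of this domain determines a unique total time order of all $n$ variables. After relabelling the variables as $t_1>\cdots>t_n$, such a point corresponds to a surjection $f:[n]\to[m]$ whose fibres have sizes $n_1,\ldots,n_m$: the fibre $f^{-1}(a)$ lists the times used by the $a$-th factor. Within factor $a$, the Dyson word places the operators in decreasing time order, which means increasing label order. Hence the operator word produced is the permutation $\sigma(f)$ obtained by writing each fibre in increasing order and concatenating the fibres in the order $a=1,\ldots,m$. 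Summing over the compositions $(n_1,\ldots,n_m)$ and over the ways of distributing times then gives
\[
    \Omega_n=\int_{\Delta_n(t)}\sum_{m\geq1}\frac{(-1)^{m-1}}{m}\sum_{\mathcal B\in\OSP([n]),\ |\mathcal B|=m}A_{(\sigma(\mathcal B))}\,dt_1\cdots dt_n.
\]
I expect this change of variables to be the main obstacle. It requires checking that the map from (point of the product of simplices) to (point of $\Delta_n$, ordered set partition) is a measure-preserving bijection off the diagonals. It also requires checking that the $\star$-product of the integrands is compatible with this relabelling. Both facts are the standard shuffle decomposition of products of iterated integrals, and I would state them as such.

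Finally, regroup by the permutation. Fix $\sigma\in S_n$ with $k=k(\sigma)$ descents. The ordered set partitions with $\sigma(\mathcal B)=\sigma$ are obtained by cutting $\sigma$ into consecutive increasing runs. Every descent must therefore be a cut, while each of the $n-1-k$ ascents may or may not be cut. Choosing $j$ ascents to cut produces $m=k+1+j$ blocks, so the coefficient of $A_{(\sigma)}$ is
\[
    \sum_{j=0}^{n-1-k}\binom{n-1-k}{j}\frac{(-1)^{k+j}}{k+1+j}
    =(-1)^k\int_0^1 x^k(1-x)^{n-1-k}\,dx
    =\frac{(-1)^k}{n\binom{n-1}{k}}=c_{n,k}.
\]
The middle equality uses $\frac{1}{k+1+j}=\int_0^1x^{k+j}\,dx$ together with the binomial theorem, exactly as in the proof of Proposition~\ref{prop:general-reduces-to-special}. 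This yields \eqref{eq:Magnus-permutation-form}.
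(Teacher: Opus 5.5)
Your proof is correct. It differs from the paper only in that the paper gives no proof of this statement at all: it quotes the formula as a classical result \cite{Mielnik_1970,Strichartz_1987} and uses it as an external input to Section~\ref{sec:operator-acyclic-master}.

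Your derivation is essentially the classical Eulerian-idempotent argument, and the details check out. You expand $\log_\star U$ using \eqref{U-vs-Omega_star}. You decompose $\Delta_{n_1}(t)\times\cdots\times\Delta_{n_m}(t)$ into copies of $\Delta_n(t)$ indexed by surjections $f:[n]\to[m]$. You then regroup ordered set partitions by the permutation obtained from increasingly ordered blocks. For example, $n=2$ gives $U_2-\tfrac12U_1\star U_1=\int_{\Delta_2(t)}\tfrac12(A_{(12)}-A_{(21)})$, and the case $n=3$ also matches \eqref{eq:Omega3-operator-words}.

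The step you flagged as the main obstacle is harmless. The relabelling is a coordinate permutation with unit Jacobian. Moreover, $\star$ acts only on the oscillator variables, so by bilinearity it commutes with the formal time integrals and with the relabelling.

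What your route buys is a self-contained operator-side derivation and a sharper structural link.
\begin{itemize}
\item Your run-cutting computation mirrors Proposition~\ref{prop:general-reduces-to-special}: you use increasing blocks with optional cuts at ascents, where the paper uses decreasing blocks with optional merges at descents. The two computations are exchanged by $\mathcal B\mapsto\mathcal B^{\mathrm{rev}}:=(B_\ell|\cdots|B_1)$ and $\sigma\mapsto\sigma^{\mathrm{rev}}$, which is the content of Lemma~\ref{lem:operator-permutation-reversal}.
\item Your intermediate ordered-set-partition form even lets one bypass that lemma. Insert the edge rule \eqref{eq:operator-beta-evaluation}. An edge $j\to i$ with $i<j$ then contributes $\beta_{\varepsilon,-}$ when $b_{\mathcal B}(i)\le b_{\mathcal B}(j)$ and $\beta_{\varepsilon,+}$ otherwise, which is exactly the factor in $M_G(\mathcal B^{\mathrm{rev}})$. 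Hence $\Omega_n^{\mathrm{op}}(G)=\sum_{\mathcal B}c_{|\mathcal B|}M_G(\mathcal B)$ is directly the general master function.
\end{itemize}

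The paper's citation is shorter. It also keeps the operator route visibly independent of the ordered-set-partition machinery of Section~\ref{sec:master-formulas}, an independence your argument shows to be largely formal.
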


\subsection{From operator orderings to edge-parameter monomials}
\label{subsec:operator-orderings-edge-weights}

Let $G$ be an acyclic quiver with $V_G=[n]$, topologically labeled so
that every edge has the form $\varepsilon:j\to i$ with $i<j$.
Recall that
$\beta_{\varepsilon,\pm}=(\beta_\varepsilon\pm1)/2$, that
$p_\sigma(i)=\sigma^{-1}(i)$ is the position of $i$ in the operator
word $A_{(\sigma)}$, and that
\begin{equation}
\label{eq:operator-edge-sign}
    s_\sigma(\varepsilon)
    =
    \begin{cases}
        +,&p_\sigma(i)<p_\sigma(j),\\
        -,&p_\sigma(i)>p_\sigma(j).
    \end{cases}
\end{equation}
Thus $s_\sigma(\varepsilon)$ is precisely the edge sign appearing in
the acyclic master formula.

We now apply the purely directed rule of
\eqref{eq:pure-directed-beta-evaluation}.  For an edge
$\varepsilon:j\to i$, it gives
\begin{equation}
\label{eq:operator-beta-evaluation}
    W_{ij}\longmapsto\beta_{\varepsilon,-},
    \qquad
    W_{ji}\longmapsto\beta_{\varepsilon,+}.
\end{equation}
The Wick expansion determines which ordered contraction occurs, and
\eqref{eq:operator-beta-evaluation} assigns the corresponding
edge parameter.  For parallel edges, the rule is applied independently
to each edge.  Define
\begin{equation}
\label{eq:operator-graph-monomial}
    M_G^{\mathrm{op}}(\sigma)
    :=
    \prod_{\varepsilon\in E_G}
    \beta_{\varepsilon,-s_\sigma(\varepsilon)}.
\end{equation}

\begin{proposition}[Edge-parameter monomial of an operator ordering]
\label{prop:operator-ordering-weight}
For every $\sigma\in S_n$, the contribution of the operator word
$A_{(\sigma)}$ to the edge parameters of $G$ is
$M_G^{\mathrm{op}}(\sigma)$.
\end{proposition}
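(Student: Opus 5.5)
The plan is to read off the edge weight directly from the exponentiated Wick rule, Lemma~\ref{lem:exponentiated-Wick}, and then apply the purely directed evaluation rule \eqref{eq:operator-beta-evaluation} one edge at a time. Fix $\sigma\in S_n$. By Lemma~\ref{lem:exponentiated-Wick},
\[
A_{(\sigma)}=\exp\Bigl(\sum_{a<b}W_{\sigma_a\sigma_b}\Bigr)\prod_i A_i .
\]
For each unordered pair $\{i,j\}$ with $i<j$, exactly one of the two ordered contractions appears in the exponent. It is $W_{ij}$ if $p_\sigma(i)<p_\sigma(j)$, and $W_{ji}$ if $p_\sigma(i)>p_\sigma(j)$.

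Expanding the exponential, the coefficient producing the labeled multigraph $G$ selects, for each edge $\varepsilon:j\to i$ of $G$, one factor of this ordered contraction for the pair $\{i,j\}$. Parallel edges each select an independent factor, with the $1/m!$ handled as in Proposition~\ref{prop:Dyson-blue-graph-expansion}; these combinatorial factors are common to all $\sigma$ and are not part of the edge-parameter monomial. So the $\sigma$-dependent data is simply the list of ordered kernels attached to the edges of $G$.

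Next apply \eqref{eq:operator-beta-evaluation} edgewise. If $p_\sigma(i)<p_\sigma(j)$, so that $s_\sigma(\varepsilon)=+$, the kernel is $W_{ij}\mapsto\beta_{\varepsilon,-}=\beta_{\varepsilon,-s_\sigma(\varepsilon)}$. If $p_\sigma(i)>p_\sigma(j)$, so that $s_\sigma(\varepsilon)=-$, the kernel is $W_{ji}\mapsto\beta_{\varepsilon,+}=\beta_{\varepsilon,-s_\sigma(\varepsilon)}$. Taking the product over $E_G$ gives $M_G^{\mathrm{op}}(\sigma)$.

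The only point needing care, and the one I expect to be the main obstacle, is justifying that the rule \eqref{eq:pure-directed-beta-evaluation} may be applied to a single operator word with the fixed orientation $j\to i$, even though the word is integrated over the simplex $\Delta_n(t)$. The simplex is compatible with the topological labeling, so $t_i>t_j$ for every edge $j\to i$, i.e.\ $\theta_{ij}=1$. Hence the edge is indeed in the sector where $\mathrm{red}_{ij}=W_{ji}\theta_{ij}$ and $\mathrm{blue}_{ij}=W_{ij}\theta_{ij}$. Extracting the $\mathrm D_\varepsilon\theta_{ij}$ component of \eqref{eq:basis-change-RB-to-DU-with-arrows} then yields exactly $\beta_{\varepsilon,\pm}$. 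I would state this explicitly and note that the undirected $\mathrm U$-components are discarded because we are in the purely directed sector.
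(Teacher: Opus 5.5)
Your proposal is correct and follows the paper's proof. Both use the exponentiated Wick rule to read off which ordered contraction, $W_{ij}$ or $W_{ji}$, each edge receives from the relative positions $p_\sigma(i),p_\sigma(j)$, then apply the evaluation rule \eqref{eq:operator-beta-evaluation} edgewise and multiply over edges. Your remarks on the $\sigma$-independent $1/m!$ factors and on $\theta_{ij}=1$ over $\Delta_n(t)$ are consistent additions that the paper leaves implicit.
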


\begin{proof}
Let $\varepsilon:j\to i$.  If $p_\sigma(i)<p_\sigma(j)$, then $i$
occurs before $j$ in the operator word, so the Wick contraction is
$W_{ij}$ and contributes $\beta_{\varepsilon,-}$.  Since
$s_\sigma(\varepsilon)=+$, this is
$\beta_{\varepsilon,-s_\sigma(\varepsilon)}$.  If
$p_\sigma(i)>p_\sigma(j)$, the contraction is $W_{ji}$ and contributes
$\beta_{\varepsilon,+}$; since $s_\sigma(\varepsilon)=-$, the same
expression is obtained.  The Wick rule factorizes over the edges, and
multiplying these contributions gives
\eqref{eq:operator-graph-monomial}.
\end{proof}

\subsection{Derivation of the acyclic master formula}
\label{subsec:operator-special-formula}

Recall the acyclic master coefficients
$
    d_{n,k}
    =
    \frac{(-1)^{n-1-k}}{n\binom{n-1}{k}}$
and the acyclic master function
\[
    \overline{\Omega}_n(G)
    =
    \sum_{\sigma\in S_n}
    d_{n,k(\sigma)}
    \prod_{\varepsilon\in E_G}
    \beta_{\varepsilon,s_\sigma(\varepsilon)}.
\]
On the operator-product side, Proposition~\ref{prop:operator-ordering-weight} gives
\begin{equation}
\label{eq:operator-coefficient-G}
    \Omega_n^{\mathrm{op}}(G)
    :=
    \sum_{\sigma\in S_n}
    c_{n,k(\sigma)}
    \prod_{\varepsilon\in E_G}
    \beta_{\varepsilon,-s_\sigma(\varepsilon)}.
\end{equation}

For $\sigma=(\sigma_1,\ldots,\sigma_n)$, let
$\sigma^{\mathrm{rev}}=(\sigma_n,\ldots,\sigma_1)$.

\begin{lemma}[Reversal of operator orderings]
\label{lem:operator-permutation-reversal}
For every $\sigma\in S_n$,
\[
    k(\sigma^{\mathrm{rev}})
    =
    n-1-k(\sigma),
    \qquad
    s_{\sigma^{\mathrm{rev}}}(\varepsilon)
    =
    -s_\sigma(\varepsilon),
\]
and consequently
$c_{n,k(\sigma)}
    =
    d_{n,k(\sigma^{\mathrm{rev}})}$.
\end{lemma}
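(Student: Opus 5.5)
The lemma has three parts: the descent count of the reversed word, the edge signs of the reversed word, and the coefficient identity. The first two are direct combinatorial checks on reversal; the third is arithmetic on $c_{n,k}$ and $d_{n,k}$.

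\emph{Descents.} Write $\tau:=\sigma^{\mathrm{rev}}$, so $\tau_a=\sigma_{n+1-a}$. For $a\in\{1,\ldots,n-1\}$, position $a$ is a descent of $\tau$ iff $\sigma_{n+1-a}>\sigma_{n-a}$. This says exactly that position $n-a$ is an ascent of $\sigma$. The map $a\mapsto n-a$ is a bijection of $\{1,\ldots,n-1\}$ onto itself. Since every position there is either an ascent or a descent, we get $k(\tau)=(n-1)-k(\sigma)$.

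\emph{Edge signs.} We have $p_\tau(v)=n+1-p_\sigma(v)$ for every vertex $v$. Take an edge $\varepsilon:j\to i$ with $i\neq j$; the acyclic labeling guarantees there are no tadpoles. Then the inequality $p_\sigma(i)<p_\sigma(j)$ holds iff $p_\tau(i)>p_\tau(j)$. By \eqref{eq:operator-edge-sign}, this gives $s_\tau(\varepsilon)=-s_\sigma(\varepsilon)$.

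\emph{Coefficients.} Put $k=k(\sigma)$. By the descent identity, $k(\sigma^{\mathrm{rev}})=n-1-k$, so
\[
d_{n,n-1-k}=\frac{(-1)^{k}}{n\binom{n-1}{n-1-k}}.
\]
The symmetry $\binom{n-1}{n-1-k}=\binom{n-1}{k}$ turns this into $(-1)^k/\bigl(n\binom{n-1}{k}\bigr)$. This is $c_{n,k}$ as defined in \eqref{eq:operator-cnk}.

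No step here is a real obstacle. The only place needing care is the boundary bookkeeping in the descent count: positions run over $1,\ldots,n-1$, and the map $a\mapsto n-a$ must preserve that range, which it does. For the downstream use I would also note that the three parts together give
\[
c_{n,k(\sigma)}\prod_{\varepsilon}\beta_{\varepsilon,-s_\sigma(\varepsilon)}
=
d_{n,k(\sigma^{\mathrm{rev}})}\prod_{\varepsilon}\beta_{\varepsilon,s_{\sigma^{\mathrm{rev}}}(\varepsilon)}.
\]
Reindexing the sum over $\sigma$ by the bijection $\sigma\mapsto\sigma^{\mathrm{rev}}$ on $S_n$ then identifies $\Omega_n^{\mathrm{op}}(G)$ with $\overline{\Omega}_n(G)$.
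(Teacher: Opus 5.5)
Your proposal is correct and follows the paper's proof essentially step for step. Reversal swaps ascents and descents, which gives the descent count; the position identity $p_{\sigma^{\mathrm{rev}}}(i)=n+1-p_\sigma(i)$ gives the sign flip; and binomial symmetry gives $d_{n,n-1-k}=c_{n,k}$. Your closing reindexing remark is exactly how the paper then derives Theorem~\ref{thm:operator-special-master-formula}.
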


\begin{proof}
Reversing the word exchanges every adjacent ascent with a descent, so
$k(\sigma^{\mathrm{rev}})=n-1-k(\sigma)$. Moreover,
$p_{\sigma^{\mathrm{rev}}}(i)=n+1-p_\sigma(i)$, so the relative order
of the endpoints of every edge is reversed and hence
$s_{\sigma^{\mathrm{rev}}}(\varepsilon)=-s_\sigma(\varepsilon)$.
Finally,
\[
    d_{n,k(\sigma^{\mathrm{rev}})}
    =
    d_{n,n-1-k(\sigma)}
    =
    \frac{(-1)^{k(\sigma)}}{n\binom{n-1}{k(\sigma)}}
    =
    c_{n,k(\sigma)}.
\]
\end{proof}

\begin{theorem}[Operator-product derivation of the acyclic master formula]
\label{thm:operator-special-master-formula}
Let $G$ be an acyclic quiver on $n$ vertices. Then
\begin{equation}
\label{eq:operator-special-equality}
    \Omega_n^{\mathrm{op}}(G)
    =
    \overline{\Omega}_n(G).
\end{equation}
In particular, if $G$ is connected, then
$\Omega_n^{\mathrm{op}}(G)
    =
    \omega_c(G;\boldsymbol\beta)$.
\end{theorem}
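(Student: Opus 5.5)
The plan is to reindex the sum defining $\Omega_n^{\mathrm{op}}(G)$ in \eqref{eq:operator-coefficient-G} by the reversal involution $\sigma\mapsto\sigma^{\mathrm{rev}}$ on $S_n$. Since reversal is a bijection of $S_n$, we may write
\[
    \Omega_n^{\mathrm{op}}(G)
    =
    \sum_{\sigma\in S_n}
    c_{n,k(\sigma)}
    \prod_{\varepsilon\in E_G}
    \beta_{\varepsilon,-s_\sigma(\varepsilon)}
\]
and treat each summand separately. Fix $\sigma$ and set $\tau:=\sigma^{\mathrm{rev}}$. Lemma~\ref{lem:operator-permutation-reversal} supplies two facts about this summand.

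First, the coefficient transforms as $c_{n,k(\sigma)}=d_{n,k(\tau)}$. Second, the edge signs satisfy $-s_\sigma(\varepsilon)=s_\tau(\varepsilon)$ for every edge $\varepsilon$, so the monomial becomes $\prod_{\varepsilon}\beta_{\varepsilon,s_\tau(\varepsilon)}$. Parallel edges cause no difficulty here, since the sign rule is applied edge by edge. Each summand is therefore exactly the $\tau$-summand of $\overline{\Omega}_n(G)$ in Definition~\ref{def:special-master-function}. Summing over $\tau\in S_n$ gives \eqref{eq:operator-special-equality}.

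For the final assertion, assume $G$ is connected and acyclic. Theorem~\ref{thm:special-master-formula}, which itself rests on Theorem~\ref{thm:general-master-formula} and Proposition~\ref{prop:general-reduces-to-special}, gives $\overline{\Omega}_n(G)=\omega_c(G;\boldsymbol\beta)$. Chaining this with the first part yields the claim.

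The argument is a pure reindexing, so there is no substantive obstacle. The one point to check is the sign bookkeeping: that the convention $W_{ij}\mapsto\beta_{\varepsilon,-}$ when $i$ precedes $j$ (Proposition~\ref{prop:operator-ordering-weight}) produces the exponent $-s_\sigma$ rather than $s_\sigma$. This is exactly what makes the word reversal line up the operator-side coefficients $c_{n,k}$ with the master coefficients $d_{n,k}$.
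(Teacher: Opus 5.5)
Your proposal is correct and follows the paper's proof essentially verbatim: apply Lemma~\ref{lem:operator-permutation-reversal} to convert each $\sigma$-summand of \eqref{eq:operator-coefficient-G} into the $\sigma^{\mathrm{rev}}$-summand of $\overline{\Omega}_n(G)$, then reindex using the fact that reversal is a bijection of $S_n$. For the connected case you invoke the acyclic master formula (Theorem~\ref{thm:special-master-formula}), exactly as the paper does.
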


\begin{proof}
By \eqref{eq:operator-coefficient-G} and
Lemma~\ref{lem:operator-permutation-reversal},
\[
    \Omega_n^{\mathrm{op}}(G)
    =
    \sum_{\sigma\in S_n}
    d_{n,k(\sigma^{\mathrm{rev}})}
    \prod_{\varepsilon\in E_G}
    \beta_{\varepsilon,s_{\sigma^{\mathrm{rev}}}(\varepsilon)}.
\]
Since $\sigma\mapsto\sigma^{\mathrm{rev}}$ is a bijection of $S_n$,
reindexing the sum gives
\[
    \Omega_n^{\mathrm{op}}(G)
    =
    \sum_{\tau\in S_n}
    d_{n,k(\tau)}
    \prod_{\varepsilon\in E_G}
    \beta_{\varepsilon,s_\tau(\varepsilon)}
    =
    \overline{\Omega}_n(G).
\]
For connected $G$, the acyclic master formula gives
$\overline{\Omega}_n(G)=\omega_c(G;\boldsymbol\beta)$.
\end{proof}

Theorem~\ref{thm:operator-special-master-formula} shows that operator-product and Hopf-algebraic routes agree at the level
of connected acyclic coefficients.

\section{Quantum Murua formula}
\label{sec:quantum-Murua-reorg}
In this section, we establish a refined quantum Murua formula for
connected acyclic quivers. Here ``refined'' refers to the extension of the quantum Murua formula
of \cite{Guo:2026xaw} from the fixed physical specializations to
independent edge parameters $\beta_\varepsilon$.
We prove, using the acyclic master formula, that the refined Murua
recursion gives the same $\omega$-function. The RB and BW
specializations are discussed at the end of the section. In the
physical interpretation of \cite{Guo:2026xaw}, edge directions encode
time ordering, so directed cycles are incompatible with such an
ordering, while tadpoles are excluded by normal ordering and multiple
edges arise naturally from the fuzzy-propagator formalism.

\subsection{Cut-subquivers and almost-rooted spider webs}
\label{subsec:almost-rooted-spider-webs-reorg}

Let $G$ be a connected acyclic quiver.

\begin{definition}
A vertex $s\in V_G$ is called a \emph{semi-root} if it has no outgoing edge.
\end{definition}

Since $G$ is acyclic, it has at least one semi-root. Fix one and denote it by $s$. Recall that $K\preceq G$ denotes a cut-subquiver of $G$. Set
\begin{equation}
\label{eq:Murua-cut-subquivers-reorg}
    \mathcal K_s(G)
    :=
    \{K\preceq G\mid \text{$s$ is isolated in $K$}\}.
\end{equation}
For $K\in\mathcal K_s(G)$, contract every connected component of $K$ and denote the resulting edge-labeled quiver by ${G}\cocont{K}$. Its edges are precisely the edges of $G$ not contained in $K$, with their labels retained. Define
\begin{equation}
\label{eq:Murua-effective-vertices-reorg}
    \nu_G(K)
    :=
    |V_{{G}\cocont{K}}|
    =
    c(K),
\end{equation}
where $c(K)$ is the number of connected components of $K$. The isolated component $\{s\}$ determines a distinguished root vertex of ${G}\cocont{K}$. Although $G$ is acyclic, ${G}\cocont{K}$ need not be acyclic and may contain tadpoles.

\begin{remark}[Relation with the notation of \cite{Guo:2026xaw}]
\label{rem:Murua-notation-dictionary-reorg}
The edge set denoted by $p$ in \cite{Guo:2026xaw} corresponds to the complementary edge set $E_G\setminus E_K$. Thus their $G\setminus p$ is our cut-subquiver $K$, while the graph obtained by contracting the connected components of $G\setminus p$ is our ${G}\cocont{K}$. In particular, the edge-labeled edge set of ${G}\cocont{K}$ is canonically identified with $p$, and their quantity $|p|$ is our $\nu_G(K)$. We emphasize that $|p|$ in \cite{Guo:2026xaw} denotes the number of vertices of the quotient
quiver $G\cocont K$, not the number of edges in
$p$. Moreover, $\omega(G\backslash p)=\omega(K)$.
Finally, the requirement that $p$ contain all edges incident to the
semi-root $s$ is equivalent to requiring that $s$ be isolated in $K$. Thus the admissible choices $p\in P_s(G)$ are in bijection with
the cut-subquivers $K\in\mathcal K_s(G)$.
\end{remark}

We next give an intrinsic graph-theoretic formulation of the ``almost-rooted'' condition used in \cite{Guo:2026xaw}.

\begin{definition}[Almost-rooted orientation]
\label{def:almost-rooted-reorg}
Let $H$ be a finite quiver with a distinguished vertex $r$.
We say that $(H,r)$ is \emph{almost rooted} if $r$ has no outgoing
edge, every vertex $v\neq r$ has at least one outgoing edge, and every
choice of one outgoing edge at each $v\neq r$ yields a rooted spanning
tree oriented toward $r$.
\end{definition}

\begin{lemma}
\label{lem:almost-rooted-equivalence-reorg}
Let $H$ be a finite quiver with a distinguished vertex $r$. Then
$(H,r)$ is almost rooted if and only if $H$ is acyclic and $r$ is its
unique sink.
\end{lemma}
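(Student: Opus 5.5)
We prove the two implications separately. The tool in both directions is the \emph{choice function}. Suppose every vertex $v\neq r$ has at least one outgoing edge. A choice function picks one outgoing edge $c(v)$ at each such $v$; it determines a map $v\mapsto\text{head}(c(v))$ on $V_H\setminus\{r\}$. The resulting spanning subgraph has exactly $|V_H|-1$ edges. It is a spanning tree oriented toward $r$ if and only if iterating this map from any vertex eventually reaches $r$, that is, if and only if the functional graph has no cycle. Since only the root lacks an outgoing edge, a cycle here means a directed cycle, possibly a tadpole, among the chosen edges.

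For ($\Leftarrow$), assume $H$ is acyclic and $r$ is its unique sink. Then $r$ has no outgoing edge, and every other vertex, not being a sink, has an outgoing edge. Take any choice function. Acyclicity of $H$ excludes directed cycles and tadpoles among the chosen edges. Starting at $v\neq r$ and following chosen edges, we get a path that never repeats a vertex, so it terminates after finitely many steps. The only vertex without a chosen outgoing edge is $r$, so the path ends at $r$. Hence the $|V_H|-1$ chosen edges connect every vertex to $r$. They therefore form a spanning tree oriented toward $r$, and $(H,r)$ is almost rooted.

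For ($\Rightarrow$), assume $(H,r)$ is almost rooted. By definition $r$ has no outgoing edge and every other vertex has one, so $r$ is the unique sink. It remains to show acyclicity, and this is the step needing care, since cycles and tadpoles must be produced inside a single choice. Suppose $H$ contains a directed cycle $C$; a tadpole counts as a cycle of length one. The vertices of $C$ all have outgoing edges along $C$, so $r\notin C$. Build a choice function by selecting, at each vertex of $C$, its outgoing edge along $C$, and an arbitrary outgoing edge at every other $v\neq r$, which exists by hypothesis. The chosen edges contain $C$, so they cannot form a tree; if $C$ is a tadpole, a rooted tree contains no loop at all. This contradicts almost-rootedness, so $H$ is acyclic.

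Beyond that construction, the only subtlety is bookkeeping with multigraphs. Parallel edges give distinct choices but do not affect the argument. The characterization of rooted spanning trees as functional graphs with no cycles holds verbatim in this setting. Finally, if $|V_H|=1$, both conditions reduce to $H$ having no tadpole.
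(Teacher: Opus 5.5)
Your proof is correct and follows the paper's argument essentially verbatim. In one direction you follow chosen outgoing edges in the acyclic graph until they terminate at the unique sink $r$, then count $|V_H|-1$ edges; in the other you force a directed cycle $C$ into a single choice to contradict almost-rootedness, and your explicit remark that $r\notin C$ is a small detail the paper leaves implicit.
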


\begin{proof}
Assume first that $H$ is acyclic with unique sink $r$. After choosing one outgoing edge at each $v\neq r$, repeatedly following the unique outgoing edge cannot produce a directed cycle and must therefore terminate at a sink, necessarily $r$. Hence every vertex is connected to $r$. The resulting spanning graph has $|V_H|-1$ edges and is therefore a rooted tree directed toward $r$.

Conversely, suppose that $(H,r)$ is almost rooted. Then $r$ is a sink and every other vertex has positive outdegree. If $H$ contained a directed cycle, choosing at each vertex of the cycle the edge lying on that cycle, and choosing arbitrarily at the remaining non-root vertices, would produce a graph containing the same directed cycle, contrary to the definition. Thus $H$ is acyclic, and $r$ is its unique sink.
\end{proof}

\begin{definition}[Spider web]
\label{def:spider-realization-reorg}
Let $Q$ be an edge-labeled quiver with distinguished vertex $s$. A \emph{spider web of $Q$ rooted at $s$} is an orientation $\Gamma$ of the underlying edge-labeled graph of $Q$ such that $(\Gamma,s)$ is almost rooted. We denote the set of such spider webs by $\mathcal W_s(Q)$.
\end{definition}

If $Q$ contains a tadpole, then $\mathcal W_s(Q)=\emptyset$. Parallel edges remain distinct throughout the construction: they may impose the same relation in the partial order of $\Gamma$, but they remain distinct in all edge weights. 

For $\Gamma\in\mathcal W_s(Q)$ and $v\neq s$, let $h_v(\Gamma)$ be the set of outgoing edges from $v$ in $\Gamma$; we call it the \emph{spider shot} at $v$.

\subsection{Refined spider factors}
\label{subsec:refined-spider-factors-reorg}

Recall that $\beta_{\varepsilon,\pm}=(\beta_\varepsilon\pm1)/2$. Let $Q$ be an edge-labeled quiver with root $s$ and let $\Gamma\in\mathcal W_s(Q)$. The orientations of $\Gamma$ and $Q$ can be compared edge by edge. For a spider shot $h=h_v(\Gamma)$, set
\begin{equation}
\label{eq:spider-reversal-set-reorg}
    F_h(Q,\Gamma)
    :=
    \{\varepsilon\in h\mid \text{$\varepsilon$ has opposite orientations in $\Gamma$ and $Q$}\}.
\end{equation}

\begin{definition}[Refined spider factor]
\label{def:refined-spider-factor-reorg}
For a spider shot $h$ and $F\subseteq h$, define
\begin{equation}
\label{eq:refined-spider-factor-reorg}
    s_{\boldsymbol\beta}(h,F)
    :=
    \prod_{\varepsilon\in h\setminus F}\beta_{\varepsilon,+}
    \prod_{\varepsilon\in F}\beta_{\varepsilon,-}
    -
    \prod_{\varepsilon\in h\setminus F}\beta_{\varepsilon,-}
    \prod_{\varepsilon\in F}\beta_{\varepsilon,+}.
\end{equation}
For $\Gamma\in\mathcal W_s(Q)$, set
\begin{equation}
\label{eq:refined-g-reorg}
    g_{\boldsymbol\beta}(Q,\Gamma)
    :=
    \prod_{v\neq s}
    s_{\boldsymbol\beta}\bigl(h_v(\Gamma),F_{h_v(\Gamma)}(Q,\Gamma)\bigr).
\end{equation}
\end{definition}

\subsection{Refined Murua formula}
\label{subsec:refined-murua-formula-reorg}

\begin{definition}
\label{def:refined-murua-expression-reorg}
For a connected acyclic quiver $G$ with semi-root $s$, define
\begin{equation}
\label{eq:refined-murua-expression-reorg}
    \mathcal M_s(G;\boldsymbol\beta)
    :=
    \sum_{K\in\mathcal K_s(G)}
    \nu_G(K)B_{\nu_G(K)-1}\,
    \omega(K;\boldsymbol\beta)
    \sum_{\Gamma\in\mathcal W_s({G}\cocont{K})}
    e(\Gamma)\,g_{\boldsymbol\beta}({G}\cocont{K},\Gamma), 
\end{equation}
where $B_n$ is the Bernoulli number with $B_1 = -\frac12$.
\end{definition}

\begin{theorem}[Refined quantum Murua formula]
\label{thm:refined-quantum-murua-reorg}
Let $G$ be a connected acyclic quiver. Then, for every semi-root $s$ and arbitrary edge parameters $\boldsymbol\beta=(\beta_\varepsilon)_{\varepsilon\in E_G}$,
\begin{equation}
\label{eq:refined-murua-theorem-reorg}
    \omega_c(G;\boldsymbol\beta)
    =
    \mathcal M_s(G;\boldsymbol\beta).
\end{equation}
In particular, the right-hand side is independent of the choice of semi-root.
\end{theorem}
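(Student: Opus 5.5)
The plan is to derive the refined Murua formula from the same operator-product mechanism that gave Theorem~\ref{thm:operator-special-master-formula}, now applied to the time derivative of the Magnusian rather than to the Magnusian itself. The physical meaning of the semi-root fixes the strategy. With the topological labeling of Section~\ref{sss:acyclic-master}, edges are $j\to i$ with $t_i>t_j$, so a vertex with no outgoing edge can be placed at the latest time. Differentiating $\Omega(t)$ with respect to the upper limit therefore isolates a vertex $s$ at time $t$. That vertex is then governed by the classical inverse-$\mathrm{dexp}$ identity
\[
\frac{d}{dt}\Omega=\sum_{m\ge0}\frac{B_m}{m!}\,\ad_\Omega^m\bigl(A(t)\bigr),
\]
with commutators taken in the $\star$-algebra. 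For a fixed connected acyclic $G$ with semi-root $s$, both sides of \eqref{eq:refined-murua-theorem-reorg} are multiaffine polynomials in $\boldsymbol\beta$; for the left side this is Corollary~\ref{cor:omega-multiaffine}. It therefore suffices to match coefficients in a realization generic enough that distinct labeled graphs give independent monomials. I would use one independent oscillator mode per vertex pair (or simply formal kernels $W_{ij}$), together with the purely directed evaluation rule \eqref{eq:pure-directed-beta-evaluation}.

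\textbf{Step 1: the left-hand side.} Expand the left-hand side using \eqref{eq:Omega-DU-expansion-reorg} in its purely directed sector, where the coefficient is $\omega_c(G;\boldsymbol\beta)$ by \eqref{eq:pure-directed-DU-coefficient}. The $t$-derivative of $I_{\mathrm{DU}}(G;t)$ pins one vertex at time $t$. Only a vertex with no outgoing edge can be pinned, since any edge $s\to v$ would force $t_v>t$. Summing over the possible pinned vertices reduces the problem to a labeled identity for each fixed semi-root $s$.

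\textbf{Step 2: the right-hand side.} Expand $\ad_\Omega^m(A(t))$ with each $\Omega$ replaced by its graph series, in which each factor carries a connected graph with coefficient $\omega_c$. The $m$ factors, together with the vertex $s$, form a spanning cut-subquiver $K\in\mathcal K_s(G)$ with $\nu_G(K)=m+1$ components. Multiplicativity gives the coefficient $\omega(K)$, as in Proposition~\ref{prop:star-exponential-convolution-coefficient}. Summing over the $m!$ assignments of the $m$ components to the slots of the nested bracket turns $B_m/m!$ into $B_{\nu-1}$. The remaining factor $\nu$ should come from the relabeling and symmetry bookkeeping when passing from the labeled expansion to the normalized coefficient of the quotient, with $s$ distinguished. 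I expect this count to need care but no new ideas.

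\textbf{Step 3: the main obstacle.} Evaluate, for a fixed quotient $Q=G\cocont K$, the nested commutator $[\Omega_{a_1},[\Omega_{a_2},\dots,[\Omega_{a_m},A(t)]\dots]]$ after Wick contraction. Each bracket $[\Omega_a,X]=\Omega_a\star X-X\star\Omega_a$ contracts component $a$ with everything already inside $X$. Under \eqref{eq:pure-directed-beta-evaluation}, the edges from $a$ into $X$ acquire $\prod\beta_{\varepsilon,\pm}$ in one ordering and the opposite signs in the other. Their difference is exactly $s_{\boldsymbol\beta}(h,F)$ of Definition~\ref{def:refined-spider-factor-reorg}, where $h$ is the set of these edges, each oriented toward the earlier-nested part, and $F$ is the set of edges whose orientation disagrees with $Q$.

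Thus each bracket nesting order determines, for every edge of $Q$, an orientation $\Gamma$ pointing inward toward $s$. The terms that survive are those in which every component $a$ has at least one edge into the part nested inside it; otherwise the commutator vanishes. I must show that the orientations arising this way are precisely the spider webs in $\mathcal W_s(Q)$, characterized by Lemma~\ref{lem:almost-rooted-equivalence-reorg}. I must also show that the number of nesting orders producing a given $\Gamma$, divided by $m!$, or by $\nu!$ after including the root, equals $e(\Gamma)$. The intended argument is that nesting orders compatible with $\Gamma$ are linear extensions of $\Gamma$, read as a poset with $s$ innermost.

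The delicate points are three. First, parallel edges and edges of $Q$ that become tadpoles must be handled: a tadpole admits no spider web and its commutator contributions cancel. Second, the normalization $\nu$ versus $\nu!$ must be matched with $e(\Gamma)=\phi(\Gamma)/\nu!$. Third, contractions between two $\Omega$ factors that are both already outside $X$ are governed by the $\star$-ordering and not by the bracket; I would check that these are absorbed into $\omega(K)$ only when internal to a component, and otherwise belong to the spider-shot of the later-nested factor.

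\textbf{Step 4: conclusion.} Comparing the two expansions coefficient by coefficient gives $\omega_c(G;\boldsymbol\beta)=\mathcal M_s(G;\boldsymbol\beta)$ for each semi-root $s$. Independence from the choice of $s$ follows at once, because the left side does not depend on $s$.

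If the operator bookkeeping in Step 3 proves too fragile, the fallback is to verify the same identity purely combinatorially. I would expand each $\omega(K)$ by the acyclic master formula as a product over components, and resum using the binomial–Bernoulli identity $\sum_{m}\binom{\nu}{m}B_m=0$ for $\nu\ge2$ against the descent coefficients $d_{n,k}$.
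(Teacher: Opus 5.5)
Your route is essentially the paper's. You use the inverse-$\mathrm{dexp}$ form of the Magnus equation, take the multilinear part of $\ad_\Omega^{\nu-1}(A)$ indexed by $K\in\mathcal K_s(G)$, and identify nesting orders with pairs $(\Gamma,\tau)$, where $\Gamma\in\mathcal W_s(G\cocont K)$ and $\tau\in\operatorname{LE}(\Gamma)$, each spider shot contributing the commutator difference $s_{\boldsymbol\beta}(h,F)$. This is Lemma~\ref{lem:adjoint-spider-correspondence-reorg}. The one structural difference is where the parameters are specialized. The paper runs the argument at the corners $\boldsymbol\eta\in\{\pm1\}^{E_G}$ (Lemma~\ref{lem:magnus-spider-decomposition-reorg}) and concludes by multiaffinity of both sides plus interpolation (Lemmas~\ref{lem:murua-multiaffine-interpolation-reorg} and~\ref{lem:murua-both-multiaffine-reorg}). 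You apply \eqref{eq:pure-directed-beta-evaluation} at general $\boldsymbol\beta$ directly. Since the edge factors are computed edgewise either way, this is fine, but then your appeal to multiaffinity plays no role.

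The step that is wrong as written is the normalization in Steps 2--3. The $m!$ assignments of components to bracket slots \emph{are} the nesting orders $\tau$. They do not contribute equally, because each fixes its own orientation $\Gamma_\tau$. So you cannot spend them in Step 2 to cancel $1/m!$ and then divide by $m!$ again in Step 3. Nor is $|\{\tau:\Gamma_\tau=\Gamma\}|/m!$ equal to $e(\Gamma)$. Since $s$ is the unique sink of a spider web, it comes first in every linear extension. Hence the nesting orders producing $\Gamma$ are all of $\operatorname{LE}(\Gamma)$, and $|\operatorname{LE}(\Gamma)|=\nu!\,e(\Gamma)$. The correct count is
\[
\frac{B_{\nu-1}}{(\nu-1)!}\,|\operatorname{LE}(\Gamma)|=\nu\,B_{\nu-1}\,e(\Gamma),
\]
so the factor $\nu$ is exactly $\nu!/(\nu-1)!$. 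It does not come from relabeling or symmetry factors, which are handled identically on both sides by orbit--stabilizer and contribute nothing extra. One smaller correction: a $K$ whose quotient has a tadpole is never produced in the nested commutator, rather than cancelling. With these fixes your argument coincides with the paper's.
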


\subsection{Proof of the refined Murua formula}
\label{subsec:proof-refined-murua-reorg}

Recall that a polynomial is \emph{multiaffine} if it has degree at most one in each variable separately. We first show that both sides of \eqref{eq:refined-murua-theorem-reorg} are multiaffine and then compare them at the corners of $\{\pm1\}^{E_G}$.

\begin{lemma}[Multiaffine interpolation]
\label{lem:murua-multiaffine-interpolation-reorg}
If $F\in\Bbbk[x_1,\ldots,x_n]$ is multiaffine, then
\begin{equation}
\label{eq:murua-multiaffine-interpolation-reorg}
    F(x_1,\ldots,x_n)
    =
    \frac1{2^n}
    \sum_{\boldsymbol\eta\in\{\pm1\}^n}
    F(\eta_1,\ldots,\eta_n)
    \prod_{i=1}^n(1+\eta_i x_i).
\end{equation}
Hence $F$ is uniquely determined by its values on $\{\pm1\}^n$.
\end{lemma}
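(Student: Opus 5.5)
The plan is to verify the identity monomial by monomial. Because both sides of \eqref{eq:murua-multiaffine-interpolation-reorg} are multiaffine, it is enough to show that they agree on the basis of squarefree monomials $x_S:=\prod_{i\in S}x_i$ with $S\subseteq[n]$. The right-hand side is linear in $F$, so it defines a linear operator $T$ on multiaffine polynomials, and the task reduces to proving $T(x_S)=x_S$ for every $S$.

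To do this, I will expand the product as $\prod_{i=1}^n(1+\eta_ix_i)=\sum_{R\subseteq[n]}\eta_R x_R$, where $\eta_R:=\prod_{i\in R}\eta_i$. Substituting $F=x_S$ gives
\[
T(x_S)=\sum_{R\subseteq[n]}x_R\,\frac1{2^n}\sum_{\boldsymbol\eta\in\{\pm1\}^n}\eta_S\,\eta_R .
\]
Since $\eta_i^2=1$, we have $\eta_S\eta_R=\eta_{S\triangle R}$. By the character orthogonality relation $2^{-n}\sum_{\boldsymbol\eta}\eta_T=\delta_{T,\emptyset}$, which follows by factorizing the sum over coordinates and using $\sum_{\eta_i=\pm1}\eta_i=0$, only the term $R=S$ survives. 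Hence $T(x_S)=x_S$.

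For uniqueness: if two multiaffine polynomials agree on $\{\pm1\}^n$, then applying the formula to each expresses both through the same corner values, so they are equal. There is no real obstacle in this argument. The only point needing care is that the monomials $x_S$ span the multiaffine polynomials, and this holds by definition. An alternative route is induction on $n$: write $F=a+bx_n$ with $a,b$ multiaffine in the remaining variables, and use the one-variable identity $a+bx=\tfrac12[(a+b)(1+x)+(a-b)(1-x)]$.
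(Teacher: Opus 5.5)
Your proof is correct, but your main argument takes a different route from the paper. The paper simply applies the one-variable identity $f(x)=\frac{1+x}{2}f(1)+\frac{1-x}{2}f(-1)$ successively in $x_1,\ldots,x_n$. This works because $F$ is affine in each variable, with coefficients that are polynomials in the others. That is exactly the inductive alternative you sketch in your last sentence.

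Your main argument instead uses that the right-hand side is linear in $F$. You check it on the squarefree monomials $x_S$ via the orthogonality relation $2^{-n}\sum_{\boldsymbol\eta}\eta_T=\delta_{T,\emptyset}$, which amounts to Fourier inversion on the cube $\{\pm1\}^n$.

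The paper's route is shorter and basis-free, and it shows that the formula is just the tensor product of one-variable interpolation. Yours requires fixing the monomial basis, but it yields something extra explicitly: the coefficient of $x_S$ in $F$ is $2^{-n}\sum_{\boldsymbol\eta}F(\boldsymbol\eta)\,\eta_S$. Both arguments use only that $2$ is invertible in $\Bbbk$.
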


\begin{proof}
Apply successively the one-variable identity
$f(x)=\frac{1+x}{2}f(1)+\frac{1-x}{2}f(-1)$.
\end{proof}

\begin{lemma}
\label{lem:murua-both-multiaffine-reorg}
Both $\omega_c(G;\boldsymbol\beta)$ and $\mathcal M_s(G;\boldsymbol\beta)$ are multiaffine in the edge parameters.
\end{lemma}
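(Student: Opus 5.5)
For $\omega_c(G;\boldsymbol\beta)$ I will simply cite Corollary~\ref{cor:omega-multiaffine}. That corollary was derived from the edge differential equation $2\partial_{\beta_\varepsilon}\omega_c(G)=\omega_c(G_{\setminus\varepsilon})$ of Proposition~\ref{prop:omega-PDE}, whose right-hand side does not involve $\beta_\varepsilon$. As an independent check one can also read multiaffinity off the general master formula (Theorem~\ref{thm:general-master-formula}): every monomial $M_G(\mathcal B)$ is a product over \emph{distinct} edges of the affine factors $\beta_{\varepsilon,\pm}$, so each edge variable appears to degree at most one.

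For $\mathcal M_s(G;\boldsymbol\beta)$ I will go term by term through Definition~\ref{def:refined-murua-expression-reorg}. Fix $K\in\mathcal K_s(G)$ and $\Gamma\in\mathcal W_s(G\cocont K)$.

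First, the factor $\omega(K;\boldsymbol\beta)$. By multiplicativity (Proposition~\ref{prop:convolution identities}) it equals the product of $\omega_c$ over the connected components of $K$. Each factor is multiaffine in the edges of its own component, and the components have pairwise disjoint edge sets. Hence $\omega(K)$ is multiaffine and depends only on the variables $\beta_\varepsilon$ with $\varepsilon\in E_K$.

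Second, the factors $e(\Gamma)$ and $g_{\boldsymbol\beta}(G\cocont K,\Gamma)$. Since $\Gamma$ is an almost-rooted orientation, it is acyclic and tadpole-free by Lemma~\ref{lem:almost-rooted-equivalence-reorg}, so $e(\Gamma)$ is a constant $\phi(\Gamma)/|V_\Gamma|!$ with no $\beta$-dependence. The spider shots $h_v(\Gamma)$ for $v\neq s$ partition the edge set $E_{G\cocont K}=E_G\setminus E_K$, because every edge has exactly one tail and the root $s$ has no outgoing edge. Each refined spider factor $s_{\boldsymbol\beta}(h,F)$ is a difference of two products over the distinct edges of $h$ of affine factors $\beta_{\varepsilon,\pm}$, so it is multiaffine in the variables of $h$ alone. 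It follows that $g_{\boldsymbol\beta}$ is multiaffine in the variables of $E_G\setminus E_K$, each of which occurs in exactly one spider factor.

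Combining the two pieces, each summand of $\mathcal M_s(G;\boldsymbol\beta)$ is a product of multiaffine polynomials in the disjoint variable sets $E_K$ and $E_G\setminus E_K$. Such a product is again multiaffine. The remaining coefficients $\nu_G(K)B_{\nu_G(K)-1}$ are constants, and a finite linear combination of multiaffine polynomials is multiaffine, so $\mathcal M_s(G;\boldsymbol\beta)$ is multiaffine.

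The one point requiring care is the claim that the spider shots partition $E_G\setminus E_K$ with no edge appearing twice, including when there are parallel edges. This holds because edges are labeled and stay distinct in $G\cocont K$, and each edge of $\Gamma$ has a single tail $v\neq s$.
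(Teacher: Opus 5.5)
Your proof is correct and follows the paper's argument. The spider shots partition $E_G\setminus E_K$, so the parameters split into two disjoint sets. Those in $E_K$ enter only through $\omega(K;\boldsymbol\beta)$, and each one in $E_G\setminus E_K$ enters exactly once in $g_{\boldsymbol\beta}$. A product of multiaffine polynomials in disjoint variables is multiaffine.

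There are only cosmetic differences. The paper reads off multiaffinity of $\omega_c$ from the acyclic master formula rather than from Corollary~\ref{cor:omega-multiaffine}. It also leaves implicit your correct remark that $e(\Gamma)$ is $\boldsymbol\beta$-independent because $\Gamma$ is tadpole-free.
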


\begin{proof}
The acyclic master formula makes the first statement immediate. Fix $K\in\mathcal K_s(G)$ and $\Gamma\in\mathcal W_s({G}\cocont{K})$. Since $s$ is the sink of $\Gamma$, the spider shots $h_v(\Gamma)$, $v\neq s$, partition $E_{{G}\cocont{K}}$, so every parameter attached to an edge outside $K$ occurs exactly once in $g_{\boldsymbol\beta}({G}\cocont{K},\Gamma)$. The remaining parameters are precisely those of $E_K$ and occur only in $\omega(K;\boldsymbol\beta)$. Both factors are multiaffine; hence so is their product and therefore $\mathcal M_s(G;\boldsymbol\beta)$.
\end{proof}

Fix $\boldsymbol\eta=(\eta_\varepsilon)_{\varepsilon\in E_G}\in\{\pm1\}^{E_G}$. For a spider shot $h$, write
\begin{equation}
\label{eq:corner-edge-sets-reorg}
    E_+(h):=\{\varepsilon\in h\mid\eta_\varepsilon=+1\},
    \qquad
    E_-(h):=\{\varepsilon\in h\mid\eta_\varepsilon=-1\}.
\end{equation}

\begin{lemma}[Corner values of the spider factor]
\label{lem:murua-spider-corners-reorg}
At $\boldsymbol\beta=\boldsymbol\eta$,
\begin{equation}
\label{eq:murua-spider-corners-reorg}
    s_{\boldsymbol\eta}(h,F)
    =
    \begin{cases}
        (-1)^{|E_-(h)|},&F=E_-(h),\\
        -(-1)^{|E_-(h)|},&F=E_+(h),\\
        0,&\text{otherwise}.
    \end{cases}
\end{equation}
\end{lemma}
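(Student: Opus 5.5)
This is a direct evaluation of the two products in \eqref{eq:refined-spider-factor-reorg} at the corner $\boldsymbol\beta=\boldsymbol\eta$. At a corner each factor $\beta_{\varepsilon,\pm}$ is $0$ or $\pm1$:
\[
\eta_\varepsilon=+1:\quad \beta_{\varepsilon,+}=1,\ \ \beta_{\varepsilon,-}=0;
\qquad
\eta_\varepsilon=-1:\quad \beta_{\varepsilon,+}=0,\ \ \beta_{\varepsilon,-}=-1.
\]
So each monomial is a product of entries from $\{0,1,-1\}$. It is nonzero exactly when every edge is assigned its ``surviving'' sign: $+$ for edges in $E_+(h)$ and $-$ for edges in $E_-(h)$.

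First consider the monomial $\prod_{\varepsilon\in h\setminus F}\beta_{\varepsilon,+}\prod_{\varepsilon\in F}\beta_{\varepsilon,-}$. Edges of $h\setminus F$ receive $+$ and edges of $F$ receive $-$. The monomial is therefore nonzero if and only if $h\setminus F\subseteq E_+(h)$ and $F\subseteq E_-(h)$. Since $h=E_+(h)\sqcup E_-(h)$, this is equivalent to $F=E_-(h)$. In that case its value is $\prod_{\varepsilon\in E_-(h)}(-1)=(-1)^{|E_-(h)|}$.

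Next consider the monomial $\prod_{\varepsilon\in h\setminus F}\beta_{\varepsilon,-}\prod_{\varepsilon\in F}\beta_{\varepsilon,+}$, which has the roles of $+$ and $-$ exchanged. The same reasoning shows it is nonzero if and only if $F=E_+(h)$. Its value is again $(-1)^{|E_-(h)|}$, because the edges with a $-$ label are exactly $h\setminus F=E_-(h)$.

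Now subtract, as in \eqref{eq:refined-spider-factor-reorg}. If $F$ is neither $E_-(h)$ nor $E_+(h)$, both monomials vanish and $s_{\boldsymbol\eta}(h,F)=0$. If $F=E_-(h)$, only the first monomial survives, giving $(-1)^{|E_-(h)|}$. If $F=E_+(h)$, only the second survives, giving $-(-1)^{|E_-(h)|}$.

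The one delicate point is the case $E_+(h)=E_-(h)$. This forces $h=\emptyset$, which cannot occur, because every $v\neq s$ in a spider web has a nonempty spider shot by Definition~\ref{def:almost-rooted-reorg}. Hence, for nonempty $h$, the three cases in \eqref{eq:murua-spider-corners-reorg} are mutually exclusive and the formula is well defined.
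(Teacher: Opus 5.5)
Your proposal is correct and follows the paper's proof: evaluate $\beta_{\varepsilon,\pm}$ at the corners, then observe that the first product survives only for $F=E_-(h)$ and the second only for $F=E_+(h)$, both with value $(-1)^{|E_-(h)|}$. Your extra check that spider shots are nonempty, so that the first two cases cannot coincide, is a point the paper leaves implicit.
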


\begin{proof}
If $\eta_\varepsilon=+1$, then $\beta_{\varepsilon,+}=1$ and $\beta_{\varepsilon,-}=0$; if $\eta_\varepsilon=-1$, then $\beta_{\varepsilon,+}=0$ and $\beta_{\varepsilon,-}=-1$. Hence the first product in \eqref{eq:refined-spider-factor-reorg} is nonzero exactly for $F=E_-(h)$ and equals $(-1)^{|E_-(h)|}$, while the second is nonzero exactly for $F=E_+(h)$ and has the same value before the overall minus sign.
\end{proof}

Since $F_h(Q,\Gamma)$ records the edges whose orientations in $\Gamma$
and $Q$ are opposite, the two terms in
\eqref{eq:refined-spider-factor-reorg} are precisely the two local
operator orderings occurring in the corresponding commutator.  We
make this correspondence explicit before proving the corner identity.

\begin{lemma}[Nested-adjoint decomposition by spider webs]
\label{lem:adjoint-spider-correspondence-reorg}
Fix $K\in\mathcal K_s(G)$ and put
$Q:={G}\cocont{K}$, $r:=|V_Q|=\nu_G(K)$.
Write
$K=\{s\}\sqcup C_1\sqcup\cdots\sqcup C_{r-1}$.
In the multilinear part of
$\ad_\Omega^{r-1}(A)$ in which $A$ supplies the root $s$ and the
$r-1$ copies of $\Omega$ supply the connected contributions of
$C_1,\ldots,C_{r-1}$, the contribution of the remaining edges can be
reorganized as
\begin{equation}
\label{eq:adjoint-spider-correspondence-reorg}
\sum_{\Gamma\in\mathcal W_s(Q)}
|\operatorname{LE}(\Gamma)|\,
g_{\boldsymbol\eta}(Q,\Gamma).
\end{equation}
More precisely, the terms are indexed by pairs
$(\Gamma,\tau)$ with
$\Gamma\in\mathcal W_s(Q)$ and
$\tau\in\operatorname{LE}(\Gamma)$, and for fixed $\Gamma$ the
edge factor is $g_{\boldsymbol\eta}(Q,\Gamma)$, independently of
$\tau$.
\end{lemma}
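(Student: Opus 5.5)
This lemma is largely bookkeeping, and I would prove it by induction on the depth of nesting in $\ad_\Omega^{r-1}(A)$, where each bracket is expanded into its two operator orderings. Write
\[
\ad_\Omega^{r-1}(A)=[\Omega,[\Omega,\ldots,[\Omega,A]\ldots]].
\]
Fully expanding the nested commutator gives a signed sum of $2^{r-1}$ operator words. In each word the positions of the $r$ factors (the root $A$ and the $r-1$ connected blocks $C_1,\ldots,C_{r-1}$) form a total order $\pi$ on $V_Q$. A word arising from a given sequence of choices (left or right at each bracket) is exactly one in which each newly added block is placed at one end of the current word. The first step is therefore to reindex the multilinear part by assigning the blocks $C_a$ to the successive $\Omega$ slots. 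Taking all $(r-1)!$ assignments together with all left/right choices yields pairs (assignment, word).

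By the exponentiated Wick rule, Lemma~\ref{lem:exponentiated-Wick}, each edge of $Q$ joining two blocks receives $W_{ij}$ or $W_{ji}$ according to the relative order of its endpoint blocks in the word. In the purely directed sector this is evaluated by \eqref{eq:pure-directed-beta-evaluation}, so the weight at the corner $\boldsymbol\eta$ is a product over edges of $\beta_{\varepsilon,\pm}$.

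The key construction is the map from a term to a spider web. For each block $v\neq s$, let $h_v$ be the set of edges joining $v$ to blocks inserted \emph{earlier} in the nesting, i.e.\ closer to $A$. Orient these edges out of $v$. This defines an orientation $\Gamma$ of $Q$. The insertion order $\tau$ (root first, then the blocks in nesting order) reversed is a linear extension of $\Gamma$, so $\Gamma$ is acyclic with sink $s$. Every $v\neq s$ has an outgoing edge precisely when the term is nonzero; this is where connectivity of the inserted block to earlier blocks enters. Terms in which a block has no edge to earlier blocks cancel in the commutator, because both orderings give the same weight. By Lemma~\ref{lem:almost-rooted-equivalence-reorg}, $\Gamma\in\mathcal W_s(Q)$.

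Conversely, a pair $(\Gamma,\tau)$ with $\tau\in\operatorname{LE}(\Gamma)$ reconstructs the insertion order, and all edges at $v$ toward earlier blocks are exactly $h_v(\Gamma)$. Hence nonvanishing terms are indexed by pairs $(\Gamma,\tau)$. For fixed $\Gamma$ and $\tau$, the left/right choice at block $v$ affects only the Wick factors of the edges in $h_v$: placing $v$ on the left versus the right flips the relative order of $v$ against all earlier blocks simultaneously. Summing the two choices with the commutator sign gives, edge by edge, $\beta_{\varepsilon,+}$ or $\beta_{\varepsilon,-}$ depending on whether the orientation of $\varepsilon$ in $\Gamma$ agrees with that in $Q$. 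This is exactly $s_{\boldsymbol\beta}(h_v,F_{h_v}(Q,\Gamma))$, up to one global convention sign to be fixed by checking the case $r=2$. The choices at different blocks are independent and their edge sets $h_v$ are disjoint, so the product is $g_{\boldsymbol\eta}(Q,\Gamma)$, independent of $\tau$. Summing over $\tau$ then gives the factor $|\operatorname{LE}(\Gamma)|$.

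I expect the main obstacle to be the sign and orientation conventions. The relevant pieces are which of $W_{ij}$ and $W_{ji}$ corresponds to the left placement, the reversal relation \eqref{eq:RB-ordering-redification}, and matching the first product of \eqref{eq:refined-spider-factor-reorg} with the "$v$ to the left" term. I would settle these on the one-edge and two-edge examples and then argue that the edgewise factorization propagates the conventions. A secondary point is checking that the $(r-1)!$ block assignments are in bijection with $\tau$ and are not overcounted by the symmetry factor.
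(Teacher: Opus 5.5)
Your proposal is correct and follows the paper's proof essentially step for step. The shared steps are: the nesting order $\tau$ with the root first; orienting each edge from the later-inserted to the earlier-inserted block; cancellation in the commutator when a block has no edge to earlier blocks; Lemma~\ref{lem:almost-rooted-equivalence-reorg}; reconstruction of the term from $(\Gamma,\tau)$; and the fact that the left/right choice at $v$ affects only the edges of $h_v(\Gamma)$, so the commutator difference is $s_{\boldsymbol\beta}(h_v,F_{h_v}(Q,\Gamma))$.

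There are three minor points. First, no global sign needs fixing: with $\ad_\Omega X=\Omega\star X-X\star\Omega$, the term with $v$ on the left gives $\prod_{\varepsilon\in h\setminus F}\beta_{\varepsilon,+}\prod_{\varepsilon\in F}\beta_{\varepsilon,-}$ directly by \eqref{eq:operator-beta-evaluation}. Second, the case where $Q$ has a tadpole must be handled separately, as the paper does, because otherwise your $\Gamma$ is not an orientation of all of $Q$. Such a tadpole is an edge outside $K$ lying inside a single block; it cannot arise as a cross-contraction, so the contribution vanishes, just as $\mathcal W_s(Q)$ is empty. Third, in the paper's convention it is $\tau$ itself, not its reversal, that lies in $\operatorname{LE}(\Gamma)$; this does not affect the count.
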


\begin{proof}
If $Q$ contains a tadpole, then an edge of $G\setminus K$ has both
endpoints in the same connected component of $K$. Such an edge cannot
be produced as an edge between two distinct factors in the above
multilinear part of the nested adjoint. Hence this contribution
vanishes, in agreement with
$\mathcal W_s(Q)=\emptyset$. We may therefore assume that $Q$ is
tadpole-free.

A choice of the order in which the factors associated with
$C_1,\ldots,C_{r-1}$ occupy the successive adjoint positions gives a
total order $\tau$ of the vertices of $Q$, with the root $s$ first.
Orient each edge of $Q$ from the vertex inserted later to the vertex
inserted earlier, and denote the resulting orientation by $\Gamma$.
It is acyclic and $s$ is a sink.

Suppose that, when a vertex $v\neq s$ is inserted, no edge joins it
to a previously inserted vertex. Then the prescribed edge monomial
occurs with the same coefficient in the two terms of the commutator
and cancels. Thus a nonzero contribution requires every
$v\neq s$ to have at least one outgoing edge in $\Gamma$. Therefore
$s$ is the unique sink of $\Gamma$, and
Lemma~\ref{lem:almost-rooted-equivalence-reorg} gives
$\Gamma\in\mathcal W_s(Q)$. By construction, the nesting order
$\tau$ is a linear extension of $\Gamma$.

Conversely, let $\Gamma\in\mathcal W_s(Q)$ and
$\tau\in\operatorname{LE}(\Gamma)$. Since $s$ is the unique sink, it
is first in $\tau$. Moreover, every edge in the spider shot
$h_v(\Gamma)$ joins $v$ to a vertex preceding it in $\tau$.
Successively inserting the component factors in the order prescribed
by $\tau$ therefore reconstructs the corresponding nested-adjoint
term and induces precisely the orientation $\Gamma$. Hence the
nesting orders producing a fixed $\Gamma$ are exactly
$\operatorname{LE}(\Gamma)$.

It remains to compute the edge factor. Fix $v\neq s$, and set
$h=h_v(\Gamma)$ and $F=F_h(Q,\Gamma)$. In the first term of the local
commutator, the factor associated with $v$ lies to the left of the
previously inserted factors; in the second term it lies to their
right. By the operator edge rule \eqref{eq:operator-beta-evaluation},
their contributions are respectively
\[
\prod_{\varepsilon\in h\setminus F}\beta_{\varepsilon,+}
\prod_{\varepsilon\in F}\beta_{\varepsilon,-},
\qquad
\prod_{\varepsilon\in h\setminus F}\beta_{\varepsilon,-}
\prod_{\varepsilon\in F}\beta_{\varepsilon,+}.
\]
Their difference is therefore
$s_{\boldsymbol\beta}(h,F)$ by
\eqref{eq:refined-spider-factor-reorg}. At
$\boldsymbol\beta=\boldsymbol\eta$ this becomes
$s_{\boldsymbol\eta}(h,F)$, whose sign is given by
Lemma~\ref{lem:murua-spider-corners-reorg}.

Finally, the spider shots $h_v(\Gamma)$, $v\neq s$, partition $E_Q$.
Multiplying the local commutator factors therefore gives
\[
\prod_{v\neq s}
s_{\boldsymbol\eta}
\bigl(h_v(\Gamma),F_{h_v(\Gamma)}(Q,\Gamma)\bigr)
=
g_{\boldsymbol\eta}(Q,\Gamma)
\]
by \eqref{eq:refined-g-reorg}. This factor depends only on $\Gamma$.
Thus a fixed $\Gamma$ occurs with multiplicity
$|\operatorname{LE}(\Gamma)|$, proving
\eqref{eq:adjoint-spider-correspondence-reorg}.
\end{proof}

\begin{lemma}[Spider-web expansion for sign specializations]
\label{lem:magnus-spider-decomposition-reorg}
For every $\boldsymbol\eta\in\{\pm1\}^{E_G}$,
\begin{equation}
\label{eq:murua-corner-identity-reorg}
\omega_c(G;\boldsymbol\eta)
=
\mathcal M_s(G;\boldsymbol\eta).
\end{equation}
\end{lemma}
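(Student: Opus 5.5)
The plan is to derive the corner identity from the Magnus differential equation
\[
    \frac{d}{dt}\Omega(t)
    =
    \sum_{m\geq0}\frac{B_m}{m!}\,\ad_{\Omega(t)}^{m}\bigl(A(t)\bigr),
\]
read through the graph expansions established above. At a corner $\boldsymbol\beta=\boldsymbol\eta\in\{\pm1\}^{E_G}$, the directed graph $G$ is the RB graph with $\eta_\varepsilon=1$ red and $\eta_\varepsilon=-1$ blue. By the purely directed sector of the change of basis, \eqref{eq:pure-directed-DU-coefficient} and \eqref{eq:pure-directed-beta-evaluation}, the coefficient of $G$ in the Magnusian is $\omega_c(G;\boldsymbol\eta)$. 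Concretely, every Wick contraction of an edge $\varepsilon:j\to i$ is evaluated by $W_{ji}\mapsto\beta_{\varepsilon,+}$ and $W_{ij}\mapsto\beta_{\varepsilon,-}$, which at the corner is $1,0$ or $0,-1$. The same operator edge rule is the one used in Lemma~\ref{lem:adjoint-spider-correspondence-reorg}. So both sides of the Magnus equation can be expanded in the same labeled directed graph basis, and I compare coefficients there.

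\emph{Left-hand side.} I work with fixed vertex labels and the time integrals $\int\prod_{\varepsilon}\theta_\varepsilon\prod_v A(t_v)\,dt$. Differentiating in the upper limit $t$ sets the time of one vertex equal to $t$. Since an edge $j\to i$ carries $\theta(t_i-t_j)$, only a vertex with no outgoing edge can sit at the top time, that is, a semi-root. Fixing the semi-root $s$ and extracting from $\frac{d}{dt}\Omega$ the labeled term in which $s$ carries time $t$ and the other vertices are integrated gives exactly $\omega_c(G;\boldsymbol\eta)$. The symmetry factor $1/\sigma(G)$ is removed by working with labeled graphs, using the same orbit--stabilizer argument as in Proposition~\ref{prop:Dyson-blue-graph-expansion}.

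\emph{Right-hand side.} In $\ad_\Omega^{r-1}(A(t))$, the operator $A(t)$ supplies the vertex $s$ and each copy of $\Omega$ supplies one connected graph $C_a$. The edges internal to the $C_a$ form a cut-subquiver $K$ in which $s$ is isolated, so $K\in\mathcal K_s(G)$ and $r=\nu_G(K)$. By multiplicativity the internal coefficient is $\prod_a\omega_c(C_a)=\omega(K;\boldsymbol\eta)$. The remaining edges, which join distinct factors, are the edges of $Q=G\cocont K$, and Lemma~\ref{lem:adjoint-spider-correspondence-reorg} evaluates their contribution as $\sum_{\Gamma\in\mathcal W_s(Q)}|\operatorname{LE}(\Gamma)|\,g_{\boldsymbol\eta}(Q,\Gamma)$. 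For the combinatorics: the $r-1$ labeled components can be assigned to the $r-1$ copies of $\Omega$ in $(r-1)!$ ways, which cancels the $1/(r-1)!$ in $B_{r-1}/(r-1)!$. Writing $|\operatorname{LE}(\Gamma)|=r!\,e(\Gamma)$ then gives
\[
    B_{r-1}\,|\operatorname{LE}(\Gamma)|
    =
    r\,B_{r-1}\,(r-1)!\,e(\Gamma).
\]
I still have to check that the leftover $(r-1)!$ is absorbed by the labeled-versus-ordered convention in Lemma~\ref{lem:adjoint-spider-correspondence-reorg}, namely that the nesting orders $\tau$ there already enumerate component orderings. If it is, the result is exactly $\nu_G(K)B_{\nu_G(K)-1}\,\omega(K;\boldsymbol\eta)\,e(\Gamma)\,g_{\boldsymbol\eta}(Q,\Gamma)$, and summing over $K$ and $\Gamma$ yields $\mathcal M_s(G;\boldsymbol\eta)$.

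The main obstacle is this normalization step: making sure that labeled graphs, the $1/m!$ factors from the exponentiated Wick rule for parallel edges, the symmetry factors, and the ordering of the $\Omega$ factors all match, so that the two labeled coefficients agree exactly rather than up to an overall factor. I would first do the bookkeeping with all vertices and edges distinguishable, so that no $1/\sigma$ or $1/m_{ij}!$ appears. I would then check the normalization on the one-edge graph, where $r=2$, $K=\bullet^2$, $B_1=-\frac12$ and $\omega_c=-\frac12$ at $\boldsymbol\eta=\mathbf1$, and on the two-vertex banana with parallel edges. A secondary point is to confirm that quotient graphs with tadpoles contribute nothing on both sides: on the Murua side $\mathcal W_s(Q)=\emptyset$, and on the Magnus side such an edge would have to join a factor to itself, as explained in Lemma~\ref{lem:adjoint-spider-correspondence-reorg}.
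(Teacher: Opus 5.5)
Your route is the same as the paper's. You expand $\dot\Omega=\sum_{q}\frac{B_q}{q!}\ad_\Omega^{q}(A)$ and let $A(t)$ supply $s$ while the $r-1$ copies of $\Omega$ supply the components of $K\in\mathcal K_s(G)$. You then collect $\omega(K;\boldsymbol\eta)$ by multiplicativity and evaluate the cross edges with Lemma~\ref{lem:adjoint-spider-correspondence-reorg}. Your left-hand-side remark, that $d/dt$ only survives at a vertex without outgoing edges, fills in a detail the paper leaves implicit.

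The step you flag is indeed the problem, and as written it is wrong: you cannot let ``the $(r-1)!$ assignments of components to the copies of $\Omega$'' cancel the $1/(r-1)!$. Those assignments are precisely the nesting orders $\tau$ of Lemma~\ref{lem:adjoint-spider-correspondence-reorg}, i.e.\ the total orders of $V_Q$ with $s$ first. That lemma already sums over all of them, discards those that cancel inside a commutator, and groups the rest as $\sum_{\Gamma}|\operatorname{LE}(\Gamma)|\,g_{\boldsymbol\eta}(Q,\Gamma)$. Counting them a second time overcounts by $(r-1)!$.

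So your suspicion is right, and the fix is to drop the cancellation. The factor $B_{r-1}/(r-1)!$ stays, and $\frac{B_{r-1}}{(r-1)!}\,|\operatorname{LE}(\Gamma)|=\frac{B_{r-1}}{(r-1)!}\,r!\,e(\Gamma)=r\,B_{r-1}\,e(\Gamma)$, exactly as in the paper.

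Your proposed sanity checks would not catch this, because the one-edge graph and the banana both have $r=2$, where $(r-1)!=1$. The cherry $2\to1\leftarrow3$ with $s=1$ does catch it:
\begin{itemize}
\item only $K=\bullet^3$ lies in $\mathcal K_s(G)$, so $r=3$;
\item the only spider web is $\Gamma=G$, with $e(\Gamma)=\frac13$ and $g_{\boldsymbol\eta}=1$;
\item the correct count gives $3B_2\cdot\frac13=\frac16=\omega_c(G)$, whereas your provisional count gives $\frac13$.
\end{itemize}

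The rest of the label bookkeeping closes with no further factors. On the left, choosing which of the $n$ labels sits at time $t$ turns the $1/n!$ of the labeled expansion of $\Omega_n$ into $1/(n-1)!$. On the right, distributing the other $n-1$ labels over the ordered $\Omega$ factors gives the same $1/(n-1)!$ times a sum over ordered block assignments, i.e.\ over the $\tau$'s. The parallel-edge factorials enter identically on both sides: internal ones through the $\Omega$ factors, cross ones through the Wick exponential.
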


\begin{proof}
By Theorem~\ref{thm:operator-special-master-formula},
$\omega_c(G;\boldsymbol\eta)$ is the operator-product coefficient
obtained from the Magnus expansion after the specialization
$\boldsymbol\beta=\boldsymbol\eta$. At this specialization, for each edge $\varepsilon$ exactly one of
$\beta_{\varepsilon,+}$ and $\beta_{\varepsilon,-}$ is nonzero, so
the local rule \eqref{eq:operator-beta-evaluation} selects one of the
two ordered contractions, with a factor $-1$ precisely when
$\eta_\varepsilon=-1$. We evaluate this coefficient using the Magnus
differential equation
\begin{equation}
\label{eq:magnus-recursion-corner-reorg}
\dot\Omega
=
\frac{\ad_\Omega}{e^{\ad_\Omega}-1}(A)
=
\sum_{q\geq0}\frac{B_q}{q!}\ad_\Omega^q(A).
\end{equation}

Fix $K\in\mathcal K_s(G)$ and put
$r:=\nu_G(K)=|V_{{G}\cocont{K}}|$. Since $s$ is isolated in $K$, write
$K=\{s\}\sqcup C_1\sqcup\cdots\sqcup C_{r-1}$.
The corresponding contribution comes from $q=r-1$: the factor $A$
supplies the root, while the $r-1$ copies of $\Omega$ supply the
connected contributions of $C_1,\ldots,C_{r-1}$. The Bernoulli
coefficient is $B_{r-1}/(r-1)!$.

By Lemma~\ref{lem:adjoint-spider-correspondence-reorg}, the contribution
of the remaining edges is
\[
\sum_{\Gamma\in\mathcal W_s({G}\cocont{K})}
|\operatorname{LE}(\Gamma)|\,
g_{\boldsymbol\eta}({G}\cocont{K},\Gamma).
\]
Since every such $\Gamma$ is acyclic on $r$ vertices,
$|\operatorname{LE}(\Gamma)|=r!e(\Gamma)$, and hence
\[
\frac{B_{r-1}}{(r-1)!}
|\operatorname{LE}(\Gamma)|
=
rB_{r-1}e(\Gamma).
\]

The edge parameters belonging to $K$ are contained in the connected
factors $\omega_c(C_a;\boldsymbol\eta)$. The isolated vertex $s$
contributes $1$, so multiplicativity gives their product as
$\omega(K;\boldsymbol\eta)$. Therefore the contribution associated
with $K$ is
\[
rB_{r-1}\,\omega(K;\boldsymbol\eta)
\sum_{\Gamma\in\mathcal W_s({G}\cocont{K})}
e(\Gamma)\,
g_{\boldsymbol\eta}({G}\cocont{K},\Gamma).
\]
Summing over $K\in\mathcal K_s(G)$ gives
\eqref{eq:murua-corner-identity-reorg}.
\end{proof}

\begin{proof}[Proof of Theorem~\ref{thm:refined-quantum-murua-reorg}]
By Lemma~\ref{lem:murua-both-multiaffine-reorg}, both sides are multiaffine, and by Lemma~\ref{lem:magnus-spider-decomposition-reorg} they agree at every $\boldsymbol\eta\in\{\pm1\}^{E_G}$. Lemma~\ref{lem:murua-multiaffine-interpolation-reorg} therefore proves the identity for arbitrary $\boldsymbol\beta$. Independence of $s$ follows from the left-hand side.
\end{proof}

\subsection{RB and BW specializations}
\label{subsec:Murua-specializations-reorg}

We now compare the refined formula with the two quantum Murua formulas of \cite{Guo:2026xaw}.

\subsubsection{The RB specialization}

Let $G$ be an RB graph admitting a semi-root $s$, and specialize
\begin{equation}
\label{eq:Murua-RB-corner-reorg}
    \beta_\varepsilon
    =
    \begin{cases}
        1,&\varepsilon\in E_r(G),\\
        -1,&\varepsilon\in E_b(G).
    \end{cases}
\end{equation}
For $K\in\mathcal K_s(G)$, put $Q:={G}\cocont{K}$ and let $\mathcal F_s(Q)\subseteq\mathcal W_s(Q)$ consist of the spider webs $\Gamma$ such that, for every $v\neq s$,
\begin{equation}
\label{eq:RB-admissible-shot-reorg}
    F_{h_v(\Gamma)}(Q,\Gamma)
    =
    h_v(\Gamma)\cap E_b(Q)
    \quad\text{or}\quad
    F_{h_v(\Gamma)}(Q,\Gamma)
    =
    h_v(\Gamma)\cap E_r(Q).
\end{equation}
For $\Gamma\in\mathcal F_s(Q)$, let $\ell(\Gamma)$ be the number of vertices $v\neq s$ for which the second alternative occurs. Lemma~\ref{lem:murua-spider-corners-reorg} gives
\begin{equation}
\label{eq:RB-spider-factor-reorg}
    g_{\boldsymbol\beta}(Q,\Gamma)
    =
    \begin{cases}
        (-1)^{b(Q)+\ell(\Gamma)},&\Gamma\in\mathcal F_s(Q),\\
        0,&\Gamma\notin\mathcal F_s(Q),
    \end{cases}
\end{equation}
where $b(Q)$ is the number of blue edges of $Q$.

\begin{proposition}[RB quantum Murua formula]
\label{prop:Murua-RB-specialization-reorg}
Let $G$ be a connected acyclic RB graph and let $s$ be a semi-root. Then
\begin{equation}
\label{eq:Murua-RB-result-reorg}
    \omega_{RB,c}(G)
    =
    \sum_{K\in\mathcal K_s(G)}
    \sum_{\Gamma\in\mathcal F_s({G}\cocont{K})}
    (-1)^{\ell(\Gamma)}
    e(\Gamma)\,
    \nu_G(K)B_{\nu_G(K)-1}\,
    \omega_{RB}(K).
\end{equation}
\end{proposition}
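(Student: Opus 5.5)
The plan is to specialize the refined quantum Murua formula, Theorem~\ref{thm:refined-quantum-murua-reorg}, at the RB corner \eqref{eq:Murua-RB-corner-reorg}. Then we multiply both sides by $(-1)^{b(G)}$ and track the signs.

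First, since $G$ is connected, Definition~\ref{def:RB-e-omega-reorg} gives
\[
\omega_{RB,c}(G)=(-1)^{b(G)}\,\omega_c(G;\boldsymbol\beta)\big|_{\mathrm{RB}}=(-1)^{b(G)}\mathcal M_s(G;\boldsymbol\beta)\big|_{\mathrm{RB}}.
\]

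Second, we expand $\mathcal M_s$ using Definition~\ref{def:refined-murua-expression-reorg}. For each $K\in\mathcal K_s(G)$ the summand contains $\omega(K;\boldsymbol\beta)$. Its RB specialization equals $(-1)^{b(K)}\omega_{RB}(K)$, again by Definition~\ref{def:RB-e-omega-reorg}. This holds even though $K$ is disconnected, because $\omega_{RB}$ is defined for arbitrary RB graphs. The factor $e(\Gamma)$ is independent of $\boldsymbol\beta$: spider webs are acyclic, so they have no tadpoles. For the spider factor we substitute \eqref{eq:RB-spider-factor-reorg}, which gives $g_{\boldsymbol\beta}(Q,\Gamma)=(-1)^{b(Q)+\ell(\Gamma)}$ on $\mathcal F_s(Q)$ and $0$ otherwise.

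Let me verify that formula from Lemma~\ref{lem:murua-spider-corners-reorg}. At the RB corner, $E_-(h)=h\cap E_b(Q)$ and $E_+(h)=h\cap E_r(Q)$. Each shot therefore contributes $(-1)^{|h\cap E_b|}$ when the first alternative holds, and minus that when the second holds. The shots $h_v(\Gamma)$ partition $E_Q$, so multiplying over $v\neq s$ gives total sign $(-1)^{b(Q)+\ell(\Gamma)}$. There is one degenerate case: a shot $h$ with $h\cap E_b=h\cap E_r$, which would require $h=\emptyset$. This cannot occur, since every $v\neq s$ has positive outdegree in a spider web. So the two alternatives are exclusive and $\ell(\Gamma)$ is well defined.

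Third, we collect the signs. Since $E_G=E_K\sqcup E_Q$, we have $b(G)=b(K)+b(Q)$ by \eqref{eq:RB-blue-splitting-reorg}. The total sign is therefore
\[
(-1)^{b(G)}(-1)^{b(K)}(-1)^{b(Q)+\ell(\Gamma)}=(-1)^{\ell(\Gamma)}.
\]
Substituting this into the sum yields \eqref{eq:Murua-RB-result-reorg}.

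The only real obstacle is this sign bookkeeping: confirming that the $(-1)^{b(Q)}$ from the spider factor and the $(-1)^{b(K)}$ from $\omega_{RB}(K)$ together exactly cancel the normalization $(-1)^{b(G)}$. Everything else is a direct substitution.
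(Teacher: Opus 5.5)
Your proposal is correct and is essentially the paper's proof: you specialize Theorem~\ref{thm:refined-quantum-murua-reorg} at the RB corner, insert the spider-factor value $(-1)^{b(G\cocont K)+\ell(\Gamma)}$, and cancel the blue-edge signs using $b(G)=b(K)+b(G\cocont K)$. Your extra remark that spider shots are nonempty, so that the two alternatives in \eqref{eq:RB-admissible-shot-reorg} are mutually exclusive and $\ell(\Gamma)$ is well defined, is a small point the paper leaves implicit.
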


\begin{proof}
Under \eqref{eq:Murua-RB-corner-reorg}, Theorem~\ref{thm:refined-quantum-murua-reorg} and \eqref{eq:RB-spider-factor-reorg} give the factor $(-1)^{b({G}\cocont{K})+\ell(\Gamma)}$. Moreover,
\[
    \omega_{RB,c}(G)=(-1)^{b(G)}\omega_c(G;\boldsymbol\beta),
    \qquad
    \omega_{RB}(K)=(-1)^{b(K)}\omega(K;\boldsymbol\beta).
\]
Since the labeled edge sets of $K$ and ${G}\cocont{K}$ partition $E_G$,
$b(G)=b(K)+b({G}\cocont{K})$. The blue-edge signs therefore cancel, leaving $(-1)^{\ell(\Gamma)}$.
\end{proof}

 Our red and blue labels are interchanged relative to those of \cite{Guo:2026xaw}; see Remark~\ref{rem:RB-physical-convention}. Since the factor $(-1)^{b(G)}$ is built into the definition of $\omega_{RB}$, the resulting formula is nevertheless unchanged. Thus \eqref{eq:Murua-RB-result-reorg} is exactly \cite[Eq.~(3.4)]{Guo:2026xaw}.

\subsubsection{The BW specialization}

At the physical BW specialization $(\boldsymbol\beta,\boldsymbol\gamma)=(\mathbf0,\mathbf1)$, directed-core factorization shows that the undirected edges contribute only trivial dressing factors. It therefore suffices to derive the Murua formula for the directed core.

At $\boldsymbol\beta=\mathbf0$, the refined spider factor becomes
\begin{equation}
\label{eq:spider-zero-reorg}
    s_{\mathbf0}(h,F)
    =
    \frac{(-1)^{|F|}-(-1)^{|h|-|F|}}{2^{|h|}}
    =
    \begin{cases}
        0,&|h|\text{ even},\\
        \displaystyle\frac{(-1)^{|F|}}{4^k},&|h|=2k+1.
    \end{cases}
\end{equation}
Thus only spider webs of odd outdegree at every non-root vertex contribute. For such a spider web, write
\begin{equation}
\label{eq:m-Gamma-BW-reorg}
    \deg_\Gamma^+(v)=2k_v+1\quad(v\neq s),
    \qquad
    m(\Gamma):=\sum_{v\neq s}k_v,
\end{equation}
and define
\begin{equation}
\label{eq:ell-Gamma-BW-reorg}
    \ell(\Gamma;Q)
    :=
    \sum_{v\neq s}
    \bigl|F_{h_v(\Gamma)}(Q,\Gamma)\bigr|.
\end{equation}
Thus $\ell(\Gamma;Q)$ is exactly the number of edges whose directions in $\Gamma$ are opposite to their directions in $Q$. Hence
\begin{equation}
\label{eq:g-zero-BW-reorg}
    g_{\mathbf0}(Q,\Gamma)
    =
    \frac{(-1)^{\ell(\Gamma;Q)}}{4^{m(\Gamma)}}.
\end{equation}

\begin{proposition}[Undressed BW quantum Murua formula]
\label{prop:undressed-BW-Murua-reorg}
Let $G_D$ be a connected acyclic quiver and let $s$ be a semi-root of $G_D$. Then
\begin{equation}
\label{eq:undressed-BW-Murua-reorg}
    \omega_c(G_D;\mathbf0)
    =
    \sum_{K\in\mathcal K_s(G_D)}
    \nu_{G_D}(K)B_{\nu_{G_D}(K)-1}\,
    \omega(K;\mathbf0)
    \left(
        \sum_{\substack{\Gamma\in\mathcal W_s\bigl((G_D)\cocont K\bigr)\\
        \deg_\Gamma^+(v)\text{ odd for all }v\neq s}}
        \frac{(-1)^{\ell\bigl(\Gamma;(G_D)\cocont K\bigr)}}{4^{m(\Gamma)}}\,e(\Gamma)
    \right).
\end{equation}
\end{proposition}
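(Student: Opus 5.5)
This is a direct specialization of Theorem~\ref{thm:refined-quantum-murua-reorg} at $\boldsymbol\beta=\mathbf0$. Apply that theorem to the connected acyclic quiver $G_D$ with semi-root $s$ and arbitrary $\boldsymbol\beta$, then set every $\beta_\varepsilon=0$. Both sides are polynomials in the edge parameters, so the specialization is legitimate and gives
\[
\omega_c(G_D;\mathbf0)=\mathcal M_s(G_D;\mathbf0).
\]
The factors $\nu_{G_D}(K)B_{\nu_{G_D}(K)-1}$ and $e(\Gamma)$ in \eqref{eq:refined-murua-expression-reorg} do not depend on $\boldsymbol\beta$. The factor $\omega(K;\boldsymbol\beta)$ only involves parameters of edges in $E_K$, so it simply becomes $\omega(K;\mathbf0)$.

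It remains to evaluate $g_{\mathbf0}(Q,\Gamma)$ for $Q=(G_D)\cocont K$ and $\Gamma\in\mathcal W_s(Q)$. At $\beta_\varepsilon=0$ we have $\beta_{\varepsilon,\pm}=\pm\frac12$. Substituting into \eqref{eq:refined-spider-factor-reorg} gives, for each spider shot $h$ with reversal set $F$,
\[
s_{\mathbf0}(h,F)=2^{-|h|}\bigl((-1)^{|F|}-(-1)^{|h|-|F|}\bigr).
\]
If $|h|$ is even, the two signs agree and $s_{\mathbf0}(h,F)=0$. If $|h|=2k+1$, the difference is $2(-1)^{|F|}$, so $s_{\mathbf0}(h,F)=(-1)^{|F|}/4^{k}$; this is \eqref{eq:spider-zero-reorg}.

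Since the spider shots $h_v(\Gamma)$ for $v\neq s$ partition $E_Q$, and $|h_v(\Gamma)|=\deg^+_\Gamma(v)$, the product \eqref{eq:refined-g-reorg} vanishes unless every non-root outdegree is odd. When all outdegrees are odd, the product equals $(-1)^{\sum_v|F_{h_v}|}4^{-\sum_v k_v}=(-1)^{\ell(\Gamma;Q)}4^{-m(\Gamma)}$, which is \eqref{eq:g-zero-BW-reorg}. Here $\ell(\Gamma;Q)$ counts each edge whose direction in $\Gamma$ is opposite to its direction in $Q$, and because the shots partition $E_Q$, each such edge is counted exactly once.

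Substituting this into $\mathcal M_s(G_D;\mathbf0)$ and restricting the inner sum to spider webs with odd outdegree at every $v\neq s$ yields \eqref{eq:undressed-BW-Murua-reorg}. There is no substantive obstacle. The only point needing care is to keep the signs straight: $\beta_{\varepsilon,-}=-\tfrac12$, so the reversed edges in $F$ pick up the sign $(-1)^{|F|}$ in the first product of \eqref{eq:refined-spider-factor-reorg}, while the complementary edges pick it up in the second.
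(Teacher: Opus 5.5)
Your proposal is correct and follows the paper's own argument exactly. You specialize the refined quantum Murua formula (Theorem~\ref{thm:refined-quantum-murua-reorg}) at $\boldsymbol\beta=\mathbf0$, then evaluate $s_{\mathbf0}(h,F)=2^{-|h|}\bigl((-1)^{|F|}-(-1)^{|h|-|F|}\bigr)$; this vanishes on even spider shots, and because the shots partition $E_Q$, the product over the remaining shots is $(-1)^{\ell(\Gamma;Q)}4^{-m(\Gamma)}$.
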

\begin{proof}
This is Theorem~\ref{thm:refined-quantum-murua-reorg} specialized to $\boldsymbol\beta=\mathbf0$, together with \eqref{eq:spider-zero-reorg} and \eqref{eq:g-zero-BW-reorg}.
\end{proof}

We next compare \eqref{eq:undressed-BW-Murua-reorg} with the BW quantum Murua formula of \cite{Guo:2026xaw}. The only additional ingredient is the difference between our mathematical normalization and the physical BW normalization used there.

\begin{lemma}[Loop counting]
\label{lem:Murua-BW-loop-counting-reorg}
Let $K\in\mathcal K_s(G_D)$ and let
$\Gamma\in\mathcal W_s\bigl((G_D)\cocont K\bigr)$ contribute nontrivially to \eqref{eq:undressed-BW-Murua-reorg}. Then
\begin{equation}
\label{eq:loop-Gamma-reorg}
    L(\Gamma)=2m(\Gamma).
\end{equation}
If
$K=C_0\sqcup\cdots\sqcup C_{r-1}$,
where $C_0=\{s\}$ and $r=\nu_{G_D}(K)$, then
\begin{equation}
\label{eq:loop-decomposition-BW-reorg}
    L(G_D)
    =
    2m(\Gamma)+\sum_{a=0}^{r-1}L(C_a).
\end{equation}
\end{lemma}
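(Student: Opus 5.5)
Both identities are vertex/edge counts, so the plan is to count edges and vertices of $\Gamma$ and of the pieces of $G_D$ separately. Here $L(\Gamma)=|E_\Gamma|-|V_\Gamma|+1$ is the loop number of the (connected) spider web. First I would check that $\Gamma$ is connected. By Lemma~\ref{lem:almost-rooted-equivalence-reorg}, $\Gamma$ is acyclic with unique sink $s$, so every vertex has a directed path to $s$ and $\Gamma$ is connected. Its vertex set is $V_{(G_D)\cocont K}$, which has $r=\nu_{G_D}(K)$ elements. Its edge set is $E_{G_D}\setminus E_K$.

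Next, the spider shots $h_v(\Gamma)$, $v\neq s$, partition $E_\Gamma$, because $s$ has no outgoing edges. Nontrivial contribution to \eqref{eq:undressed-BW-Murua-reorg} forces $|h_v(\Gamma)|=\deg^+_\Gamma(v)=2k_v+1$ by \eqref{eq:spider-zero-reorg}. Hence
\[
|E_\Gamma|=\sum_{v\neq s}(2k_v+1)=2m(\Gamma)+(r-1),
\]
and so $L(\Gamma)=2m(\Gamma)+r-1-r+1=2m(\Gamma)$, which is \eqref{eq:loop-Gamma-reorg}.

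For \eqref{eq:loop-decomposition-BW-reorg}, I would write $K=C_0\sqcup\cdots\sqcup C_{r-1}$ with each $C_a$ a connected component, so that $L(C_a)=|E_{C_a}|-|V_{C_a}|+1$. Summing over $a$ gives
\[
\sum_a L(C_a)=|E_K|-|V_{G_D}|+r,
\]
since the components of $K$ span all of $V_{G_D}$. Adding $L(\Gamma)=|E_{G_D}|-|E_K|-r+1$ gives $|E_{G_D}|-|V_{G_D}|+1=L(G_D)$, where $G_D$ is connected by hypothesis. Combining this with the first part yields the claim.

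The only point needing care is the connectedness of $\Gamma$ and the requirement that the components of $K$ really cover all vertices, including the isolated root $C_0=\{s\}$ with $L(C_0)=0$. The cut-subquiver convention (spanning, isolated vertices retained) guarantees both, so there is no genuine obstacle.
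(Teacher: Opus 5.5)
Your proof is correct and follows the paper's argument. The first identity comes from the same out-degree count $|E_\Gamma|=\sum_{v\neq s}(2k_v+1)=2m(\Gamma)+r-1$. The second is the paper's additivity $L(G_D)=L\bigl((G_D)\cocont K\bigr)+\sum_a L(C_a)$ under contraction of the components of $K$, which you verify by explicit vertex and edge counting; you also check that $\Gamma$ is connected and that the components of $K$ span $V_{G_D}$, which the paper leaves implicit.
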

\begin{proof}
Since every edge of $\Gamma$ has a unique tail at a non-root vertex,
\[
    |E_\Gamma|
    =
    \sum_{v\neq s}\deg_\Gamma^+(v)
    =
    \sum_{v\neq s}(2k_v+1)
    =
    2m(\Gamma)+r-1.
\]
As $|V_\Gamma|=r$, this gives $L(\Gamma)=2m(\Gamma)$. Contracting the connected components of $K$ gives $(G_D)\cocont K$, and hence
\[
    L(G_D)
    =
    L\bigl((G_D)\cocont K\bigr)
    +
    \sum_{a=0}^{r-1}L(C_a).
\]
Since $(G_D)\cocont K$ and $\Gamma$ have the same underlying graph,
$L\bigl((G_D)\cocont K\bigr)=L(\Gamma)$, which proves
\eqref{eq:loop-decomposition-BW-reorg}.
\end{proof}

As explained in Remark \ref{rem:RB-BW-physics-convention-2}, our mathematical convention omits the factors of $\pm i$ appearing in the physical ordered contractions of \cite{Guo:2026xaw}. After translating the red-blue labels, this does not change the physical RB coefficients, but it produces a loop-dependent sign in the BW basis. For a purely directed graph $H$ with even loop number,
\begin{equation}
\label{eq:BW-directed-convention-Murua-reorg}
    \omega_c(H;\mathbf0)
    =
    (-1)^{L(H)/2}
    \omega_{\mathrm{BW},c}^{\mathrm{phys}}(H).
\end{equation}
Consider a nonzero term of \eqref{eq:undressed-BW-Murua-reorg} and write
$K=C_0\sqcup\cdots\sqcup C_{r-1}$. Since
\[
    \omega(K;\mathbf0)
    =
    \prod_{a=0}^{r-1}\omega_c(C_a;\mathbf0),
\]
nonvanishing implies that every $L(C_a)$ is even.  
Applying
\eqref{eq:BW-directed-convention-Murua-reorg} to $G_D$ and to the
connected components of $K$, we find that the relative normalization
factor between the left-hand side and the recursive factor
$\omega(K;\mathbf0)$ is
\[
    (-1)^{\frac12\sum_{a=0}^{r-1}L(C_a)-\frac12L(G_D)}
    =
    (-1)^{-m(\Gamma)}
    =
    (-1)^{m(\Gamma)},
\]
where the first equality follows from
Lemma~\ref{lem:Murua-BW-loop-counting-reorg}, and the second uses
$m(\Gamma)\in\mathbb Z$. Thus the additional normalization factor $(-1)^{m(\Gamma)}$ gives
\begin{equation}
\label{eq:BW-Murua-convention-factor-reorg}
    (-1)^{m(\Gamma)}
    \frac{(-1)^{\ell\bigl(\Gamma;(G_D)\cocont K\bigr)}}{4^{m(\Gamma)}}
    =
    \left(-\frac14\right)^{m(\Gamma)}
    (-1)^{\ell\bigl(\Gamma;(G_D)\cocont K\bigr)}.
\end{equation}
Therefore \eqref{eq:undressed-BW-Murua-reorg}, expressed in the physical normalization of \cite{Guo:2026xaw}, becomes
\begin{equation}
\label{eq:physical-BW-Murua-derived-reorg}
    \omega_{\mathrm{BW},c}^{\mathrm{phys}}(G_D)
    =
    \sum_{K\in\mathcal K_s(G_D)}
    \nu_{G_D}(K)B_{\nu_{G_D}(K)-1}\,
    \omega_{\mathrm{BW}}^{\mathrm{phys}}(K)
    \left(
        \sum_{\substack{\Gamma\in\mathcal W_s\bigl((G_D)\cocont K\bigr)\\
        \deg_\Gamma^+(v)\text{ odd for all }v\neq s}}
        \left(-\frac14\right)^{m(\Gamma)}
        (-1)^{\ell\bigl(\Gamma;(G_D)\cocont K\bigr)}
        e(\Gamma)
    \right).
\end{equation}
Under the notation dictionary of
Remark~\ref{rem:Murua-notation-dictionary-reorg}, namely
\[
    p=E_{G_D}\setminus E_K,
    \qquad
    G_D\setminus p=K,
    \qquad
    |p|=\nu_{G_D}(K),
\]
\eqref{eq:physical-BW-Murua-derived-reorg} is precisely the BW quantum Murua formula \cite[Eq.~(3.9)]{Guo:2026xaw}.

\begin{remark}
For a BW graph with undirected edges, no additional Murua recursion is needed: directed-core factorization shows that the undirected edges contribute only through their dressing factors, which equal $1$ at $(\boldsymbol\beta,\boldsymbol\gamma)=(\mathbf0,\mathbf1)$. Thus the full BW quantum Murua formula is determined by the directed-core formula above.
\end{remark}

\section{Sum rules}
\label{sec:sum-rules}


We study sums of the $e$- and $\omega$-functions over all possible orientations of a fixed undirected graph 
and relate them to 
the Tutte polynomial and the chromatic polynomial. Throughout this
section, the undirected graphs whose orientations are summed over are
tadpole-free and have nonempty vertex sets.

Let $\widehat G=(V,E)$ be such a graph. We denote by
$\operatorname{Ori}(\widehat G)$ the set of quivers obtained by
choosing independently one of the two orientations of every edge.
Parallel edges are regarded as distinct, so
$|\operatorname{Ori}(\widehat G)|=2^{|E|}$.

For an undirected edge $u\in E$, let $\varepsilon$ and
$\bar\varepsilon$ denote its two orientations and attach independent
parameters $\beta_\varepsilon$ and $\beta_{\bar\varepsilon}$. Define
\begin{equation}
\label{eq:beta-average}
    \overline\beta_u
    :=
    \frac{\beta_\varepsilon+\beta_{\bar\varepsilon}}{2}.
\end{equation}
This definition is independent of the choice of $\varepsilon$. We
write $\boldsymbol\beta$ for the collection of all directional
parameters and
$\overline{\boldsymbol\beta}
=(\overline\beta_u)_{u\in E}$.
 
For an undirected graph $\widehat G=(V,E)$, let
$\operatorname{Aut}(\widehat G)$ denote the group of pairs of
bijections
\[
    \phi_V:V\longrightarrow V,
    \qquad
    \phi_E:E\longrightarrow E,
\]
such that, if an edge $u\in E$ joins $i$ and $j$, then
$\phi_E(u)$ joins $\phi_V(i)$ and $\phi_V(j)$. We set
$\sigma(\widehat G):=|\operatorname{Aut}(\widehat G)|$.
For $G\in\operatorname{Ori}(\widehat G)$, let
$\operatorname{Aut}(G)$ denote the group of pairs of bijections of
the vertex and edge sets preserving the source and target of every
edge, and set $\sigma(G):=|\operatorname{Aut}(G)|$.

Let $R$ be a commutative ring and let
$F:\operatorname{Ori}(\widehat G)\to R$ be a function that is constant
on isomorphism classes of quivers. The group
$\operatorname{Aut}(\widehat G)$ acts on
$\operatorname{Ori}(\widehat G)$ by transporting edge orientations.
The stabilizer of $G\in\operatorname{Ori}(\widehat G)$ under this
action is precisely $\operatorname{Aut}(G)$. Hence the
orbit--stabilizer theorem gives
\begin{equation}
\label{eq:orientation-orbit-reduction}
    \sum_{[G]\in\operatorname{Ori}(\widehat G)/\cong}
    \frac{F(G)}{\sigma(G)}
    =
    \frac{1}{\sigma(\widehat G)}
    \sum_{G\in\operatorname{Ori}(\widehat G)}F(G),
\end{equation}
where $[G]$ denotes the isomorphism class of $G$. For trees, this
specializes to the orbit--stabilizer argument used in
\cite[Eqs.~(4.19)--(4.21)]{Kim:2024svw}.

\subsection{\texorpdfstring{$e$}{e}-sum rule}
\label{subsec:e-sum-rule}

\begin{proposition}
\label{prop:e-orientation-sum}
For every tadpole-free undirected graph $\widehat G$,
$\sum_{G\in\operatorname{Ori}(\widehat G)}e(G)=1$.
\end{proposition}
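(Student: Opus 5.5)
The plan is a direct counting argument via linear extensions. Since $\widehat G$ is tadpole-free, every $G\in\operatorname{Ori}(\widehat G)$ is tadpole-free, so $e(G)=e_0(G)$. Hence $e(G)=\phi(G)/n!$ if $G$ is acyclic and $e(G)=0$ otherwise, where $n=|V|$. The sum therefore equals $\frac{1}{n!}\sum_{G\text{ acyclic}}|\operatorname{LE}(G)|$. We must show that the pairs $(G,\pi)$, with $G\in\operatorname{Ori}(\widehat G)$ acyclic and $\pi\in\operatorname{LE}(G)$, are in bijection with the $n!$ total orderings $\pi$ of $V$.

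The bijection is $(G,\pi)\mapsto\pi$. Its inverse sends a total ordering $\pi$ of $V$ to the unique orientation $G_\pi$ in which every edge of $\widehat G$ joining $i$ and $j$ is oriented compatibly with $\pi$. With the convention that an edge $j\to i$ forces $i$ to precede $j$, the edge is oriented from the later endpoint to the earlier one. This is done edge by edge, so parallel edges are treated independently but all receive the same orientation. The endpoints of each edge are distinct because there are no tadpoles, so the orientation is well defined. The quiver $G_\pi$ is acyclic, since $\pi$ is a linear extension of it.

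The two maps are mutually inverse. If $\pi\in\operatorname{LE}(G)$, each edge of $G$ is oriented consistently with $\pi$, and each edge of $\widehat G$ has exactly one such orientation, so $G=G_\pi$. Conversely, $\pi\in\operatorname{LE}(G_\pi)$ holds by construction. Hence $\sum_{G}\phi(G)=n!$, and the identity follows.

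An alternative route is induction on $|E|$ via the two-vertex $e$-contraction \eqref{eq:e-two-vertex-contraction}. Summing over the two orientations of a chosen edge gives $e(G)+e(G_{\bar\varepsilon})=e(G_{\setminus\varepsilon})$. This reduces $\sum_{\operatorname{Ori}(\widehat G)}e$ to $\sum_{\operatorname{Ori}(\widehat G\setminus u)}e$, and the base case is $e(\bullet^n)=1$ by \eqref{eq:e-edgeless}. This is even shorter, and I would likely present it as the main proof, with the bijection as intuition. The only point needing care is that parallel edges and directed cycles are handled correctly by that rule, which Proposition~\ref{prop:e-multi-vertex-contraction} already guarantees for arbitrary quivers. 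No real obstacle is expected.
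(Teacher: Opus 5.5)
Your bijective argument is exactly the paper's proof. The paper writes the sum as $\frac{1}{n!}\sum_{\sigma\in S_n}\#\{G\in\operatorname{Ori}(\widehat G)\mid\sigma\in\operatorname{LE}(G)\}$ and observes that a total order fixes a unique compatible orientation of every edge, which is your map $\pi\mapsto G_\pi$.

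Your inductive alternative is also correct. Pair the two orientations of a fixed non-tadpole edge $u$, apply \eqref{eq:e-two-vertex-contraction} with background $H\in\operatorname{Ori}(\widehat G_{\setminus u})$, and reduce edge by edge to $e(\bullet^n)=1$. However, Proposition~\ref{prop:e-multi-vertex-contraction} is itself proved by partitioning linear extensions according to the relative order of the marked vertices. So this route is the same counting argument applied one edge at a time, not a genuinely different proof.
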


\begin{proof}
Let $n=|V|$. For an acyclic orientation $G$,
$e(G)=|\operatorname{LE}(G)|/n!$, while $e(G)=0$ for a cyclic
orientation. Hence
\[
    \sum_{G\in\operatorname{Ori}(\widehat G)}e(G)
    =
    \frac1{n!}
    \sum_{\sigma\in S_n}
    \#\{G\in\operatorname{Ori}(\widehat G)
        \mid \sigma\in\operatorname{LE}(G)\}.
\]
For every total order $\sigma$, each edge of $\widehat G$ has a unique
orientation compatible with $\sigma$. Thus the cardinality in the
last expression is one, and the sum equals $1$.
\end{proof}

Applying \eqref{eq:orientation-orbit-reduction} to $F=e$ and using
Proposition~\ref{prop:e-orientation-sum}, we obtain
\begin{equation}
\label{eq:e-orientation-sum-isomorphism}
    \sum_{[G]\in\operatorname{Ori}(\widehat G)/\cong}
    \frac{e(G)}{\sigma(G)}
    =
    \frac{1}{\sigma(\widehat G)}.
\end{equation}
For trees, this recovers the second equality in
\cite[Eq.~(4.19)]{Kim:2024svw}.

\subsection{\texorpdfstring{$\omega$}{omega}-sum rule}
\label{subsec:omega-sum-rule}

Define
\begin{equation}
\label{eq:S-omega-definition}
    S_\omega(\widehat G;\boldsymbol\beta)
    :=
    \sum_{G\in\operatorname{Ori}(\widehat G)}
    \omega_c(G;\boldsymbol\beta),
\end{equation}
where each oriented edge uses the parameter associated with its chosen
orientation. If $\widehat G$ is disconnected, then
$S_\omega(\widehat G;\boldsymbol\beta)=0$.

\begin{theorem}[Orientation sum for the $\omega$-function]
\label{thm:omega-orientation-sum}
Let $\widehat G=(V,E)$ be a tadpole-free undirected graph with
$V\neq\emptyset$. Then
\begin{equation}
\label{eq:omega-sum-connected-subgraphs}
    S_\omega(\widehat G;\boldsymbol\beta)
    =
    \sum_{\substack{F\subseteq E\\(V,F)\ {\rm connected}}}
    (-1)^{|F|}
    \prod_{u\in E\setminus F}\overline\beta_u.
\end{equation}
Thus the orientation sum depends on the two directional parameters of
each edge only through their average \eqref{eq:beta-average}.
\end{theorem}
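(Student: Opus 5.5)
The plan is to use the general master formula, Theorem~\ref{thm:general-master-formula}, written through the polynomial $\Phi_G(q)$ of Lemma~\ref{lem:Phi-Omega}. Summing over orientations turns each edge factor into an average, and what remains is recognized as a subgraph expansion. Since $\Omega_n(G)=\frac{d}{dq}\Phi_G(q)|_{q=0}$ for every orientation $G$, and this operation is linear, we have
\[
S_\omega(\widehat G;\boldsymbol\beta)=\left.\frac{d}{dq}\right|_{q=0}\sum_{G\in\operatorname{Ori}(\widehat G)}\Phi_G(q).
\]
This holds also when $\widehat G$ is disconnected, because $\Omega_n$ vanishes on disconnected quivers by Lemma~\ref{lem:Omega-disconnected}, consistent with $\omega_c=0$. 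We therefore work with $\Psi(q):=\sum_{G}\Phi_G(q)$ at positive integers $q$ and extend polynomially.

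First, fix $f:V\to[q]$ and interchange the sums. The orientations are independent edge by edge, so the sum over orientations factors over $u\in E$. For an edge $u$ joining $i\neq j$ (there are no tadpoles), its two orientations $\varepsilon:j\to i$ and $\bar\varepsilon:i\to j$ contribute the following.
\begin{itemize}
\item If $f(i)\neq f(j)$, exactly one orientation gets sign $+$ and the other sign $-$. The contribution is $\beta_{\varepsilon,+}+\beta_{\bar\varepsilon,-}=\overline\beta_u$, and likewise in the symmetric case.
\item If $f(i)=f(j)$, both orientations get sign $-$. The contribution is $\beta_{\varepsilon,-}+\beta_{\bar\varepsilon,-}=\overline\beta_u-1$.
\end{itemize}
This is the same computation as in Proposition~\ref{prop:Omega-two-vertex-contraction}. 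Hence
\[
\Psi(q)=\sum_{f:V\to[q]}\prod_{u\in E}\bigl(\overline\beta_u-\mathbf 1[f \text{ constant on } u]\bigr),
\]
which already shows that only the averages $\overline\beta_u$ enter.

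Next, expand the product over a subset $F\subseteq E$ of edges for which the term $-\mathbf 1[\cdot]$ is chosen:
\[
\Psi(q)=\sum_{F\subseteq E}(-1)^{|F|}\prod_{u\in E\setminus F}\overline\beta_u\cdot\#\{f:V\to[q] \text{ constant on each component of } (V,F)\}.
\]
The count equals $q^{c(V,F)}$, where $c(V,F)$ is the number of connected components of $(V,F)$. Thus $\Psi(q)=\sum_F(-1)^{|F|}\prod_{u\notin F}\overline\beta_u\,q^{c(V,F)}$, which is (up to normalization) the multivariate polynomial $\mathcal P_{\widehat G}(q;\overline{\boldsymbol\beta})$ mentioned in the introduction. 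Since $V\neq\emptyset$, we have $c\geq1$. Differentiating at $q=0$ therefore keeps exactly the terms with $c(V,F)=1$, that is, the spanning connected $F$, which gives \eqref{eq:omega-sum-connected-subgraphs}.

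The main point needing care is the justification of the interchange: identifying $\Psi$ with a polynomial and differentiating termwise. Each $\Phi_G$ is a polynomial agreeing with its defining sum at positive integers, so equality of the two polynomial expressions at all positive integers $q$ suffices. The edge-by-edge factorization is routine, and parallel edges cause no issue since they are treated as distinct edges with their own parameters.
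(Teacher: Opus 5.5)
Your proof is correct and follows the paper's argument: apply the general master formula, average each edge over its two orientations to get $\overline\beta_u$ or $\overline\beta_u-1$, expand over spanning edge sets $F$, and extract the linear coefficient in $q$. The only difference is bookkeeping. You sum over maps $f:V\to[q]$ via $\Phi_G(q)$ rather than over ordered set partitions, so the inner sum is the direct count $q^{\kappa(F)}$ and the paper's Stirling-number step is not needed. Along the way you obtain exactly, not merely up to normalization, the polynomial $\mathcal P_{\widehat G}(q;\overline{\boldsymbol\beta})$ of \eqref{eq:P-G-q-beta}.
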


\begin{proof}
If $\widehat G$ is disconnected, both sides vanish, so assume that it
is connected. For every orientation $G$ of $\widehat G$, the general
master formula (Theorem \ref{thm:general-master-formula}) gives
\[
    \omega_c(G)
    =
    \sum_{\mathcal B\in\operatorname{OSP}(V)}
    c_{|\mathcal B|}
    \prod_{\varepsilon\in E_G}
    \beta_{\varepsilon,s_{\mathcal B}(\varepsilon)},
    \qquad
    c_\ell=\frac{(-1)^{\ell-1}}{\ell}.
\]
Fix $\mathcal B$ and an undirected edge $u$ with orientations
$\varepsilon$ and $\bar\varepsilon$. If the endpoints of $u$ lie in
different blocks, one orientation contributes a ``$+$'' factor and the
other a ``$-$'' factor, so their sum is
$\beta_{\varepsilon,+}+\beta_{\bar\varepsilon,-}$ or
$\beta_{\varepsilon,-}+\beta_{\bar\varepsilon,+}$, both equal to
$\overline\beta_u$. If the endpoints lie in the same block, both
orientations contribute a ``$-$'' factor and their sum is
$\overline\beta_u-1$. Hence
\begin{equation}
\label{eq:omega-sum-OSP}
    S_\omega(\widehat G;\boldsymbol\beta)
    =
    \sum_{\mathcal B\in\operatorname{OSP}(V)}
    c_{|\mathcal B|}
    \prod_{u\in E_{\neq}(\mathcal B)}\overline\beta_u
    \prod_{u\in E_{=}(\mathcal B)}
    (\overline\beta_u-1),
\end{equation}
where $E_{=}(\mathcal B)$ consists of the edges whose endpoints lie in
the same block and
$E_{\neq}(\mathcal B)=E\setminus E_{=}(\mathcal B)$.

Expanding the last product and interchanging the sums gives
\[
    S_\omega(\widehat G;\boldsymbol\beta)
    =
    \sum_{F\subseteq E}
    (-1)^{|F|}
    \prod_{u\in E\setminus F}\overline\beta_u
    \sum_{\substack{\mathcal B\in\operatorname{OSP}(V)\\
                    F\subseteq E_{=}(\mathcal B)}}
    c_{|\mathcal B|}.
\]
Let $\kappa(F)$ be the number of connected components of $(V,F)$.
The ordered set partitions in the inner sum are precisely the ordered
set partitions of these $\kappa(F)$ components. Since 
\[
    q^{\kappa(F)}
    =
    \sum_{\ell=1}^{\kappa(F)}
    \ell!\,S(\kappa(F),\ell)\binom q\ell
\]
where $S(m,\ell)$ denotes the Stirling number of the second kind, and
$\left.\frac{d}{dq}\binom q\ell\right|_{q=0}=c_\ell$,
the inner sum equals
$\left.\frac{d}{dq}q^{\kappa(F)}\right|_{q=0}$, which is $1$ if
$\kappa(F)=1$ and $0$ otherwise. Thus only connected spanning
subgraphs survive, proving \eqref{eq:omega-sum-connected-subgraphs}.
\end{proof}

If the directional parameter assignment is invariant under
$\operatorname{Aut}(\widehat G)$, then
$G\mapsto\omega_c(G;\boldsymbol\beta)$ is constant on the
$\operatorname{Aut}(\widehat G)$-orbits in
$\operatorname{Ori}(\widehat G)$. Therefore
\eqref{eq:orientation-orbit-reduction} and
Theorem~\ref{thm:omega-orientation-sum} give
\begin{equation}
\label{eq:omega-orientation-sum-isomorphism}
    \sum_{[G]\in\operatorname{Ori}(\widehat G)/\cong}
    \frac{\omega_c(G;\boldsymbol\beta)}{\sigma(G)}
    =
    \frac{1}{\sigma(\widehat G)}
    \sum_{\substack{F\subseteq E\\(V,F)\ {\rm connected}}}
    (-1)^{|F|}
    \prod_{u\in E\setminus F}\overline\beta_u.
\end{equation}
For trees, this specializes to the second equality in
\cite[Eq.~(4.20)]{Kim:2024svw}. 
We expect the sum rule \eqref{eq:omega-orientation-sum-isomorphism} to
play an important role in extending generalized unitarity methods
\cite{Bern:1994cg,Bern:2011qt} from scattering amplitudes to Magnus
amplitudes \cite{Brandhuber:2025igz}.

\begin{example}
For the cycle $C_r$, a connected spanning subgraph is either the
whole cycle or a spanning tree obtained by deleting one edge.  Thus
\begin{equation}
\label{eq:cycle-sum-general}
    S_\omega(C_r;\boldsymbol\beta)
    =
    (-1)^{r-1}
    \left(
        \sum_{u\in E(C_r)}\overline\beta_u-1
    \right).
\end{equation}

For a graph $\widehat G$ consisting of two vertices joined by parallel
edges $u_1,\ldots,u_m$, one obtains
\begin{equation}
\label{eq:parallel-edge-sum}
    S_\omega(\widehat G;\boldsymbol\beta)
    =
    \prod_{a=1}^m
    (\overline\beta_{u_a}-1)
    -
    \prod_{a=1}^m
    \overline\beta_{u_a}.
\end{equation}

Finally, let $K_4$ have edge parameters
$b_{ij}:=\overline\beta_{\{i,j\}}$, $1\leq i<j\leq4$.  Then
\begin{equation}
\label{eq:K4-sum}
\begin{split}
    S_\omega(K_4;\boldsymbol\beta)
    ={}&
    1
    -\sum_{1\leq i<j\leq4}b_{ij}
    +\sum_{\substack{u,v\in E(K_4)\\u<v}}b_ub_v
    -\sum_{T\in\operatorname{ST}(K_4)}
    \prod_{u\in E(K_4)\setminus T}b_u,
\end{split}
\end{equation}
where $\operatorname{ST}(K_4)$ denotes the set of spanning trees of
$K_4$.
\end{example}

\subsection{Specializations of the \texorpdfstring{$\omega$}{omega}-sum rule}
\label{subsec:omega-sum-specializations}

\begin{corollary}[Tree sum rule]
\label{cor:omega-tree-sum}
For every undirected tree $\widehat\tau$,
\begin{equation}
\label{eq:omega-sum-tree}
    \sum_{\tau\in\operatorname{Ori}(\widehat\tau)}
    \omega_c(\tau)
    =
    (-1)^{|V|-1}.
\end{equation}
\end{corollary}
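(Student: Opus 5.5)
This is a direct specialization of Theorem~\ref{thm:omega-orientation-sum}. Let $\widehat\tau=(V,E)$ be a tree. A tree has no loops and no tadpoles, and its vertex set is nonempty, so the theorem applies. It gives
\[
    \sum_{\tau\in\operatorname{Ori}(\widehat\tau)}\omega_c(\tau;\boldsymbol\beta)
    =
    \sum_{\substack{F\subseteq E\\(V,F)\ {\rm connected}}}
    (-1)^{|F|}\prod_{u\in E\setminus F}\overline\beta_u .
\]

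The key step is to identify the connected spanning edge subsets of a tree. A spanning subgraph $(V,F)$ is connected only if $|F|\geq|V|-1=|E|$. Hence $F=E$ is the unique connected spanning subgraph. Removing any edge of a tree disconnects it, so no proper subset qualifies, while $F=E$ itself is connected.

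The sum therefore has exactly one term, $(-1)^{|E|}$ times the empty product, which equals $(-1)^{|E|}=(-1)^{|V|-1}$. This value is independent of $\boldsymbol\beta$, which matches the statement being written without parameters.

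As a sanity check, one could instead apply the two-vertex tree rule \eqref{eq:omega_contraction-tree} and induct on the number of edges: summing over the two orientations of an edge gives $-\omega_c(\tau\cocont\varepsilon)$, and $\tau\cocont\varepsilon$ is a tree with one fewer vertex. This is not needed, since the orientation-sum theorem suffices. I expect no real obstacle here; the only point requiring care is the counting argument that $F=E$ is the sole connected spanning subset.
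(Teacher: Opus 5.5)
Your proposal is correct and is essentially the paper's own argument: the paper specializes Theorem~\ref{thm:omega-orientation-sum} in the same way, noting that the only connected spanning subgraph of a tree is the tree itself, so the sum collapses to $(-1)^{|E|}=(-1)^{|V|-1}$. Your edge-count justification ($|F|\ge |V|-1=|E|$ forces $F=E$) fills in that step cleanly, and the result holds for arbitrary $\boldsymbol\beta$.
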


\begin{proof}
The only connected spanning subgraph of a tree is the tree itself, so
Theorem~\ref{thm:omega-orientation-sum} gives
\[
    S_\omega(\widehat\tau)
    =
    (-1)^{|E|}
    =
    (-1)^{|V|-1}. \qedhere
\]
\end{proof}
Suppose next that the two orientations of every edge carry the same
parameter,
$\beta_\varepsilon
    =
    \beta_{\bar\varepsilon}
    =:
    \beta_u$.
Then $\overline\beta_u=\beta_u$, and
\begin{equation}
\label{eq:omega-sum-symmetric-beta}
    S_\omega(\widehat G;\boldsymbol\beta)
    =
    \sum_{\substack{
        F\subseteq E\\
        (V,F)\ {\rm connected}
    }}
    (-1)^{|F|}
    \prod_{u\in E\setminus F}\beta_u.
\end{equation}
Under the further uniform specialization
$\beta_u=\beta$ for every $u\in E$,
\begin{equation}
\label{eq:omega-sum-uniform}
    S_\omega(\widehat G;\beta)
    =
    \sum_{\substack{
        F\subseteq E\\
        (V,F)\ {\rm connected}
    }}
    (-1)^{|F|}
    \beta^{|E|-|F|}.
\end{equation}
The same formula holds whenever
$\overline\beta_u=\beta$ for every $u$, without requiring the two
directional parameters themselves to be equal.

\begin{corollary}
\label{cor:omega-sum-zero-average}
If $\widehat G$ is connected and
$\overline\beta_u=0$ for every $u\in E$, then
\begin{equation}
\label{eq:omega-sum-zero-average}
    S_\omega(\widehat G;\boldsymbol\beta)
    =
    (-1)^{|E|}.
\end{equation}
\end{corollary}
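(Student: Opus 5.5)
This is a direct specialization of Theorem~\ref{thm:omega-orientation-sum}. Setting $\overline\beta_u=0$ for every $u\in E$ in \eqref{eq:omega-sum-connected-subgraphs}, each summand carries the factor $\prod_{u\in E\setminus F}\overline\beta_u$. This product is an empty product, equal to $1$, when $F=E$, and it vanishes as soon as $E\setminus F\neq\emptyset$. So the only surviving term is the one with $F=E$.

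That term is present in the sum exactly when $(V,E)$ is connected, which holds because $\widehat G$ is assumed connected. Its value is $(-1)^{|E|}$, which gives \eqref{eq:omega-sum-zero-average}.

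The only point to check is that the specialization is legitimate. The right-hand side of Theorem~\ref{thm:omega-orientation-sum} is a polynomial in the averages $\overline\beta_u$. The left-hand side depends on the directional parameters only through these averages, so we may substitute any directional values with $\overline\beta_u=0$ (for instance $\beta_\varepsilon=-\beta_{\bar\varepsilon}$, or the physical point $\boldsymbol\beta=\mathbf0$) and the identity persists.

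No step is a genuine obstacle. The result may be recorded as the $\beta=0$ case of \eqref{eq:omega-sum-uniform}, in which only $F=E$ contributes since $\beta^{|E|-|F|}=0$ otherwise.
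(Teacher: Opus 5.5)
Your proposal is correct and follows the paper's own proof: specialize Theorem~\ref{thm:omega-orientation-sum} at $\overline\beta_u=0$ and observe that every term with $F\neq E$ contains a vanishing factor, so only the connected spanning set $F=E$ survives and contributes $(-1)^{|E|}$. Your extra remark on the legitimacy of the specialization is harmless, since the theorem is a polynomial identity valid for arbitrary parameter values.
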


\begin{proof}
In \eqref{eq:omega-sum-connected-subgraphs}, every term with
$F\neq E$ contains a vanishing factor $\overline\beta_u$, so only
$F=E$ survives.
\end{proof}

\subsection{Deletion--contraction}
\label{subsec:sum-deletion-contraction}

For an undirected graph $\widehat G=(V,E)$ with $V\neq\emptyset$,
define
\begin{equation}
\label{eq:C-G-definition}
    \mathcal C_{\widehat G}
    (\overline{\boldsymbol\beta})
    :=
    \sum_{\substack{
        F\subseteq E\\
        (V,F)\ {\rm connected}
    }}
    (-1)^{|F|}
    \prod_{u\in E\setminus F}\overline\beta_u.
\end{equation}
Here $\widehat G$ is allowed to contain tadpoles, so that the class of
graphs is closed under contraction.  If $\widehat G$ is disconnected,
the sum is empty and $\mathcal C_{\widehat G}=0$.

Let $\widehat G$ be an undirected graph and let $u$ be an edge of $\widehat G$. Recall that $\widehat G_{\setminus u}$ denotes the graph obtained by deleting $u$. Denote by
$\widehat G/u$ the graph obtained by contracting $u$. In the notation of Section \ref{sec:hopf}, $\widehat G/u = \widehat G\cocont{H_u}$,
where $H_u\preceq\widehat G$ is the cut-sub-\mquiver with edge set $E_{H_u}=\{u\}$.

\begin{proposition}[Deletion--contraction]
\label{prop:C-deletion-contraction}
For every non-tadpole edge $u$,
\begin{equation}
\label{eq:C-deletion-contraction}
    \mathcal C_{\widehat G}
    =
    -\mathcal C_{\widehat G/u}
    +
    \overline\beta_u\,
    \mathcal C_{\widehat G_{\setminus u}},
\end{equation}
with the parameters of all remaining edges inherited under deletion
and contraction.
\end{proposition}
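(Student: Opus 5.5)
Split the defining sum for $\mathcal C_{\widehat G}$ according to whether the chosen edge set $F$ contains $u$. Two correspondences carry the argument, both holding because $u$ is not a tadpole.

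Subsets $F\subseteq E$ with $u\in F$ correspond bijectively to subsets $F'=F\setminus\{u\}$ of the edge set $E\setminus\{u\}$ of $\widehat G/u$. Here $(V,F)$ is connected if and only if $(V/u,F')$ is connected, since contracting an edge of $F$ does not change connectivity. Moreover $E\setminus F=(E\setminus\{u\})\setminus F'$, with parameters inherited, and $(-1)^{|F|}=-(-1)^{|F'|}$. Edges of $\widehat G$ parallel to $u$ become tadpoles in $\widehat G/u$. They are still genuine edges and may lie in $F'$ or not, which is exactly why the definition allows tadpoles. So these terms sum to $-\mathcal C_{\widehat G/u}$.

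Subsets $F$ with $u\notin F$ are subsets of the edge set of $\widehat G_{\setminus u}$, and $(V,F)$ is the same spanning subgraph in either graph. The complement $E\setminus F$ is $\{u\}\sqcup\bigl((E\setminus\{u\})\setminus F\bigr)$, so each such term equals $\overline\beta_u$ times the corresponding term of $\mathcal C_{\widehat G_{\setminus u}}$. These terms sum to $\overline\beta_u\,\mathcal C_{\widehat G_{\setminus u}}$.

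Adding the two parts gives \eqref{eq:C-deletion-contraction}. The disconnected case is consistent: if $\widehat G$ is disconnected, so are $\widehat G/u$ and $\widehat G_{\setminus u}$, and all three sums are empty. Conversely, if $\widehat G_{\setminus u}$ is disconnected, its sum is empty and the formula still holds term by term. The only point needing care is the connectivity equivalence under contraction, $(V,F)$ connected $\iff$ $(V/u,F\setminus u)$ connected when $u\in F$. This holds because the quotient map sends connected components of $(V,F)$ bijectively to those of $(V/u,F\setminus u)$, the endpoints of $u$ already lying in one component.
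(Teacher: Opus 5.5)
Your proof is correct and follows essentially the same route as the paper. The paper also splits the connected spanning sets $F$ according to whether $u\in F$: it identifies the first class with connected spanning sets of $\widehat G/u$, carrying a sign flip, and the second with those of $\widehat G_{\setminus u}$, carrying the extra factor $\overline\beta_u$. Your remarks on connectivity under contraction and on parallel edges becoming tadpoles fill in details the paper leaves implicit.
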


\begin{proof}
Split the connected spanning sets $F\subseteq E$ according to whether
$u\in F$.  If $u\in F$, contraction gives a connected spanning set of
$\widehat G/u$ and changes $(-1)^{|F|}$ by a factor $-1$.  If
$u\notin F$, then $F$ is a connected spanning set of
$\widehat G_{\setminus u}$ and the complementary product contains
$\overline\beta_u$.  Adding the two contributions gives
\eqref{eq:C-deletion-contraction}.
\end{proof}

By Theorem~\ref{thm:omega-orientation-sum},
\[
    S_\omega(\widehat G;\boldsymbol\beta)
    =
    \mathcal C_{\widehat G}(\overline{\boldsymbol\beta})
\]
for tadpole-free $\widehat G$.  Hence, whenever
$\widehat G/u$ is also tadpole-free,
\begin{equation}
\label{eq:S-omega-deletion-contraction}
    S_\omega(\widehat G;\boldsymbol\beta)
    =
    -S_\omega(\widehat G/u;\boldsymbol\beta)
    +
    \overline\beta_u\,
    S_\omega(\widehat G_{\setminus u};\boldsymbol\beta).
\end{equation}
In the uniform specialization this becomes
\begin{equation}
\label{eq:S-omega-deletion-contraction-uniform}
    S_\omega(\widehat G;\beta)
    =
    -S_\omega(\widehat G/u;\beta)
    +
    \beta\,S_\omega(\widehat G_{\setminus u};\beta).
\end{equation}
If $u$ is a bridge, the deletion term vanishes, giving
$    S_\omega(\widehat G)
    =
    -S_\omega(\widehat G/u)$.

\subsection{Relation with the Tutte and chromatic polynomials}
\label{subsec:sum-rule-Tutte}

Let $\widehat G$ be a connected tadpole-free undirected graph, with
$n=|V|$, $m=|E|$, and
$L(\widehat G)=m-n+1$. Recall the standard spanning-subgraph expansion
of the Tutte polynomial; see, for example, \cite{bollobas1998modern, tutte1954contribution}:
\[
    T_{\widehat G}(1,y)
    =
    \sum_{\substack{F\subseteq E\\(V,F)\ {\rm connected}}}
    (y-1)^{|F|-n+1}.
\]
Under the uniform specialization
$\overline\beta_u=\beta$, comparison with
\eqref{eq:omega-sum-uniform} gives, for $\beta\neq0$,
\begin{equation}
\label{eq:S-omega-Tutte}
    S_\omega(\widehat G;\beta)
    =
    (-1)^{n-1}\beta^{L(\widehat G)}
    T_{\widehat G}\left(1,1-\frac1\beta\right).
\end{equation}
After simplification the right-hand side is polynomial in $\beta$, so
the identity extends to $\beta=0$.

To see the relation to chromatic polynomials, define
\begin{equation}
\label{eq:P-G-q-beta}
    \mathcal P_{\widehat G}
    (q;\overline{\boldsymbol\beta})
    :=
    \sum_{f:V\to[q]}
    \prod_{u=\{i,j\}\in E}
    \left(
        \overline\beta_u
        -
        \mathbf 1_{\{f(i)=f(j)\}}
    \right).
\end{equation}
Expanding the product gives
\begin{equation}
\label{eq:P-G-spanning}
    \mathcal P_{\widehat G}
    (q;\overline{\boldsymbol\beta})
    =
    \sum_{F\subseteq E}
    (-1)^{|F|}
    \left(
        \prod_{u\in E\setminus F}\overline\beta_u
    \right)
    q^{\kappa(F)},
\end{equation}
where $\kappa(F)$ is the number of connected components of $(V,F)$.
Consequently,
\begin{equation}
\label{eq:S-omega-P-derivative}
    S_\omega(\widehat G;\boldsymbol\beta)
    =
    \left.
    \frac{d}{dq}
    \mathcal P_{\widehat G}
    (q;\overline{\boldsymbol\beta})
    \right|_{q=0}.
\end{equation}

Let $\operatorname{Chr}_{\widehat G}(q)$ denote the chromatic
polynomial of $\widehat G$. At
$\overline\beta_u=1$ for every $u\in E$,
$\mathcal P_{\widehat G}(q;\mathbf1)
    =
    \operatorname{Chr}_{\widehat G}(q)$,
and hence $S_\omega(\widehat G;\mathbf1)
    =
    \operatorname{Chr}_{\widehat G}'(0)$. 
Thus the orientation sum is the linear coefficient in $q$ of the
multivariate graph polynomial
$\mathcal P_{\widehat G}(q;\overline{\boldsymbol\beta})$.

\newpage 
\appendix

\section{Proof of the three-vertex contraction rule}
\label{appendix_sec:three-vertex-contraction}

This appendix proves Theorem~\ref{thm:general-three-contraction}.  We retain all notation from Section~\ref{subsec:three-vertex-contraction-main}.  The argument refines the two-vertex contraction calculus by keeping track of the three distinguished edge species between the marked vertices.  After the proof, we give two examples.

\begin{proof}[Proof of Theorem~\ref{thm:general-three-contraction}]
We proceed by induction on $|E_H|$. The main idea is to apply
Lemma~\ref{lem:connected-projected-convolution} to the six graphs
$H_{[\sigma(1)\leftarrow\sigma(2)\leftarrow\sigma(3)]}$,
$\sigma\in S_3$, and add the resulting identities. For each
background cut-subquiver $K\preceq H$, the terms are grouped according
to whether neither, exactly one, or both of the two adjoined edges
belong to the chosen cut-subquiver. We first establish the auxiliary
identities needed to evaluate these contributions.

\subsection{Dependence on one edge parameter}
We shall use the tadpole factorization
(Lemma~\ref{lem:omega-tadpole-factorization}) after contraction. If
contracting some background edges identifies the two endpoints of an
edge $a$, then the image of $a$ becomes a tadpole and
\begin{equation}
\label{eq:tadpole-omega-three}
    \omega_c(X)
    =
    \frac{\beta_a-1}{2}\,
    \omega_c(X_{\setminus a}).
\end{equation}

Suppose now that $a$ is a distinguished non-tadpole edge of $X$, and
put $Y:=X_{\setminus a}$. If $X^{(\lambda)}$ denotes the same graph
with the parameter $\beta_a$ replaced by $\lambda$, then
\begin{equation}
\label{eq:parameter-shift}
    \omega_c(X^{(\lambda)})
    -
    \omega_c(X^{(\mu)})
    =
    \frac{\lambda-\mu}{2}\,\omega_c(Y).
\end{equation}
We prove this by induction on $|E_Y|$. If $E_Y=\emptyset$, then either
$X$ is a single-edge connected graph, in which case both
$\omega_c$-values are $-1/2$, or $X$ is disconnected, in which case
both sides vanish.

Assume that $E_Y\neq\emptyset$. Apply
Lemma~\ref{lem:connected-projected-convolution} to
$X^{(\lambda)}$ and $X^{(\mu)}$ and subtract the two identities.
Terms whose chosen cut-subquiver contains $a$ cancel, since the
$e$-value of a non-tadpole edge is independent of its parameter and
contraction removes $a$. The remaining terms are indexed by
$K\preceq Y$. For $E_K\neq\emptyset$, the induction hypothesis applies
if the endpoints of $a$ remain distinct after contracting $K$. If
contracting $K$ identifies the endpoints of $a$, then the image of
$a$ becomes a tadpole and \eqref{eq:tadpole-omega-three} gives the
same formula. Hence
\[
    \omega_c(X^{(\lambda)})-\omega_c(X^{(\mu)})
    =
    -\frac{\lambda-\mu}{2}
    \sum_{\substack{K\preceq Y\\E_K\neq\emptyset}}
    e(K)\omega_c(Y\cocont K).
\]
Since $E_Y\neq\emptyset$,
Lemma~\ref{lem:connected-projected-convolution} applied to $Y$ gives
\[
    \sum_{\substack{K\preceq Y\\E_K\neq\emptyset}}
    e(K)\omega_c(Y\cocont K)
    =
    -\omega_c(Y),
\]
which proves \eqref{eq:parameter-shift}.

We shall also use the following consequence. Suppose that contraction
has identified two of the marked vertices, so that two distinguished
edge species now join the same two remaining vertices with opposite
reference orientations. Their skew terms, defined in
\eqref{eq:D-skew-definition}, then satisfy the corresponding merged
relation. For example, if contraction identifies $1$ and $2$, then
$\delta$ and $\gamma$ have opposite reference orientations between
the two remaining vertices, and
\begin{equation}
\label{eq:merged-skew}
    \mathcal D_{\delta}(Q)+\mathcal D_{\gamma}(Q)
    =
    (d_{\delta}+d_{\gamma})\omega_c(Q).
\end{equation}
This follows from \eqref{eq:parameter-shift}; the other two identities
are obtained by cyclic permutation.

\subsection{Skew convolution identity}
For $K\preceq H$, put $H_K:=H\cocont K$ and define
\begin{align*}
    P_{\varepsilon,K}
    &:=
    \omega_c((H_K)_{[3|1\cdot2]}),
    &
    \Delta_{\varepsilon}(K)
    &:=
    e(K_{[3|1\leftarrow2]})
    -
    e(K_{[3|1\to2]}),
\\
    P_{\delta,K}
    &:=
    \omega_c((H_K)_{[1|2\cdot3]}),
    &
    \Delta_{\delta}(K)
    &:=
    e(K_{[1|2\leftarrow3]})
    -
    e(K_{[1|2\to3]}),
\\
    P_{\gamma,K}
    &:=
    \omega_c((H_K)_{[2|3\cdot1]}),
    &
    \Delta_{\gamma}(K)
    &:=
    e(K_{[2|3\leftarrow1]})
    -
    e(K_{[2|3\to1]}).
\end{align*}

We first consider $a=\varepsilon$. Apply
Lemma~\ref{lem:connected-projected-convolution} to
$H_{[3|1\leftarrow2]}$ and $H_{[3|1\to2]}$. Splitting each
convolution sum according to whether the adjoined edge belongs to the
chosen cut-subquiver gives
\begin{align*}
0
={}&
\sum_{K\preceq H}
e(K)\omega_c\bigl((H_K)_{[3|1\leftarrow2]}\bigr)
+
\sum_{K\preceq H}
e(K_{[3|1\leftarrow2]})P_{\varepsilon,K},
\\
0
={}&
\sum_{K\preceq H}
e(K)\omega_c\bigl((H_K)_{[3|1\to2]}\bigr)
+
\sum_{K\preceq H}
e(K_{[3|1\to2]})P_{\varepsilon,K}.
\end{align*}
Subtracting the second identity from the first yields
\[
    0
    =
    \sum_{K\preceq H}
    e(K)\mathcal D_{\varepsilon}(H_K)
    +
    \sum_{K\preceq H}
    \Delta_{\varepsilon}(K)P_{\varepsilon,K}.
\]
For the edgeless cut-subquiver $H_{\emptyset}$,
$\Delta_{\varepsilon}(H_{\emptyset})=0$, since the two one-edge graphs
have $e$-value $1/2$, while the corresponding term in the first sum is
$\mathcal D_{\varepsilon}(H)$. Hence
\[
    0
    =
    \mathcal D_{\varepsilon}(H)
    +
    \sum_{\substack{K\preceq H\\E_K\neq\emptyset}}
    \Bigl[
        e(K)\mathcal D_{\varepsilon}(H_K)
        +
        \Delta_{\varepsilon}(K)P_{\varepsilon,K}
    \Bigr].
\]
The same argument for $\delta$ and $\gamma$ gives, for every
$a\in\{\varepsilon,\delta,\gamma\}$,
\begin{equation}
\label{eq:skew-convolution}
    0
    =
    \mathcal D_a(H)
    +
    \sum_{\substack{K\preceq H\\E_K\neq\emptyset}}
    \Bigl[
        e(K)\mathcal D_a(H_K)
        +
        \Delta_a(K)P_{a,K}
    \Bigr].
\end{equation}

\subsection{Contribution with exactly one adjoined edge}
Put $F_K:=\omega_c((H_K)_{[1\cdot2\cdot3]})$. We first consider the
terms for which the selected adjoined edge belongs to the
$\varepsilon$-species. Set
$A_{\varepsilon}(K):=e(K_{[3|1\leftarrow2]})$ and
$\bar A_{\varepsilon}(K):=e(K_{[3|1\to2]})$. By
\eqref{eq:e-two-vertex-contraction},
$A_{\varepsilon}(K)+\bar A_{\varepsilon}(K)=e(K)$, while by definition
$A_{\varepsilon}(K)-\bar A_{\varepsilon}(K)
=\Delta_{\varepsilon}(K)$. Thus
\begin{equation}
\label{eq:a-plus-minus}
    A_{\varepsilon}(K)
    =
    \frac{e(K)+\Delta_{\varepsilon}(K)}{2},
    \qquad
    \bar A_{\varepsilon}(K)
    =
    \frac{e(K)-\Delta_{\varepsilon}(K)}{2}.
\end{equation}

The four relevant graphs are
$H_{[1\leftarrow2\leftarrow3]}$,
$H_{[3\leftarrow2\leftarrow1]}$,
$H_{[3\leftarrow1\leftarrow2]}$, and
$H_{[2\leftarrow1\leftarrow3]}$. After contracting the selected
$\varepsilon$- or $\bar\varepsilon$-edge, the vertices $1$ and $2$
are identified, while the remaining adjoined edge is respectively
$\delta$, $\bar\delta$, $\gamma$, or $\bar\gamma$. Let
$X_{\delta},X_{\bar\delta},X_{\gamma},X_{\bar\gamma}$ denote the
corresponding $\omega_c$-values. Their total contribution is
\begin{equation}
\label{eq:L-epsilon-first}
    \mathcal L_{\varepsilon}(K)
    =
    A_{\varepsilon}(K)(X_{\delta}+X_{\gamma})
    +
    \bar A_{\varepsilon}(K)
    (X_{\bar\delta}+X_{\bar\gamma}).
\end{equation}
Using \eqref{eq:a-plus-minus}, this becomes
\begin{equation}
\label{eq:L-epsilon-sum-difference}
\begin{split}
    \mathcal L_{\varepsilon}(K)
    ={}&
    \frac{e(K)}2
    \bigl(
        X_{\delta}+X_{\bar\delta}
        +X_{\gamma}+X_{\bar\gamma}
    \bigr)
+
    \frac{\Delta_{\varepsilon}(K)}2
    \bigl(
        X_{\delta}-X_{\bar\delta}
        +X_{\gamma}-X_{\bar\gamma}
    \bigr).
\end{split}
\end{equation}

After contracting the $\varepsilon$-edge, the species $\delta$ and
$\gamma$ join the same two remaining vertices with opposite reference
orientations. By the two-vertex $\omega_c$-contraction rule,
\begin{equation}
\label{eq:X-pair-sums}
    X_{\delta}+X_{\bar\delta}
    =
    b_{\delta}P_{\varepsilon,K}-F_K,
    \qquad
    X_{\gamma}+X_{\bar\gamma}
    =
    b_{\gamma}P_{\varepsilon,K}-F_K.
\end{equation}
Indeed, deleting the remaining adjoined edge gives
$P_{\varepsilon,K}$, while contracting it identifies all three marked
vertices and gives $F_K$. Moreover, \eqref{eq:merged-skew} gives
\begin{equation}
\label{eq:X-pair-difference}
    X_{\delta}-X_{\bar\delta}
    +X_{\gamma}-X_{\bar\gamma}
    =
    (d_{\delta}+d_{\gamma})P_{\varepsilon,K}.
\end{equation}
Substituting \eqref{eq:X-pair-sums} and
\eqref{eq:X-pair-difference} into
\eqref{eq:L-epsilon-sum-difference} gives
\begin{equation}
\label{eq:L-epsilon-closed}
    \mathcal L_{\varepsilon}(K)
    =
    e(K)\left[
        -F_K
        +\frac12(b_{\delta}+b_{\gamma})P_{\varepsilon,K}
    \right]
    +
    c_{\varepsilon}
    \Delta_{\varepsilon}(K)P_{\varepsilon,K}.
\end{equation}
The corresponding formulas for $\delta$ and $\gamma$ follow by cyclic
permutation. Hence the total contribution in which exactly one
adjoined edge is selected is
\begin{equation}
\label{eq:total-L-type}
\begin{split}
    \mathcal L(K)
    ={}&
    e(K)\Bigl[
        -3F_K
        +\frac12\bigl(
            (b_{\delta}+b_{\gamma})P_{\varepsilon,K}
            +(b_{\gamma}+b_{\varepsilon})P_{\delta,K}
            +(b_{\varepsilon}+b_{\delta})P_{\gamma,K}
        \bigr)
    \Bigr]
\\
&+
    c_{\varepsilon}\Delta_{\varepsilon}(K)P_{\varepsilon,K}
    +c_{\delta}\Delta_{\delta}(K)P_{\delta,K}
    +c_{\gamma}\Delta_{\gamma}(K)P_{\gamma,K}.
\end{split}
\end{equation}

\subsection{A two-edge calculation}
We shall need one further identity when contraction has left two
marked vertices. Let $Q$ be the resulting graph, and let $a,b$ be
distinguished edge species with opposite reference orientations
$a:u\to v$ and $b:v\to u$. Write
$Q_a,Q_{\bar a},Q_b,Q_{\bar b}$ for the four one-edge graphs and
$Q_{a,b},Q_{a,\bar b},Q_{\bar a,b},Q_{\bar a,\bar b}$ for the four
two-edge graphs. Put $q:=\omega_c(Q)$,
$r:=\omega_c(Q_{[u\cdot v]})$,
$D_a:=\omega_c(Q_a)-\omega_c(Q_{\bar a})$, and
$D_b:=\omega_c(Q_b)-\omega_c(Q_{\bar b})$, and set
$A:=D_a-d_aq$. By \eqref{eq:parameter-shift},
$A=-(D_b-d_bq)$.
The two-vertex contraction rule gives
\begin{equation}
\label{eq:two-edge-total}
\begin{split}
    &\omega_c(Q_{a,b})
    +\omega_c(Q_{a,\bar b})
    +\omega_c(Q_{\bar a,b})
    +\omega_c(Q_{\bar a,\bar b})
=
    b_ab_bq-(b_a+b_b-1)r.
\end{split}
\end{equation}
Indeed, fix the orientation of the $b$-edge and apply the contraction
rule to the $a$-edge. Contracting $a$ identifies $u$ and $v$, so the
$b$-edge becomes a tadpole. Summing the resulting identities for the
two orientations of $b$ and applying the contraction rule once more
gives \eqref{eq:two-edge-total}.

We also have
\begin{equation}
\label{eq:two-edge-checkerboard}
\begin{split}
    &\omega_c(Q_{a,b})
    +\omega_c(Q_{\bar a,\bar b})
    -\omega_c(Q_{a,\bar b})
    -\omega_c(Q_{\bar a,b})
=
    (r-q)+(d_b-d_a)A+d_ad_bq.
\end{split}
\end{equation}
To see this, first consider the unit-parameter case
$\beta_a=\beta_{\bar a}=\beta_b=\beta_{\bar b}=1$. The graphs
$Q_{a,b}$ and $Q_{\bar a,\bar b}$ contain directed $2$-cycles whose
two edges have parameter $1$. The same induction as in
Proposition~\ref{prop:omega-cyclic-vanishing} applies with arbitrary
background parameters, since only the parameters on the edges of the
chosen directed cycle are used to force the vanishing. Hence
$\omega_c(Q_{a,b})=\omega_c(Q_{\bar a,\bar b})=0$.

In $Q_{a,\bar b}$ the two adjoined edges are parallel and have the
same orientation. When the unit-parameter two-vertex contraction
rule is applied to the $\bar b$-edge, the graph obtained by reversing
$\bar b$ is $Q_{a,b}$ and hence has zero $\omega_c$-value, while
contracting $\bar b$ makes the $a$-edge a unit-parameter tadpole and
therefore also gives zero. Thus
$\omega_c(Q_{a,\bar b})=\omega_c(Q_a)$. Similarly,
$\omega_c(Q_{\bar a,b})=\omega_c(Q_{\bar a})$. The unit-parameter
two-vertex contraction rule applied to the $a$-edge now gives
$\omega_c(Q_a)+\omega_c(Q_{\bar a})=q-r$. Therefore the left-hand
side of \eqref{eq:two-edge-checkerboard} is $r-q$ in the
unit-parameter case.

It remains to restore the four independent parameters. Restoring
$\beta_a,\beta_{\bar a}$ first changes the left-hand side of
\eqref{eq:two-edge-checkerboard} by
$d_a(D_b-d_bq)=-d_aA$. Restoring
$\beta_b,\beta_{\bar b}$ afterwards changes it by
$d_bD_a=d_bA+d_ad_bq$. Thus the total correction is
$(d_b-d_a)A+d_ad_bq$, proving
\eqref{eq:two-edge-checkerboard}. Taking one half of the sum of
\eqref{eq:two-edge-total} and \eqref{eq:two-edge-checkerboard} yields
\begin{equation}
\label{eq:two-edge-diagonal}
    \omega_c(Q_{a,b})
    +\omega_c(Q_{\bar a,\bar b})
    =
    \frac12\Bigl[
        (b_ab_b+d_ad_b-1)q
        +(2-b_a-b_b)r
        +(d_b-d_a)A
    \Bigr].
\end{equation}

\subsection{Compatibility after contracting background edges}
During the induction, contracting a background cut-subquiver may
identify some of the marked vertices. In this situation we continue
to use the notation in \eqref{eq:D-skew-definition} and
\eqref{eq:Phi-definition}, with $1,2,3$ denoting their images after
contraction. We verify that \eqref{eq:general-three-contraction}
remains valid after such contractions.

Suppose first that contraction has identified $1$ and $2$, while $3$
remains distinct. Let $Q$ be the resulting graph and put
$q:=\omega_c(Q)$ and $r:=\omega_c(Q_{[1\cdot2\cdot3]})$. Since the
images of $1$ and $2$ coincide, the $\varepsilon$-edge becomes a
tadpole, so $\mathcal D_{\varepsilon}(Q)=d_{\varepsilon}q$.
Furthermore, \eqref{eq:merged-skew} gives
$\mathcal D_{\delta}(Q)+\mathcal D_{\gamma}(Q)
=(d_{\delta}+d_{\gamma})q$. Set
\begin{equation}
\label{eq:A12-3}
    A_{\varepsilon}
    :=
    \mathcal D_{\delta}(Q)-d_{\delta}q
    =
    -\mathcal D_{\gamma}(Q)+d_{\gamma}q.
\end{equation}

Among the six graphs, the four containing an
$\varepsilon$- or $\bar\varepsilon$-edge are
$Q_{[1\leftarrow2\leftarrow3]}$,
$Q_{[3\leftarrow2\leftarrow1]}$,
$Q_{[3\leftarrow1\leftarrow2]}$, and
$Q_{[2\leftarrow1\leftarrow3]}$. Since contraction identifies the
endpoints of the $\varepsilon$- or $\bar\varepsilon$-edge, its image
is a tadpole. Hence the tadpole factorization gives
\begin{align*}
& \omega_c(Q_{[1\leftarrow2\leftarrow3]})+\omega_c(Q_{[3\leftarrow2\leftarrow1]})+\omega_c(Q_{[3\leftarrow1\leftarrow2]})+\omega_c(Q_{[2\leftarrow1\leftarrow3]}) \\
& =
\frac{\beta_{\varepsilon}-1}{2}\bigl(\omega_c(Q_{\delta})+\omega_c(Q_{\gamma})\bigr)
+
\frac{\beta_{\bar\varepsilon}-1}{2}\bigl(\omega_c(Q_{\bar\delta})+\omega_c(Q_{\bar\gamma})\bigr).
\end{align*}
The two-vertex contraction rule gives
\[
\omega_c(Q_{\delta})+\omega_c(Q_{\bar\delta})=b_{\delta}q-r,
\qquad
\omega_c(Q_{\gamma})+\omega_c(Q_{\bar\gamma})=b_{\gamma}q-r,
\]
while \eqref{eq:merged-skew} gives
\[
\omega_c(Q_{\delta})-\omega_c(Q_{\bar\delta})+\omega_c(Q_{\gamma})-\omega_c(Q_{\bar\gamma})
=
(d_{\delta}+d_{\gamma})q.
\]
Using
$\beta_{\varepsilon}=b_{\varepsilon}+d_{\varepsilon}$ and
$\beta_{\bar\varepsilon}=b_{\varepsilon}-d_{\varepsilon}$, we obtain
\begin{equation}
\label{eq:12-3-loop-four}
\begin{split}
& \omega_c(Q_{[1\leftarrow2\leftarrow3]})+\omega_c(Q_{[3\leftarrow2\leftarrow1]})+\omega_c(Q_{[3\leftarrow1\leftarrow2]})+\omega_c(Q_{[2\leftarrow1\leftarrow3]})
\\
& =
\frac12\Bigl[(b_{\varepsilon}b_{\delta}+b_{\varepsilon}b_{\gamma}+d_{\varepsilon}d_{\delta}+d_{\varepsilon}d_{\gamma}-b_{\delta}-b_{\gamma})q+2(1-b_{\varepsilon})r\Bigr].
\end{split}
\end{equation}

For the remaining two graphs,
$Q_{[2\leftarrow3\leftarrow1]}$ and
$Q_{[1\leftarrow3\leftarrow2]}$, 
\eqref{eq:two-edge-diagonal}, with $a=\delta$ and
$b=\gamma$, gives
\begin{equation}
\label{eq:12-3-opposite-two}
\omega_c(Q_{[2\leftarrow3\leftarrow1]})+\omega_c(Q_{[1\leftarrow3\leftarrow2]})
=
\frac12\Bigl[(b_{\delta}b_{\gamma}+d_{\delta}d_{\gamma}-1)q+(2-b_{\delta}-b_{\gamma})r+(d_{\gamma}-d_{\delta})A_{\varepsilon}\Bigr].
\end{equation}
The partial-contraction term in \eqref{eq:Phi-definition} is
$\frac12\Bigl[
        (2b_{\varepsilon}+b_{\delta}+b_{\gamma})r
        +(b_{\delta}+b_{\gamma})q
    \Bigr]$.
Substituting these expressions into \eqref{eq:Phi-definition}, all
$r$-terms cancel and
\begin{equation}
\label{eq:Phi-12-3}
\begin{split}
    \Phi(Q)
    =
    \frac12\Bigl(
        -1
        +b_{\varepsilon}b_{\delta}
        +b_{\delta}b_{\gamma}
        +b_{\gamma}b_{\varepsilon}
        +d_{\varepsilon}d_{\delta}
        +d_{\delta}d_{\gamma}
        +d_{\gamma}d_{\varepsilon}
    \Bigr)q
+
    \frac12(d_{\gamma}-d_{\delta})A_{\varepsilon}.
\end{split}
\end{equation}
On the other hand,
\[
\begin{split}
    &C_{\triangle}q
    +c_{\varepsilon}\mathcal D_{\varepsilon}(Q)
    +c_{\delta}\mathcal D_{\delta}(Q)
    +c_{\gamma}\mathcal D_{\gamma}(Q)
=
    \left(
        C_{\triangle}
        +c_{\varepsilon}d_{\varepsilon}
        +c_{\delta}d_{\delta}
        +c_{\gamma}d_{\gamma}
    \right)q
    +(c_{\delta}-c_{\gamma})A_{\varepsilon}.
\end{split}
\]
Since
\[
    c_{\varepsilon}d_{\varepsilon}
    +c_{\delta}d_{\delta}
    +c_{\gamma}d_{\gamma}
    =
    d_{\varepsilon}d_{\delta}
    +d_{\delta}d_{\gamma}
    +d_{\gamma}d_{\varepsilon},
    \qquad
    c_{\delta}-c_{\gamma}
    =
    \frac12(d_{\gamma}-d_{\delta}),
\]
this agrees with \eqref{eq:Phi-12-3}. The other two possible
identifications of two marked vertices follow by cyclic permutation.

Finally, suppose that contraction has identified all three marked
vertices, and let $Q$ be the resulting graph. Put
$q:=\omega_c(Q)$. Each distinguished edge has now become a tadpole,
so $\mathcal D_{\varepsilon}(Q)=d_{\varepsilon}q$,
$\mathcal D_{\delta}(Q)=d_{\delta}q$, and
$\mathcal D_{\gamma}(Q)=d_{\gamma}q$. Summing the six products of the
corresponding tadpole factors gives
\begin{equation}
\label{eq:123-six-sum}
\begin{split}
    &\sum_{\sigma\in S_3}
    \omega_c\bigl(
        Q_{[\sigma(1)\leftarrow\sigma(2)\leftarrow\sigma(3)]}
    \bigr)
=
    \frac12\Bigl(
        b_{\varepsilon}b_{\delta}
        +b_{\delta}b_{\gamma}
        +b_{\gamma}b_{\varepsilon}
        +d_{\varepsilon}d_{\delta}
        +d_{\delta}d_{\gamma}
        +d_{\gamma}d_{\varepsilon}
        -2b_{\varepsilon}-2b_{\delta}-2b_{\gamma}+3
    \Bigr)q.
\end{split}
\end{equation}
All three partial contractions and the full contraction now have
$\omega_c$-value $q$. Hence
\[
    \Phi(Q)
    =
    \frac12\Bigl(
        -1
        +b_{\varepsilon}b_{\delta}
        +b_{\delta}b_{\gamma}
        +b_{\gamma}b_{\varepsilon}
        +d_{\varepsilon}d_{\delta}
        +d_{\delta}d_{\gamma}
        +d_{\gamma}d_{\varepsilon}
    \Bigr)q.
\]
Using \eqref{eq:C123-general} and
$c_{\varepsilon}d_{\varepsilon}
+c_{\delta}d_{\delta}
+c_{\gamma}d_{\gamma}
=d_{\varepsilon}d_{\delta}
+d_{\delta}d_{\gamma}
+d_{\gamma}d_{\varepsilon}$, we find that this is exactly the right-hand side of
\eqref{eq:general-three-contraction}. Thus
\eqref{eq:general-three-contraction} remains valid after every
identification of the marked vertices arising from contraction.

\subsection{Induction}
Suppose first that $E_H=\emptyset$. If $H$ has a vertex other than the
three marked vertices, every graph occurring in
\eqref{eq:Phi-definition} is disconnected, and both sides of
\eqref{eq:general-three-contraction} vanish. It remains to consider
$V_H=\{1,2,3\}$. Every graph in the sum defining $\Phi(H)$ is then a
directed three-vertex path $T$. Given that $e(T)=1/6$, each of its two
one-edge cut-subquivers has $\omega$-value $-1/2$, and the corresponding
contraction has $e$-value $1/2$, the convolution recursion gives
$\omega_c(T)=1/3$. Hence the sum of the six terms is $2$. Moreover,
$\omega_c(H_{[1\cdot2\cdot3]})=1$, while all three partial-contraction
terms vanish. Therefore $\Phi(H)=2-2=0$. On the other hand,
$\omega_c(H)=0$ and
$\mathcal D_{\varepsilon}(H)
=\mathcal D_{\delta}(H)
=\mathcal D_{\gamma}(H)=0$, so the right-hand side of
\eqref{eq:general-three-contraction} is also $0$.

Assume now that $|E_H|>0$ and that the theorem holds for every
background with fewer than $|E_H|$ edges. Apply
Lemma~\ref{lem:connected-projected-convolution} to the six graphs
$H_{[\sigma(1)\leftarrow\sigma(2)\leftarrow\sigma(3)]}$,
$\sigma\in S_3$, and add the resulting identities. Fix
$K\preceq H$ and write $H_K:=H\cocont K$.

We group the terms associated with $K$ according to whether neither,
exactly one, or both of the two adjoined edges belong to the chosen
cut-subquiver. If neither is selected, contraction affects only the
background, and the contribution is
\[
    e(K)
    \sum_{\sigma\in S_3}
    \omega_c\bigl(
        (H_K)_{[\sigma(1)\leftarrow\sigma(2)\leftarrow\sigma(3)]}
    \bigr).
\]
If both are selected, \eqref{eq:e-three-vertex-contraction} shows that
the six $e$-coefficients sum to $e(K)$, while contracting the two
adjoined edges identifies all three marked vertices. Hence the
contribution is $e(K)F_K$. If exactly one is selected, the total
contribution is \eqref{eq:total-L-type}. Combining these three cases
with \eqref{eq:Phi-definition}, the complete contribution associated
with $K$ is
\begin{equation}
\label{eq:fixed-K-master}
    e(K)\Phi(H_K)
    +
    c_{\varepsilon}\Delta_{\varepsilon}(K)P_{\varepsilon,K}
    +
    c_{\delta}\Delta_{\delta}(K)P_{\delta,K}
    +
    c_{\gamma}\Delta_{\gamma}(K)P_{\gamma,K}.
\end{equation}

For the edgeless cut-subquiver $K=H_{\emptyset}$, we have $e(K)=1$ and
all three $\Delta$-terms vanish, so
\eqref{eq:fixed-K-master} reduces to $\Phi(H)$. Now let
$E_K\neq\emptyset$. Contracting $K$ removes at least one background
edge, so $H_K$ has fewer background edges than $H$. If the three
marked vertices remain distinct, the induction hypothesis applies to
$H_K$. If contraction has identified some of them, Step~5 gives the
same formula directly. Hence in every case
\begin{equation}
\label{eq:IH-uniform}
    \Phi(H_K)
    =
    C_{\triangle}\omega_c(H_K)
    +c_{\varepsilon}\mathcal D_{\varepsilon}(H_K)
    +c_{\delta}\mathcal D_{\delta}(H_K)
    +c_{\gamma}\mathcal D_{\gamma}(H_K).
\end{equation}

Summing \eqref{eq:fixed-K-master} over all $K\preceq H$ and applying
\eqref{eq:IH-uniform} whenever $E_K\neq\emptyset$ gives
\begin{align}
0
=
    \Phi(H)
    +
    C_{\triangle}
    \sum_{\substack{K\preceq H\\E_K\neq\emptyset}}
    e(K)\omega_c(H_K)
+
    \sum_{a\in\{\varepsilon,\delta,\gamma\}}
    c_a
    \sum_{\substack{K\preceq H\\E_K\neq\emptyset}}
    \Bigl[
        e(K)\mathcal D_a(H_K)
        +\Delta_a(K)P_{a,K}
    \Bigr].
\label{eq:three-induction-final-sum}
\end{align}
Since $E_H\neq\emptyset$,
Lemma~\ref{lem:connected-projected-convolution} applied to $H$ gives
$
    \sum_{\substack{K\preceq H\\E_K\neq\emptyset}}
    e(K)\omega_c(H_K)
    =
    -\omega_c(H)$,
while \eqref{eq:skew-convolution} gives, for every
$a\in\{\varepsilon,\delta,\gamma\}$,
\[
    \sum_{\substack{K\preceq H\\E_K\neq\emptyset}}
    \Bigl[
        e(K)\mathcal D_a(H_K)
        +\Delta_a(K)P_{a,K}
    \Bigr]
    =
    -\mathcal D_a(H).
\]
Substituting these identities into
\eqref{eq:three-induction-final-sum} yields
\[
    \Phi(H)
    =
    C_{\triangle}\omega_c(H)
    +c_{\varepsilon}\mathcal D_{\varepsilon}(H)
    +c_{\delta}\mathcal D_{\delta}(H)
    +c_{\gamma}\mathcal D_{\gamma}(H),
\]
which is \eqref{eq:general-three-contraction}.
\end{proof}

\subsection{Examples}

\begin{example}
\label{ex:three-directed-cycle-background}
Let $H$ be the directed $3$-cycle
\[
    \eta:2\longrightarrow1,
    \qquad
    \theta:3\longrightarrow2,
    \qquad
    \kappa:1\longrightarrow3.
\]
A direct convolution computation gives
\begin{equation}
\label{eq:cycle-background-value}
    \omega_c(H)
    =
    \frac{\beta_{\eta}+\beta_{\theta}+\beta_{\kappa}-3}{6}.
\end{equation}
The three partial contractions are
\begin{align}
\label{eq:cycle-partial-contractions}
    \omega_c(H_{[3|1\cdot2]})
    &=
    -\frac{(\beta_{\eta}-1)(\beta_{\theta}+\beta_{\kappa}-2)}{8},
    &
    \omega_c(H_{[1|2\cdot3]})
    &=
    -\frac{(\beta_{\theta}-1)(\beta_{\eta}+\beta_{\kappa}-2)}{8},
    \nonumber\\
    \omega_c(H_{[2|3\cdot1]})
    &=
    -\frac{(\beta_{\kappa}-1)(\beta_{\eta}+\beta_{\theta}-2)}{8},
\end{align}
while
\begin{equation}
\label{eq:cycle-full-contraction}
    \omega_c(H_{[1\cdot2\cdot3]})
    =
    \frac{(\beta_{\eta}-1)(\beta_{\theta}-1)(\beta_{\kappa}-1)}{8}.
\end{equation}
The three orientation differences are
\begin{align}
\label{eq:cycle-skew-values}
    \mathcal D_{\varepsilon}(H)
    &=
    d_{\varepsilon}\omega_c(H)
    +
    \frac{\beta_{\eta}\beta_{\theta}+\beta_{\eta}\beta_{\kappa}
    -6\beta_{\eta}+3\beta_{\theta}+3\beta_{\kappa}-2}{24},
    \nonumber\\
    \mathcal D_{\delta}(H)
    &=
    d_{\delta}\omega_c(H)
    +
    \frac{\beta_{\eta}\beta_{\theta}+\beta_{\theta}\beta_{\kappa}
    +3\beta_{\eta}-6\beta_{\theta}+3\beta_{\kappa}-2}{24},
    \nonumber\\
    \mathcal D_{\gamma}(H)
    &=
    d_{\gamma}\omega_c(H)
    +
    \frac{\beta_{\eta}\beta_{\kappa}+\beta_{\theta}\beta_{\kappa}
    +3\beta_{\eta}+3\beta_{\theta}-6\beta_{\kappa}-2}{24}.
\end{align}

To record the six terms entering $\Phi(H)$ compactly, set
\[
F_+(x,y;u,v,w)
:=
\frac{1}{24}\Bigl[
xy(u+v+w-3)
+y(uw+uv-3u+1)
+x(uv+vw-3v+1)
+uvw-3uv+u+v
\Bigr]
\]
and
\begin{align*}
F_-(x,y;u,v,w)
& :=
\frac{1}{48}\Bigl[
2xy(u+v+w-3)
+y(uw+uv-3w-3v+4)
\\
& +x(uv+vw-3u-3w+4)
+2uvw-3uw-u-3vw+6w-v
\Bigr].
\end{align*}
Direct convolution gives
\begin{align}
\label{eq:cycle-six-values}
\begin{split}
& \omega_c(H_{[1\leftarrow2\leftarrow3]})
=
F_+(\beta_{\varepsilon},\beta_{\delta};
     \beta_{\eta},\beta_{\theta},\beta_{\kappa}), \quad
\omega_c(H_{[2\leftarrow3\leftarrow1]})
=
F_+(\beta_{\delta},\beta_{\gamma};
     \beta_{\theta},\beta_{\kappa},\beta_{\eta}), \\
     &
\omega_c(H_{[3\leftarrow1\leftarrow2]})
=
F_+(\beta_{\gamma},\beta_{\varepsilon}; 
     \beta_{\kappa},\beta_{\eta},\beta_{\theta}), \quad
\omega_c(H_{[3\leftarrow2\leftarrow1]})
=
F_-(\beta_{\bar\varepsilon},\beta_{\bar\delta};
     \beta_{\eta},\beta_{\theta},\beta_{\kappa}), \\
& 
\omega_c(H_{[1\leftarrow3\leftarrow2]})
=
F_-(\beta_{\bar\delta},\beta_{\bar\gamma};
     \beta_{\theta},\beta_{\kappa},\beta_{\eta}), \quad
\omega_c(H_{[2\leftarrow1\leftarrow3]})
=
F_-(\beta_{\bar\gamma},\beta_{\bar\varepsilon};
     \beta_{\kappa},\beta_{\eta},\beta_{\theta}).
\end{split}
\end{align}
The second and the third formulas are obtained from the first one by cyclic
permutation, and similarly for the last three. Substituting
\eqref{eq:cycle-six-values},
\eqref{eq:cycle-partial-contractions}, and
\eqref{eq:cycle-full-contraction} into
\eqref{eq:Phi-definition} and simplifying gives
\[
    \Phi(H)
    =
    \frac{C_{\triangle}}{6}
    \bigl(\beta_{\eta}+\beta_{\theta}+\beta_{\kappa}-3\bigr)
    +
    c_{\varepsilon}\mathcal D_{\varepsilon}(H)
    +
    c_{\delta}\mathcal D_{\delta}(H)
    +
    c_{\gamma}\mathcal D_{\gamma}(H).
\]
By \eqref{eq:cycle-background-value}, this is precisely
\eqref{eq:general-three-contraction}.
\end{example}

\begin{example}
\label{ex:three-parallel-background}
Let $H$ consist of one non-distinguished edge
\[
    \eta:1\longrightarrow2
\]
and an isolated marked vertex $3$. Thus $\eta$ is parallel to
$\bar\varepsilon$ whenever the latter is adjoined.
A direct computation from the convolution recursion gives
\[
    \omega_c(H)=0,
    \qquad
    \omega_c(H_{[3|1\cdot2]})=0,
    \qquad
    \omega_c(H_{[1|2\cdot3]})
    =
    \omega_c(H_{[2|3\cdot1]})
    =
    -\frac12,
    \qquad
    \omega_c(H_{[1\cdot2\cdot3]})
    =
    \frac{\beta_{\eta}-1}{2}.
\]
Moreover,
\[
    \omega_c(H_{[3|1\leftarrow2]})
    =
    \omega_c(H_{[3|1\to2]})
    =
    0,
\]
\[
    \omega_c(H_{[1|2\leftarrow3]})
    =
    \omega_c(H_{[2|3\leftarrow1]})
    =
    \frac16, \quad
    \omega_c(H_{[1|2\to3]})
    =
    \omega_c(H_{[2|3\to1]})
    =
    \frac13.
\]
Hence
$
    \mathcal D_{\varepsilon}(H)=0$,
$\mathcal D_{\delta}(H)
    =
    \mathcal D_{\gamma}(H)
    =
    -\frac16$.
The six terms entering $\Phi(H)$ are
\begin{align*}
&\omega_c(H_{[1\leftarrow2\leftarrow3]})
=
\frac{2\beta_{\eta}+\beta_{\varepsilon}-3}{12}, \ 
\omega_c(H_{[3\leftarrow2\leftarrow1]})
=
\frac{\beta_{\eta}+\beta_{\bar\varepsilon}}{6},
\
\omega_c(H_{[2\leftarrow3\leftarrow1]})
=
\frac{2\beta_{\eta}+\beta_{\delta}+\beta_{\gamma}}{12}, \\
&
\omega_c(H_{[1\leftarrow3\leftarrow2]})
=
\frac{\beta_{\eta}+\beta_{\bar\delta}+\beta_{\bar\gamma}-3}{6},
\
\omega_c(H_{[3\leftarrow1\leftarrow2]})
=
\frac{2\beta_{\eta}+\beta_{\varepsilon}-3}{12}, \ 
\omega_c(H_{[2\leftarrow1\leftarrow3]})
=
\frac{\beta_{\eta}+\beta_{\bar\varepsilon}}{6}.
\end{align*}
Therefore \eqref{eq:general-three-contraction} holds.
\end{example}

\section{Interpretations in the context of quasisymmetric functions}
\label{sec:qsym}

The $\omega$-function and its master formula have an interesting interpretation in the language of quasisymmetric functions.  This viewpoint is not used in the presentation of the main text, but places the related ordered fibers, the polynomial $\Phi_G(q;\beta)$, the coefficients $(-1)^{r-1}/r$, and two later specializations in a common setting of algebraic combinatorics.  The basic construction below applies to every finite quiver, including quivers with directed cycles, parallel edges, and tadpoles; acyclicity enters only in the order-theoretic and permutation interpretations.

\subsection{Ordered fibers, WQSym, and QSym}

We use a single sign convention for all finite quivers.  Let $a:V_G\to T$ be a map to a totally ordered set and $\eps:j\to i$ be an edge. Define
\begin{equation}\label{eq:sign-map}
 s_a(\eps):=
 \begin{cases}
 +,&a(i)<a(j),\\
 -,&a(i)\ge a(j).
 \end{cases}
\end{equation}
Equality therefore receives the sign $-$, in particular for tadpoles.  For a positive integer $m$, write $[m]:=\{1,\ldots,m\}$.  An ordered set partition
$B=(B_1|\cdots|B_r)$ is equivalently defined by its surjection
$$ b_B:V_G\twoheadrightarrow[r], \quad b_B(v):=t, \quad v\in B_t.$$  
Thus the notation of Section~\ref{sec:master-formulas} is simply
$s_B:=s_{b_B}$.  Recall also from Section~\ref{sec:master-formulas} that
\[
 M_G(B):=\prod_{\eps\in E_G}\beta_{\eps,s_B(\eps)},
 \qquad \beta_{\eps,\pm}:=\frac{\beta_\eps\pm1}{2}.
\]
This same convention will be used for arbitrary maps $f$ below.

We first recall briefly the word-quasisymmetric refinement~\cite{NT06,NPT13}, because its indexing is exactly by ordered set partitions.  Let
$A:=\{a_1<a_2<\cdots\}$ be a countable ordered alphabet.  A word of length $n$ is equivalently viewed as a map
$f:[n]\to A$, via 
$$f\leftrightarrow w_f:=f(1)\cdots f(n)\in A^n\subset A^*,$$ 
where $A^*$ is the free monoid on $A$.
Denote
\[
 \operatorname{im}(f)=\{b_1<\cdots<b_r\},\qquad A_{[r]}:=\{a_1<\cdots<a_r\}.
\]
Then there is a unique increasing bijection
$$j(f):A_{[r]}\xrightarrow{\sim}\operatorname{im}(f), \quad j(f)(a_t):=b_t,$$ 
and hence a unique factorization
\begin{equation}\label{eq:packing}
 f=j(f)\circ p(f),\qquad
 p(f):=j(f)^{-1}\circ f:[n]\twoheadrightarrow A_{[r]}.
\end{equation}
The surjection $p(f)$ is the \emph{packing} of $f$; in word notation,
$\operatorname{pack}(w_f):=w_{p(f)}$.  This is the usual packing operation in $\WQSym$.

The connection with ordered set partitions is immediate, through
\begin{equation}\label{eq:packed-osp}
 p:[n]\twoheadrightarrow A_{[r]}
 \quad\longleftrightarrow\quad
 B(p):=\bigl(p^{-1}(a_1)|\cdots|p^{-1}(a_r)\bigr).
\end{equation}
For example,
$w=a_4a_2a_4a_7a_2$ has
$\operatorname{pack}(w)=a_2a_1a_2a_3a_1$ and corresponds to
$(\{2,5\}|\{1,3\}|\{4\})$.
For a packed map $p$, the monomial word-quasisymmetric function is
\begin{equation}\label{eq:Mp}
 \mathbf M_p(A):=\sum_{\substack{f:[n]\to A\\p(f)=p}}w_f
 =\sum_{\operatorname{pack}(w)=w_p}w.
\end{equation}
The span of these elements is $\WQSym$.  By \eqref{eq:packed-osp}, we may write the monomial basis $\mathbf M_p$ as $\mathbf M_B$, indexed by ordered set partitions.  Under abelianization $a_i\mapsto x_i$,
\begin{equation}\label{eq:ab}
 \ab(\mathbf M_B)=M_{\alpha(B)},
 \qquad
 \alpha(B):=(|B_1|,\ldots,|B_r|),
\end{equation}
where
$$M_\alpha(X):=\sum_{i_1<\cdots<i_r}x_{i_1}^{\alpha_1}\cdots x_{i_r}^{\alpha_r}$$
is the monomial basis of the space $\QSym$ of quasisymmetric functions~\cite{richard2011enumerative}.
Thus $\WQSym$ remembers the ordered fibers themselves, whereas $\QSym$ remembers their sizes.

For $V_G=[n]$, define first the noncommutative generating function
\begin{equation}\label{eq:WG-map}
 \mathcal W_G(A;\beta)
 :=\sum_{f:[n]\to A}
 \left(\prod_{\eps\in E_G}\beta_{\eps,s_f(\eps)}\right)w_f,
 \qquad w_f:=f(1)\cdots f(n).
\end{equation}
Here $s_f$ is the instance of \eqref{eq:sign-map} determined by the order on $A$.
Since the packing $p(f)$ is obtained from $f$ by an increasing relabeling of its image, one has
$s_f=s_{p(f)}=s_{B(p(f))}$.  Hence the coefficient of $w_f$ depends only on its packing.  Grouping ~\eqref{eq:WG-map} by packed maps, or equivalently by ordered set partitions, gives
\begin{equation}\label{eq:WG-osp}
 \mathcal W_G(A;\beta)
 =\sum_p M_G(B(p))\,\mathbf M_p(A)
 =\sum_{B\in\OSP(V_G)}M_G(B)\mathbf M_B(A).
\end{equation}
In particular, $\mathcal W_G(A;\beta)$ is in $\WQSym$.

Abelianizing $a_i\mapsto x_i$ gives
\begin{equation}\label{eq:KG-map}
 \mathcal K_G(X;\beta)
 :=\ab\bigl(\mathcal W_G(A;\beta)\bigr)
 =\sum_{f:V_G\to\mathbb Z_{>0}}
 \left(\prod_{\eps\in E_G}\beta_{\eps,s_f(\eps)}\right)
 \prod_{v\in V_G}x_{f(v)}.
\end{equation}
Unlike $\mathcal W_G$, this commutative generating function is intrinsic: it does not require a labeling of $V_G$.  Grouping the maps $f$ by their nonempty fibers, listed in increasing order of their values, gives
\begin{equation}\label{eq:KG-osp}
 \mathcal K_G(X;\beta)
 =\sum_{B\in\OSP(V_G)}M_G(B)M_{\alpha(B)}(X).
\end{equation}
Thus $\mathcal K_G(X;\beta)$ is in $\QSym$.  \eqref{eq:WG-osp} and \eqref{eq:KG-osp} express the same ordered-fiber decomposition at the noncommutative and commutative levels: $\WQSym$ remembers the ordered fibers themselves, whereas $\QSym$ remembers only their cardinalities.

The intrinsic form \eqref{eq:KG-map} also makes clear that the construction applies to every finite quiver: no acyclicity assumption is required.  For example, if $G$ is the directed two-cycle $2\xrightarrow{\eps}1\xrightarrow{\delta}2$, then
\[
 \mathcal K_G=\beta_{\eps,-}\beta_{\delta,-}M_{(2)}
 +(\beta_{\eps,+}\beta_{\delta,-}+\beta_{\eps,-}\beta_{\delta,+})M_{(1,1)},
\]
which is generally nonzero.  Thus cycles do not obstruct the QSym construction; they become obstructive only after the specialization $\beta=\one$ below.  This distinction is useful because the contraction calculus of the main text naturally produces cyclic quivers and tadpoles.

\subsection{Principal specialization and the master function}

For $m\ge0$, define the finite principal specialization at $1$ by
\begin{equation}\label{eq:psm}
\ps_{m,1}: \QSym \rightarrow \mathbb{Q}, \quad  \ps_{m,1}(F):=F(\underbrace{1,\ldots,1}_{m},0,0,\ldots).
\end{equation}
This is the $t=1$ case of the usual finite principal specialization
$F(1,t,\ldots,t^{m-1},0,\ldots)$; see \cite[Secs.~7.8, 7.19]{StaEC2}. 
Since
\[
 \ps_{m,1}(M_\alpha)=\binom{m}{\ell(\alpha)},
\]
we define the polynomial specialization $\ps_q:\QSym\to\mathbb{Q}[q]$ by
$\ps_q(M_\alpha):=\binom{q}{\ell(\alpha)}$
and linearity.
Applying this to \eqref{eq:KG-osp} gives precisely the polynomial of \eqref{eq:Phi-OSP}:
\begin{equation}\label{eq:Phi-ps}
 \Phi_G(q;\beta)=\ps_q\mathcal K_G(X;\beta)
 =\sum_{B\in\OSP(V_G)}\binom{q}{|B|}M_G(B).
\end{equation}
Thus, for positive integers $m$, $\Phi_G(m;\beta)$ is simply the finite principal specialization $\mathcal K_G(1^m;\beta)$.

For a polynomial $P(q)$, we use $[q]P(q)$ to denote its \emph{linear coefficient}, i.e. the coefficient of $q^1$.  This is coefficient-extraction notation and should not be confused with the finite-set notation $[m]$ introduced above.  For $F\in \QSym$, define
\begin{equation}\label{eq:lambda}
 \lambda(F):=[q]\,\ps_q(F).
\end{equation}
Since $[q]\binom qr=(-1)^{r-1}/r$,
\begin{equation}\label{eq:lambdaM}
 \lambda(M_\alpha)=\frac{(-1)^{\ell(\alpha)-1}}{\ell(\alpha)}.
\end{equation}
Consequently \eqref{eq:KG-osp} gives
\[
 \lambda(\mathcal K_G)
 =\sum_{B\in\OSP(V_G)}\frac{(-1)^{|B|-1}}{|B|}M_G(B),
\]
which is the general master function of \eqref{eq:general-master-function}.  By Theorem~\ref{thm:general-master-formula},
\begin{equation}\label{eq:omegaK}
 \omega_c(G;\beta)=\lambda(\mathcal K_G)=[q]\Phi_G(q;\beta).
\end{equation}
This is an interpretation of the master formula, not an alternative proof of it.

There is also a standard Hopf-algebraic description of the coefficient functional.  
Let 
$$\zeta_Q: \QSym \to \mathbb{Q}, \quad \zeta_Q(F):=F(1,0,0,\ldots)$$
be the canonical character of $\QSym$. Then the deconcatenation coproduct on the monomial basis gives
\begin{equation}\label{eq:log}
 \lambda=\log_*\zeta_Q.
\end{equation}
Thus $\lambda$ is an infinitesimal character; in particular it vanishes on products of two positive-degree elements.  This is consistent with the vanishing of $\omega_c$ on disconnected nonempty quivers.  We stress that the convolution in \eqref{eq:log} is the standard $\QSym$ convolution and is not the graph-contraction convolution used in the main text; see \cite{ABS06} for the general character framework.

\subsection{Boundary, acyclic, and orientation interpretations}

At $\boldsymbol{\beta}=\one$ one has $\beta_{\eps,+}=1$ and $\beta_{\eps,-}=0$, so \eqref{eq:KG-map} becomes
\begin{equation}\label{eq:boundary}
 \mathcal K_G(X;\one)=
 \sum_{\substack{f:V_G\to\mathbb Z_{>0}\\f(i)<f(j)\text{ for every }j\to i}}
 \prod_{v\in V_G}x_{f(v)}.
\end{equation}
If $G$ is acyclic, \eqref{eq:boundary} is the strict order-preserving-map enumerator of the reachability poset of $G$.  If $G$ contains a directed cycle or a tadpole, it vanishes.  Principal specialization recovers the boundary identity of Section~\ref{ss:beta-boundary}:
\begin{equation}\label{eq:chi}
 \Phi_G(q;\one)=\chi_G(q),\qquad
 \omega_c(G;\one)=[q]\chi_G(q)=\chi_G'(0).
\end{equation}
Hence the usual poset interpretation is a specialization of the arbitrary-quiver construction, not its starting point.

For acyclic $G$ there is a further fundamental-basis interpretation of the coefficients $d_{n,k}$ of \eqref{eq:d-nk-master}, which occur in the acyclic master function \eqref{eq:special-master-function}, and of the Magnus coefficients $c_{n,k}$ of \eqref{eq:operator-cnk} in Theorem~\ref{thm:Magnus-permutation-form}.  

Suppose $V_G=[n]$ is topologically labeled.  
For $S\subseteq[n-1]$, define the fundamental quasisymmetric function 
\begin{equation}\label{eq:fundamental-def}
 F_{n,S}(X):=
 \sum_{\substack{1\le i_1\le\cdots\le i_n\\
                     j\in S\,\Rightarrow\, i_j<i_{j+1}}}
 x_{i_1}\cdots x_{i_n}.
\end{equation}
They form the fundamental basis of the homogeneous component $\QSym_n$.  Equivalently, if $\alpha=(\alpha_1,\ldots,\alpha_r)\vDash n$ and
\[
 D(\alpha):=\{\alpha_1,\alpha_1+\alpha_2,\ldots,
 \alpha_1+\cdots+\alpha_{r-1}\}\subseteq[n-1],
\]
then, writing $F_\alpha:=F_{n,D(\alpha)}$,
\begin{equation}\label{eq:F-monomial}
 F_\alpha
 =\sum_{\substack{\gamma\vDash n\\D(\gamma)\supseteq D(\alpha)}}M_\gamma.
\end{equation}
In other words, the fundamental basis is obtained from the monomial basis by allowing adjacent parts to merge except across the prescribed positions in $D(\alpha)$.

For a permutation $\sigma=\sigma_1\cdots\sigma_n\in S_n$, write
\[
 \Des(\sigma):=\{j\in[n-1]\mid \sigma_j>\sigma_{j+1}\},
 \qquad
 \Asc(\sigma):=[n-1]\setminus\Des(\sigma).
\]
Now sort the maps $f:[n]\to\mathbb Z_{>0}$ by weakly increasing values, breaking ties by decreasing vertex labels.  For a fixed resulting permutation $\sigma$, the allowed weak inequalities among the values of $f$ are precisely encoded by $F_{n,\Asc(\sigma)}$: the tie-breaking rule forces a strict rise at every ascent position of $\sigma$.  Hence grouping the maps $f$ according to the resulting permutation gives
\begin{equation}\label{eq:fundamental}
 \mathcal K_G(X;\beta)
 =\sum_{\sigma\in S_n}
 \left(\prod_{\eps\in E_G}\beta_{\eps,s_\sigma(\eps)}\right)
 F_{n,\Asc(\sigma)}(X).
\end{equation}
With this convention the fundamental function is indexed by $\Asc(\sigma)$ rather than $\Des(\sigma)$.
The standard specialization
$\ps_q(F_{n,S})=\binom{q+n-1-|S|}{n}$
gives
\begin{equation}\label{eq:dc}
 \lambda\bigl(F_{n,\Asc(\sigma)}\bigr)=d_{n,k(\sigma)},
 \qquad
 \lambda\bigl(F_{n,\Des(\sigma)}\bigr)=c_{n,k(\sigma)},
\end{equation}
where $d_{n,k}$ and $c_{n,k}$ are given explicitly in \eqref{eq:d-nk-master} and~\eqref{eq:operator-cnk}, respectively.  
Thus the fundamental QSym basis accounts simultaneously for the coefficients in the acyclic master formula, Theorem~\ref{thm:special-master-formula}, and in the Mielnik--Plebanski--Strichartz formula, Theorem~\ref{thm:Magnus-permutation-form}.  Reversal interchanges ascents and descents, and \eqref{eq:dc} recovers the coefficient relation used in Lemma~\ref{lem:operator-permutation-reversal} and Theorem~\ref{thm:operator-special-master-formula}.  Unlike Eqs.(\ref{eq:KG-osp})--(\ref{eq:omegaK}), this paragraph is specific to acyclic quivers.

We finally record one consequence for the orientation sum of Section~\ref{sec:sum-rules}.  Let $\bar G:=(V,E)$ be a tadpole-free undirected multigraph, let $\eps,\bar\eps$ be the two orientations of $u\in E$, and recall
$\bar\beta_u=(\beta_\eps+\beta_{\bar\eps})/2$ from \eqref{eq:beta-average}.  Define
\begin{equation}\label{eq:Pcal}
 \mathcal P_{\bar G}(X;\bar\beta)
 :=\sum_{f:V\to\mathbb Z_{>0}}
 \left(\prod_{u=\{i,j\}\in E}
 (\bar\beta_u-\mathbf1_{\{f(i)=f(j)\}})\right)
 \prod_{v\in V}x_{f(v)}.
\end{equation}
This function is symmetric, and its principal specialization is the polynomial $P_{\bar G}(q;\bar\beta)$ of Equation (\ref{eq:P-G-q-beta}).  Moreover, the edgewise calculation used in the proof of Theorem~\ref{thm:omega-orientation-sum} lifts before principal specialization to
\begin{equation}\label{eq:orientation}
 \sum_{G\in\Ori(\bar G)}\mathcal K_G(X;\beta)
 =\mathcal P_{\bar G}(X;\bar\beta).
\end{equation}
Indeed, for a fixed coloring $f$ and an edge $u=\{i,j\}$, the two orientations contribute $\bar\beta_u$ when $f(i)\ne f(j)$ and $\bar\beta_u-1$ when $f(i)=f(j)$.  Applying $\ps_q$ and then $[q]$ to \eqref{eq:orientation} recovers \eqref{eq:S-omega-P-derivative}.  At $\overline{\boldsymbol\beta}=\mathbf1$, $\mathcal P_{\bar G}$ is Stanley's chromatic symmetric function \cite{Sta95}.

In this way the essential quasisymmetric function interpretation may be summarized by
\[
 \mathcal K_G\xrightarrow{\ \ps_q\ }\Phi_G(q;\beta)
 \xrightarrow{\ [q]\text{ (linear coefficient)}\ }\omega_c(G;\beta),
\]
valid for arbitrary quivers, with the acyclic and orientation-sum statements above as two useful specializations.

\newpage
 
\subsection*{Acknowledgements}

JWK and SL are grateful to the organizers of the Amplitudes 2026 conference, during which significant progress toward the master formulas was made. 
JL was supported by the Beatriz Galindo Senior grant BG24/00114 from the Spanish Ministry of Science, Innovation and Universities.
JHK was supported by the Department of Energy (Grant No.~DE-SC0011632) and by the Walter Burke Institute for Theoretical Physics.
SK and SL were supported by the National Research Foundation of Korea (NRF) grant NRF RS-2024-00351197 and KIAS grant PG006002.

\subsection*{Declaration on the Use of Generative AI} 
Generative AI models (ChatGPT by OpenAI and Claude by Anthropic) were used to assist with language editing, improve readability, and help reorganize the presentation of some proofs. It did not generate the underlying mathematical or physical ideas. All mathematical and physical content, including the validity of every proof, remains the sole responsibility of the authors.

\subsection*{Conflict of Interest}
The authors declare that they have no conflicts of interest relevant to this work.

\subsection*{Data Availability Statement}
No datasets were generated or analyzed during the current study. 

\newpage

\bibliographystyle{alpha}  
\bibliography{math-references}

\end{document}